\DeclareSymbolFont{bbold}{U}{bbold}{m}{n}
\DeclareSymbolFontAlphabet{\mathbbm}{bbold}
\theoremstyle{plain}
	\newtheorem{theorem}{Theorem}[section]
	\newtheorem{proposition}[theorem]{Proposition}
	\newtheorem{lemma}[theorem]{Lemma}
	\newtheorem{corollary}[theorem]{Corollary}
\theoremstyle{definition}
	\newtheorem{definition}[theorem]{Definition}
	\newtheorem{notation}[theorem]{Notation}
	\newtheorem{fact}[theorem]{Fact}
\theoremstyle{remark}
	\newtheorem{remark}[theorem]{Remark}
	\newtheorem{example}[theorem]{Example}
\newcommand{\ZFC}{\ensuremath{\text{{\sf ZFC}}}}
\DeclareMathOperator{\dom}{dom}
\DeclareMathOperator{\ran}{ran}
\DeclareMathOperator{\coker}{coker}
\DeclareMathOperator{\crit}{crit}
\DeclareMathOperator{\cf}{cf}
\DeclareMathOperator{\supp}{supp}
\DeclareMathOperator{\val}{val}
\newcommand{\FA}{\ensuremath{\text{{\sf FA}}}}
\newcommand{\SPFA}{\ensuremath{\text{{\sf SPFA}}}}
\newcommand{\Nm}{\ensuremath{\text{{\sf Nm}}}}
\newcommand{\SP}{\ensuremath{\text{{\sf SP}}}}
\newcommand{\MM}{\ensuremath{\text{{\sf MM}}}}
\newcommand{\non}{\ensuremath{\lnot}}
\newcommand{\NS}{\ensuremath{\text{{\sf NS}}}}
\newcommand{\CF}{\ensuremath{\text{{\sf CF}}}}
\newcommand{\RR}{\mathbb{D}}
\newcommand{\QQ}{\mathbb{C}}
\newcommand{\BB}{\mathbb{B}}
\newcommand{\DD}{\mathbb{D}}
\newcommand{\PP}{\mathbb{P}}
\newcommand{\A}{{\sf A}}
\newcommand{\PD}{{\sf PD}}
\newcommand{\RO}{{\sf RO}}
\newcommand{\RCS}{{\sf RCS}}
\newcommand{\sg}{sg}
\newcommand{\gen}{gen}
\newcommand{\FFF}{\mathcal{F}}
\DeclareMathOperator{\Coll}{Coll}
\newcommand{\pp}{\mathcal{P}}
\newcommand{\res}{\upharpoonright}
\newcommand{\cp}[1]{\left( #1 \right)}
\newcommand{\qp}[1]{\left[ #1 \right]}
\newcommand{\Qp}[1]{\left\llbracket #1 \right\rrbracket}
\newcommand{\ap}[1]{\langle #1 \rangle}
\newcommand{\bp}[1]{\left\lbrace #1 \right\rbrace}
\newcommand{\vp}[1]{\left\lvert #1 \right\rvert}
\newcommand{\0}{\mathbbm 0}
\newcommand{\1}{\mathbbm 1}
\newcommand{\2}{\mathbbm 2}
\newcommand{\ON}{\mathrm{\mathbf{ON}}}
\newcommand{\psup}{\tilde{\bigvee}}
\title{A boolean algebraic approach to semiproper iterations}
\author{M. Viale, G. Audrito, S. Steila}
\date{}
\begin{document}
	\maketitle

	\section*{Introduction}

These notes present a compact and self contained development of the theory of
iterated forcing with a focus on semiproperness and revised countable support iterations.
We shall pursue the approach to iterated forcing devised by Donder and Fuchs 
in~\cite{FUCHS}, thus we shall present iterated forcing by means 
of directed system of complete and injective homomorphisms of complete boolean algebras.
A guiding idea that drives this work is 
that for many purposes, especially when dealing with problems of a 
methamatematical nature, the use of boolean valued
models is more convenient.
A partial order and its boolean completion can produce exactly the same consistency results,
however:
\begin{itemize}
\item
In a specific consistency proof the forcing notion 
we have in mind in order to obtain the desired result is given by a partial order
and passing to its boolean completion may obscure our intuition on the nature of the problem and the combinatorial properties we wish our partial order to have.
\item
When the 
problem aims to find general properties of 
forcings which are shared by a wide class of partial orders, we believe 
that focusing on complete boolean algebras gives a more efficient way to handle the problem.
This is the case for at least two reasons: on the one hand
there are less complete boolean algebras to deal with than partial orders, 
thus we have to handle potentially less objects, on the other hand we have a rich algebraic 
theory for complete boolean algebras and the use of algebraic properties may greatly 
simplify our calculations.
\end{itemize}
We believe that this second case applies when our aim is to develop a general
theory of iterated forcing and these notes are guided by this convinction.

The first five sections of these notes contain a detailed presentation of the algebraic properties 
of complete homomorphisms between atomless complete boolean algebras and basic facts
on limits of directed systems of complete homomorphisms between boolean algebras.
Sections 6 and 7 introduce a boolean algebraic definition of semiproperness and 
develop the core results on semiproperness and on revised countable 
support iterations of semiproper forcings basing them on this algebraic characterization 
of semiproperness.
Section 8 contains a proof of the most celebrated application of semiproperness,
i.e. Foreman, Magidor, and Shelah's proof of the consistency of $\MM$~\cite{FOREMAGISHELAH}. 
Since a crucial role in our analysis of semiproperness
is played by generalized stationary sets, 
we enclude an appendix containing all relevant facts
about generalized stationarity that were employed in these notes.

The results we present are well established parts of the current development of set theory,
however the proofs are novel and in some cases we believe to cover a gap
in the literature, especially in light of the fact that, 
up to date, there is (in our eyes)
no neat self contained presentation of the preservation theorems for semiproperness under revised 
countable support iterations.
These notes take a great care to present all basic results and to give detailed proofs
(provided these are not covered in a systematic way elsewhere), for this reason we believe they are
of interest to any scholar who is acquainted with forcing and the basics of boolean valued models 
and aims to learn the standard results on proper and semiproper iterations.
While the focus is on semiproper iterations we believe there will be no problem 
to rearrange these techniques in order to cover also
the cases of proper or ccc iterations.

The paper is organized as follows:
\begin{itemize}
\item
Section~\ref{sec:background} 
contains in the first part basic material on the relation between partial orders, 
their boolean completions, the Stone spaces associated to their boolean completions. 
In the second part of the section we give
a sketchy presentation of the basic properties of boolean valued models. We assume the reader is 
acquainted with these results. For the part on partial orders, boolean completions and Stone spaces,
a source of inspiration can be chapter 2 of
Kunen's book~\cite{KUNEN}, for the part on Boolean valued models we refer the reader to 
Bell's book~\cite{BELL}, to
Jech's chapter on forcing~\cite{JECH}, to Hamkins and Seabold's paper on 
Boolean ultrapowers~\cite{HAMSEA}, or to Audrito's master thesis~\cite{AUDRITO}.

\item
In the first part of section~\ref{sec:regemb} we introduce regular homomorphisms between 
atomless complete boolean algebras (i.e. injective complete homomorphisms)
and their associated retractions.

$\pi:\QQ\to\BB$ is the retraction associated to a regular homomorphism
$i:\BB\to\QQ$ if
\[
\pi(q)=\bigwedge_{\BB}\{b:i(b)\geq q\}.
\]
The key feature of these retractions is the identity
\[
\pi(i(b)\wedge q)=b\wedge \pi(q)
\]
for all $b\in\BB$ and $q\in\QQ$.
This algebraic identity will be the cornerstone in our analysis of iterated forcing.
We prove in details this and other identities and some other facts: for example that any
complete homomorphism $i:\BB\to\QQ$ grants that whenever we add a $V$-generic filter $G$
for $\QQ$ then $i^{-1}[G]$ is a $V$-generic filter for $\BB$, i.e. 
in the context of boolean valued models complete homomorphisms,
play the role complete embeddings between posets have in the context of ordinary forcing.

In the second part of this section we give a proof that complete homomorphisms $i:\BB\to\QQ$ 
induce $\Delta_1$-preserving embeddings $\hat{\imath}:V^{\BB}\to V^{\QQ}$ 
on the respective boolean valued models.
\item
In section~\ref{sec:iterations} we present iterated forcing in the setting of complete boolean algebras.
In this section occurs the first great simplification (due to Donder and Fuchs) that our presentation of 
iterated forcing allows, which is the definition of revised countable support iterations.
For this reason we wish to spend some more words on the matters treated in this section.

We focus our presentation limiting our attention to iteration systems of regular homomorphisms,
which are the exact counterpart in our setting of the standard notion of iteration for posets.
A complete iteration system 
\[
\FFF=\{i_{\alpha \beta}:\BB_\alpha\to \BB_\beta:\alpha \leq \beta < \lambda\}
\]
is a commuting family of regular embeddings, a branch in $T(\FFF)$ is a function
$f:\lambda\to V$ such that for all $\alpha\leq\beta<\lambda$
$f(\alpha)$ is the retraction of $f(\beta)$ by means of the retraction associated to 
$i_{\alpha\beta}$.
A branch $f$ is eventually constant if there is some $\alpha$ such that
$f(\beta)=i_{\alpha\beta}\circ f(\alpha)$ for every $\beta\geq\alpha$.
There is a standard order on $T(\FFF)$ given by the pointwise comparison of branches.
With respect to the standard presentations
of iterated forcing,
$C(\FFF)$, the set of constant branches, corresponds to the direct limit, 
$T(\FFF)$ corresponds to the full limit.

The revised countable support of $\FFF$ consists of those branches $f\in T(\FFF)$ 
with the property that
\begin{itemize}
\item
either for some $\alpha<\lambda$ $f(\alpha)$ forces with respect to $\BB_\alpha$ that\\
$\cf(\lambda)=\omega$,
\item
or $f$ is eventually constant.
\end{itemize} 
We invite the reader to compare this definition of revised countable support iterations
with the original one\footnote{We do not dare to check whether the two 
notions of revised countable support limit are equivalent, however
they serve the same purpose (i.e producing iterations of semiproper posets that do not collapse 
$\omega_1$) and it is transparent that Donder and Fuchs'definition is more
manageable than Shelah's one.} of Shelah in Chapter $X$ of his book~\cite{SHEPRO}.

In the second part of the section
we study basic properties of complete iteration systems, in particular we set up 
sufficient conditions to establish when the direct limit of an iteration system of length $\lambda$
is ${<}\lambda$-cc (this corresponds to the well known 
result of Baumgartner on direct limits of ${<}\lambda$-cc forcings),
and when $T(\FFF)$ and $C(\FFF)$ do overlap.

\item
In the first part of
section~\ref{sec:twostepitquot} we analyze in more details the relation existing between
$V^{\BB}$ and $V^{\QQ}$ in case $i:\BB\to\QQ$ is  a complete homomorphism.
First we show that if $G$ is $V$-generic for $\BB$ and $K$ is the dual of $G$, $\QQ/i[K]$
is a complete boolean algebra in $V[G]$, and also that whenever
	\[
	\begin{tikzpicture}[xscale=1.5,yscale=-1.2]
		\node (B) at (0, 0) {$\BB$};
		\node (Q0) at (1, 0) {$\QQ_0$};
		\node (Q1) at (1, 1) {$\QQ_1$};
		\path (B) edge [->]node [auto] {$\scriptstyle{i_0}$} (Q0);
		\path (B) edge [->]node [auto,swap] {$\scriptstyle{i_1}$} (Q1);
		\path (Q0) edge [->]node [auto] {$\scriptstyle{j}$} (Q1);
	\end{tikzpicture}
	\]
and $K$ is the dual of a $V$-generic filter $G$ for $\BB$, then the map 
defined by $j/_K([c]_{i_0[K]})=[j(c)]_{i_1[K]}$ is a complete
homomorphism in $V[G]$.

In the second part of this section 
we show that whenever $\dot{\QQ}\in V^{\BB}$ is a $\BB$-name for a complete boolean algebra
there is a complete boolean algebra $\BB*\dot{\QQ}\in V$ and
an $i:\BB\to \BB*\dot{\QQ}$ such that whenever $G$ is $V$-generic for $\BB$ and $J$ is its dual
$\BB*\dot{\QQ}/i[J]$ is isomorphic to $\dot{\QQ}_G$ in $V[G]$.

Finally in the last part of the section, 
with these results at our disposal, we check that $\FFF/G$ is a complete iteration system in
$V[G]$ whenever
$\FFF$ is an iteration system in $V$ and $G$ is $V$-generic for some $\BB$ in $\FFF$.

\item
Section~\ref{sec:examples} 
gather a family of examples which should clear up many misleading points on the properties of
complete iteration systems.

\item
Sections~\ref{sec:semiproper} and~\ref{sec:semiproperiter} 
contain the bulk of our results on semiproperness. In section~\ref{sec:semiproper}
we introduce the second key simplification in our treatment of semiproper iterations.
We introduce an algebraic definition of semiproperness which due to its relevance we wish to 
anticipate here:

given a countable model $M\prec H_\theta$ and a complete boolean algebra $\BB\in M$,
\[
\sg(\BB,M)=\bigwedge_{\BB}\bp{\bigvee_{\BB} (X\cap M): X\in M\text{ is a predense subset of $\BB$ of size $\aleph_1$}}.
\]

$\BB$ is semiproper if for a club of countable $M\prec H_\theta$ and for all
$b\in \BB^+\cap M$
\[
b\wedge \sg(\BB,M) > \0_{\BB}.
\]

We also introduce the key notion to analyze semiproper iterations:

$i:\BB\to\QQ$ is a semiproper regular homomorphism if $\BB$ is semiproper and for a club of countable 
$M\prec H_\theta$ and for all $q\in \QQ^+\cap M$ we have that
\[
\pi(q)\wedge\sg(\BB,M)=
\pi(q\wedge \sg(\QQ,M)) > \0_{\BB}.
\]

Next we show that a forcing notion $P$ is semiproper according to Shelah's definition iff
its boolean completion is semiproper according to our definition.

We conclude the section giving a simple topological characterization of properness and semiproperness. 
\item
Section~\ref{sec:semiproperiter} 
is devoted to the analysis of two-step iterations of semiproper posets
and to the proof of the preservation of semiproperness through 
revised countable support limits.

First, along the same lines of what was done in section~\ref{sec:twostepitquot}, 
we prove that two-step and 
three-step iterations
of semiproper forcings behave as expected. In particular
we show that
whenever $\BB$ is a semiproper  complete boolean algebra
and $\dot{\QQ}\in V^{\BB}$ is a name for a semiproper complete boolean algebra
then the natural regular homomorphism $i:\BB\to \BB*\dot{\QQ}$ is also 
semiproper\footnote{We want to remark that $i:\BB\to\QQ$ can be a semiproper 
regular homomorphism even if
for some $G$ $V$-generic for $\BB$, 
$\QQ/i[J]$ is not semiproper in $V[G]$ (where $J$ is the dual of $G$).}.

Next we concentrate on the proof of the preservation of semiproperness through limit stages.
The proof splits in three cases according to the cofinality of the length of the iteration system 
($\aleph_0$, $\aleph_1$, bigger than $\aleph_1$) and mimicks in this new setting the original proof
of Shelah of these results. 
\item
Section~\ref{sec:consMM} 
gives a proof of the consistency of the forcing axiom $\MM$ relative to the existence
of a supercompact cardinal by means of a semiproper iteration.
\item
The appendix~\ref{sec:statsets} contains a detailed exposition of generalized stationary sets. 
The basic properties of these sets are needed to develop all properties of semiproper forcings
and thus are required in order to follow the content of sections~\ref{sec:semiproper},~\ref{sec:semiproperiter}, and~\ref{sec:consMM}.

\end{itemize}

These notes are the outcome of a Ph.D. course the first author gave on these matters
in the spring and the fall of 2013.
The basic ideas that guided the course comes from the observation that a full account of
Donder and Fuchs approach
to semiproper iterations is not available in a published form and the unique draft
of their results is the rather sketchy preprint on the ArXiv~\cite{FUCHS}. 
Moreover the available
drafts of their results do not push to their 
extreme consequences the power given by the algebraic apparatus 
provided by the theory of complete boolean algebra.
Donder and Fuchs limit themselves to use 
this algebraic apparatus to simplify (dramatically) the definition of revised countable
support limit.
No attempt is done by them to use 
this algebraic apparatus to simplify the proofs of the 
iteration lemmas for preservation of semiproperness through limit stages.
This might be partially explained by the fact that
 the proof of the properness of countable support iteration of proper posets is well understood and 
 the modifications required to handle the proof of semiproperness for 
 countable support iterations of length at most $\omega_1$ is obtained from that proof
 with minor variations.
 Nonetheless we believe that our ``algebraic'' treatment of iterated forcing gives a simpler 
 and more elegant presentation of the whole theory of iterated forcing 
 and outlines neatly the connections between the notion of
 properness and semiproperness in the theory of forcing, and their Baire category counterparts in topology.
 Moreover we believe that our presentation opens the way to 
 handle different kinds of limits given by complete iteration systems indexed by arbitrary partial orders
 and also to develop a theory of iterated forcing for stationary set preserving posets.
 We are also curious to see if this approach could simplify the treatment of semiproper iterations which
 do not add reals and may help to foresee a fruitful theory of iterated forcing
 which preserve $\aleph_1$ and $\aleph_2$.

\subparagraph*{Acknowledgements}

These notes developed out of a course officially held by the first author, 
nonetheless the contributions of the audience to a successful outcome of the course is unvaluable.
In particular all the proofs presented in these notes are the fruit of a joint elaboration of (at least)
all three authors of these notes plus (in many cases) also of the other participants to the course.
For these reasons we thank in particular:
\begin{itemize}
\item
Rapha\"{e}l Carroy for for his contributions to sections~\ref{rap1} 
and~\ref{rap2} on Stone spaces and the topological characterization
of properness and semiproperness. The observations there contained
are the outcome of discussions between him and the first author.

\item
Fiorella Guichardaz whose master thesis contains the bulk of results on 
which sections  \ref{sec:regemb} to \ref{sec:twostepitquot} expand.
\item
Daisuke Ikegami for the several advices he has given during the redaction of these notes.
\item
Bruno Li Marzi for his keen interest on the subject.
\end{itemize}

	\tableofcontents
	\section{Posets, boolean algebras, forcing} \label{sec:background}
In this section we present some general facts that are required for the development of the remainder of these notes. 
Reference texts for this section are~\cite{BELL},~\cite{JECH}, and~\cite{KUNEN}.

\subsection{Posets and boolean algebras}
We introduce posets and complete boolean algebras and we prove that forcing equivalent notions have isomorphic boolean completions.
This allows us to focus on complete boolean algebras. 

	\begin{definition}
		A \emph{poset} (partially ordered set) is a set $P$ together with a binary relation $\leq$ on $P$ which is transitive, reflexive and antisymmetric.
	\begin{itemize}
	\item
		Given $a, b \in P$, $a \perp b$ ($a$ and $b$ are \emph{incompatible})
		if and only if:
		\[
		a \perp b \quad \Leftrightarrow \quad \neg \exists c: ~ c \leq a \wedge c \leq b
		\]
		Similarly, $a \parallel b$ ($a, b$ are \emph{compatible})
		iff $\neg \cp{a \perp b}$.
	\item
		A subset $A \subset P$ is a \emph{chain} if and only if is totally ordered in $P$.
	\item	
		A subset $A \subset P$ is an \emph{antichain} if and only if the elements of $A$ are pairwise incompatible.
	\item	
		An antichain $A \subset P$ is \emph{maximal} if and only if no strict superset 
		of $A$ is an antichain.
	\item For a given $A\subset P$, 
			\[
				\downarrow{A}=\{p:\exists q\in A, p\leq_P q\},
			\]
		  and
			\[
		    	\uparrow{A}=\{p:\exists q\in A, p\geq_P q\}.
			\]
	\item
    	A set $D \subset P$ is dense iff $\uparrow D =P$. 
	\item
	    A set $B \subset P$ is predense iff  $\downarrow B$ is dense.
	\item A set $B \subset P$ is directed iff 
	\[
		\forall p,\ q \in B \exists r \in B (p \leq r \wedge q \leq r).
	\]
	\item
	A poset $P$ is separative iff for all $p\not\leq q \in P$, there exists $r \in P$ with $r \leq p$, $r \perp q$.
	\item
	A poset $P$ is ${<}\lambda$-cc (chain condition) iff $|A|<\lambda$ for all maximal antichains $A\subset P$.	
	\end{itemize}
	\end{definition}
	
	
	\begin{fact}
		Every dense set $D \subset P$ contains a maximal antichain $A \subset D$. Conversely, any maximal antichain is predense.
	\end{fact}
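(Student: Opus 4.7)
The plan is to handle the two assertions separately, using Zorn's lemma for the first and a direct unpacking of definitions for the second.

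For the first assertion, I would consider the collection $\mathcal{A}$ of all antichains contained in $D$, partially ordered by inclusion. Any chain $\mathcal{C} \subseteq \mathcal{A}$ has an upper bound: the union $\bigcup \mathcal{C}$ lies in $\mathcal{A}$, because any two of its elements belong to a common $A' \in \mathcal{C}$ (chains are totally ordered) and hence are incompatible in $P$, and clearly $\bigcup \mathcal{C} \subseteq D$. By Zorn's lemma there is a $\subseteq$-maximal $A \in \mathcal{A}$.

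The key point is then to upgrade maximality of $A$ within $\mathcal{A}$ to genuine maximality as an antichain of $P$. Given an arbitrary $p \in P$, density of $D$ provides $d \in D$ with $d \leq p$. If $d \in A$, then $p$ and $d$ have the common lower bound $d$, so $p \parallel d$ with $d \in A$. If $d \notin A$, then by maximality of $A$ in $\mathcal{A}$ the set $A \cup \{d\}$ fails to be an antichain in $D$, which forces $d \parallel a$ for some $a \in A$; any common lower bound of $d$ and $a$ is also one of $p$ and $a$, so $p \parallel a$. In either case $p$ is compatible with an element of $A$, so no strict superset of $A$ can be an antichain of $P$.

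For the converse, let $A$ be a maximal antichain of $P$; I need to show $\downarrow A$ is dense, i.e.\ that every $p \in P$ lies above some element of $\downarrow A$. If $p \in A$ then $p \in \downarrow A$ and we take $p$ itself. If $p \notin A$, then $A \cup \{p\}$ is not an antichain by maximality, so $p \parallel a$ for some $a \in A$; any witness $r \leq p, a$ lies in $\downarrow A$ and satisfies $r \leq p$. No step here poses any serious obstacle; the only mild subtlety is the distinction between maximality inside $D$ and maximality inside $P$, which is resolved precisely by invoking density at the candidate element $p$.
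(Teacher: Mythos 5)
Your proof is correct. The paper states this as a \emph{Fact} with no accompanying proof, treating it as standard background, so there is nothing to compare against; your Zorn's lemma argument (with the careful upgrade from maximality among antichains inside $D$ to maximality among antichains of $P$, via density) is the canonical way to prove it, and the second half correctly unwinds the paper's definitions of ``predense'' and ``dense'' in terms of $\downarrow$ and $\uparrow$.
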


	\begin{definition}
		A set $I \subset P$ is an \emph{ideal} in $P$ iff it is downward closed and upward directed
		(i.e. it has upper bounds for all of its finite subsets).
		A set $F \subset P$ is a \emph{filter} in $P$ iff it is upward closed and downward directed.	\end{definition}

	\begin{definition}
		Let $M$ be a model of $\ZFC$ and $P \in M$ be a poset. A filter $G \subset P$ is $M$-generic for $P$ if and only if $G\cap D\cap M\neq\emptyset$ for every dense set $D$ of $P$ in $M$. 
	Equivalently, a filter $G$ is $M$-generic if it intersects inside $M$ 
	every maximal antichain of $P$ in $M$.
	\end{definition}

	\begin{fact}
		If $P \in M$ is a separative poset, no filter in $M$ is $M$-generic. 
	\end{fact}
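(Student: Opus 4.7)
The plan is to exhibit, inside $M$, a dense subset of $P$ that is disjoint from $G$. The natural choice is $D := P \setminus G$, which lies in $M$ because $G$ does, and is visibly disjoint from $G$. Once we verify that $D$ is dense in $P$, the hypothesis of $M$-genericity will force $G \cap D \cap M \neq \emptyset$, which is absurd. Thus the entire proof reduces to checking density of $D$.

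Given $p \in P$, I look for $q \leq p$ with $q \notin G$. If $p \notin G$ take $q = p$, so the substantive case is $p \in G$. Here I would argue by contradiction: assume $\downarrow p \subseteq G$. Pick any strict predecessor $s < p$; then $p \not\leq s$, so by separativity there exists $r \leq p$ with $r \perp s$. But now $r$ and $s$ both lie in $\downarrow p \subseteq G$ while being incompatible, contradicting the fact that the elements of a filter are pairwise compatible. Hence some $q \leq p$ must lie outside $G$, and $D$ is dense as required.

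The main obstacle is not the separativity manipulation itself, which is a one-line algebraic calculation once the right incomparability is in place, but rather securing a strict predecessor $s < p$ for a generic $p \in G$. This step uses the implicit non-atomicity assumption that below every element of $P$ there is a strictly smaller one, which is the regime the paper focuses on henceforth: without it the statement genuinely fails, as witnessed by the three-element poset $\{a,b,\1\}$ with $a \perp b$ and both below $\1$, where the filter $\{a,\1\}$ sitting inside $M$ is already $M$-generic. In the atomless context targeted by the rest of the paper this poses no difficulty, and the density check closes the argument through the standard disjointness-versus-genericity contradiction.
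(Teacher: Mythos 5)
The paper states this Fact without proof, so there is nothing in the text to compare your argument against; what you have actually done is locate a genuine flaw in the statement itself, and your analysis is correct. The poset $\{a,b,\1\}$ with $a\perp b$ and $a,b<\1$ is separative (each of the finitely many instances $p\not\leq q$ is easily checked), and the filter $\{a,\1\}$ meets the only nontrivial dense set $\{a,b\}$; so whenever this poset and this filter both lie in $M$, the filter is $M$-generic in $M$, contradicting the Fact as written. The missing hypothesis is atomlessness, consistent with the paper's stated focus on atomless complete boolean algebras from Section~\ref{sec:regemb} onward; a correct version reads: if $P\in M$ is an atomless separative poset, no filter in $M$ is $M$-generic.

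Under that hypothesis your argument is sound, but the route is longer than necessary. Once $P$ is atomless, below any $p$ there are already two incompatible $r,s\leq p$, and a filter $G$ can contain at most one of them; so $P\setminus G$ is dense immediately, with separativity playing no role in the density check. Your strict-predecessor detour in effect re-derives atomlessness from separativity together with the absence of minimal elements, which is fine, but note that these two conditions are not the same thing: ``below every element there is a strictly smaller one'' is ``no minimal elements,'' which only coincides with ``atomless'' in the separative case (a linear order without minimum has no minimal elements yet is maximally atomic). Keeping these apart avoids needing separativity at all in the density step.
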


	\begin{definition}
		A poset $P$ is a \emph{lattice} if any two elements $a,b$ have a unique supremum $a \vee b$ (least upper bound, join) and infimum $a \wedge b$ (greatest lower bound, meet).
	\begin{itemize}
	\item
		A lattice is \emph{distributive} if the operations of join and meet distribute over each other.
	\item
		A lattice $\mathbb{L}$ is \emph{bounded} if it has a least element ($\0_{\mathbb{L}}$) 
		and a greatest element ($\1_{\mathbb{L}}$).
	\item
		A lattice is \emph{complemented} if it is bounded lattice and every element $a$ has a complement, i.e. an element $\neg a$ satisfying $a \vee \neg a = \1$ and $a \wedge \neg a = \0$.
		\item
		A \emph{boolean algebra} is a complemented distributive lattice. A boolean algebra is \emph{complete} iff every subset has a supremum and an infimum.
		\end{itemize}
	\end{definition}
	
	\begin{fact}
	Given a boolean algebra $\BB$,
	$\BB \setminus \bp{\0}=\BB^+$ is a separative poset 
	with the order relation $a\leq_{\BB^+}b$ given by any of the following requirement on $a$ and $b$:
	\begin{itemize}
	\item
	$a\wedge b=a$,
	\item
	$a\vee b=b$. 
	\end{itemize}
\end{fact}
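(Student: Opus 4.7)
The plan is to verify three separate things: that the two proposed order relations coincide, that they define a partial order on $\BB^+$, and that this partial order is separative. All three will follow directly from standard boolean algebra identities (absorption, distributivity, complementation), so the proof is essentially a warm-up in manipulating $\wedge$, $\vee$, $\neg$.

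First, for the equivalence of $a \wedge b = a$ and $a \vee b = b$, I would use the absorption laws. If $a \wedge b = a$, then $a \vee b = (a \wedge b) \vee b = b$; conversely if $a \vee b = b$ then $a \wedge b = a \wedge (a \vee b) = a$. This shows that the two characterizations define the same binary relation on $\BB^+$, and it is convenient to take $a \leq b \iff a \wedge b = a$ as the working definition thereafter.

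Next, I would check the three axioms of a partial order. Reflexivity is immediate from $a \wedge a = a$. For antisymmetry, $a \leq b$ and $b \leq a$ give $a = a \wedge b = b \wedge a = b$. For transitivity, from $a \wedge b = a$ and $b \wedge c = b$, associativity of $\wedge$ yields $a \wedge c = (a \wedge b) \wedge c = a \wedge (b \wedge c) = a \wedge b = a$. Restricting to $\BB^+ = \BB \setminus \{\0\}$ obviously preserves these properties.

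Finally, for separativity, suppose $a, b \in \BB^+$ with $a \not\leq b$, that is, $a \wedge b \ne a$. Using distributivity, write $a = a \wedge \1 = a \wedge (b \vee \neg b) = (a \wedge b) \vee (a \wedge \neg b)$. If $a \wedge \neg b$ were $\0$, this would force $a = a \wedge b$, contradicting the hypothesis. Hence $r := a \wedge \neg b \in \BB^+$. Clearly $r \leq a$, and any lower bound $c \in \BB^+$ of $r$ and $b$ would satisfy $c \leq r \wedge b = a \wedge \neg b \wedge b = \0$, contradicting $c \ne \0$; so $r \perp b$. This witnesses separativity, completing the proof. No step really poses an obstacle; the only point worth stating explicitly is the use of $b \vee \neg b = \1$, which is exactly the complementation axiom that distinguishes boolean algebras from arbitrary distributive bounded lattices and is what makes separativity automatic.
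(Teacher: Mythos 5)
Your proof is correct and complete; the paper states this as a \texttt{Fact} without providing any proof, so there is no argument to compare against, but your route (absorption for the equivalence of the two order characterizations, the usual lattice identities for the partial-order axioms, and taking $r = a \wedge \neg b$ as the separating witness) is the standard one and is exactly what the authors are implicitly relying on. One small point worth flagging: when you argue that $r := a \wedge \neg b \ne \0$ by contraposition, you use that $a = (a \wedge b) \vee (a \wedge \neg b)$, which is distributivity plus $b \vee \neg b = \1$; this is the only place complementation enters and, as you correctly observe, is the reason the claim is special to boolean algebras and fails for general distributive lattices.
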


	To any poset we can associate a unique (up to isomorphism) boolean completion:

	\begin{theorem}
		For every poset $P$ there exists a unique (up to isomorphism) complete boolean algebra 
		$\BB$ (the boolean completion of $P$) with a \emph{dense embedding} 
		$i_P: P \to \BB^+$ such that for any $p,q \in P$:
		\begin{itemize}
			\item $p \leq q \implies i_P(p) \leq i_P(q)$,
			\item $p \perp q \implies i_P(p) \perp i_P(q)$,
			\item $i_P[P]$ is a dense subset of $\BB^+$.
		\end{itemize}
	\end{theorem}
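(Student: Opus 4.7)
The plan is to construct $\BB$ as the algebra $\RO(P)$ of regular open subsets of $P$ with respect to the Alexandrov topology whose open sets are the downward-closed subsets of $P$, and to set $i_P(p) = \mathrm{int}(\Cl(\downarrow\{p\}))$. I would first recall the classical fact (e.g.\ in chapter 2 of~\cite{KUNEN}) that $\RO(P)$ ordered by inclusion is a complete boolean algebra with operations $U \wedge V = U \cap V$, $\neg U = P \setminus \Cl(U)$, and $\bigvee_{i \in I} U_i = \mathrm{int}(\Cl(\bigcup_{i \in I} U_i))$; the verification rests on the Kuratowski identities $\Cl\,\mathrm{int}\,\Cl = \Cl$ and dually.

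To check the three properties of $i_P$, I would first compute, directly from the definition of the Alexandrov topology, that $\Cl(\downarrow\{p\}) = \{r \in P : r \parallel p\}$ (since the minimal open neighborhood of $r$ is $\downarrow\{r\}$) and hence $i_P(p) = \{s : \forall r \leq s,\; r \parallel p\}$. Order-preservation is then immediate from monotonicity of $\downarrow(\cdot)$, $\Cl$, and $\mathrm{int}$. Density follows because any nonempty $U \in \RO(P)$ is in particular downward-closed, so picking $p \in U$ gives $\downarrow\{p\} \subseteq U$ and hence $i_P(p) \subseteq U$. For the incompatibility clause, assuming $p \perp q$ and a hypothetical $s \in i_P(p) \cap i_P(q)$, one chains two compatibility witnesses: $s \in i_P(p)$ gives some $t \leq s$ with $t \leq p$; since $t \leq s$, one has $t \in i_P(q)$, so one picks $u \leq t$ with $u \leq q$, yielding $u \leq p$ and $u \leq q$, contradicting $p \perp q$.

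For uniqueness, given two complete boolean algebras $(\BB, i)$ and $(\BB', i')$ meeting the conclusion, the plan is to define
\[
\varphi(b) = \bigvee\nolimits_{\BB'}\{i'(p) : p \in P,\; i(p) \leq b\}
\]
and verify that $\varphi: \BB \to \BB'$ is an isomorphism of complete boolean algebras with $\varphi \circ i = i'$. The engine is the dense-representation lemma: in any complete boolean algebra $\BB$ with dense image $i[P] \subseteq \BB^+$, every $b \in \BB$ equals $\bigvee\{i(p) : i(p) \leq b\}$, and $\neg b = \bigvee\{i(p) : i(p) \wedge b = \0\}$, so that $b$ is completely determined by the set $\{p : i(p) \leq b\}$; preservation of meets and complements under $\varphi$ then reduces to tracking incompatibility, which both $i$ and $i'$ reflect on $P$. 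The main obstacle is the incompatibility step of existence: since the regular-open hulls $i_P(p)$ and $i_P(q)$ can be strictly larger than $\downarrow\{p\}$ and $\downarrow\{q\}$, one must chase through the chain-of-witnesses argument above rather than appealing to a naive set-theoretic disjointness; once this is done, uniqueness is essentially bookkeeping built on the dense-representation lemma.
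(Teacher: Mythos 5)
Your proposal is correct but takes a genuinely different route from the paper's proof. The paper constructs the completion directly as the quotient $\pp(P)/\equiv^*_P$, where $A\leq^*_P B$ holds when $\downarrow A\cap\downarrow B$ is dense in $\downarrow A$, defines the boolean operations on equivalence classes by hand, and leaves to the reader the verification that the result is a complete boolean algebra; it also does not address uniqueness. You instead realize $\BB$ as the regular open algebra of $P$ in the Alexandrov topology (downward-closed sets as opens), with $i_P(p)=\mathrm{int}(\Cl(\downarrow\{p\}))$, which lets you inherit completeness from the classical fact that $\RO(X)$ of any topological space is a complete boolean algebra instead of proving it from scratch, and you supply the uniqueness argument. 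The two constructions yield isomorphic objects: one checks that $A\equiv^*_P B$ exactly when $\mathrm{int}(\Cl(\downarrow A))=\mathrm{int}(\Cl(\downarrow B))$ and that the paper's maximal representative $\bigcup[A]_{\equiv^*_P}$ is precisely the Alexandrov regular-open hull of $\downarrow A$, so they are two packagings of the same idea; yours is the more economical one because the heavy lifting is delegated to the standard $\RO$ machinery. Your chain-of-witnesses argument for the incompatibility clause is the right level of care: since $i_P(p)$ can strictly exceed $\downarrow\{p\}$, one cannot argue by naive disjointness of downsets, and the two-step descent through $s$, $t$, $u$ handles this correctly. One small point worth making explicit in the density step is that $\downarrow\{p\}\subseteq U$ gives $i_P(p)\subseteq\mathrm{int}(\Cl(U))=U$ precisely because $U$ is assumed regular open, so the regularity of $U$ is doing real work there.
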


\begin{proof}
We briefly sketch how to define the boolean completion of a poset $P$.

For $A,B\subset P$, say that $A\leq^*_P B$ if $(\downarrow{A})\cap (\downarrow{B})$ is a dense subset of $(\downarrow{A},\leq_P\cap (\downarrow{A})^2 )$.

Remark that $\leq^*_P$ is a transitive reflexive relation on $\pp(P)$ and thus we can define
an equivalence relation $\equiv^*_P$ by $A\equiv^*_P B$ if $A\leq^*_P B\leq^*_P A$.

Then observe that $\leq^*_P$ induces a separative partial order on $\pp(P)/\equiv^*_P$ and that
for any $A\subset P$, 
$\bigcup[A]_{\equiv^*_P}$ is the maximal element under inclusion of
$[A]_{\equiv^*_P}$.

Now define
\begin{itemize}
\item
$\bigwedge \{[A_i]_{\equiv^*_P}:i\in I\}=[\bigcap \{\bigcup [A_i]_{\equiv^*_P}:i\in I\}]_{\equiv^*_P}$,
\item
$\bigvee \{[A_i]_{\equiv^*_P}:i\in I\}=[\bigcup \{\bigcup [A_i]_{\equiv^*_P}:i\in I\}]_{\equiv^*_P}$,
\item
$\neg [A]_{\equiv^*_P}=[\{p\in P: (\downarrow{\{p\}})\cap (\bigcup[A]_{\equiv^*_P})=\emptyset\}]_{\equiv^*_P}$.
\end{itemize}

We leave to the reader to check that the above operations make $\pp(P)/\equiv^*_P$ a complete boolean algebra and that
the map $i_P:P\to \pp(P)/\equiv^*_P$ which maps $p$ to  $[\{p\}]_{\equiv^*_P}$ is a complete
embedding with a dense image.
\end{proof}

	\begin{definition}
	Let $V$ be a transitive model of $\ZFC$ and $\BB$ be a complete boolean algebra in $V$.
		A set $U \subset \BB$ is an \emph{ultrafilter} in $\BB$ if and only if $U$ is a filter and for any 
		$b \in \BB$, $b \in U$ or $\neg b \in U$.
	
		If $I$ is an ideal of $\BB$, the \emph{quotient} $\BB / I$ is the quotient of $\BB$ with 
		respect to the equivalence relation defined by $a \approx b \Leftrightarrow a \triangle b \in I$
		($a\triangle b=(a \vee b)\wedge \neg(a\wedge b)$).
	\end{definition}

	\begin{notation}
	Let $\BB$ be a complete boolean algebra. $X\subset\BB$ is a pre-filter if $\uparrow X$ is a filter and is a pre-ideal if $\downarrow X$ is an ideal. Given $X\subset\BB$, we let $I^*=\{\neg a:a\in I\}$. It is well known that $I^{**}=I$ and $I$ is an ideal iff $I^*$ is a filter. 
	With an abuse of notation, if $G$ is a pre-filter on $\BB$, we write $\BB/G$ also to denote
	$\BB/(\uparrow G)^*$.
	
	If $I\subset\BB$ is an ideal $\BB/I$ is always a boolean algebra (but in general it is not complete).
	\end{notation}

\subsection{Stone spaces and dual properties.} \label{rap1}

Complete boolean algebras can be related to topological spaces with certain specific properties.

\begin{definition}
The closure $\overline{U}$ of a subset $U$ of a topological space $(X, \tau)$ is the intersection of all closed sets containing $U$.

The interior $\mathring{U}$ of a subset $U$ of a topological space $(X, \tau)$ is the union of all open sets contained in $U$.
\end{definition}

\begin{definition}\label{def:ROP}
		Let $(X,\tau)$ be a topological space. 
		The \emph{regular open algebra} of $X$ is 
		$\mathrm{RO}(X,\tau) = \bp{ U \subset X: ~ U = \mathring{\overline{U}} }$ 
		ordered by set-theoretical inclusion. $\RO(X,\tau)$ is a complete boolean algebra 
		with the operations defined as follows:
	\begin{itemize}
\item
$U\wedge V=U\cap V$ for all $U,V\in\mathrm{RO}(X,\tau)$,
\item
$\neg U= \mathring{\overline{X\setminus U}}$ for all  $U\in\mathrm{RO}(X,\tau)$,
\item
$\bigwedge A=\mathring{\overline{\bigcap A}}$ for all  $A\subset\mathrm{RO}(X,\tau)$,
\item
$\bigvee A=\mathring{\overline{\bigcup A}}$ for all  $A\subset\mathrm{RO}(X,\tau)$.
\end{itemize}

The elements of $\RO(X,\tau)$ are the regular open subsets of $X$.

Given $\BB$ a complete boolean algebra, its \emph{Stone space} is
\[
X_{\BB}=\{G \subseteq \BB : G \mbox{ is an ultrafilter }\}.
\]
$X_{\BB}$ is endowed with the topology $\tau_{\BB}$ generated by
\[
\{N_b=\{G \in X_\BB : b \in G\}: b \in \BB\}.
\]

Given a poset $Q$, let $(X_Q,\tau_Q)$ be the Stone space associated to its boolean completion
 $\pp(Q)/\equiv^*_Q$.

\end{definition}

We can now spell out the relation between a poset, its boolean completion and the
complete boolean algebra given by the regular open set of the corresponding Stone space.

\begin{remark}

Let $\BB$ be a complete boolean algebra and $(X_{\BB},\tau_{\BB})$ be its associated Stone space.
It is well-known that:
\begin{itemize}
	\item each basic open set is also a closed set, since it is the complement of $N_{\neg b}$;
	\item every clopen set is a regular open, thus each $N_b$ is a regular open;
	\item $\BB$ is a complete boolean algebra, thus $\{N_b: b \in \BB\}$ is the family of the regular open sets of $\tau_{\BB}$;
	\item if $A \subseteq \BB$ then $N_{\bigvee A} = \mathring{\overline{\bigcup \{N_p:p\in A\}}}$;
	\item $X_\BB$ is Haussdorff and compact.
\end{itemize}

Let $Q$ be a poset and $(X_Q,\tau_Q)$ be its associated topological space defined above.
We can check that for any $A\subset Q$, 
\[
X_A=\bigcup\{N_p:p\in A\}
\]
is a regular open set in $\tau_Q$ iff
$A=\bigcup[A]_{\equiv^*_Q}$ and 
that $A\equiv^*_Q B$ iff $\mathring{\overline{X_A}}=\mathring{\overline{X_B}}$.

The above observation allows to define a natural isomorphism 
\[
\Phi_Q: ( \pp(Q)/\equiv^*_Q)  \to  \RO(X_Q,\tau_Q)
\]
defined by $[A] \mapsto  X_{\bigcup[A]}$.

Moreover
the separative quotient of $Q$ is mapped by the isomorphism in the basis $\{N_q:q\in Q\}$ for $\tau_Q$
which is a dense subset of $\RO(X_Q,\tau_Q)^+$.
\end{remark}

In view of the above remark we are led to the following:
\begin{notation}
For any given poset $Q$ we let $\RO(Q):=\RO(X_Q,\tau_Q)$ denote its boolean completion.
\end{notation}

%
%
%
%
%

\subsection{Forcing and boolean valued models}

We assume the reader is a acquainted with the basic development of forcing and boolean valued models, here we resume the result and definitions we shall need in the form which is more convenient for us.

\begin{definition}\label{def:forcingnames}
Let $V$ be a transitive model of $\ZFC$ and $\BB$ be a complete boolean algebra in $V$. 
\begin{equation*}
	V^{\BB}=\{\dot{a}\in V: \dot{a}:V^{\BB}\to \BB\text{ is a partial function}\footnotemark\}.
\end{equation*}
\footnotetext{This definition is a shorthand for a recursive definition by rank. We remark that in certain cases (for example in the definition of the name $\dot{\beta}$ in the proof of 
Lemma~\ref{LemmaGenerale} below) it will be convenient to allow a name $\dot{a}$ to be a relation 
(as in Kunen's~\cite[Definition 2.5]{KUNEN}); given a $\BB$-name $\dot{a}$ according to Kunen's definition, the corresponding intended name $f_{\dot{a}}$ according to the above definition 
is given by $f_{\dot{a}}(f_{\dot{c}}) = \bigvee \bp{b: ~ \ap{\dot{c},b} \in \dot{a}}$.}
We let for the atomic formulas $x\in y$, $x\subseteq y$, $x=y$:
\begin{itemize}
\item
$\Qp{\dot{b}_0\in\dot{b}_1}_{\BB}=\bigvee\bp{\Qp{\dot{a}=
\dot{b}_0}_{\BB}\wedge\dot{b}_0(\dot{a}): {\dot{a}\in\dom(\dot{b}_1)}}$,
\item
$\Qp{\dot{b}_0\subseteq\dot{b}_1}_{\BB}=\bigwedge\bp{\neg\dot{b}_0(\dot{a})\vee
\Qp{\dot{a}\in
\dot{b}_0}_{\BB}:{\dot{a}\in\dom(\dot{b}_0)}}$,
\item
$\Qp{\dot{b}_0=\dot{b}_1}_{\BB}=\Qp{\dot{b}_0\subseteq\dot{b}_1}_{\BB}\wedge\Qp{\dot{b}_1\subseteq\dot{b}_0}_{\BB}$.
\end{itemize} 
For general formulas $\phi(x_0,\dots,x_n)$, we let:
\begin{itemize}
\item
$\Qp{\neg\phi}_{\BB}=\neg\Qp{\phi}_{\BB}$,
\item
$\Qp{\phi\wedge\psi}_{\BB}=\Qp{\phi}_{\BB}\wedge\Qp{\psi}_{\BB}$,
\item
$\Qp{\phi\vee\psi}_{\BB}=\Qp{\phi}_{\BB}\vee\Qp{\psi}_{\BB}$,
\item
$\Qp{\exists x\phi(x,\dot{b}_1,\dots,\dot{b}_n)}_{\BB}=
\bigvee\bp{\Qp{\phi(\dot{a},\dot{b}_1,\dots,\dot{b}_n)}_{\BB}: {\dot{a}\in V^{\BB}}}$.
\end{itemize}

\end{definition}

When the context is clear, we will omit the index.

\begin{notation}
For a complete boolean algebra $\BB$,
$\dot{G}_{\BB}\in V^{\BB}$ always denote the canonical name for a $V$-generic filter for $\BB$, i.e.
\[
\dot{G}_{\BB}=\{\langle \check b,b\rangle: b\in\BB\}.
\]
\end{notation}

\begin{theorem}[\L o\'s]
Let $V$ be a transitive model of $\ZFC$ and $\BB$ be a complete boolean algebra in $V$.
Let $G$ be any ultrafilter on 
$\BB$. For $\dot{a},\dot{b}\in V^{\BB}$ we let 
\begin{itemize}
\item
$\dot{b} =_G \dot{a}$ iff $\Qp{\dot{b} =\dot{a}}\in G$,
\item
$[\dot{b}]_G=\{\dot{a}:\dot{b} =_G\dot{a}\}$, 
\item
$[\dot{b}]_G \in_G [\dot{a}]_G$ iff $\Qp{\dot{b} \in\dot{a}}\in G$,
\item
$V^{\BB}/G=\{[\dot{b}]_G:\dot{b}\in V^{\BB}\}$.
\end{itemize}
Then:
\begin{enumerate}
\item
$(V^\BB/G,\in_G)$ is a model of $\ZFC$ 
\item
$(V^\BB/G,\in_G)$ models $\phi([\dot{b}_1]_G,\dots,[\dot{b}_n]_G)$ iff
$\Qp{\phi(\dot{b}_1,\dots,\dot{b}_n)}\in G$.
\end{enumerate}
\end{theorem}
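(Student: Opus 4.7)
The plan is to carry out the standard Łoś-style induction, proving (2) first and then deducing (1) from it. Before any induction, I would verify that $=_G$ is an equivalence relation on $V^\BB$ and that $\in_G$ descends to the quotient. Both reductions amount to verifying inequalities such as $\Qp{\dot{a}=\dot{a}} = \1$, $\Qp{\dot{a}=\dot{b}}\wedge\Qp{\dot{b}=\dot{c}}\leq\Qp{\dot{a}=\dot{c}}$ and $\Qp{\dot{a}=\dot{a}'}\wedge\Qp{\dot{b}=\dot{b}'}\wedge\Qp{\dot{a}\in\dot{b}}\leq\Qp{\dot{a}'\in\dot{b}'}$, all of which follow by an induction on the joint rank of the names directly from the recursive clauses of Definition~\ref{def:forcingnames}; membership of these suprema in $G$ is then preserved by the filter property.

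Next I would establish (2) by induction on the complexity of $\phi$. The atomic case is the definition of $\in_G$ and $=_G$. Conjunction uses that $G$ is closed under meets (filter), negation uses the ultrafilter property (exactly one of $b,\neg b$ lies in $G$), and disjunction follows. The delicate case is the existential: if $\Qp{\exists x\,\phi(x,\dot{b})} = \bigvee_{\dot{a}\in V^\BB}\Qp{\phi(\dot{a},\dot{b})} \in G$, I need a single $\dot{a}$ with $\Qp{\phi(\dot{a},\dot{b})}\in G$. This is where I expect the main obstacle. The standard device is the \emph{mixing lemma} (or maximum principle): given an antichain $A \subseteq \BB$ and names $\{\dot{a}_p : p \in A\}$, completeness of $\BB$ allows the construction of a single $\dot{a}\in V^\BB$ with $p \leq \Qp{\dot{a}=\dot{a}_p}$ for all $p\in A$. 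Applied to a maximal antichain $A$ below some $b\in G$ refining the suprema $\Qp{\phi(\dot{a},\dot{b})}$ (each $p\in A$ chosen so $p \leq \Qp{\phi(\dot{a}_p,\dot{b})}$ for some $\dot{a}_p$), the mixed name $\dot{a}$ satisfies $\Qp{\phi(\dot{a},\dot{b})} \geq b \in G$, hence lies in $G$. The reverse implication is immediate from the definition of the supremum.

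Finally, for (1), once (2) is established it suffices to check $\Qp{\sigma} = \1$ for each axiom $\sigma$ of $\ZFC$, since $\1 \in G$ for every ultrafilter. These computations are routine: Extensionality follows from the definition of $\Qp{\dot{a}=\dot{b}}$; Pairing, Union, Power Set and Infinity are witnessed by canonical names such as $\{\langle \dot{a},\1\rangle,\langle \dot{b},\1\rangle\}$; Separation and Replacement again invoke the mixing lemma to assemble witnesses along maximal antichains; Foundation is obtained from the well-foundedness of $\rank$ on $V^\BB$; and the Axiom of Choice transfers from $V$ by well-ordering the domains of the relevant names in the ground model. The common algebraic engine throughout is the mixing lemma, which is precisely what the completeness of $\BB$ provides.
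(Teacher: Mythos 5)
The paper states this theorem without proof: it sits in the background subsection on forcing and boolean-valued models, where the authors explicitly say they ``assume the reader is acquainted with the basic development of forcing and boolean valued models'' and refer to Bell, Jech, and Hamkins--Seabold. There is therefore no paper proof to compare against, and I can only assess your argument on its own terms.

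Your proposal is correct and follows the standard route. The preliminary step (that $=_G$ is an equivalence relation and $\in_G$ respects it) does indeed reduce to the boolean inequalities you list, proved by rank induction from Definition~\ref{def:forcingnames}. The induction for (2) is sound: atomic formulas are the definitions of $=_G$ and $\in_G$, conjunction uses the filter property, negation uses the ultrafilter property, and the existential case requires exhibiting one name realizing the supremum $\bigvee_{\dot{a}}\Qp{\phi(\dot{a},\dot{b})}$. Your inline argument for that step --- choosing a maximal antichain below $\Qp{\exists x\,\phi(x,\dot{b})}$ each element of which decides a witness $\dot{a}_p$, then mixing along it --- is exactly the proof of the Fullness lemma, which the paper records just below this theorem; invoking Fullness directly would be marginally cleaner, though there is no circularity either way since Mixing and Fullness are proved by explicit name constructions independent of the theorem. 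Part (1) reduces, as you say, to checking $\Qp{\sigma}=\1$ for each $\ZFC$ axiom $\sigma$ together with $\1\in G$, and your sketch of which axioms are routine and which invoke Mixing (Separation, Replacement) is accurate.
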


\begin{definition}
Let $V$ be transitive model of $\ZFC$, $\BB\in V$ be a complete boolean algebra in $V$,
$G$ be a $V$-generic filter for $\BB$.
For any $\dot{b}\in V^{\BB}$ we let 
\[
\dot{b}_G=\{\dot{a}_G:\exists p\in G\langle\dot{a},p\rangle\in\dot{b}\}.
\]
$V[G]=\{\dot{b}_G:\dot{b}\in V^P\}$.
\end{definition}

\begin{theorem}[Cohen's forcing theorem] 
Let $V$ be transitive model of $\ZFC$, $\BB\in V$ be a complete boolean algebra
$G$ be a $V$-generic filter for $\BB$. Then:
\begin{enumerate}
\item
$V[G]$ is isomorphic to $V^{\BB}/G$ via the map
which sends $\dot{b}_G$ to $[\dot{b}]_{G}$.
\item
$V[G]\models\phi((\dot{b}_1)_G,\dots,(\dot{b}_n)_G)$ iff
$\Qp{\phi(\dot{b}_1,\dots,\dot{b}_n)}\in G$.
\item
$b\leq_{\BB} \Qp{\phi(\dot{b}_1,\dots,\dot{b}_n)}$ iff
$V[G]\models\phi((\dot{b}_1)_G,\dots,(\dot{b}_n)_G)$
 for all $V$-generic filters
$G$ for $\BB$ such that $b\in G$.
\end{enumerate}
\end{theorem}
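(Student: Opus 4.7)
The proof breaks into three phases mirroring the three numbered statements. First, for (1), I would construct the map $\pi\colon V^{\BB}/G\to V[G]$, $[\dot{b}]_G\mapsto \dot{b}_G$, and verify it is a well-defined isomorphism of $\in$-structures. Surjectivity is immediate from the definition of $V[G]$. Well-definedness together with injectivity amount to the two claims, proved by simultaneous induction on $\max(\rank(\dot{a}),\rank(\dot{b}))$: (i) $\Qp{\dot{a}=\dot{b}}\in G$ iff $\dot{a}_G=\dot{b}_G$, and (ii) $\Qp{\dot{a}\in\dot{b}}\in G$ iff $\dot{a}_G\in\dot{b}_G$. The forward directions unfold the definitions of the boolean values as suprema/infima and use that $G$, being generic, is an ultrafilter that meets every maximal antichain of $\BB$ in $V$; the reverse directions are contrapositive and use that $\neg\Qp{\dot{a}=\dot{b}}\in G$ whenever $\Qp{\dot{a}=\dot{b}}\notin G$, again by the ultrafilter property.

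Second, for (2), the truth lemma, I would proceed by induction on the complexity of $\phi$. The atomic cases follow from (1) combined with \L o\'s's theorem already established for $V^{\BB}/G$. The cases $\neg$, $\wedge$, $\vee$ reduce to the ultrafilter/filter properties of $G$. The existential case is the crux: if $\Qp{\exists x\,\phi(x,\dot{b}_1,\dots,\dot{b}_n)}\in G$, one must exhibit a name $\dot{a}\in V^{\BB}$ with $\Qp{\phi(\dot{a},\dot{b}_1,\dots,\dot{b}_n)}\in G$ in order to apply the inductive hypothesis. This requires the maximum principle: a maximal antichain $A\subset\BB$ below $\Qp{\exists x\,\phi}$ with witnesses $\{\dot{a}_b:b\in A\}$ satisfying $b\leq\Qp{\phi(\dot{a}_b,\vec{\dot{b}})}$ is mixed into a single name $\dot{a}=\bigcup_{b\in A}\{\langle\dot{c},b\wedge\dot{a}_b(\dot{c})\rangle:\dot{c}\in\dom(\dot{a}_b)\}$ achieving $\Qp{\phi(\dot{a},\vec{\dot{b}})}=\Qp{\exists x\,\phi(x,\vec{\dot{b}})}$. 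The converse direction of the existential case is immediate: if $V[G]\models\phi(\dot{a}_G,\vec{\dot{b}}_G)$, inductive hypothesis gives $\Qp{\phi(\dot{a},\vec{\dot{b}})}\in G$ and this lies below the supremum defining $\Qp{\exists x\,\phi}$.

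Third, (3) follows formally from (2). The forward implication: if $b\leq\Qp{\phi(\vec{\dot{b}})}$ and $b\in G$, upward closure of $G$ gives $\Qp{\phi(\vec{\dot{b}})}\in G$, whence $V[G]\models\phi(\vec{\dot{b}}_G)$ by (2). For the converse, assuming $b\not\leq\Qp{\phi(\vec{\dot{b}})}$, the element $b\wedge\neg\Qp{\phi(\vec{\dot{b}})}$ is positive in $\BB$; a standard Rasiowa--Sikorski style construction produces a $V$-generic filter $G$ containing it, and then (2) yields $V[G]\not\models\phi(\vec{\dot{b}}_G)$ while still $b\in G$, contradicting the assumed right-hand side.

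The main obstacle is the existential clause of (2), which is where the algebraic content of forcing really enters: one needs the maximum principle to convert a supremum in $\BB$ into a single witnessing $\BB$-name. All other steps are essentially bookkeeping with the recursive definition of $\Qp{\cdot}$ and the ultrafilter/genericity properties of $G$, but the mixing argument genuinely uses the completeness of $\BB$ and cannot be bypassed.
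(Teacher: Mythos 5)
The paper states Cohen's forcing theorem without proof, as part of the background material the reader is assumed to know, so there is no in-text argument to compare against; I will simply assess your sketch. It is essentially correct, but there is one notable redundancy and one small overstatement worth flagging. Once step (1) establishes that $\dot{b}_G\mapsto[\dot{b}]_G$ is an isomorphism of $\in$-structures from $(V[G],\in)$ onto $(V^{\BB}/G,\in_G)$, item (2) follows \emph{immediately} from the \L o\'s theorem already stated in the paper: isomorphic structures satisfy the same first-order formulas under corresponding assignments, so $V[G]\models\phi((\dot{b}_1)_G,\dots,(\dot{b}_n)_G)$ iff $V^{\BB}/G\models\phi([\dot{b}_1]_G,\dots,[\dot{b}_n]_G)$ iff $\Qp{\phi(\dot{b}_1,\dots,\dot{b}_n)}\in G$. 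The separate induction you run for (2)---in particular the mixing argument for the existential clause---amounts to re-proving the existential clause of \L o\'s, which you are already entitled to cite. Your concluding remark that the mixing argument ``cannot be bypassed'' is also a little strong in the generic setting: for a $V$-generic $G$ one can avoid the maximum principle entirely, since the set of conditions $p$ for which either $p\wedge\Qp{\exists x\,\phi(x,\dot{b}_1,\dots,\dot{b}_n)}=\0$ or $p\leq\Qp{\phi(\dot{a},\dot{b}_1,\dots,\dot{b}_n)}$ for some $\dot{a}\in V^{\BB}$ is a dense subset of $\BB^+$ lying in $V$, and genericity then produces a witness directly from $\Qp{\exists x\,\phi}\in G$. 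Mixing genuinely is required for \L o\'s over arbitrary ultrafilters and for the fullness lemma, but not for the truth lemma (2) over a $V$-generic filter. Steps (1) and (3) themselves are fine: (1) is the standard simultaneous induction on the ranks of the names, using that $G$ is an ultrafilter meeting all maximal antichains of $\BB$ in $V$, and (3) follows from (2) by upward closure of $G$ in one direction and a Rasiowa--Sikorski construction of a generic filter through $b\wedge\neg\Qp{\phi(\dot{b}_1,\dots,\dot{b}_n)}>\0$ in the other.
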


\begin{notation}
Given a partial order $P$ and $\dot{b}_1,\dots,\dot{b}_n\in V^{\RO(P)}$ we say that
$p\Vdash_P\phi(\dot{b}_1,\dots,\dot{b}_n)$ iff 
$i_P(p)\leq\Qp{\phi(\dot{b}_1,\dots,\dot{b}_n)}$.
\end{notation}

\begin{lemma}[Mixing]
Let $\BB$ be a complete boolean algebra and
$\{\dot{b}_a:a\in A\}$ be a family of $\BB$-names indexed by an antichain.
Then there exists $\dot{b}\in V^{\BB}$ such that
$\Qp{\dot{b}=\dot{b}_a}\geq a$ for all $a\in A$.
\end{lemma}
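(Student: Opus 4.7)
The plan is to construct the desired mixture $\dot{b}$ explicitly and then verify the inequality $\Qp{\dot{b}=\dot{b}_a} \geq a$ by direct computation using the recursive definition of the atomic boolean values. First I set
\[
D = \bigcup_{a \in A} \dom(\dot{b}_a), \qquad \dot{b}(\dot{c}) = \bigvee_{a \in A}\bigl(a \wedge \dot{b}_a(\dot{c})\bigr),
\]
with the convention $\dot{b}_a(\dot{c}) = \0_{\BB}$ whenever $\dot{c} \notin \dom(\dot{b}_a)$. This $\dot{b}$ is a legitimate element of $V^{\BB}$ since $\BB$ is complete and $A$, $D$ are sets.

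Next I fix $a \in A$ and show separately that $\Qp{\dot{b}\subseteq\dot{b}_a}\geq a$ and $\Qp{\dot{b}_a\subseteq\dot{b}}\geq a$; the conjunction of these two inequalities is by definition $\Qp{\dot{b}=\dot{b}_a}$, giving the conclusion. The key observation driving both computations is that since $A$ is an antichain, $a \wedge a' = \0_{\BB}$ for every $a' \in A \setminus \{a\}$, and therefore
\[
a \wedge \dot{b}(\dot{c}) \;=\; \bigvee_{a' \in A}\bigl(a \wedge a' \wedge \dot{b}_{a'}(\dot{c})\bigr) \;=\; a \wedge \dot{b}_a(\dot{c})
\]
for every $\dot{c} \in D$.

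For the first inclusion, unfolding the definition of $\Qp{\dot{b}\subseteq\dot{b}_a}$ reduces the claim to showing, for each $\dot{c} \in D$, the inequality $a \wedge \dot{b}(\dot{c}) \leq \Qp{\dot{c}\in\dot{b}_a}$. By the displayed identity this becomes $a \wedge \dot{b}_a(\dot{c}) \leq \Qp{\dot{c}\in\dot{b}_a}$, which either follows immediately from the $\dot{c}$-indexed disjunct in the definition of $\Qp{\dot{c}\in\dot{b}_a}$ (with $\Qp{\dot{c}=\dot{c}}=\1_{\BB}$) when $\dot{c}\in\dom(\dot{b}_a)$, or is trivial since $\dot{b}_a(\dot{c})=\0_{\BB}$ otherwise. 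For the second inclusion, for each $\dot{c}\in\dom(\dot{b}_a)\subseteq D$ the $\dot{c}$-indexed disjunct in the expansion of $\Qp{\dot{c}\in\dot{b}}$ yields $\Qp{\dot{c}\in\dot{b}} \geq \dot{b}(\dot{c}) \geq a \wedge \dot{b}_a(\dot{c})$, whence $\neg\dot{b}_a(\dot{c}) \vee \Qp{\dot{c}\in\dot{b}} \geq a$, as required. No genuine obstacle arises; the only delicate point is noticing that the antichain hypothesis is exactly what makes the defining supremum collapse to $a \wedge \dot{b}_a(\dot{c})$ after meeting with $a$, which is what unlocks both halves of the verification.
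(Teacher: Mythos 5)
Your proof is correct, and it is the standard construction: define $\dot{b}$ on the union of the domains by mixing the values along the antichain, then verify both inclusions using the recursive definition of the atomic boolean values. The paper states the Mixing Lemma without proof, so there is nothing to compare against; but the argument you give is the canonical one. The key antichain identity $a\wedge\dot{b}(\dot{c})=a\wedge\dot{b}_a(\dot{c})$ is exactly the right reduction, and both halves of the verification are carried out correctly (including the final step $\neg\dot{b}_a(\dot{c})\vee\Qp{\dot{c}\in\dot{b}}\geq a$, which follows by meeting with $a$ and distributing). The only step you invoke without comment is the fact $\Qp{\dot{c}=\dot{c}}=\1_{\BB}$, which is a standard induction on rank and is safe to take for granted here.
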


\begin{lemma}[Fullness]
Let $\BB$ be a complete boolean algebra. 
For all formula $\phi(x,x_1,\dots,x_n)$ and 
$\dot{b}_1,\dots,\dot{b}_n\in V^{\BB}$, there is 
$\dot{b}\in V^{\BB}$ such that
\[
\Qp{\exists x\phi(x,\dot{b}_1,\dots\dot{b}_n)}=\Qp{\phi(\dot{b},\dot{b}_1,\dots\dot{b}_n)}.
\]
\end{lemma}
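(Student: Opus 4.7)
The plan is to build $\dot{b}$ by mixing a family of witnesses indexed by a maximal antichain. Set $a=\Qp{\exists x\,\phi(x,\dot{b}_1,\dots,\dot{b}_n)}_{\BB}$; by the definition of the boolean value of an existential formula, $a=\bigvee\{\Qp{\phi(\dot{c},\dot{b}_1,\dots,\dot{b}_n)}_{\BB}:\dot{c}\in V^{\BB}\}$. I consider the collection $\mathcal{P}$ of all pairs $(A,\{\dot{c}_{a'}:a'\in A\})$ such that $A\subseteq\BB^+$ is an antichain with $\bigvee A\leq a$ and for each $a'\in A$ a witness $\dot{c}_{a'}\in V^{\BB}$ is chosen with $a'\leq\Qp{\phi(\dot{c}_{a'},\dot{b}_1,\dots,\dot{b}_n)}_{\BB}$. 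The set $\mathcal{P}$ is nonempty (taking $A=\emptyset$) and is partially ordered by componentwise extension; every chain has an upper bound obtained by unioning, so Zorn's Lemma yields a maximal element $(A,\{\dot{c}_{a'}:a'\in A\})$.

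Next I claim $\bigvee A=a$. Suppose not, and let $d=a\wedge\neg\bigvee A>\0_{\BB}$. Since $d\leq a=\bigvee_{\dot{c}}\Qp{\phi(\dot{c},\dot{b}_1,\dots,\dot{b}_n)}_{\BB}$, there must exist some $\dot{c}\in V^{\BB}$ with $e:=d\wedge \Qp{\phi(\dot{c},\dot{b}_1,\dots,\dot{b}_n)}_{\BB}>\0_{\BB}$. Then $e$ is incompatible with every element of $A$, and $e\leq\Qp{\phi(\dot{c},\dot{b}_1,\dots,\dot{b}_n)}_{\BB}$, so adjoining $e$ (with witness $\dot{c}$) strictly extends $(A,\{\dot{c}_{a'}\})$, contradicting maximality.

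Now apply the Mixing Lemma to the antichain $A$ and the family $\{\dot{c}_{a'}:a'\in A\}$ to obtain $\dot{b}\in V^{\BB}$ with $\Qp{\dot{b}=\dot{c}_{a'}}_{\BB}\geq a'$ for every $a'\in A$. Using the standard boolean-value identity $\Qp{\phi(\dot{b})}_{\BB}\wedge\Qp{\dot{b}=\dot{c}_{a'}}_{\BB}\leq\Qp{\phi(\dot{c}_{a'})}_{\BB}$ (and symmetrically), we obtain for each $a'\in A$:
\[
a'=a'\wedge\Qp{\dot{b}=\dot{c}_{a'}}_{\BB}\wedge\Qp{\phi(\dot{c}_{a'},\dot{b}_1,\dots,\dot{b}_n)}_{\BB}\leq\Qp{\phi(\dot{b},\dot{b}_1,\dots,\dot{b}_n)}_{\BB}.
\]
Taking the join over $a'\in A$ yields $a=\bigvee A\leq\Qp{\phi(\dot{b},\dot{b}_1,\dots,\dot{b}_n)}_{\BB}$, and the reverse inequality is immediate from the definition of the boolean value of $\exists x\,\phi$. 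Hence equality holds, as required.

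The main obstacle is the maximality argument in the second step: one must verify carefully that the strict extension yielded by $e$ respects the antichain and bound conditions, and that the conclusion $\bigvee A=a$ is indeed forced by maximality. The invocation of Zorn's Lemma is unavoidable since there is no natural well-ordering on $V^{\BB}$ to perform a transfinite recursion without choice; the verification that $\Qp{\phi(\dot{b})}_{\BB}\geq\Qp{\phi(\dot{c}_{a'})}_{\BB}\wedge\Qp{\dot{b}=\dot{c}_{a'}}_{\BB}$ is a routine induction on the complexity of $\phi$, following directly from Definition~\ref{def:forcingnames}.
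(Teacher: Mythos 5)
The paper states the Fullness Lemma without proof, so there is no internal proof to compare against. Your argument follows the standard textbook route (maximal antichain of witnesses, then Mixing, then the substitution identity $\Qp{\phi(\dot{a})}\wedge\Qp{\dot{a}=\dot{b}}\leq\Qp{\phi(\dot{b})}$), and it is correct in structure. The reduction to the Mixing Lemma and the final inequalities are all sound, and the substitution identity is indeed a routine induction on formula complexity (note it relies on the infinite distributive law $\bigvee_i c_i \wedge d = \bigvee_i(c_i \wedge d)$, which does hold in a complete boolean algebra).

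However, there is one genuine gap: your collection $\mathcal{P}$ of pairs $(A,\{\dot{c}_{a'}\})$ is a \emph{proper class}, because the witness names $\dot{c}_{a'}$ range over $V^{\BB}$. Zorn's Lemma in $\ZFC$ is a statement about sets, so it does not directly apply. The fix is cheap but must be made explicit. The cleanest route is to avoid Zorn on $\mathcal{P}$ entirely: let $D=\{b\in\BB^+ : b\leq\Qp{\phi(\dot{c},\dot{b}_1,\dots,\dot{b}_n)} \text{ for some } \dot{c}\in V^{\BB}\}$, which is a \emph{subset} of $\BB$ and hence a set, and which is dense below $a=\Qp{\exists x\,\phi}$. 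Take a maximal antichain $A\subseteq D$ refining $a$ (Zorn applied to a bona fide set of antichains), so $\bigvee A = a$; then by the Axiom of Collection choose, for each $a'\in A$, a name $\dot{c}_{a'}$ witnessing $a'\in D$. The rest of your argument then goes through verbatim. Alternatively one can salvage your $\mathcal{P}$-based argument by first using Collection to fix a set $S\subseteq V^{\BB}$ with $\{\Qp{\phi(\dot{c})}:\dot{c}\in V^{\BB}\}=\{\Qp{\phi(\dot{c})}:\dot{c}\in S\}$ and then restricting the witness functions to take values in $S$; with that restriction $\mathcal{P}$ becomes a set and Zorn applies. Either way, the Replacement/Collection step is not optional and should appear in the proof.
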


\begin{fact}
Let $V$ be transitive model of $\ZFC$, $\BB\in V$ be a complete boolean algebra,
$G$ be a $V$-generic ultrafilter for $\BB$. Then $\bigwedge A\in G$
for any $A\subset G$ which belongs
to $V$.
\end{fact}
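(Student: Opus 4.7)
The plan is to show that if $b = \bigwedge A$ (computed in $V$) failed to lie in $G$, then we could cook up a dense set in $V$ avoided by $G$, contradicting genericity. Concretely, I would work with the following candidate:
\[
D = \bp{c \in \BB^+ : c \leq b} \cup \bp{c \in \BB^+ : \exists a \in A,\ c \leq \neg a}.
\]
Clearly $D \in V$ since $A, b \in V$.

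The main step is to verify that $D$ is dense in $\BB^+$. Given $d \in \BB^+$, either $d \wedge b > \0$, in which case $d \wedge b \leq d$ witnesses the first clause, or $d \wedge b = \0$, i.e.\ $d \leq \neg b = \bigvee_{a \in A} \neg a$. Here I would invoke the infinite distributivity of a complete boolean algebra to write
\[
d = d \wedge \bigvee_{a \in A} \neg a = \bigvee_{a \in A}(d \wedge \neg a),
\]
so since $d > \0$, some $d \wedge \neg a$ must be nonzero. That element lies below $d$ and below $\neg a$, so it witnesses the second clause.

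By $V$-genericity, pick $c \in G \cap D$. If $c \leq b$ then upward closure of the filter $G$ gives $b \in G$, as desired. Otherwise $c \leq \neg a$ for some $a \in A$; but $a \in A \subset G$ and $c \in G$, so $c \wedge a \in G$, while $c \wedge a \leq a \wedge \neg a = \0_\BB$, forcing $\0_\BB \in G$, a contradiction. The only real subtlety is the density argument, and it reduces cleanly to the distributivity of $\wedge$ over arbitrary $\bigvee$ in a complete boolean algebra; everything else is bookkeeping with filters.
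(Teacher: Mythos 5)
Your proof is correct; the paper states this as a \emph{Fact} without proof, and your argument is the standard one. The only non-trivial step is the density of $D$, which is exactly where you invoke the infinite distributive law $d \wedge \bigvee_{a \in A} \neg a = \bigvee_{a \in A}(d \wedge \neg a)$; this indeed holds in every complete boolean algebra, so the argument goes through.
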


	\section{Regular embeddings} \label{sec:regemb}

\subsection{Embeddings and retractions}

In this part we introduce the notions of complete homomorphism and regular embedding and their basic properties.

\begin{definition}
	Let $\BB$, $\QQ$ be complete boolean algebras, $i: \BB \to \QQ$ is a \emph{complete homomorphism} iff it is an homomorphism that preserves arbitrary suprema. We say that $i$ is a \emph{regular embedding} iff it is an injective complete homomorphism of boolean algebras.
\end{definition}

\begin{definition}
	Let $\BB$ be a complete boolean algebra, and let $b \in \BB$. Then 
	\[
		\BB\res b = \{c \in \BB : c \leq b \},
	\]
	and 
	\[ 	
		\begin{split}
		\BB &\to \BB \res b \\
		c &\mapsto  c \wedge b.
		\end{split}
	\]
	is 	the restriction map from $\BB$  to $\BB\res b$.
\end{definition}

\begin{definition} \label{eCoker}
	Let $i: \BB \to \QQ$ be a complete homomorphism. We define
	\[
	\ker(i) = \bigvee \bp{b \in \BB: \ i(b) = \0_\QQ}
	\]
	\[
	\coker(i) = \neg\ker(i)	
	\]
\end{definition}

\begin{remark}
We can always factor a complete homomorphism $i: \BB \to \QQ$ as the restriction map from $\BB$ to 
$\BB \res \coker(i)$ (which we can trivially check to be a complete and surjective homomorphism)
composed with the regular embedding $i \res \coker(i)$.
This factorization allows to generalize easily many results on regular embeddings to results on
complete homomorphisms.
\end{remark}

\begin{definition}
	Let $i: \BB \to \QQ$ be a regular embedding, the \emph{retraction} associated to $i$ is the map
	\[
	\begin{array}{llll}
		\pi_i :& \QQ &\to& \BB \\
		&c &\mapsto& \bigwedge \bp{b \in \BB: ~ i(b) \geq c}
	\end{array}
	\]
\end{definition}

\begin{proposition} \label{eRetrProp}
	Let $i:\BB\to\QQ$ be a regular embedding, $b \in \BB$, $c,d \in \QQ$ be arbitrary. Then,
	\begin{enumerate}
		\item $\pi_i\circ i(b)= b$ hence $\pi_i$ is surjective;
		\item \label{eRPComp} $i\circ\pi_i(c)\geq c$ hence $\pi_i$ maps $\QQ ^+$ to $\BB^+$;
		\item \label{eRPJoins} $\pi_i$ preserves joins, i.e. $\pi_i (\bigvee X)= \bigvee \pi_i[X]$ for all $X \subseteq \QQ$;
		\item $i(b)= \bigvee \{e : \pi_i (e) \leq b \}$.
		\item \label{eRPHomo} $\pi_i (c \wedge i(b)) = \pi_i(c)\wedge b = \bigvee \{ \pi_i(e): e \leq c, \pi_i(e) \leq b\}$;
		\item \label{eRPMeets} $\pi_i$ does not preserve neither meets nor complements whenever $i$ is not surjective, but $\pi_i(d \wedge c) \leq \pi_i(d) \wedge \pi_i(c)$ and $\pi_i(\neg c) \geq \neg \pi_i(c)$;
	\end{enumerate}
\end{proposition}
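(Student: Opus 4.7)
The plan is to prove the six items roughly in the stated order, exploiting the crucial fact that a complete boolean homomorphism preserves not only arbitrary joins but also arbitrary meets (since $\bigwedge X = \neg\bigvee\neg X$ and homomorphisms preserve negation). An equivalent reformulation that I will use implicitly throughout is the adjunction-like biconditional $\pi_i(c)\leq b \iff c\leq i(b)$, which falls straight out of the definition together with monotonicity of $i$.

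Item 1 is immediate: $b$ lies in $\{b':i(b')\geq i(b)\}$, giving $\pi_i(i(b))\leq b$, and the reverse comes from injectivity (any $b'$ with $i(b')\geq i(b)$ must satisfy $b'\geq b$). For item 2, meet-preservation of $i$ yields $i(\pi_i(c))=\bigwedge\{i(b):i(b)\geq c\}\geq c$; hence $c>\0$ forces $\pi_i(c)>\0$ since $i(\0)=\0$. Item 3 follows by proving both inequalities: $\bigvee\pi_i[X]$ belongs to the set defining $\pi_i(\bigvee X)$ since $i(\bigvee\pi_i[X])=\bigvee i(\pi_i[X])\geq\bigvee X$, giving $\leq$; the reverse uses that $\pi_i$ is monotone (a direct consequence of the adjunction, or of items 1--2). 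Item 4 is an easy corollary of items 1 and 2: $i(b)$ itself sits in $\{e:\pi_i(e)\leq b\}$ via $\pi_i(i(b))=b$, and any such $e$ satisfies $e\leq i(\pi_i(e))\leq i(b)$.

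Item 5 is the heart of the proposition and contains what I expect to be the main obstacle, namely the ``Frobenius reciprocity'' identity $\pi_i(c\wedge i(b))=\pi_i(c)\wedge b$. The inequality $\leq$ is just monotonicity: $c\wedge i(b)\leq c$ gives $\pi_i(c\wedge i(b))\leq \pi_i(c)$, while $c\wedge i(b)\leq i(b)$ combined with item 1 gives $\pi_i(c\wedge i(b))\leq b$. For the nontrivial direction $\geq$, the trick is the distributive decomposition
\[
c=(c\wedge i(b))\vee(c\wedge\neg i(b)),
\]
to which I apply $\pi_i$ using item 3 to get $\pi_i(c)=\pi_i(c\wedge i(b))\vee \pi_i(c\wedge\neg i(b))$. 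Then, since $\neg i(b)=i(\neg b)$ (homomorphism) and by item 1, $\pi_i(c\wedge\neg i(b))\leq\pi_i(i(\neg b))=\neg b$, so meeting with $b$ kills the second summand and leaves $\pi_i(c)\wedge b\leq \pi_i(c\wedge i(b))\wedge b\leq \pi_i(c\wedge i(b))$, which is what was wanted. The second equality in item 5 then follows by substituting $i(b)=\bigvee\{e:\pi_i(e)\leq b\}$ (item 4), distributing the meet with $c$, applying item 3, and observing that ranging over $e\leq c$ with $\pi_i(e)\leq b$ exhausts exactly the joinands needed.

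Item 6 is straightforward bookkeeping. The meet inequality is pure monotonicity from items 3 (hence monotonicity); the complement inequality follows from $\pi_i(c)\vee\pi_i(\neg c)=\pi_i(c\vee\neg c)=\pi_i(\1_\QQ)=\1_\BB$ (the last equality by item 2 applied to $c=\1_\QQ$, or by item 1). To witness that equality can fail when $i$ is not surjective, any toy example suffices, e.g.\ $\BB=\{\0,\1\}$ regularly embedded into an atomless $\QQ$: then for any nontrivial $c\in\QQ$ one has $\pi_i(c)=\pi_i(\neg c)=\1_\BB$, so both $\pi_i(c\wedge\neg c)=\0$ and $\pi_i(\neg c)=\1$ violate the would-be equalities.
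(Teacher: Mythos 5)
Your overall structure matches the paper closely, and most items are handled with essentially the same ideas. Item 5 is where you diverge a little and for the better: both you and the paper start from the join decomposition $\pi_i(c)=\pi_i(c\wedge i(b))\vee\pi_i(c\wedge\neg i(b))$ (item 3), but the paper then sets up a system of three equations plus two inequalities and reasons about a join of disjoint parts, whereas you just meet both sides with $b$ and observe that $\pi_i(c\wedge\neg i(b))\wedge b\leq\neg b\wedge b=\0$, which kills the unwanted summand directly. This is shorter and entirely correct. You also sketch a proof of the second equality $\pi_i(c)\wedge b=\bigvee\{\pi_i(e):e\leq c,\pi_i(e)\leq b\}$, which the paper in fact never verifies; your sketch does go through (substitute item 4, distribute, apply item 3, then check that $e\mapsto c\wedge e$ matches the two index sets up to equal values). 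Your meet inequality in item 6 via monotonicity is also simpler than the paper's appeal to item 5.

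There is, however, a genuine gap in item 6. The proposition asserts that $\pi_i$ fails to preserve meets and complements \emph{whenever} $i$ is not surjective, i.e.\ for every non-surjective regular embedding. Your ``toy example'' with $\BB=\{\0,\1\}$ and $\QQ$ atomless only shows that \emph{some} non-surjective $i$ witnesses the failure; it does not establish the universal claim. The paper proves it uniformly: pick any $c\in\QQ\setminus i[\BB]$, set $d=i(\pi_i(c))\wedge\neg c$ (nonzero since $i(\pi_i(c))>c$ by item 2 and non-membership), compute $\pi_i(d)\wedge\pi_i(c)=\pi_i(d)>\0$ using join-preservation, yet $\pi_i(d\wedge c)=\pi_i(\0)=\0$, so meets fail; and then complements must also fail, since $\pi_i$ preserves joins and a join- and complement-preserving map would preserve meets via De Morgan. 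You should replace your toy example with an argument of this kind (or a similar uniform one) to actually prove the stated ``whenever.''
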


\begin{proof}

\begin{enumerate}
	\item We have
	\[
 		 \pi_i\circ i(b) =\bigwedge \{a\in {\BB}:i(a)\geq i(b)\})=
		 \bigwedge \{a\in {\BB}:a\geq b\})=b,
	\]
	since $i$ is injective. Hence $\pi$ is surjective.
	\item We have
	\[
	\begin{split}
		 i\circ\pi_i(c)  &= i\cp{\bigwedge \{b\in {\BB}:i(b)\geq c\}}=\bigwedge \{i(b):b\in {\BB},\,i(b)\geq c\} \\
         &\geq \bigwedge \{d\in {\QQ}:d\geq c\} =c.
    \end{split}
	\]
    Now if, by contradiction, for $c > \0_{\QQ},$ $\pi_i(c)=\0_{\BB},$  we would have $\0_{\QQ}=i\circ \pi (c) \geq c > \0_{\QQ}.$
	\item Let $X= \{ c_j:j \in J\} \subseteq \QQ.$ Thus, for all $k\in J,$
	\[
		\begin{split}
 			 \pi_i\cp{\bigvee \{c_j:j\in J\}} &= \bigwedge \{b\in {\BB}:i(b) \geq \bigvee \{c_j:j\in J\} \} \\
  			&\geq \bigwedge \{b\in {\BB}: i(b)\geq c_k\} =\pi_i(c_k).
		\end{split}
	\]
	In that way, we obtain the first inequality: $\pi_i(\bigvee X) \geq \bigvee \pi_i[X].$ 

	Now if $a = \bigvee \pi_i[X],$ we have that $a\geq \pi_i(c_j)$ for all $j\in J.$ Thus, for all $j\in J:$
	\[
		i(a)\geq i\circ \pi_i(c_j)\geq c_j .
	\]
	In particular $i(a)\geq \bigvee \{c_j:j\in J\}.$ By definition, $\pi_i$ is increasing, so:
	\[
		a = \pi_i(i(a)) \geq \pi\cp{\bigvee \{c_j:j\in J\}},
	\]
	that is, the second inequality $\bigvee \pi_i[X] \geq \pi_i (\bigvee X)$ holds.
	\item 
	Let $e \in \QQ$ such that $\pi_i(e) \leq b.$ Since $i$ is order preserving, 
	$s \leq i(\pi_i(e)) \leq i(b).$
	 Thus
	\[
		 \bigvee \{e: \pi_i(e) \leq b \} \leq i(b).
	\]
	In order to prove the other inequality; recall that $b= \pi_i(i(b)).$ So 
	\[
 	 i(b)  \leq \bigvee \{e : \pi_i (e) \leq \pi_i (i(b)) \}= \bigvee \{e : \pi_i(e) \leq b \}. 
	\]
	\item For $b \in \BB, c \in \QQ,$ the following three equations hold:
	\begin{align} 
		\label{piq1} \pi_i(c \wedge i(b)) \vee \pi_i (c \wedge \non i(b)) & = \pi_i(c); \\
		\label{piq2} (\pi_i(c) \wedge b) \vee (\pi_i(c) \wedge \non b) & = \pi_i(c);\\
		\label{piq3} (\pi_i(c) \wedge b) \wedge (\pi_i(c)\wedge \non b) & = \0_{\BB}.
	\end{align}
	Furthermore, by $\pi_i$ definition, we have:
	\begin{equation} \label{min1}
		 \pi_i(c \wedge i(b)) \leq \pi_i(c) \wedge b;
	\end{equation}
	\begin{equation} \label{min2}
		 \pi_i(c \wedge \non i (b)) = \pi_i (c \wedge i(\non b) ) \leq \pi_i(c) \wedge \non b. 
	\end{equation}
	By (\ref{min1}), (\ref{min2}), and (\ref{piq3}) we get
	\[ 
		 \pi_i(c \wedge i(b)) \wedge \pi_i(c \wedge \non i (b)) = (\pi_i(c) \wedge b) \wedge (\pi_i(c) \wedge \non b)=  \0_{\BB}.
	\]
	Moreover, by (\ref{piq1}) and (\ref{piq2}),
	\[
		 \pi_i(c \wedge i(b)) \vee \pi_i(c \wedge \non i (b)) = (\pi_i(c) \wedge b) \vee (\pi_i(c) \wedge \non b).
	\]

	All in all, we conclude that
	\[
		 \pi_i(c \wedge i(b))=\pi_i(c) \wedge b \mbox{ and } \pi_i(c \wedge \non i (b))=\pi_i(c) \wedge \non b.
	\]
	\item If $i:\BB \rightarrow \QQ$ is not surjective, then pick $c \in \QQ \setminus i [\BB].$
	Then $i (\pi_i (c)) \neq c$ and we have $i (\pi_i (c)) > c.$ Thus $d = i(\pi_i(c)) \wedge \neg c > \0_{\QQ}$ and $ \pi_i(d) > \0_{\BB}.$ Now,
	\[
		\pi_i(c) \vee \pi_i(d)  = \pi_i (c \vee d) = \pi_i (i (\pi_i(c)))= \pi_i (c).
	\]
	Thus $\pi_i (d) \wedge \pi_i (c) = \pi_i (d) > \0_{\BB}.$ But $\pi_i(d \wedge c) = \pi_i (\0_{\QQ})= \0_{\BB},$ so $\pi_i$ does not preserve meets. It cannot preserve complements, since it preserves joins and so otherwise it should preserve meets. 
	
	However for all $d$, $c \in \QQ$:
	\[\pi_i (d \wedge c) \leq \pi_i(d \wedge i (\pi_i(c)))= \pi_i(d) \wedge \pi_i(c);\]
	and $\neg\pi_i(d) \leq \pi_i(\neg d)$, since
	\[\neg \pi_i(d) \wedge \neg \pi_i(\neg d)= \neg (\pi_i(d) \vee \pi_i(\neg d)) = \neg (\pi_i(d \vee \neg d)) = \neg (\pi_i(\1))= \0.\]
\end{enumerate}
\end{proof}

Complete homomorphism and regular embeddings are the boolean algebraic counterpart of
two-step iterations, this will be spelled out in detail in section~\ref{sec:twostepitquot}.
Below we outline the relation existing between generic extensions by $\BB$ and $\QQ$ in case
there is a complete homomorphism $i:\BB\to\QQ$.

\begin{lemma} \label{eRetrDense}
	Let $i:\BB\to\QQ$ be a regular embedding, $D \subset \BB$, $E \subset \QQ$ be predense sets, then $i[D]$ and $\pi_i[E]$ are predense (i.e. predense subsets are mapped into predense subsets). Moreover $\pi_i$ maps $V$-generic filter in $V$-generic filters.
\end{lemma}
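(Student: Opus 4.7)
The plan is to use the characterization that a subset $X\subseteq\BB$ of a complete boolean algebra is predense if and only if $\bigvee X=\1_{\BB}$. This equivalence follows at once from the distributive law $b\wedge\bigvee X=\bigvee_{x\in X}b\wedge x$ valid in every complete boolean algebra: if $\bigvee X<\1$ then $\neg\bigvee X$ witnesses non-predensity, and if $\bigvee X=\1$ then no non-zero element can be incompatible with every element of $X$.

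With this reduction, preservation of predensity is immediate. For $i[D]$, since $i$ is a complete homomorphism, $\bigvee_{\QQ} i[D]=i(\bigvee_{\BB} D)=i(\1_{\BB})=\1_{\QQ}$, so $i[D]$ is predense in $\QQ$. For $\pi_i[E]$, Proposition~\ref{eRetrProp}(\ref{eRPJoins}) gives $\bigvee_{\BB}\pi_i[E]=\pi_i(\bigvee_{\QQ} E)=\pi_i(\1_{\QQ})$; combining parts~(1) and the fact that $i(\1_{\BB})=\1_{\QQ}$, we get $\pi_i(\1_{\QQ})=\pi_i(i(\1_{\BB}))=\1_{\BB}$, so $\pi_i[E]$ is predense in $\BB$.

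For the genericity statement, I would first clarify the content, since $\pi_i[G]$ is in general only closed upwards up to passing to its upward closure. The natural object is $H:=i^{-1}[G]$, and I would check that $H$ coincides with $\uparrow\pi_i[G]$: one inclusion uses Proposition~\ref{eRetrProp}(\ref{eRPComp}), namely $i(\pi_i(q))\geq q$ so $\pi_i(q)\in i^{-1}[G]$ for every $q\in G$; the other uses Proposition~\ref{eRetrProp}(1), namely $\pi_i(i(b))=b$, so any $b\in H$ equals $\pi_i(i(b))\in\pi_i[G]$. Verifying that $H$ is a filter is then routine: it is upward closed by construction, and closure under meets follows from $i(b_1\wedge b_2)=i(b_1)\wedge i(b_2)$ together with the fact that $G$ is a filter.

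Finally, $V$-genericity of $H$ reduces to the preservation statement already proved: given a predense $D\in V$ in $\BB$, the set $i[D]$ is predense in $\QQ$ and lies in $V$, so by $V$-genericity of $G$ there exists $d\in D$ with $i(d)\in G$, whence $d\in H\cap D$. The only subtle point, and the one I would be careful about, is the identification $\uparrow\pi_i[G]=i^{-1}[G]$: the lemma would be false if read as asserting that $\pi_i[G]$ is itself closed under meets, but the content is exactly that it generates the $V$-generic filter $i^{-1}[G]$.
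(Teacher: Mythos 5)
Your argument for the two predensity claims is correct and takes a genuinely different route from the paper's. You reduce predensity in a complete boolean algebra to the single condition $\bigvee X = \1$, after which both claims fall out at once from join-preservation of $i$ and of $\pi_i$ (Proposition~\ref{eRetrProp}.\ref{eRPJoins}) together with $\pi_i\circ i=\id$. The paper instead argues elementwise: given $c\in\QQ^+$, pick $d\in D$ compatible with $\pi_i(c)$ and transfer compatibility across the embedding via the identity $\pi_i(i(d)\wedge c)=d\wedge\pi_i(c)$ of Proposition~\ref{eRetrProp}.\ref{eRPHomo}, and symmetrically for $\pi_i[E]$ using Proposition~\ref{eRetrProp}.\ref{eRPComp}. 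Your reduction is slicker; the paper's local argument keeps the identity $\pi_i(i(b)\wedge c)=b\wedge\pi_i(c)$ in the foreground, which is the main workhorse for the rest of the development.

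On the genericity clause your computation is right but your cautionary remark is not. You suggest that $\pi_i[G]$ may fail to be a filter and that the real content concerns $i^{-1}[G]=\uparrow\pi_i[G]$. In fact $\pi_i[G]$ is already a filter whenever $G$ is: if $b\geq\pi_i(c)$ with $c\in G$, then $i(b)\geq i(\pi_i(c))\geq c$, so $i(b)\in G$ and $b=\pi_i(i(b))\in\pi_i[G]$, giving upward closure; then $\pi_i(a\wedge c)\leq\pi_i(a)\wedge\pi_i(c)$ together with upward closure gives closure under meets. This is exactly how the paper's proof proceeds. Moreover the same two observations yield $\pi_i[G]=i^{-1}[G]$ outright, an equality rather than an identification up to upward closure, so the lemma as literally stated (about $\pi_i[G]$) is not in need of the reinterpretation you propose. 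The genericity check itself — $D$ predense in $\BB$ gives $i[D]$ predense in $\QQ$, so $G$ meets $i[D]$, so $\pi_i[G]$ meets $D$ — coincides with the paper's.
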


\begin{proof}
	First, let $c \in \QQ$ be arbitrary. Since $D$ is predense, there exists $d \in D$ such that $d \wedge \pi(c) > \0$. Then by Property \ref{eRetrProp}.\ref{eRPHomo} also $i(d) \wedge c > \0$ hence $i[D]$ is predense.
	Finally, let $b \in \BB$ be arbitrary. Since $E$ is predense, there exists $e \in E$ such that $e \wedge i(b) > \0$. Then by Property \ref{eRetrProp}.\ref{eRPHomo} also $\pi_i(e) \wedge b > \0$ hence $\pi_i[E]$ is predense.

	For the last point in the lemma, we first prove that $\pi_i[G]$ is a filter whenever $G$ is a filter. Let $c$ be in $G$, and suppose $b > \pi_i(c) $. Then by Property \ref{eRetrProp}.\ref{eRPComp} also 
	$i(b) > i(\pi_i(c)) \geq c$, hence $i(b) \in G$ and $b \in \pi_i[G]$, proving that $\pi_i[G]$ is upward closed. Now suppose $a, c \in G$, then by Property \ref{eRetrProp}.\ref{eRPMeets} we have that $\pi_i(a) \wedge \pi_i(c) \geq \pi_i(a \wedge c) \in \pi_i[G]$ since $a \wedge c \in G$. Combined with the fact that $\pi_i[G]$ is upward closed this concludes the proof that $\pi_i[G]$ is a filter.

	Finally, let $D$ be a predense subset of $\BB$ and assume $G$ is $V$-generic for $\QQ$. 
	We have that $i[D]$ is predense hence $i[D] \cap G \neq \emptyset$ by $V$-genericity of $G$. Fix $c \in i[D] \cap G$, then $\pi_i(c) \in D \cap \pi_i[G]$ concluding the proof.
\end{proof}

\begin{lemma}
	Let $i:\BB\to\QQ$ be an homomorphism of boolean algebras. Then $i$ is a complete homomorphism iff for every $V$-generic filter $G$ for $\QQ$, $i^{-1}[G]$ is a $V$-generic filter for $\BB$.
\end{lemma}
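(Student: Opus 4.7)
The plan is to prove both directions separately, with the bridge in both cases being the elementary observation that a subset $D \subseteq \BB$ of a complete boolean algebra is predense iff $\bigvee D = \1_\BB$.

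For the forward implication, assume $i$ is a complete homomorphism and fix a $V$-generic filter $G$ for $\QQ$. Checking that $i^{-1}[G]$ is a filter in $\BB$ is routine and uses only that $i$ is an order-preserving homomorphism sending $\1_\BB$ to $\1_\QQ$ and respecting binary meets (the fact that $\0_\BB \notin i^{-1}[G]$ follows since $i(\0_\BB) = \0_\QQ \notin G$). For $V$-genericity, given any predense $D \in V$ for $\BB$, completeness of $i$ yields $\bigvee i[D] = i(\bigvee D) = \1_\QQ$, so $i[D]$ is predense in $\QQ$; genericity of $G$ then produces some $d \in D$ with $i(d) \in G$, i.e.\ $d \in D \cap i^{-1}[G]$.

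For the converse I would argue by contradiction. Suppose every $i^{-1}[G]$ is a $V$-generic filter, but $i$ fails to preserve some join, so that there exists $X \subseteq \BB$ with $\bigvee i[X] < i(\bigvee X)$ (the reverse inequality holds automatically because $i$ is order-preserving). Set $q = i(\bigvee X) \wedge \neg \bigvee i[X] > \0_\QQ$ and pick a $V$-generic $G$ for $\QQ$ with $q \in G$. Then $i(\bigvee X) \in G$ gives $\bigvee X \in i^{-1}[G]$, while $\neg \bigvee i[X] \in G$ forces $i(x) \notin G$ for every $x \in X$, hence $i^{-1}[G] \cap X = \emptyset$. Now consider the set $D = X \cup \{\neg \bigvee X\}$: its join is $\1_\BB$, so it is predense in $\BB$. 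By the hypothesized $V$-genericity, $i^{-1}[G]$ meets $D$, and since it misses $X$ it must contain $\neg \bigvee X$. Filterhood of $i^{-1}[G]$ then forces $\0_\BB = \bigvee X \wedge \neg \bigvee X \in i^{-1}[G]$, whence $\0_\QQ = i(\0_\BB) \in G$, which is absurd.

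The step I expect to require the most care is the backward direction, where one must genuinely exploit filterhood of $i^{-1}[G]$ (not merely that it meets every predense set), and tacitly rely on the availability of a $V$-generic filter for $\QQ$ containing a prescribed nonzero condition -- the standing convention adopted throughout the paper. The only real trick is to detect the right predense set $D = X \cup \{\neg \bigvee X\}$ to exhibit the contradiction.
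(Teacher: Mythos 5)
Your proof is correct and follows essentially the same route as the paper's: the forward direction passes through predensity of $i[D]$, and the backward direction forces the "gap" element $i(\bigvee X)\wedge\neg\bigvee i[X]$ into a generic $G$. The only difference is presentational — you make the predense set $X\cup\{\neg\bigvee X\}$ explicit and argue predensity of $i[D]$ directly from $\bigvee i[D]=\1$, whereas the paper compresses both steps (citing a lemma for the first, and simply asserting failure of genericity below $\bigvee A$ for the second).
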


\begin{proof}
	If $i$ is a complete homomorphism and $G$ is a $V$-generic filter, then $i^{-1}[G]$ is trivially a filter. Furthermore, given $D$ dense subset of $\BB$, $i[D]$ is predense so there exists a $c \in G \cap i[D]$, hence $i^{-1}(c) \in i^{-1}[G] \cap D$.
	
	Conversely, suppose by contradiction that there exists an $A \subseteq \BB$ such that $i(\bigvee A) \neq \bigvee i[A]$ (in particular, necessarily $i(\bigvee A) > \bigvee i[A]$). Let $d = i(\bigvee A) \setminus \bigvee i[A]$, $G$ be a $V$-generic filter with $d \in G$. Then $i^{-1}[G] \cap A = \emptyset$ hence is not $V$-generic below $\bigvee A \in i^{-1}[G]$, a contradiction.
\end{proof}

Later in these notes we will use the following lemma to produce local versions of various results.

\begin{lemma}[Restriction]\label{6StepRestr}
	Let $i:\BB\to\QQ$ be a regular embedding, $c \in \QQ$, then
	\[
	\begin{array}{cccc}
	i_c: & \BB \res \pi_i(c) &\to& \QQ \res c \\
	& b &\mapsto& i(b) \wedge c
	\end{array}
	\]
	is a regular embedding and its associated retraction is $\pi_{i_c} = \pi_i\res (\QQ\res c)$.
\end{lemma}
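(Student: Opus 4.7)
First I would verify the map $i_c$ is well defined: for $b\leq\pi_i(c)$, clearly $i(b)\wedge c\leq c$, so $i_c$ lands in $\QQ\res c$. Recall that $\BB\res\pi_i(c)$ is itself a complete boolean algebra whose meets and arbitrary joins are inherited from $\BB$, whose top is $\pi_i(c)$, and whose complement of $b$ is $\pi_i(c)\wedge\neg b$; similarly for $\QQ\res c$. Throughout the argument the key identity I would repeatedly invoke is $i(\pi_i(c))\geq c$ from Proposition~\ref{eRetrProp}.\ref{eRPComp}, together with the fundamental equation $\pi_i(c\wedge i(b))=\pi_i(c)\wedge b$ from Proposition~\ref{eRetrProp}.\ref{eRPHomo}.

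Next I would check that $i_c$ is a boolean algebra homomorphism. For the top: $i_c(\pi_i(c))=i(\pi_i(c))\wedge c=c$ since $i(\pi_i(c))\geq c$. For meets: $i_c(b_1\wedge b_2)=i(b_1)\wedge i(b_2)\wedge c=(i(b_1)\wedge c)\wedge(i(b_2)\wedge c)$. For relative complements: using $i(\pi_i(c))\geq c$ again,
\[
i_c(\pi_i(c)\wedge\neg b)=i(\pi_i(c))\wedge\neg i(b)\wedge c=c\wedge\neg i(b)=c\wedge\neg(i(b)\wedge c),
\]
which is the complement of $i_c(b)$ inside $\QQ\res c$. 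For preservation of arbitrary joins, given $X\subseteq\BB\res\pi_i(c)$, its join in $\BB\res\pi_i(c)$ agrees with its join in $\BB$ (because the latter lies below $\pi_i(c)$), so
\[
i_c\!\bigl(\bigvee X\bigr)=i\bigl(\bigvee X\bigr)\wedge c=\bigvee_{b\in X}i(b)\wedge c=\bigvee_{b\in X}i_c(b),
\]
using that $i$ is a complete homomorphism and distributivity. For injectivity, if $i_c(b)=i(b)\wedge c=\0$, then by Proposition~\ref{eRetrProp}.\ref{eRPHomo} we get $\pi_i(c)\wedge b=\pi_i(c\wedge i(b))=\0$, and since $b\leq\pi_i(c)$ this forces $b=\0$.

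Finally I would compute the retraction. By definition,
\[
\pi_{i_c}(q)=\bigwedge\{b\leq\pi_i(c):i(b)\wedge c\geq q\}.
\]
When $q\leq c$, the condition $i(b)\wedge c\geq q$ is equivalent to $i(b)\geq q$: one direction is immediate, the other follows because $i(b)\geq q$ and $c\geq q$ jointly imply $i(b)\wedge c\geq q$. Moreover, if $b\in\BB$ satisfies $i(b)\geq q$, then $b':=b\wedge\pi_i(c)\leq\pi_i(c)$ also satisfies $i(b')=i(b)\wedge i(\pi_i(c))\geq q\wedge c=q$ (again using $i(\pi_i(c))\geq c\geq q$), so the restriction $b\leq\pi_i(c)$ does not alter the infimum. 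Hence $\pi_{i_c}(q)=\bigwedge\{b\in\BB:i(b)\geq q\}=\pi_i(q)$, as required.

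\textbf{Main obstacle.} No single step is deep; the only place where one has to be attentive is in handling complements and joins relative to $\BB\res\pi_i(c)$ and $\QQ\res c$, making sure one uses the relative complement (capped at the new top) rather than the ambient one. Once the identity $i(\pi_i(c))\geq c$ is exploited to absorb $i(\pi_i(c))\wedge c=c$, every verification reduces to a one-line computation.
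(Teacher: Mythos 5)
Your proof is correct and follows essentially the same approach as the paper: injectivity via the identity $\pi_i(i(b)\wedge c)=b\wedge\pi_i(c)$, and the retraction computed by showing the infimum over $\{b\leq\pi_i(c):i(b)\wedge c\geq q\}$ coincides with $\pi_i(q)$. The only difference is that you also spell out the routine verification that $i_c$ is a complete homomorphism, which the paper omits as immediate.
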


\begin{proof}
	First suppose that $i_c(b) = \0$, then by Proposition \ref{eRetrProp}.\ref{eRPHomo},
	\[
	\0 = \pi_i(i_c(b)) = \pi_i(i(b) \wedge c ) = b \wedge \pi_i(c) = b
	\]
	that ensures the regularity of $i_c$. Furthermore, for any $d \leq c$,
	\[
	\begin{array}{lll}	
	\pi_{i_c}(d) &=& \bigwedge \bp{b \leq \pi_i(c): ~ i(b) \wedge c \geq d } \\
	&=& \bigwedge \bp{b \leq \pi_i(c): ~ i(b) \geq d } = \pi_i(d),
	\end{array}
	\]
	concluding the proof.
\end{proof}

The notion of regular embedding and associated retraction can also be translated in the context of Stone spaces. Recall that for a complete boolean algebra $\BB$, $X_{\BB}$ is the Stone space of $\BB$ whose points are the ultrafilters on $\BB$ and whose topology is generated by the class of regular open sets
\[
N_a=\{G\in X_{\BB}:a\in \BB\}.
\]
\begin{proposition}
	Let $i: \BB \rightarrow \QQ$ a regular embedding of complete boolean algebras. 
	
	Then the following map:
	\[
	\begin{aligned}
		\pi^*: X_\QQ &\rightarrow X_\BB\\
		G &\mapsto \pi_i[G],
	\end{aligned}
	\]
	is continuous and open (since $\pi^*[N_c]=N_{\pi_i(c)}$). Moreover, 
	$X_\QQ /_\approx \simeq X_\BB$, where
	\[
	G \approx H \iff \pi^*(G)=\pi^*(H).
	\]
\end{proposition}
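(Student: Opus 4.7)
The plan is to verify that $\pi^*$ is a well-defined continuous open surjection between the Stone spaces, and then quotient by $\approx$. The crux will be an ultrafilter-lifting argument that relies on the key identity $\pi_i(q \wedge i(b)) = \pi_i(q) \wedge b$ from Proposition~\ref{eRetrProp}.\ref{eRPHomo}.

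First I would check that $\pi^*(G) = \pi_i[G]$ is an ultrafilter on $\BB$ whenever $G$ is one on $\QQ$. That $\pi_i[G]$ is a filter goes exactly as in Lemma~\ref{eRetrDense}. For the ultrafilter property, given $b \in \BB$ either $i(b) \in G$ or $i(\neg b) = \neg i(b) \in G$; applying $\pi_i$ and using $\pi_i \circ i = \id_{\BB}$ from Proposition~\ref{eRetrProp} gives $b \in \pi_i[G]$ in the first case and $\neg b \in \pi_i[G]$ in the second. Next I would compute preimages of basic clopens to obtain $\pi^{*-1}[N_b] = N_{i(b)}$: if $i(b) \in G$, then $b = \pi_i(i(b)) \in \pi_i[G]$; conversely, if $b = \pi_i(c)$ for some $c \in G$, then by Proposition~\ref{eRetrProp}.\ref{eRPComp}, $i(b) \geq c$, so $i(b) \in G$. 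Since the $N_b$ generate the topology on $X_\BB$, continuity follows.

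For openness I would establish the identity $\pi^*[N_c] = N_{\pi_i(c)}$ for each $c \in \QQ$. The inclusion $\subseteq$ is immediate from the definition of $\pi^*$. For the reverse, given $H \in N_{\pi_i(c)}$, I would build $G \in N_c$ with $\pi^*(G) = H$ by showing that the set $F_0 = \{c\} \cup i[H]$ has the finite intersection property in $\QQ$, and then extending to an ultrafilter via the ultrafilter lemma. The finite intersection property reduces, using that $H$ is a filter and $i$ preserves meets, to checking that $c \wedge i(h) > \0_\QQ$ for every $h \in H$; and by the key identity
\[
\pi_i(c \wedge i(h)) = \pi_i(c) \wedge h,
\]
which lies in $H \setminus \{\0_\BB\}$ since both $\pi_i(c)$ and $h$ are in the filter $H$, this is clear. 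The resulting ultrafilter $G$ then satisfies $c \in G$ and $i[H] \subseteq G$, whence $H \subseteq \pi_i[G]$; both being ultrafilters on $\BB$, they coincide. Surjectivity of $\pi^*$ follows from the case $c = \1_\QQ$.

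Finally, I would descend to the quotient. By construction $\pi^*$ factors through $X_\QQ/_\approx$ as a bijection $\tilde{\pi}^* \colon X_\QQ/_\approx \to X_\BB$. The cleanest conclusion is to note that $X_\QQ/_\approx$ is compact as a continuous image of the compact $X_\QQ$, while $X_\BB$ is Hausdorff, so the continuous bijection $\tilde{\pi}^*$ is automatically a homeomorphism; alternatively one directly transports openness of $\pi^*$ via the quotient topology. The main obstacle is the openness/surjectivity step, which hinges on the filter-lifting construction above; everything else is formal manipulation of the identities satisfied by $\pi_i$ and $i$.
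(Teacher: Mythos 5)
The paper states this proposition without proof, leaving only the parenthetical hint $\pi^*[N_c]=N_{\pi_i(c)}$. Your proposal is a correct and complete reconstruction built on exactly that identity: the ultrafilter-image argument via $\pi_i\circ i=\id_\BB$, the preimage computation $\pi^{*-1}[N_b]=N_{i(b)}$ for continuity, the filter-lifting argument with $\pi_i(c\wedge i(h))=\pi_i(c)\wedge h$ for openness and surjectivity, and the compact-to-Hausdorff trick for the quotient homeomorphism are all sound and match what the paper's hint indicates.
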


\subsection{Embeddings and boolean valued models}

Complete homomorphisms of complete boolean algebras extend to natural $\Delta_1$-elementary 
maps between boolean valued models.

\begin{proposition}
	Let $i:\BB\to\QQ$ be a complete homomorphism, and define by recursion $\hat{\imath}: V^\BB \to V^\QQ$ by
	\[
	\hat{\imath}(\dot{b})(\hat{\imath}(\dot{a}))=i\circ\dot{b}(\dot{a})
	\]
	for all $\dot{a}\in\dom(\dot{b})\in V^{\BB}$.
	Then the map $\hat{\imath}$ is $\Delta_1$-elementary, i.e. for every $\Delta_1$ formula $\phi$,
	\[
	i\cp{ \Qp{ \phi(\dot{b}_1,\ldots,\dot{b}_n) }_\BB } = \Qp{ \phi( \hat{\imath}(\dot{b}_1),\ldots,\hat{\imath}(\dot{b}_n) ) }_\QQ
	\]
\end{proposition}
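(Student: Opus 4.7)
The plan is to proceed by induction on the complexity of $\phi$, with the atomic base case handled by a simultaneous induction on the ranks of the names since $\in$, $\subseteq$, and $=$ are defined by mutual recursion in Definition~\ref{def:forcingnames}.

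For the atomic case, unfolding the definition gives $\Qp{\dot{c}\in\dot{b}}_\BB=\bigvee\{\Qp{\dot{a}=\dot{c}}_\BB\wedge\dot{b}(\dot{a}):\dot{a}\in\dom(\dot{b})\}$. Since $i$ preserves arbitrary joins (by assumption) and binary meets (as a Boolean homomorphism),
\[
i\bigl(\Qp{\dot{c}\in\dot{b}}_\BB\bigr)=\bigvee\bigl\{i\bigl(\Qp{\dot{a}=\dot{c}}_\BB\bigr)\wedge i(\dot{b}(\dot{a})):\dot{a}\in\dom(\dot{b})\bigr\}.
\]
By the inductive hypothesis applied to names of strictly lower rank, $i(\Qp{\dot{a}=\dot{c}}_\BB)=\Qp{\hat{\imath}(\dot{a})=\hat{\imath}(\dot{c})}_\QQ$, and by the defining recursion $i(\dot{b}(\dot{a}))=\hat{\imath}(\dot{b})(\hat{\imath}(\dot{a}))$. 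Since $\dom(\hat{\imath}(\dot{b}))=\hat{\imath}[\dom(\dot{b})]$, the resulting expression is exactly $\Qp{\hat{\imath}(\dot{c})\in\hat{\imath}(\dot{b})}_\QQ$. The subset case proceeds analogously, using that complete homomorphisms preserve arbitrary meets and complements (by de Morgan); equality reduces at once to the two subset clauses.

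For Boolean connectives the claim follows directly from $i$ being a Boolean homomorphism. For unbounded quantifiers only an inequality is available:
\[
i\bigl(\Qp{\exists x\,\phi(x,\dot{b}_1,\ldots,\dot{b}_n)}_\BB\bigr)=\bigvee_{\dot{a}\in V^\BB}\Qp{\phi(\hat{\imath}(\dot{a}),\hat{\imath}(\dot{b}_1),\ldots,\hat{\imath}(\dot{b}_n))}_\QQ\leq\Qp{\exists x\,\phi(x,\hat{\imath}(\dot{b}_1),\ldots,\hat{\imath}(\dot{b}_n))}_\QQ,
\]
since $V^\QQ$ will typically contain names outside $\hat{\imath}[V^\BB]$. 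Propagating the induction, a $\Sigma_1$ formula $\phi$ therefore satisfies $i(\Qp{\phi}_\BB)\leq\Qp{\phi}_\QQ$, and dually (by negation) a $\Pi_1$ formula satisfies the reverse inequality $i(\Qp{\phi}_\BB)\geq\Qp{\phi}_\QQ$.

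Finally, a $\Delta_1$ formula $\phi$ is $\ZFC$-provably equivalent to both a $\Sigma_1$ form $\phi^\Sigma$ and a $\Pi_1$ form $\phi^\Pi$; hence $\Qp{\phi}_\BB=\Qp{\phi^\Sigma}_\BB=\Qp{\phi^\Pi}_\BB$ and similarly in $V^\QQ$, so combining the two inequalities gives the desired $i(\Qp{\phi}_\BB)=\Qp{\phi}_\QQ$. The main obstacle is organising the simultaneous rank induction for the atomic case and verifying cleanly that $\dom(\hat{\imath}(\dot{b}))=\hat{\imath}[\dom(\dot{b})]$, so that the quantification over $\dom(\dot{b})$ in the definition of atomic values transfers faithfully to quantification over $\dom(\hat{\imath}(\dot{b}))$ on the $\QQ$-side.
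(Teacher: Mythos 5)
Your plan mirrors the paper's proof closely: the same rank induction on names for the atomic cases, the same propagation through Boolean connectives, the same observation that an unbounded existential only yields the inequality $\leq$, and the same $\Sigma_1/\Pi_1$ argument to conclude for $\Delta_1$.

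There is, however, a genuine gap: you never treat bounded quantifiers, and with them the $\Delta_0$ level of the hierarchy. To go from the quantifier-free case to the $\Sigma_1$ inequality you must first establish full \emph{equality} for all $\Delta_0$ formulas, and $\Delta_0$ formulas contain $\exists x \in \dot{b}_1\,\phi$ and $\forall x \in \dot{b}_1\,\phi$. The essential observation, which the paper spells out as a separate case, is that
\[
\Qp{\exists x \in \dot{b}_1\,\phi}_\BB \;=\; \bigvee\bigl\{\dot{b}_1(\dot{a}) \wedge \Qp{\phi(\dot{a},\ldots)}_\BB : \dot{a} \in \dom(\dot{b}_1)\bigr\}
\]
is a supremum indexed by the \emph{set} $\dom(\dot{b}_1)$, so $i$ preserves it exactly and one gets equality; the degradation to $\leq$ happens only when the witness ranges over the proper class $V^\BB$. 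Your sentence ``propagating the induction, a $\Sigma_1$ formula satisfies $i(\Qp{\phi}_\BB)\leq\Qp{\phi}_\QQ$'' is not justified by what precedes it: the $\Delta_0$ matrix of a $\Sigma_1$ formula contains bounded universals as well, and since $\Delta_0$ is closed under negation, no uniform one-sided inequality can hold at that level short of equality. You must prove the equality for $\Delta_0$ before there is anything correct to propagate to $\Sigma_1$. Inserting the bounded-quantifier case — which is exactly where your argument for unbounded quantifiers happens to give equality rather than mere inequality — closes the gap and recovers the paper's proof.
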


\begin{proof}
	We prove the result by induction on the complexity of $\phi$. For atomic formulas $\psi$ (either $x = y$ or $x \in y$), we proceed by further induction on the rank of $\dot{b}_1$, $\dot{b}_2$.
	
	\[
	\begin{split}
		i\cp{ \Qp{\dot{b}_1 \in \dot{b}_2}_\BB } &= i\cp{ \bigvee \bp{\dot{b}_2(\dot{a}) \wedge \Qp{\dot{b}_1 = \dot{a}}_\BB : \dot{a} \in \dom(\dot{b}_2)}} \\
		&= \bigvee \bp{ i\cp{\dot{b}_2(\dot{a})} \wedge i\cp{ \Qp{\dot{b}_1 = \dot{a}}_\BB }: {\dot{a} \in \dom(\dot{b}_2)}} \\
		&= \bigvee\bp{ i\cp{\dot{b}_2(\dot{a})} \wedge \Qp{\hat{\imath}(\dot{b}_1) = \hat{\imath}(\dot{a})}_\QQ : {\dot{a} \in \dom(\dot{b}_2)}}\\
		&= \Qp{\hat{\imath}(\dot{b}_1) \in \hat{\imath}(\dot{b}_2)}_\QQ \\
		i\cp{ \Qp{\dot{b}_1 \subseteq \dot{b}_2}_\BB } &= i\cp{ \bigwedge \bp{ \dot{b}_1(\dot{a}) \rightarrow \Qp{\dot{a} \in \dot{b}_2}_\BB: {\dot{a} \in \dom(\dot{b}_1)}}} \\	
		&= \bigwedge \bp{ i\cp{\dot{b}_1(\dot{a})}\rightarrow i\cp{ \Qp{\dot{a} \in \dot{b}_2}_\BB }: {\dot{a} \in \dom(\dot{b}_1)}} \\
		&= \bigwedge \bp{ i\cp{\dot{b}_1(\dot{a})} \rightarrow \Qp{\hat{\imath}(\dot{a}) \in \hat{\imath}(\dot{b}_2)}_\QQ: {\dot{a} \in \dom(\dot{b}_1)}} \\
		&=\Qp{\hat{\imath}(\dot{b}_1) \subseteq \hat{\imath}(\dot{b}_2)}_\QQ. 
	\end{split}
	\]

	We used the inductive hypothesis in the last row of each case. Since $\Qp{\dot{b}_1 = \dot{b}_2} = \Qp{\dot{b}_1 \subseteq \dot{b}_2} \wedge \Qp{\dot{b}_2 \subseteq \dot{b}_1}$, the proof for $\psi$ atomic is complete.
	
	For $\psi$ quantifier-free formula the proof is immediate since $i$ is an embedding hence preserves $\vee$, $\neg$. Suppose now that $\psi = \exists x \in y ~ \phi$ is a $\Delta_0$ formula.
	\[
	\begin{split}
	    i&\cp{\Qp{\exists x \in \dot{b}_1 \phi(x,\dot{b}_1,\ldots,\dot{b}_n)}_\BB} \\
		& = \bigvee \bp{  i\cp{\dot{b}_1(\dot{a})} \wedge i\cp{\Qp{\phi(\dot{a},\dot{b}_1,\ldots,\dot{b}_n)}_\BB}: {\dot{a} \in \dom(\dot{b}_1)}} \\
		& = \bigvee \bp{ i(\dot{b}_1(\dot{a})) \wedge \Qp{\phi\cp{\hat{\imath}(\dot{a}),\hat{\imath}(\dot{b}_1),\ldots,\hat{\imath}(\dot{b}_n)}}_\QQ : {\dot{a} \in \dom(\dot{b}_1)}} \\
		& = \Qp{\exists x \in \hat{\imath}(\dot{b}_1) ~ \phi\cp{x,\hat{\imath}(\dot{b}_1),\ldots,\hat{\imath}(\dot{b}_n)}}_\QQ
	\end{split}
	\]
	

	Furthermore, if $\psi = \exists x ~ \phi$ is a $\Sigma_1$ formula, by the fullness lemma there exists a $\dot{a} \in V^\BB$ such that $\Qp{\exists x \phi(x,\dot{b}_1,\ldots,\dot{b}_n)}_\BB = \Qp{\phi(\dot{a},\dot{b}_1,\ldots,\dot{b}_n)}_\BB$ hence
	\[
	\begin{split}
		i\cp{\Qp{\exists x \phi(x,\dot{b}_1,\ldots,\dot{b}_n)}_\BB} &= i\cp{\Qp{\phi(\dot{a},\dot{b}_1,\ldots,\dot{b}_n)}_\BB} \\
		&= \Qp{\phi\cp{\hat{\imath}(\dot{a}),\hat{\imath}(\dot{b}_1),\ldots,\hat{\imath}(\dot{b}_n)}}_\QQ \\
		&\leq \Qp{\exists x\phi\cp{x,\hat{\imath}(\dot{b}_1),\ldots,\hat{\imath}(\dot{b}_n)}}_\QQ
	\end{split}
	\]
	Thus, if $\phi$ is a $\Delta_1$ formula, either $\phi$ and $\neg \phi$ are $\Sigma_1$ hence the above inequality holds and also
	\[
	\begin{split}
		i\cp{\Qp{\phi(\dot{b}_1,\ldots,\dot{b}_n)}_\BB} &= \neg i\cp{\Qp{\neg\phi(\dot{b}_1,\ldots,\dot{b}_n)}_\BB} \\
		&\geq \neg \Qp{\neg \phi\cp{\hat{\imath}(\dot{b}_1),\ldots,\hat{\imath}(\dot{b}_n)}}_\QQ \\
		&= \Qp{\phi\cp{\hat{\imath}(\dot{b}_1),\ldots,\hat{\imath}(\dot{b}_n)}}_\QQ, \\
	\end{split}
	\]
	concluding the proof.
\end{proof}

\begin{notation}
In general all over these notes for the sake of readability we shall confuse $\BB$-names with their defining properties.
Recurring examples of this behavior are the following:
\begin{itemize}
\item
If we have in $V$ a collection $\{\dot{b}_i:i\in I\}$ of $\BB$-names, we confuse
$\{\dot{b}_i:i\in I\}$ with a $\BB$-name $\dot{b}$ such that for all $\dot{a}\in V^{\BB}$
\[
\Qp{\dot{a}\in\dot{b}}=\Qp{\exists i\in \check{I}\dot{a}=\dot{b}_i}.
\]
\item
If $i:\BB\to\QQ$ is a complete homomorphism, we denote by
$\QQ/i[\dot{G}]$ a $\BB$-name $\dot{b}$ such that
\[
\Qp{\dot{b}
\mbox{ is the quotient of $\QQ$ modulo the ideal generated by the dual of }i[\dot{G}_{\BB}]}=\1_{\BB}.
\]
\end{itemize}
\end{notation}

	\section{Iteration systems}\label{sec:iterations}

In this section we will present iteration systems and some of their algebraic properties. We refer to later sections an analysis of their forcing properties. In order to develop the theory of iterations, from now on we shall consider only regular embeddings.

\subsection{Definitions and basic properties}
\begin{definition}
	$\FFF=\{i_{\alpha \beta}:\BB_\alpha\to \BB_\beta:\alpha \leq \beta < \lambda\}$ is a \emph{complete iteration system} of complete boolean algebras iff for all $\alpha \leq \beta \leq \gamma < \lambda$:
	\begin{enumerate}
		\item $\BB_\alpha$ is a complete boolean algebra and $i_{\alpha \alpha}$ is the identity on it;
		\item $i_{\alpha \beta}$ is a regular embedding with associated retraction $\pi_{\alpha \beta}$;
		\item $i_{\beta \gamma}\circ i_{\alpha \beta}=i_{\alpha \gamma}$.
 \end{enumerate}

If $\gamma < \lambda$, we define $\FFF \res \gamma = \bp{i_{\alpha \beta}: \alpha \leq \beta < \gamma}$.
\end{definition}

\begin{definition}
Let $\FFF$ be a complete iteration system of length $\lambda$. Then:

\begin{itemize}
\item
 The \emph{inverse limit} of the iteration is
	\[
	T(\FFF) = \bp{ f \in \prod_{\alpha < \lambda} \BB_\alpha : ~ \forall \alpha \forall \beta > \alpha ~ \pi_{\alpha \beta}(f(\beta)) = f(\alpha) }
	\]
	and its elements are called \emph{threads}. 
\item
	The \emph{direct limit} is
	\[
	C(\FFF) = \bp{ f \in T(\FFF) : ~ \exists \alpha \forall \beta > \alpha ~ f(\beta) = i_{\alpha\beta}(f(\alpha)) }
	\]
	and its elements are called \emph{constant threads}. 
	The support of a constant thread $\supp(f)$ is 
	the least $\alpha$ such that $i_{\alpha\beta}\circ f(\alpha)=f(\beta)$ for all $\beta\geq\alpha$. 
\item
	The \emph{revised countable support limit} is
	\[
	RCS(\FFF) = \bp{ f \in T(\FFF) : ~ f \in C(\FFF) \vee \exists \alpha ~ f(\alpha) \Vdash \cf(\check{\lambda}) = \check{\omega} }
	\]
\end{itemize}	
\end{definition}

We can define on $T(\FFF)$ a natural join operation.

\begin{definition}
	Let $A$ be any subset of $T(\FFF)$. We define the \emph{pointwise supremum} of $A$ as
	\[
	\psup A=\ap{ \bigvee\{f(\alpha):f\in A\}:\alpha<\lambda }.
	\]
\end{definition}

The previous definition makes sense since by Proposition \ref{eRetrProp}.\ref{eRPJoins} $\psup A$ is a thread.

\begin{definition}
	Let $\FFF = \{i_{\alpha \beta} : \alpha \leq \beta < \lambda\}$ be an iteration system. 
	For all $\alpha < \lambda$, we define $i_{\alpha\lambda}$ as
	\[
	\begin{array}{llll}
		i_{\alpha\lambda} : & \BB_\alpha &\to& C(\FFF) \\
		&b& \mapsto & \ap{\pi_{\beta,\alpha}(b) : ~ \beta < \alpha}^\smallfrown \ap{i_{\alpha\beta}(b) : ~ \alpha \leq \beta < \lambda }
	\end{array}
	\]
	and $\pi_{\alpha \lambda}$ 
	\[
	\begin{array}{llll}
		\pi_{\alpha\lambda} : & T(\FFF) &\to& \BB_\alpha \\
		&f& \mapsto & f(\alpha)
	\end{array}
	\]
	When it is clear from the context, we will denote $i_{\alpha\lambda}$ by $i_\alpha$ and $\pi_{\alpha\lambda}$ by $\pi_\alpha$.
\end{definition}

	\begin{fact}
    We may observe that:

	\begin{enumerate}
	
		\item
			$C(\FFF)\subseteq RCS(\FFF)\subseteq T(\FFF)$ are partial orders with the order relation given by pointwise comparison of threads.
					
		\item 
			Every thread in $T(\FFF)$ is completely determined by its tail. Moreover every thread in $C(\FFF)$ is entirely determined by the restriction to its support. Hence, given a thread $f \in T(\FFF)$, for every $\alpha < \lambda$ $f \res \alpha$ determines a constant thread $f_\alpha \in C(\FFF)$ such that $f \leq_{T(\FFF)} f_\alpha$. 
		
		\item
			It follows that for every $\alpha < \beta < \lambda$, $i_{\alpha \lambda} = i_{\alpha \beta} \circ i_{\beta \lambda}$.
		
		\item
			$i_{\alpha \lambda}$ can naturally be seen as a regular embedding of
			$\BB_\alpha$ in any of $\RO(C(\FFF))$, $\RO(T(\FFF))$, $\RO(RCS(\FFF))$. Moreover
			by Property \ref{eRetrProp}.\ref{eRPJoins} in all three cases
			$\pi_{\alpha \lambda}=\pi_{i_{\alpha,\lambda}}\res P$ 
			where $P=C(\FFF), T(\FFF), RCS(\FFF)$.

		\item
			If $\FFF$ is an iteration of length $\lambda$, and $g: \cf(\lambda) \to \lambda$ is an increasing cofinal map, then we have the followings isomorphisms of partial orders:
			\[C(\FFF)\cong C(\{i_{g(\alpha) g(\beta)} : \alpha \leq \beta < \cf(\lambda)\});\]
			\[T(\FFF)\cong T(\{i_{g(\alpha) g(\beta)} : \alpha \leq \beta < \cf(\lambda)\});\]
			\[RCS(\FFF)\cong RCS(\{i_{g(\alpha) g(\beta)} : \alpha \leq \beta < \cf(\lambda)\}).\]
			hence we will always assume w.l.o.g. that $\lambda$ is a regular cardinal.
	\end{enumerate}
	\end{fact}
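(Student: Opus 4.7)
My plan is to treat (1)--(3) as direct bookkeeping and concentrate on (4) and (5), which carry the real content. For (1) the inclusions $C(\FFF)\subseteq RCS(\FFF)\subseteq T(\FFF)$ are immediate from the definitions, and the pointwise order is inherited from the product. For (2) I observe that $f(\alpha)=\pi_{\alpha\beta}(f(\beta))$ determines $f$ on any tail, and I define the announced constant thread as $f_\alpha:=i_{\alpha\lambda}(f(\alpha))$; the inequality $f\leq f_\alpha$ reduces at coordinates $\beta\geq\alpha$ to $x\leq i(\pi_i(x))$ (Proposition~\ref{eRetrProp}.\ref{eRPComp}) and at coordinates $\gamma<\alpha$ to the thread identity $\pi_{\gamma\alpha}(f(\alpha))=f(\gamma)$. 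For (3) I verify $i_{\alpha\lambda}=i_{\beta\lambda}\circ i_{\alpha\beta}$ by casework on the output coordinate, using the commutativity of $\FFF$ at upper indices and the dual identity $\pi_{\gamma\alpha}=\pi_{\gamma\beta}\circ\pi_{\beta\alpha}$ at lower ones. The dual identity follows from the adjunction $\pi_i(x)\leq y \iff x\leq i(y)$ (a consequence of Proposition~\ref{eRetrProp}.(1),\ref{eRPComp}), which characterises the retraction as the unique left adjoint of $i$.

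For (4), I rely on the standard criterion that a map $j\colon\BB\to P$ into a dense subposet of $\RO(P)$ extends to a regular embedding $\BB\to\RO(P)$ iff it preserves order, incompatibility, and predensity. Order-preservation and incompatibility-preservation for $i_{\alpha\lambda}$ are immediate by evaluating at coordinate $\alpha$. For predensity, given a maximal antichain $A\subseteq\BB_\alpha$ and a nonzero $f\in P$, I note first that $f(\alpha)>\0$ (since for any nonzero thread $f(\gamma)>\0$ at every coordinate, by Proposition~\ref{eRetrProp}.\ref{eRPComp} applied to upper coordinates) and pick $a\in A$ with $a\wedge f(\alpha)>\0$. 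The naive pointwise meet of $f$ and $i_{\alpha\lambda}(a)$ fails to be a thread because retractions do not preserve meets (Proposition~\ref{eRetrProp}.\ref{eRPMeets}); instead I define $g(\gamma)=f(\gamma)\wedge i_{\alpha\gamma}(a)$ for $\gamma\geq\alpha$, which stays compatible with the retractions via Proposition~\ref{eRetrProp}.\ref{eRPHomo}, and $g(\gamma)=\pi_{\gamma\alpha}(g(\alpha))$ for $\gamma<\alpha$. This $g$ is a nonzero thread with $g\leq f$ and $g\leq i_{\alpha\lambda}(a)$, and it lies in $C(\FFF)$ whenever $f$ does, so predensity passes to $C(\FFF)$ and thence to $RCS(\FFF)$ and $T(\FFF)$. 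For the retraction identity $\pi_{\alpha\lambda}=\pi_{i_{\alpha\lambda}}\res P$, projecting at coordinate $\alpha$ gives $\pi_{i_{\alpha\lambda}}(f)\geq f(\alpha)$, and a coordinate-by-coordinate check shows $i_{\alpha\lambda}(f(\alpha))\geq f$, yielding the reverse inequality by the adjunction.

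For (5), I define $\Phi_g\colon T(\FFF)\to T(\FFF')$ by $f\mapsto f\circ g$, with inverse sending $f'$ to the unique $f$ with $f(\gamma)=\pi_{\gamma,g(\alpha)}(f'(\alpha))$ for any $\alpha$ with $g(\alpha)\geq\gamma$; well-definedness rests on the composition identity for retractions from (3). Constancy transfers in both directions, since a support $\beta_0$ for $f$ pulls back to any $\alpha_0\in\cf(\lambda)$ with $g(\alpha_0)\geq\beta_0$ and conversely. For the $RCS$ condition, the cofinal map $g\in V$ remains a cofinal function of $\lambda$ in every generic extension $V[G]$, so $\cf^{V[G]}(\lambda)=\cf^{V[G]}(\cf(\lambda))$; combined with the $\Delta_1$-elementarity of the $i_{\alpha\beta}$'s, this allows the forcing statement ``$f(\alpha)\Vdash\cf(\check\lambda)=\check\omega$'' to be moved between the two systems. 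The main obstacle I foresee is the meet-preservation failure in the predensity step of (4); the truncation-and-propagation trick that defines $g$ above is the only nontrivial construction in the whole argument.
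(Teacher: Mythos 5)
The paper states this as a \emph{Fact} and supplies no proof at all, so your writeup is filling a deliberate gap rather than competing with an existing argument; there is nothing in the paper to compare against. Your proof is essentially correct, and the thread-truncation device at the heart of your predensity argument (replace the naive pointwise meet $f(\gamma)\wedge i_{\alpha\lambda}(a)(\gamma)$ by $f(\gamma)\wedge i_{\alpha\gamma}(a)$ for $\gamma\geq\alpha$ and propagate downward by retraction) is exactly the construction the paper itself uses later in Lemma~\ref{iTFSup}, so you are well within the paper's toolbox. Two remarks. First, in item (3) the paper writes $i_{\alpha\lambda}=i_{\alpha\beta}\circ i_{\beta\lambda}$, which does not typecheck; your $i_{\alpha\lambda}=i_{\beta\lambda}\circ i_{\alpha\beta}$ is the intended statement, and your derivation of the dual identity $\pi_{\gamma\alpha}=\pi_{\gamma\beta}\circ\pi_{\beta\alpha}$ from the adjunction $\pi_i(x)\leq y\iff x\leq i(y)$ is clean. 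Second, your phrase ``predensity passes to $C(\FFF)$ and thence to $RCS(\FFF)$ and $T(\FFF)$'' misstates the logic: predensity of $i_{\alpha\lambda}[A]$ in $C(\FFF)$ does not formally imply it in the larger posets, since $C(\FFF)$ need not be dense in them. What your construction actually shows is stronger and uniform: for any nonzero $f$ in any of the three posets, the thread $g$ you build lies below both $f$ and some $i_{\alpha\lambda}(a)$ and inherits $f$'s membership class --- $g\in C(\FFF)$ when $f\in C(\FFF)$, as you check, and $g\in RCS(\FFF)$ when $f\in RCS(\FFF)$, which you omit but which is immediate since $g\leq f$ pointwise (so if $f(\gamma)\Vdash\cf(\check\lambda)=\check\omega$ then $g(\gamma)\leq f(\gamma)$ forces the same, and the $C(\FFF)$ case is already handled). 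With that one clause added, item (4) is complete. Items (1), (2), (5) and the retraction identity $\pi_{\alpha\lambda}=\pi_{i_{\alpha\lambda}}\res P$ via $i_{\alpha\lambda}(f(\alpha))\geq f$ and the adjunction are all correct; for (5) your verification that the $RCS$ condition transfers via the fixed cofinal map $g\in V$ surviving into generic extensions is the right observation.
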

	
	\begin{remark}
	It must be noted that if $A$ is an infinite subset of $T(\FFF)$, $\psup A$ might \emph{not} be the least upper bound of  $A$ in $\RO(T(\FFF))$, as shown in Example \ref{xSups}. A sufficient condition on $A$ for this to happen is given by Lemma~\ref{iTFSup}
	below.
	\end{remark}
	
	\begin{definition}	
	$C(\FFF)$ inherits the structure of a boolean algebra with boolean operations defined as follows:
		\begin{itemize}
			\item $f\wedge g$ is the unique thread $h$ whose support $\beta$ is the max of the support of $f$ and $g$ and is such that $h(\beta)=f(\beta)\wedge g(\beta)$,
			\item $\neg f$ is the unique thread $h$ whose support $\beta$ is the support of $f$ such that $h(\beta)=\neg f(\beta)$.
		\end{itemize}
	\end{definition}
	
	\begin{fact}
	\begin{enumerate}
		\item If $g\in T(\FFF)$ and $h\in C(\FFF)$ we can check that $g\wedge h$, defined as the thread where eventually all coordinates $\alpha$ are the pointwise meet of $g(\alpha)$ and $h(\alpha)$, is the infimum of $g$ and $h$ in $T(\FFF)$.
		\item There can be nonetheless two distinct incompatible threads 
		$f,g\in T(\FFF)$ such that $f(\alpha)\wedge g(\alpha)>\0_{\BB_\alpha}$ for all 
		$\alpha<\lambda$. Thus in general the pointwise meet of two threads is not even a thread, as shown in Example \ref{xWedge}.
	\end{enumerate}
    \end{fact}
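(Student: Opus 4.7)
The plan for the first claim is to make the definition of $g \wedge h$ completely explicit and then verify both the thread condition and the infimum property via the key identity $\pi_i(c \wedge i(b)) = \pi_i(c) \wedge b$ from Proposition~\ref{eRetrProp}.\ref{eRPHomo}. Concretely, let $\gamma = \supp(h)$ and define $k : \lambda \to V$ by $k(\alpha) = g(\alpha) \wedge h(\alpha)$ for $\alpha \geq \gamma$, and $k(\alpha) = \pi_{\alpha\gamma}(k(\gamma))$ for $\alpha < \gamma$. This $k$ is the candidate for $g \wedge h$.

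To see that $k$ is a thread, the main case is $\gamma \leq \alpha \leq \beta$: since $h(\beta) = i_{\alpha\beta}(h(\alpha))$ (because $\alpha, \beta \geq \supp(h)$) and $g$ is itself a thread, we compute
\[
\pi_{\alpha\beta}(k(\beta)) = \pi_{\alpha\beta}(g(\beta) \wedge i_{\alpha\beta}(h(\alpha))) = \pi_{\alpha\beta}(g(\beta)) \wedge h(\alpha) = g(\alpha) \wedge h(\alpha) = k(\alpha),
\]
which is precisely the identity from Proposition~\ref{eRetrProp}.\ref{eRPHomo}. The remaining cases $\alpha \leq \beta < \gamma$ and $\alpha < \gamma \leq \beta$ follow from the functoriality $\pi_{\alpha\gamma} = \pi_{\alpha\beta} \circ \pi_{\beta\gamma}$, itself a consequence of $\pi_{\alpha\beta}$ being the unique left adjoint of $i_{\alpha\beta}$ (the adjunction $\pi_i(c) \leq b \iff c \leq i(b)$ follows directly from $\pi \circ i = \id$, $i \circ \pi \geq \id$, and monotonicity of both maps). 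For the infimum property, suppose $f \in T(\FFF)$ satisfies $f \leq g$ and $f \leq h$. On coordinates $\alpha \geq \gamma$ we have $f(\alpha) \leq g(\alpha) \wedge h(\alpha) = k(\alpha)$ directly, and for $\alpha < \gamma$ we use monotonicity of $\pi_{\alpha\gamma}$ (which follows from Proposition~\ref{eRetrProp}.\ref{eRPJoins}) to conclude $f(\alpha) = \pi_{\alpha\gamma}(f(\gamma)) \leq \pi_{\alpha\gamma}(k(\gamma)) = k(\alpha)$.

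For the second claim, I would defer to Example~\ref{xWedge}. The hypothesis $h \in C(\FFF)$ is used essentially in the first claim: it is precisely what allows one to rewrite $h(\beta) = i_{\alpha\beta}(h(\alpha))$ for all sufficiently large $\alpha \leq \beta$, which in turn is what makes the identity $\pi_{\alpha\beta}(g(\beta) \wedge i_{\alpha\beta}(h(\alpha))) = \pi_{\alpha\beta}(g(\beta)) \wedge h(\alpha)$ applicable. Without the eventually-constant hypothesis this computation breaks down, and the expected content of the example is to exhibit two threads $f, g \in T(\FFF)$ whose pointwise meets $f(\alpha) \wedge g(\alpha)$ are positive at every coordinate and yet fail to assemble into a thread, thereby witnessing that $f$ and $g$ can be incompatible in $T(\FFF)$ despite having everywhere-positive pointwise meets. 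No serious obstacle arises in the first claim itself; the main delicacy is just the three-case bookkeeping around the support of $h$.
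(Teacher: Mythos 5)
The paper states this as a Fact without supplying a proof, so there is nothing to compare against; your verification is correct and uses precisely the intended tool, namely the identity $\pi_i(c \wedge i(b)) = \pi_i(c) \wedge b$ from Proposition~\ref{eRetrProp}.\ref{eRPHomo}, applied after rewriting $h(\beta) = i_{\alpha\beta}(h(\alpha))$ for $\alpha,\beta \geq \supp(h)$. The adjunction argument for $\pi_{\alpha\gamma} = \pi_{\alpha\beta}\circ\pi_{\beta\gamma}$ and the monotonicity step for the infimum property are both sound, and deferring part (2) to Example~\ref{xWedge} is exactly what the paper does.
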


	\begin{remark}
	In general $C(\FFF)$ is not complete and $RO(C(\FFF))$ cannot be identified with a complete subalgebra of $\RO(T(\FFF))$ (i.e. $C(\FFF)$ and $T(\FFF)$ as forcing notions in general share little in common), as shown in Example \ref{xSubstruct}. However, $\RO(C(\FFF))$ can be identified with a subalgebra of $T(\FFF)$ that is complete (even though it is not a complete subalgebra), as shown in the following proposition.
    \end{remark}

\begin{proposition} \label{iCFinTF}
	Let $\FFF = \bp{i_{\alpha \beta} : \alpha \leq \beta < \lambda}$ be an iteration system. Then $\RO(C(\FFF)) \simeq D = \bp{ f \in T(\FFF): ~ f = \psup \bp{g\in C(\FFF):~ g \leq f} }$.
\end{proposition}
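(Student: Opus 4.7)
The plan is to define a direct order isomorphism $\Phi : D \to \RO(C(\FFF))$ via the concrete presentation $\RO(C(\FFF)) = \pp(C(\FFF))/\equiv^*_{C(\FFF)}$. For $f \in D$ set $A_f = \{g \in C(\FFF) : g \leq f\}$ and let $\Phi(f) = [A_f]$, with candidate inverse $\Psi([A]) = \psup A$. Once $\Phi$ is shown to be a bijection respecting the order in both directions, the complete Boolean structure of $\RO(C(\FFF))$ transports to $D$, giving the desired isomorphism.

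The key technical input is the following Claim: for every $g \in C(\FFF)$ with $g > \0$ and $g \leq \psup A$ there exists $h \in A$ such that $g \wedge_{C(\FFF)} h > \0$. I would prove this by contradiction. Fix $\alpha = \supp(g)$ and set $\beta_h = \max(\alpha, \supp(h))$ for each $h \in A$; the hypothesis $g \wedge_{C(\FFF)} h = \0$ gives $g(\beta_h) \wedge h(\beta_h) = \0_{\BB_{\beta_h}}$. When $\beta_h = \alpha$ this is already the desired $g(\alpha) \wedge h(\alpha) = \0_{\BB_\alpha}$; when $\beta_h > \alpha$ one has $g(\beta_h) = i_{\alpha\beta_h}(g(\alpha))$, so applying $\pi_{\alpha\beta_h}$ to $i_{\alpha\beta_h}(g(\alpha)) \wedge h(\beta_h) = \0$ and using $\pi_i(q \wedge i(b)) = \pi_i(q) \wedge b$ of Proposition~\ref{eRetrProp}(\ref{eRPHomo}) combined with the thread identity $\pi_{\alpha\beta_h}(h(\beta_h)) = h(\alpha)$ yields the same conclusion. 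Taking $\bigvee_{h \in A}$ at coordinate $\alpha$ gives $g(\alpha) \wedge (\psup A)(\alpha) = \0_{\BB_\alpha}$, contradicting $\0 < g(\alpha) \leq (\psup A)(\alpha)$.

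From the Claim I extract a reconstruction principle: if $A' \subseteq C(\FFF)$ is such that $\downarrow A' \cap \downarrow g$ is dense in $\downarrow g$ inside $C(\FFF)$, then $g = \psup(\downarrow A' \cap \downarrow g)$ -- otherwise at some coordinate $\alpha$ the difference $b = g(\alpha) \setminus \psup(\downarrow A' \cap \downarrow g)(\alpha)$ would be nonzero, and the constant thread $i_{\alpha\lambda}(b) \leq g$ would have no nonzero refinement in $\downarrow A'$, contradicting density. Applying this twice shows that $\psup$ is well-defined on $\equiv^*$-classes, hence $\Psi$ is well-defined. With this in hand: $\Psi \circ \Phi = \id_D$ by definition of $D$; $\Psi$ lands in $D$ because $A \subseteq A_{\psup A}$ gives $\psup A \leq \psup A_{\psup A} \leq \psup A$; injectivity of $\Phi$ and both directions of order preservation reduce to the well-definedness of $\psup$ on $\equiv^*$-classes. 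Finally $\Phi \circ \Psi = \id_{\RO(C(\FFF))}$ requires $A \equiv^* A_{\psup A}$, where $A \subseteq A_{\psup A}$ gives one inclusion and the Claim -- applied to each $g \in A_{\psup A}$ with $g > \0$ to produce $h \in A$ with $\0 < g \wedge_{C(\FFF)} h \in \downarrow A \cap \downarrow g$ -- gives the other.

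The main obstacle is the Claim itself: it is the only place where the algebra of the retractions enters non-trivially, and without it one cannot descend the vanishing of a meet at a high coordinate to the single coordinate $\supp(g)$. This is essential precisely because the pointwise meet of two threads need not even lie in $T(\FFF)$, so only the algebraic behavior of the retractions can bridge a thread inequality to coordinate-by-coordinate information.
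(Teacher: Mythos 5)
Your proof is correct and follows essentially the same route as the paper's: the map $f\mapsto\{g\in C(\FFF):g\leq f\}$ with inverse $\psup$, and the crucial compatibility step (your Claim, with the two-case projection via $\pi_i(q\wedge i(b))=\pi_i(q)\wedge b$) is exactly the step the paper carries out somewhat more tersely when it produces $f\in U$ with $f(\alpha)\parallel g(\alpha)$ and concludes $f\wedge_{C(\FFF)} g>\0$. The only cosmetic difference is that you work with the presentation $\pp(C(\FFF))/\equiv^*$ while the paper works directly with regular open subsets of $C(\FFF)$, and you are somewhat more explicit in justifying why pointwise compatibility at $\supp(g)$ lifts to a nonzero meet of constant threads.
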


\begin{proof}
	The isomorphism associates to a regular open $U \in \RO(C(\FFF))$ the thread $k(U) = \psup  U$, with inverse $k^{-1}(f) = \bp{g \in C(\FFF):~ g \leq f}$.

	First, we prove that $k^{-1} \circ k(U) = \bp{ g \in C(\FFF): ~ g \leq \psup U } = U$. Since $\psup U > \bigvee U$, it follows that $U \subseteq k^{-1} \circ k(U)$. Furthermore, since $U$ is a regular open set, if $g \notin U$, there exists a $g' \leq g$ that is in the interior of the complement of $U$ (i.e., $\forall g'' \leq g' ~ g'' \notin U$). So suppose towards a contradiction that there exist a $g \leq \tilde{\bigvee} U$ as above (i.e., $\forall g' \leq g ~ g' \notin U$). Let $\alpha$ be the support of $g$, so that $g(\alpha) \leq \bigvee \bp{f(\alpha): ~ f \in U}$. Then, there exists an $f \in U$ such that $f(\alpha)$ is compatible with $g(\alpha)$, hence $f \wedge g > \0$ and is in $U$ (since $U$ is open). Since $f \wedge g \leq g$, this is a contradiction.

	It follows that $k(U) \in D$ for every $U \in \RO(C(\FFF))$.  Moreover, $k^{-1}(f)$ is in $\RO(C(\FFF))$ (i.e., is regular open). In fact, it is open and if $g \notin k^{-1}(f)$ then $g \nleq f$ and this is witnessed by some $\alpha > \supp(g)$, so that $g(\alpha) \nleq f(\alpha)$. Let $h = i_\alpha(g(\alpha)\setminus f(\alpha)) > \0$, then for all $h' \leq h$, $h'(\alpha) \perp f(\alpha)$ hence $h' \nleq f$, thus $k^{-1}(f)$ is regular.

	Furthermore, $k^{-1}$ is the inverse map of $k$ since we already verified that $k^{-1} \circ k(U) = U$ and for all $f \in D$, $k \circ k^{-1}(f) = f$ by definition of $D$. Finally, $k$ and $k^{-1}$ are order-preserving maps since $U_1 \subseteq U_2$ iff $\psup U_1 \leq \psup U_2$.
	\end{proof}

As noted before, the notion of supremum in $T(\FFF)$ may not coincide with the notion of pointwise supremum. However, in some cases it does, for example:

\begin{lemma}\label{iTFSup}
	Let $\FFF = \{i_{\alpha \beta} : \alpha \leq \beta < \lambda\}$ be an iteration system and $A \subseteq T(\FFF)$ be an antichain such that $\pi_{\alpha \lambda}[A]$ is an antichain for some $\alpha < \lambda$. Then $\psup  A$ is the supremum of the elements of $A$ in $\RO(T(\FFF))$.
\end{lemma}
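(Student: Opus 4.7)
The plan is to show that $\psup A$ is the supremum of (the image of) $A$ in $\RO(T(\FFF))$ by verifying, for every $h\in T(\FFF)$ with $\0<h\leq\psup A$, that $h$ is compatible in $T(\FFF)$ with some $f\in A$; since $T(\FFF)$ embeds densely into $\RO(T(\FFF))$, this suffices to identify $\psup A$ with the regular open closure of $\bigcup_{f\in A}N_f$. I will first use Proposition~\ref{eRetrProp}(\ref{eRPComp}) in both directions to observe that any nonzero thread is positive at every coordinate, and that the antichain property of $\{f(\alpha):f\in A\}$ propagates to $\{f(\beta):f\in A\}$ for every $\beta\geq\alpha$: indeed, for $f\neq f'$ in $A$, Proposition~\ref{eRetrProp}(\ref{eRPMeets}) gives $\pi_{\alpha\beta}(f(\beta)\wedge f'(\beta))\leq f(\alpha)\wedge f'(\alpha)=\0_{\BB_\alpha}$, whence $f(\beta)\wedge f'(\beta)=\0_{\BB_\beta}$ by positivity preservation.

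The heart of the argument, and the step I expect to be the main obstacle, is the following identity: for all $\gamma\geq\beta\geq\alpha$ and all $f\in A$,
\[
\pi_{\beta\gamma}(h(\gamma)\wedge f(\gamma))=h(\beta)\wedge f(\beta).
\]
The inequality $\leq$ is Proposition~\ref{eRetrProp}(\ref{eRPMeets}). For the reverse, I will use $h\leq\psup A$ to write $h(\gamma)=\bigvee_{f\in A}(h(\gamma)\wedge f(\gamma))$, then apply $\pi_{\beta\gamma}$ (which preserves joins by Proposition~\ref{eRetrProp}(\ref{eRPJoins})) to obtain
\[
h(\beta)=\bigvee_{f\in A}\pi_{\beta\gamma}(h(\gamma)\wedge f(\gamma)).
\]
Since the antichain property at level $\beta$ makes $\{h(\beta)\wedge f(\beta):f\in A\}$ a partition of $h(\beta)$, and each summand on the right sits below the corresponding $h(\beta)\wedge f(\beta)$, the pairwise disjointness forces term-by-term equality (each $h(\beta)\wedge f_0(\beta)$ can only be recovered from its $f=f_0$ term).

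With this identity in hand, constructing a common refinement is essentially bookkeeping. Since $h(\alpha)>\0$ and $h(\alpha)\leq\bigvee_{f\in A}f(\alpha)$, fix $f_0\in A$ with $b=h(\alpha)\wedge f_0(\alpha)>\0$, and define
\[
h'(\beta)=\begin{cases}h(\beta)\wedge f_0(\beta)&\text{if }\beta\geq\alpha,\\ \pi_{\beta\alpha}(b)&\text{if }\beta\leq\alpha.\end{cases}
\]
The crucial identity yields $\pi_{\beta\gamma}(h'(\gamma))=h'(\beta)$ for $\gamma\geq\beta\geq\alpha$, while the case $\beta\leq\alpha$ follows from $\pi_{\beta\beta'}\circ\pi_{\beta'\alpha}=\pi_{\beta\alpha}$ together with $\pi_{\alpha\gamma}(h(\gamma)\wedge f_0(\gamma))=b$. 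Finally $h'(\alpha)=b>\0$ so $h'\neq\0$, and by construction $h'\leq h$ and $h'\leq f_0$ in $T(\FFF)$, witnessing the required compatibility and completing the proof.
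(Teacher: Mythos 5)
Your proof is correct, and it follows the paper's underlying strategy: given a nonzero $h\in T(\FFF)$ below $\psup A$, exhibit a nonzero thread below both $h$ and some $f\in A$, which (since $T(\FFF)$ is separative and embeds densely in $\RO(T(\FFF))$) rules out any gap between $\bigvee A$ and $\psup A$.

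The two arguments differ in how the common lower bound is built and certified. The paper, working by contradiction with a thread $g\leq\neg\bigvee A\wedge\psup A$, first picks $f\in A$ with $g(\alpha)\parallel f(\alpha)$, then defines the candidate as $h(\beta)=g(\beta)\wedge i_{\alpha\beta}(f(\alpha))$; this makes threadness immediate from the identity $\pi(c\wedge i(b))=\pi(c)\wedge b$ of Proposition~\ref{eRetrProp}.\ref{eRPHomo}, and the antichain hypothesis is then invoked to show $h(\beta)\leq f(\beta)$. You instead define the candidate directly as the pointwise meet $h'(\beta)=h(\beta)\wedge f_0(\beta)$, which makes $h'\leq f_0$ immediate, but must now prove a genuine lemma — the identity $\pi_{\beta\gamma}(h(\gamma)\wedge f(\gamma))=h(\beta)\wedge f(\beta)$ — to certify that $h'$ is a thread. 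Your derivation of that identity is correct: the $\leq$ direction is Proposition~\ref{eRetrProp}.\ref{eRPMeets}, and for $\geq$ you decompose $h(\gamma)$ along the antichain $\{f'(\gamma):f'\in A\}$, push the join through $\pi_{\beta\gamma}$, and use pairwise disjointness at level $\beta$ to extract the $f$-term. The end result is literally the same thread the paper constructs (since $h(\beta)\leq f_0(\beta)$ forces $g(\beta)\wedge i_{\alpha\beta}(f_0(\alpha))=g(\beta)\wedge f_0(\beta)$), so the difference is purely organizational; what your route buys is an explicit and reusable commutation identity between the retractions $\pi_{\beta\gamma}$ and the operation of meeting with members of an antichain projecting to an antichain, at the cost of a slightly longer verification than the paper's one-line appeal to Proposition~\ref{eRetrProp}.\ref{eRPHomo}.
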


\begin{proof}
	Suppose by contradiction that $\bigvee A < \psup  A$ in $\RO(T(\FFF))$. Then there exists $g \in T(\FFF)$ such that $0< g \leq \neg \bigvee A \wedge \psup A$.  Let $\alpha < \lambda$ be such that $\pi_{\alpha \lambda} [A] $ is an antichain and let $f \in A$ be such that $f(\alpha)$ is compatible with $g(\alpha)$. Such $f$ exists because $g(\alpha) \leq \bigvee\bp{ f(\alpha): {f \in A}}$ so, since $g(\alpha)\neq \0$, there exists $f \in A$, $f(\alpha) \parallel g(\alpha)$.  We are going to prove that $g$ and $f$ are compatible.
	Consider 
	\[
 		 h = \ap{ g(\beta) \wedge i_{\alpha, \beta} \circ f(\alpha) : ~ \alpha \leq \beta < \lambda }.
	\] 
	Then $h \leq g$ and it is a thread of $T(\FFF)$.
	In fact since $i_{\alpha,\beta}=i_{\gamma, \beta} \circ i_{\alpha, \gamma}$ for each $\alpha \leq \gamma \leq \beta < \lambda$
	\[ 
  		\pi_{\gamma, \beta}(h(\beta))= \pi_{\gamma, \beta} (g(\beta) \wedge i_{\alpha,\beta}\circ f(\alpha)) = \pi_{\gamma, \beta}(g(\beta)) \wedge i_{\alpha, \gamma}(f(\alpha))=h(\gamma).
	\]
	It only remains to prove that $h(\beta) \leq f(\beta)$ for each $\beta \geq \alpha$. We have $h(\beta) \leq g(\beta) \leq \sup\{t(\beta): t \in A\}$ and also $h(\beta)$ is incompatible with $t(\beta)$ for all $f \neq t \in A$.
	In fact $h(\alpha)= g(\alpha) \wedge f(\alpha) \leq f(\alpha) \perp t(\alpha);$ now suppose by contradiction that $g(\beta) \wedge i_{\alpha, \beta}f(\alpha) \parallel t(\beta),$ so there exists $r$ such that $r \leq g(\beta) \wedge i_{\alpha,\beta}(f(\alpha))$ and $r \leq t(\beta)$, then we obtain a contradiction:
	\[
	  \pi_{\alpha,\beta} (r) \leq g(\alpha) \wedge i_{\alpha,\gamma}(f(\alpha)) \mbox{ and } \pi_{\alpha,\beta} (r) \leq t(\alpha).
	\]
	Thus
	\[
	  h(\beta) \leq \bigvee\{t(\beta) : t \in A\} \wedge \cp{\neg \bigvee\{t(\beta): t \in A, t \neq f\}}= f(\beta)
	\]
	for all $\beta \geq \alpha$. So $g$ and $f$ are compatible. Contradiction.
\end{proof}

\subsection{Sufficient conditions for $C(\FFF) = T(\FFF)$}

Even though in general $C(\FFF)$ is different from $T(\FFF)$, in certain cases 
they happen to coincide:

\begin{lemma} \label{iBaumCorollary}
	Let $\FFF = \{i_{\alpha \beta} : \alpha \leq \beta < \lambda\}$ be an iteration system such that $C(\FFF)$ is ${<}\lambda$-cc. Then $T(\FFF) = C(\FFF)$ is a complete boolean algebra.
\end{lemma}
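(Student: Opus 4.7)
The plan is to show directly that every thread is already a constant thread, which gives $T(\FFF) = C(\FFF)$, and then verify that $C(\FFF)$ is complete. We may assume $\lambda$ is regular by the fact on cofinal reindexing.

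Given an arbitrary $f \in T(\FFF)$, I would consider the sequence of constant threads
\[
c_\beta := i_{\beta,\lambda}(f(\beta)) \in C(\FFF), \qquad \beta < \lambda,
\]
each of which satisfies $c_\beta \geq f$ in $T(\FFF)$. The first step is to verify that $\beta \mapsto c_\beta$ is weakly decreasing in $C(\FFF)$. For $\alpha \leq \beta$, one checks at each level $\gamma$ that $c_\alpha(\gamma) \geq c_\beta(\gamma)$: for $\gamma < \alpha$ both values equal $f(\gamma)$; for $\alpha \leq \gamma$ one uses $i\circ\pi \geq \id$ together with the iteration identities $i_{\alpha\gamma} = i_{\beta\gamma}\circ i_{\alpha\beta}$ and $f(\beta) = \pi_{\alpha\beta}(f(\beta)) = f(\alpha)$-compatibility through the retractions.

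The core step is to show that the weakly decreasing sequence $\ap{c_\beta : \beta < \lambda}$ must stabilize. Suppose not: since $\lambda$ is regular, either the sequence takes $\lambda$-many distinct values, or else it is eventually constant. In the first case, since the range is a $\leq$-chain of cardinality $\lambda$ of order type $\lambda$, I can extract a strictly decreasing cofinal subsequence $c_{\beta_\xi}$ for $\xi < \lambda$, and form
\[
a_\xi := c_{\beta_\xi} \wedge \neg c_{\beta_{\xi+1}} \in C(\FFF)^+.
\]
These are pairwise disjoint (for $\xi < \eta$, $a_\eta \leq c_{\beta_\eta} \leq c_{\beta_{\xi+1}} \leq \neg a_\xi$), contradicting the ${<}\lambda$-cc of $C(\FFF)$. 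Hence there is $\beta_0 < \lambda$ with $c_\beta = c_{\beta_0}$ for every $\beta \geq \beta_0$. Evaluating at coordinate $\beta \geq \beta_0$ gives $f(\beta) = i_{\beta_0,\beta}(f(\beta_0))$, so $f \in C(\FFF)$ with $\supp(f) \leq \beta_0$. This proves $T(\FFF) = C(\FFF)$.

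It remains to show $C(\FFF)$ is a complete boolean algebra. Given any $X \subseteq C(\FFF)$, take a maximal antichain $A$ in the set $\{g \in C(\FFF) : \exists x \in X,\ g \leq x\}$. By ${<}\lambda$-cc and regularity of $\lambda$, there is $\alpha < \lambda$ bounding all supports of elements of $A$. Then $h := \psup A$ is a constant thread with $\supp(h) \leq \alpha$, hence $h \in C(\FFF)$. By the maximality of $A$, $h$ is an upper bound of $X$: any $x \in X$ with $x \not\leq h$ would yield $x \wedge \neg h > 0$ in $C(\FFF)$, giving an element below some $x \in X$ incompatible with every $a \in A$, contradicting maximality. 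Pointwise comparison then shows $h$ is the least upper bound. The main obstacle is the stabilization argument for $c_\beta$, which depends delicately on extracting the strictly decreasing cofinal subsequence; once this is in hand the rest is routine bookkeeping with the identities from Proposition~\ref{eRetrProp}.
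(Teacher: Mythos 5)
Your proof is correct and uses essentially the same ideas as the paper's: from a non-constant thread $f$, the sequence $i_{\beta,\lambda}(f(\beta))$ of constant threads is weakly decreasing, and failure to stabilize yields a strictly descending $\lambda$-sequence in $C(\FFF)^+$, hence an antichain of size $\lambda$ via $a_\xi = c_{\beta_\xi}\wedge\neg c_{\beta_{\xi+1}}$, contradicting ${<}\lambda$-cc; for completeness one bounds the supports of a maximal antichain below $X$ using ${<}\lambda$-cc and the regularity of $\lambda$. The only differences from the paper are presentational: you prove $T(\FFF)=C(\FFF)$ before completeness (the paper does the reverse), and you spell out the extraction of a strictly descending cofinal subsequence from the weakly decreasing sequence, where the paper just says ``by restricting to a subset of $\lambda$ w.l.o.g.''
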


\begin{proof}
	First, since every element of $\RO(C(\FFF))$ is the supremum of an antichain in $C(\FFF)$, since $C(\FFF)$ is ${<}\lambda$-cc and since $\lambda$ is regular, the supremum of such an antichain can be computed in some $\BB_\alpha$ for $\alpha < \lambda$ hence $\RO(C(\FFF)) = C(\FFF)$.

	Let $f$ be in $T(\FFF)\setminus C(\FFF)$. 
	Since $f$ is a non-constant thread, for all $\alpha < \beta$ we have that 
	$i_{\alpha\beta}(f(\alpha)) \geq f(\beta)$ and for all $\alpha$ there is an ordinal
	$\beta_\alpha$ such that 
	$i_{\alpha\beta_\alpha}(f(\alpha)) > f(\beta_\alpha)$. By restricting to a subset of $\lambda$
	w.l.o.g. we can suppose that $f(\beta)<i_{\alpha\beta}(f(\alpha))$ for all $\beta>\alpha$.
	Hence $\{i_{\alpha\lambda}(f(\alpha)):\alpha<\lambda\}$ is a strictly descending sequence of length $\lambda$ of elements in $C(\FFF)^+$. 
	From a descending sequence we can always define an antichain in $C(\FFF)$ setting $a_\alpha = i_{\alpha\lambda}(f(\alpha)) \wedge \neg i_{\alpha+1,\lambda}(f(\alpha+1))$. Since $C(\FFF)$ is ${<}\lambda$-cc, this antichain has to be of size less than $\lambda$ hence for coboundedly many $\alpha$, $a_\alpha = \0$ hence $f(\alpha+1) = i_{\alpha,\alpha+1}(f(\alpha))$, so $f \in C(\FFF)$, contradiction.
\end{proof}

\begin{theorem}[Baumgartner] \label{iBaumgartner}
	Let $\FFF = \{i_{\alpha \beta} : \alpha \leq \beta < \lambda\}$ be an iteration system such that $\BB_\alpha$ is ${<}\lambda$-cc for all $\alpha$ and $S = \bp{\alpha: ~ \BB_\alpha \cong \RO(C(\FFF \res \alpha))}$ is stationary. Then $C(\FFF)$ is ${<}\lambda$-cc.
\end{theorem}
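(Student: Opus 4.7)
The plan is to argue by contradiction: I assume $A = \{f_\xi : \xi < \lambda\} \subseteq C(\FFF)$ is an antichain of cardinality $\lambda$, with $f_\xi$ of support $\alpha_\xi < \lambda$, and manufacture a $\lambda$-sized antichain inside some $\BB_{\beta^*}$ with $\beta^* < \lambda$, contradicting the $<\lambda$-cc hypothesis.

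First I bound the supports. For each $\alpha \in S$ the canonical injection $C(\FFF \res \alpha) \hookrightarrow \BB_\alpha = \RO(C(\FFF \res \alpha))$ is order-preserving, so it sends $A_\alpha := \{f \in A : \alpha_f < \alpha\}$ to an antichain in $\BB_\alpha$; thus $|A_\alpha| < \lambda$, and since $S$ is cofinal in $\lambda$ this extends to every $\alpha < \lambda$. Regularity of $\lambda$ then lets me re-enumerate $A$ so that $\xi \mapsto \alpha_\xi$ is strictly increasing and cofinal, and the reflection set $C = \{\alpha < \lambda : \alpha_\xi < \alpha \text{ for all } \xi < \alpha\}$ is a club; thus $S \cap C$ is stationary.

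The heart of the argument is a double application of the retraction identity $\pi_i(c \wedge i(b)) = \pi_i(c) \wedge b$ of Proposition~\ref{eRetrProp}.\ref{eRPHomo}. Fix $\alpha \in S \cap C$. A first application at $\pi_{\alpha,\alpha_\xi}$, combined with $f_\xi(\alpha_\xi) \wedge f_{\xi'}(\alpha_\xi) = 0$, shows that for $\xi' < \alpha \leq \xi$ one has $f_\xi(\alpha) \wedge f_{\xi'}(\alpha) = 0$ in $\BB_\alpha$; in particular, writing $b_\alpha := \bigvee_{\xi < \alpha} f_\xi(\alpha)$, the nonzero element $f_\alpha(\alpha)$ satisfies $f_\alpha(\alpha) \leq \neg b_\alpha$. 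Since $\alpha \in S$, density of $C(\FFF \res \alpha)$ in $\BB_\alpha$ allows me to choose $g_\alpha \in C(\FFF \res \alpha)^+$ with $g_\alpha \leq f_\alpha(\alpha)$, of support $\beta_\alpha < \alpha$ and value $\tilde g_\alpha := g_\alpha(\beta_\alpha) \in \BB_{\beta_\alpha}^+$. Applying $\pi_{\beta_\alpha,\alpha}$ to $i_{\beta_\alpha,\alpha}(\tilde g_\alpha) \leq f_\alpha(\alpha)$ gives $\tilde g_\alpha \leq f_\alpha(\beta_\alpha)$, and a second application of Proposition~\ref{eRetrProp}.\ref{eRPHomo} (to $i_{\beta_\alpha,\alpha}(\tilde g_\alpha) \wedge f_\xi(\alpha) = 0$ for $\xi < \alpha$) yields $\tilde g_\alpha \wedge f_\xi(\beta_\alpha) = 0$ in $\BB_{\beta_\alpha}$ for every $\xi < \alpha$.

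Now Fodor applied to the regressive function $\alpha \mapsto \beta_\alpha$ on $S \cap C$ produces a stationary $T \subseteq S \cap C$ and $\beta^* < \lambda$ with $\beta_\alpha = \beta^*$ for all $\alpha \in T$. For $\alpha < \alpha'$ both in $T$: by the first bound applied at $\alpha$, $\tilde g_\alpha \leq f_\alpha(\beta^*)$; by the second bound applied at $\alpha'$ and the index $\alpha < \alpha'$, $\tilde g_{\alpha'} \wedge f_\alpha(\beta^*) = 0$. Hence $\tilde g_\alpha \wedge \tilde g_{\alpha'} = 0$ in $\BB_{\beta^*}$, so $\{\tilde g_\alpha : \alpha \in T\}$ is an antichain of cardinality $\lambda$ in $\BB_{\beta^*}$, contradicting $<\lambda$-cc of $\BB_{\beta^*}$. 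The step I expect to demand the most care is the bookkeeping of the two applications of the retraction identity: getting both $\tilde g_\alpha \leq f_\alpha(\beta_\alpha)$ and $\tilde g_\alpha \perp f_\xi(\beta_\alpha)$ for $\xi < \alpha$ is what allows the reflection structure at $\alpha$ to be pushed down intact to the fixed level $\beta^*$ after Fodor.
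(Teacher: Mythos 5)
Your proof is correct and follows essentially the same route as the paper's: use Fodor to stabilize the support $\beta_\alpha$ of a small-support approximant $g_\alpha$ obtained from the density of $C(\FFF\res\alpha)$ in $\BB_\alpha$ for $\alpha\in S$, then exploit the retraction identity of Proposition~\ref{eRetrProp}.\ref{eRPHomo} to push the incompatibility structure down to the fixed level $\beta^*$. The only notable (cosmetic) difference is the phrasing of the final contradiction: you show the $\tilde g_\alpha$ are \emph{pairwise incompatible} and so produce a $\lambda$-sized antichain in $\BB_{\beta^*}$, whereas the paper takes the contrapositive and uses ${<}\lambda$-cc of $\BB_\xi$ to find two \emph{compatible} $\psi(\alpha),\psi(\beta)$ from which $f_\alpha\wedge f_\beta>\0$ is then extracted. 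One small overstatement worth fixing: you cannot in general re-enumerate $A$ so that $\xi\mapsto\alpha_\xi$ is \emph{strictly} increasing, since several $f\in A$ may share the same support; a weakly increasing enumeration suffices, the set $C=\{\alpha:\alpha_\xi<\alpha\text{ for all }\xi<\alpha\}$ is still a club because $|A_\alpha|<\lambda$ for every $\alpha$, and nothing in your later steps actually requires $\alpha_\alpha\geq\alpha$ (your computation of $f_\alpha(\alpha)\wedge f_{\xi'}(\alpha)=\0$ for $\xi'<\alpha$ goes through in either case).
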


\begin{proof}
	Suppose by contradiction that there exists an antichain $\ap{f_\alpha: \alpha < \lambda}$.
	Let $h: \lambda \to \lambda$ be such that $h(\alpha) > \alpha, \supp(f_\alpha)$. Let $C$ be the club of closure points of $h$ (i.e. such that for all $\alpha \in C$, $h[\alpha] \subseteq h(\alpha)$). Then we can define a regressive function
	\[
	\begin{array}{llll}
		\phi : & S &\to& \lambda \\
		&\alpha &\mapsto& \min\bp{\supp(g): ~ g \in C(\FFF \res \alpha)^+, g < f_\alpha(\alpha)}
	\end{array}
	\]
	and a corresponding function $\psi: S \to C(\FFF)$ such that $\supp(\psi(\alpha)) = \phi(\alpha)$, $\psi(\alpha)(\alpha) < f_\alpha(\alpha)$. By Fodor's Lemma let $\xi \in \lambda$, $T \subset S$ be stationary  such that $\phi[T] =\bp{\xi}$.

	Since $\psi[T \cap C]$ has size $\lambda$ and $\BB_\xi$ is ${<}\lambda$-cc, there are $\alpha,\beta \in T \cap C$ such that $\psi(\alpha) \wedge \psi(\beta) \geq b > \0$ for some $b$ with $\supp(b) = \xi$. Moreover $f_\alpha(\alpha) > b(\alpha)$ holds and the support of $b$ is below $\alpha$, so that $f_\alpha \wedge b > \0$. Furthermore, $f_\beta(\beta) > b(\beta) \geq \cp{f_\alpha \wedge b}(\beta)$ and the support of $f_\alpha \wedge b$ is below $\beta$, thus $f_\beta \wedge f_\alpha \wedge b > \0$ contradicting the hypothesis that $\ap{f_\alpha: \alpha < \lambda}$ is an antichain.
\end{proof}

	\section{Two-step iterations and generic quotients}~\label{sec:twostepitquot}

In the first part of this section we define two-step iteration $\mathbb{B}*\dot{\mathbb{Q}}$,
following Jech~\cite[chapter 16]{JECH}
and we study the basic properties of the natural 
regular embedding of $\BB$ into $\mathbb{B}*\dot{\mathbb{Q}}$ where $\dot{\mathbb{Q}}$
is a $\BB$-name for a complete boolean algebra.

In the second part of this section we study the properties of generic quotients
given by $\BB$-names $\QQ/i[\dot{G}_{\BB}]$ where $i:\BB\to\QQ$ is a complete homomorphism
and we show that if we have a commutative diagram of complete homomorphisms:
\[
	\begin{tikzpicture}[xscale=1.5,yscale=-1.2]
		\node (B) at (0, 0) {$\BB$};
		\node (Q0) at (1, 0) {$\QQ_0$};
		\node (Q1) at (1, 1) {$\QQ_1$};
		\path (B) edge [->]node [auto] {$\scriptstyle{i_0}$} (Q0);
		\path (B) edge [->]node [auto,swap] {$\scriptstyle{i_1}$} (Q1);
		\path (Q0) edge [->]node [auto] {$\scriptstyle{j}$} (Q1);
	\end{tikzpicture}
	\]
and $G$ is a $V$-generic filter for $\BB$, then the map defined by 
$j/G([c]_{i_0[G]})=[j(c)]_{i_1[G]}$ is a complete
homomorphism in $V[G]$ and we also show a converse of this property.

In the third part of this section we show that the two approaches are equivalent in the sense that
$i:\BB\to\QQ$ is a complete homomorphism iff $\QQ$ is isomorphic to $\BB*\QQ/i[\dot{G}_{\BB}]$
and we prove a converse of the above factorization property when we start from $\BB$-names
for regular embeddings $\dot{k}:\dot{\QQ}\to\dot{\RR}$.

Finally in the last part we apply the above results to analyze generic quotients of iteration systems.

\subsection{Two-step iterations}

We present two-step iterations following~\cite{JECH}.

\begin{definition}
	Let $\BB$ be a complete boolean algebra, and $\dot{\QQ}$ be a $\BB$-name for a complete boolean algebra. We denote by $\BB \ast \dot{\QQ}$ the boolean algebra defined in $V$ whose
	elements are the equivalence classes of $\BB$-names for elements of $\dot{\QQ}$
	(i.e. $\dot{a}\in V^{\BB}$ such that $\Qp{\dot{a}\in\dot{\QQ}}_{\BB}=\1_{\BB}$) 
	modulo the equivalence relation:
	\[
	\dot{a} \approx \dot{b} ~ \Leftrightarrow ~ \Qp{\dot{a} = \dot{b}}_\BB = \1,
	\]
	with the following operations:
	\[ 
		[\dot{d}] \vee_{\BB \ast \dot{\QQ}} [\dot{e}] = [\dot{f}] \iff \Qp{\dot{d} \vee_{\dot{\QQ}} \dot{e} =\dot{f}}=\1_{\BB};
	\]
	
	\[
		\qquad \neg_{\BB \ast \dot{\QQ}}[\dot{d}]=[\dot{e}] 
	\]
	for any $\dot{e}$ such that $\Qp{\dot{e}=\neg_{\dot{\QQ}}\dot{d}}=\1_{\BB}$.
\end{definition}

Literally speaking our definition of $\BB\ast\dot{\QQ}$ yields an object whose domain is a family of 
proper classes of $\BB$-names. By means of Scott's trick we can arrange so that 
$\BB\ast\dot{\QQ}$ is indeed a set. We leave the details to the reader.

\begin{lemma}\label{lem:twostepit}
	Let $\BB$ be a complete boolean algebra, and $\dot{\QQ}$ be a $\BB$-name for a complete boolean algebra. Then $\BB\ast\dot{\QQ}$ is a complete boolean algebra and the maps 
	$i_{\BB\ast\dot{\QQ}}$, $\pi_{\BB\ast\dot{\QQ}}$ defined as
	\[
	\begin{array}{cccl}
		i_{\BB\ast\dot{\QQ}} :& \BB &\to& \BB \ast \dot{\QQ} \\
		& b &\mapsto& [\dot{d}_b]_\approx \\
		\pi_{\BB\ast\dot{\QQ}} :& \BB \ast \dot{\QQ} &\to& \BB \\
		& \qp{\dot{c}}_\approx &\mapsto& \Qp{\dot{c} > \0}
	\end{array}
	\]
	where $\dot{d}_b\in V^{\BB}$ is a $\BB$-name for an element of $\dot{\QQ}$ such that  
	$\Qp{ \dot{d}_b =\1_{\dot{\QQ}} }_{\BB} = b$ and 
	$\Qp{ \dot{d}_b =\0_{\dot{\QQ}} }_{\BB} = \neg b$,
	are a regular embedding with its associated retraction. 
\end{lemma}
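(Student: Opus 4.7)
The proof splits naturally into three parts: (i) that $\BB\ast\dot\QQ$ is a complete boolean algebra under the stated operations, (ii) that $i_{\BB\ast\dot\QQ}$ is a regular embedding, and (iii) that $\pi_{\BB\ast\dot\QQ}$ is its associated retraction. The unifying idea is that every axiom to be checked is internally forced by $\1_\BB$ in $V^\BB$ (where $\dot\QQ$ is a complete boolean algebra), so the strategy throughout is to translate each external statement about $\BB\ast\dot\QQ$ into an equivalent statement about boolean values in $\BB$ and exploit the fullness and mixing lemmas.

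First I would verify well-definedness of the operations on the quotient, using that $\Qp{\cdot=\cdot}_\BB$ is an equivalence relation on names forced into $\dot\QQ$ and that the internal boolean operations of $\dot\QQ$ respect it; existence of representatives for $\vee$ and $\neg$ is guaranteed by the fullness lemma, and sethood of the quotient by Scott's trick. The boolean-algebra axioms then transfer from the fact that $\1_\BB$ forces $\dot\QQ$ to satisfy them. For completeness, given a family $\{[\dot a_i]_\approx : i\in I\}$, I would form the $\BB$-name $\dot A=\{(\dot a_i,\1_\BB):i\in I\}$, apply fullness to obtain $\dot s$ with $\Qp{\dot s=\bigvee_{\dot\QQ}\dot A}_\BB=\1_\BB$, and then check that $[\dot s]_\approx$ is both an upper bound of the family in $\BB\ast\dot\QQ$ and, since any external upper bound $[\dot c]_\approx$ is represented by a name forced to dominate each $\dot a_i$ and hence $\dot s$, the least such.

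Next I would handle $i_{\BB\ast\dot\QQ}$: the mixing lemma applied to the antichain $\{b,\neg b\}$ with values $\{\1_{\dot\QQ},\0_{\dot\QQ}\}$ yields a name $\dot d_b$ with the required boolean values, unique up to $\approx$. The homomorphism and completeness properties reduce to internal identities verified by splitting along appropriate partitions of $\BB$ (for instance, $\{b_1\wedge b_2,\, b_1\wedge\neg b_2,\, \neg b_1\wedge b_2,\, \neg b_1\wedge\neg b_2\}$ for binary joins). Injectivity of $i_{\BB\ast\dot\QQ}$ uses the internal nondegeneracy $\Qp{\0_{\dot\QQ}\neq\1_{\dot\QQ}}_\BB=\1_\BB$: if $\dot d_b\approx \dot d_{b'}$, then on the piece $b\triangle b'$ of $\BB$ one would force $\0_{\dot\QQ}=\1_{\dot\QQ}$, so $b\triangle b'=\0_\BB$. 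Finally, unfolding $i_{\BB\ast\dot\QQ}(b)\geq[\dot c]_\approx$ as $\Qp{\dot c\leq\dot d_b}_\BB=\1_\BB$ and splitting along $\{b,\neg b\}$ yields the equivalence with $\Qp{\dot c>\0}_\BB\leq b$, whence $\bigwedge\{b:i_{\BB\ast\dot\QQ}(b)\geq[\dot c]_\approx\}=\Qp{\dot c>\0}_\BB=\pi_{\BB\ast\dot\QQ}([\dot c]_\approx)$, confirming that $\pi_{\BB\ast\dot\QQ}$ is the retraction of $i_{\BB\ast\dot\QQ}$.

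The main obstacle I anticipate is the bookkeeping of the completeness argument: one must handle a proper class of name-representatives (via Scott's trick), build a single $\BB$-name encoding an arbitrary index-set family of names, and verify that the internal supremum $\bigvee_{\dot\QQ}$ computed inside $V^\BB$ coincides with the external supremum in $\BB\ast\dot\QQ$. Once that is in place, the remaining verifications are routine translations between external boolean identities in $\BB\ast\dot\QQ$ and internal boolean identities forced by $\1_\BB$ in $V^\BB$.
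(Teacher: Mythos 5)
Your proposal is correct and follows essentially the same route as the paper: using fullness to obtain an internal supremum name $\dot s$ for completeness, the mixing lemma on $\{b,\neg b\}$ to build $\dot d_b$, internal boolean-value computations for the homomorphism properties, and unfolding $i(b)\geq[\dot c]_\approx$ to identify the retraction as $\Qp{\dot c>\0}_\BB$. One small point in your favor: you make explicit the nondegeneracy assumption $\Qp{\0_{\dot\QQ}\neq\1_{\dot\QQ}}_\BB=\1_\BB$ needed for both the uniqueness of $\dot d_b$ and the injectivity of $i_{\BB\ast\dot\QQ}$, which the paper uses silently.
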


\begin{proof}
	We leave to the reader to verify that $\BB * \dot{\QQ}$ is a boolean algebra.
	We can also check that
	\[
	[\dot{c}]\leq[\dot{a}]\iff \Qp{\dot{c}\vee\dot{a}=\dot{a}}=\1_{\BB}
	\iff \Qp{\dot{c}\leq\dot{a}}=\1_{\BB}.
	\]
	Observe that $\BB * \dot{\QQ}$ is also complete: if 
	$\{ [\dot{d}_\alpha] : \alpha < \delta \} \subseteq \BB\ast\dot{\QQ}$, 
	let $\dot{c}$ be such that 
	$\Qp{\dot{c}=\bigvee\bp{\dot{d}_\xi: {\xi<\delta}}}=\1$.
	Then $[\dot{c}] \geq \bigvee\bp{ [\dot{d}_\xi]: {\xi<\delta}}$
	since for all $\alpha<\delta$
	\[
	\Qp{\bigvee\bp{ \dot{d}_\xi: {\xi<\delta}}\geq\dot{d}_\alpha}=\1_{\BB}.
	\]
	Moreover 
	if
	\[
		\Qp{\dot{a} \geq \dot{d}_\alpha}=\1,
	\] 
	for all $\alpha<\delta$, then
	\[
	\bigwedge\bp{\Qp{\dot{a} \geq \dot{d}_\alpha}:{\alpha<\delta}}=\1,
	\]
	
	thus $\Qp{\dot{a} \geq \dot{c}}=\1$, hence $[\dot{a}] \geq [\dot{c}]$, which gives that
	$[\dot{c}] = \bigvee\bp{[\dot{d}_\alpha]: {\alpha<\delta} }$. 
	
	Now we prove that $i_{\BB\ast\dot{\QQ}}$ is a regular embedding and that $\pi_{\BB\ast\dot{\QQ}}$ is its associated retraction.
		\begin{itemize}
    		\item First of all a standard application of the mixing lemma to the maximal antichain
		$\{b,\neg b\}$ and the family of $\BB$-names $\{\dot{1},\dot{0}\}$
		shows that for each $b \in \BB$ there exists a unique 
		$[\dot{d}_b] \in   		\BB\ast\dot{\QQ}$ 
		such that $\Qp{ \dot{d}_b =1 } = b$ and $\Qp{ \dot{d}_b =0} = \neg b$. 
		Now, suppose that there exist $[\dot{c}], [\dot{d}] \in \BB\ast\dot{\QQ}$ such that 
		$i_{\BB\ast\dot{\QQ}}(b) =[\dot{c}]=[\dot{d}]$. Then
			\[
				\begin{split}
   				 	b&=\Qp{\dot{c}=1}\wedge \Qp{\dot{d}=1}\leq \Qp{\dot{c}=\dot{d}}\\
    				\neg b&= \Qp{\dot{c}=0}\wedge \Qp{\dot{d}=0}\leq \Qp{\dot{c}=\dot{d}}.
  			    \end{split}
			\]
			Hence $\1 = b \vee \neg b \leq \Qp{\dot{c}=\dot{d}}$ and this implies $[\dot{c}]=[\dot{d}]$.
	
			\item $i_{\BB\ast\dot{\QQ}}$ preserves negation. Observe that $\neg [\dot{d}_{b}]= [\dot{d}_{\neg b}]$. In  fact we have that 
			\[
			    \neg \Qp{\dot{d}_{b} = 1}=  \Qp{\dot{d}_{b} = 0}=\Qp{\neg \dot{d}_{b} = 1}= \neg b
			\]  
			    and similarly 
			\[ 
				\neg \Qp{\dot{d}_{b} = 0}= \Qp{\neg \dot{d}_{b} = 0}=b;
			\]
			so, thanks to the uniqueness proved above, $[\neg \dot{d}_{b}]=[\dot{d}_{\neg b}]$. Therefore
			\[
			   i_{\BB\ast\dot{\QQ}}(\neg b)= [\dot{d}_{\neg b}]= \neg[\dot{d}_{b}]= \neg i_{\BB\ast\dot{\QQ}}(b).
			\]
			
			\item $i_{\BB\ast\dot{\QQ}}$ preserves joins. Consider $\{b_\alpha \in \BB : \alpha < \delta\}$. We have that
			\[
				\Qp{\bigvee \dot{d}_{b_\alpha} = 0} =  \bigwedge \Qp{\dot{d}_{b_\alpha}=0}= \bigwedge (\neg b_\alpha) = \neg (\bigvee b_\alpha).
			\]
			We have also 
			\[\begin{split}
			     \Qp{\bigvee\dot{d}_{b_\alpha} = 1} &\leq \Qp{\bigvee \dot{d}_{b_\alpha} > 0} = \bigvee \Qp{\dot{d}_{b_\alpha} > 0} \\ &=\bigvee \Qp{\dot{d}_{b_\alpha} = 1} \leq  \Qp{\bigvee\dot{d}_{b_\alpha} = 1};
			  \end{split}
			\]
			then $\Qp{\bigvee\dot{d}_{b_\alpha} = 1} =\bigvee \Qp{\dot{d}_{b_\alpha} = 1}= \bigvee b_\alpha$. Hence
			\[
			    i_{\BB\ast\dot{\QQ}}\cp{\bigvee b_\alpha}= \qp{\bigvee\dot{d}_{b_\alpha}}= \bigvee \qp{ i_{\BB\ast\dot{\QQ}} (b_\alpha)}.
			\]
			
			\item $i_{\BB\ast\dot{\QQ}}$ is regular. If $i_{\BB\ast\dot{\QQ}}(b)=i_{\BB\ast\dot{\QQ}}(b') = [\dot{d}]$, then $b'= \Qp{\dot{d}=1}= b$.
			
			\item We have to show that $\pi_{i_{\BB\ast\dot{\QQ}}}([\dot{c}]) = \Qp{\dot{c} >0}$: 
			by applying the definition of retraction associated to $i_{\BB\ast\dot{\QQ}}$,
			\[
				\pi_{i_{\BB\ast\dot{\QQ}}}([\dot{c}])= \bigwedge\{b \in \BB : i_{\BB\ast\dot{\QQ}}(b)\geq [\dot{c}]\}.
			\]
			If $b$ is such that $i_{\BB\ast\dot{\QQ}}(b)\geq [\dot{c}]$, then $\Qp{\dot{d_b} \geq \dot{c}}=\1$ and we obtain
			\[
			   b= \Qp{\dot{d}_b = 1}=\Qp{\dot{d}_b >0} \geq \Qp{\dot{c} >0}\wedge \Qp{\dot{d_b} \geq \dot{c}}= \Qp{\dot{c} >0},
			\]
			so we have the first inequality
			\[
			\pi_{i_{\BB\ast\dot{\QQ}}}([\dot{c}])\geq\Qp{\dot{c}>0}.
			\]
			In order to obtain the other one, let $i_{\BB\ast\dot{\QQ}}(\Qp{\dot{c} >0})=[\dot{d}]$.
			Then on the one hand:
			\[
				\neg \Qp{\dot{c} = 0} = \Qp{\dot{c} > 0} = \Qp{\dot{d} = 1} \leq \Qp{\dot{c} \leq \dot{d}}.
			\]
			On the other hand:
			\[
			 \Qp{\dot{c} = 0} \leq \Qp{\dot{c} \leq \dot{d}}.
			\]
			In particular since $\neg \Qp{\dot{c} = 0}\vee\Qp{\dot{c} = 0}=\1_{\BB}$ we get that
			\[
			\Qp{\dot{c} \leq \dot{d}}=\1_{\BB},
			\]
			and thus that
			$[\dot{c}]\leq [\dot{d}] = i_{\BB\ast\dot{\QQ}} (\Qp{\dot{c} >0})$, 
			i.e. 
			\[
			\pi_{i_{\BB\ast\dot{\QQ}}}([\dot{c}])\leq\Qp{\dot{c}>0}
			\]
as was to be shown.
			
	 \end{itemize}
\end{proof}

When clear from the context, we shall feel free to omit the subscripts in $i_{\BB\ast\dot{\QQ}}$, $\pi_{\BB\ast\dot{\QQ}}$.

\begin{remark}
	This definition is provably equivalent to Kunen's two-step iteration of posets (as in \cite{KUNEN}), i.e. 
	$\RO(P \ast \dot{Q})$ is isomorphic to $\RO(P) \ast \RO(\dot{Q})$.
\end{remark}

We shall need in several occasions the following fact:

\begin{fact}\label{6antichainDQ}
	$A= \{[\dot{c}_\alpha]_{\approx} : \alpha \in \lambda\}$ is a maximal antichain in $\DD= \BB * \dot{\QQ}$, if and only if  
	\[
		  \Qp{ \{\dot{c}_\alpha : \alpha \in \lambda\} \mbox{ is a maximal antichain in } \dot{\QQ} } = \1.
	\]
\end{fact}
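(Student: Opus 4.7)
The strategy is to translate the statements about antichains in $\DD = \BB*\dot{\QQ}$ into boolean-valued statements about $\dot{\QQ}$, using the order and boolean operations computed in Lemma~\ref{lem:twostepit}. First I would record the three basic translations: $[\dot{c}] \leq [\dot{a}]$ in $\DD$ iff $\Qp{\dot{c} \leq \dot{a}}_\BB = \1$; $[\dot{c}]\wedge[\dot{a}] = [\dot{0}]$ iff $\Qp{\dot{c} \wedge \dot{a} = \0}_\BB = \1$; and $[\dot{c}] > [\dot{0}]$ iff $\pi_{\BB*\dot{\QQ}}([\dot{c}]) = \Qp{\dot{c} > \0}_\BB > \0_\BB$. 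With these in hand, both directions reduce to straightforward manipulations.

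For the forward direction, suppose $A = \{[\dot{c}_\alpha] : \alpha \in \lambda\}$ is a maximal antichain in $\DD$. Pairwise incompatibility in $\DD$ immediately gives $\Qp{\dot{c}_\alpha \wedge \dot{c}_\beta = \0}_\BB = \1$ for all $\alpha \neq \beta$, hence the conjunction over all pairs is $\1$. The maximality statement is the hard point: I would argue by contradiction. Suppose $b = \Qp{\exists x \in \dot{\QQ}\ (x > \0 \wedge \forall \alpha \in \check\lambda\ x \wedge \dot{c}_\alpha = \0)}_\BB > \0$. By the fullness lemma, fix a name $\dot{e}$ for an element of $\dot{\QQ}$ with $\Qp{\dot{e} > \0 \wedge \forall \alpha\ \dot{e} \wedge \dot{c}_\alpha = \0}_\BB \geq b$. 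Then, by the mixing lemma applied to the maximal antichain $\{b, \neg b\}$ and to $\{\dot{e}, \dot{0}\}$, construct $\dot{e}'$ so that $\Qp{\dot{e}' = \dot{e}}_\BB \geq b$ and $\Qp{\dot{e}' = \dot{0}}_\BB \geq \neg b$. Then $\pi_{\BB*\dot{\QQ}}([\dot{e}']) = \Qp{\dot{e}' > \0}_\BB \geq b > \0_\BB$, so $[\dot{e}'] > [\dot{0}]$ in $\DD$, while for every $\alpha$ we have $\Qp{\dot{e}' \wedge \dot{c}_\alpha = \0}_\BB \geq b \vee \neg b = \1$, so $[\dot{e}'] \wedge [\dot{c}_\alpha] = [\dot{0}]$. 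This contradicts the maximality of $A$.

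For the backward direction, assume $\Qp{\{\dot{c}_\alpha : \alpha \in \lambda\} \text{ is a maximal antichain in } \dot{\QQ}}_\BB = \1$. The pairwise incompatibility clause gives $\Qp{\dot{c}_\alpha \wedge \dot{c}_\beta = \0}_\BB = \1$ for every pair, hence the $[\dot{c}_\alpha]$ form an antichain in $\DD$. For maximality, take an arbitrary $[\dot{e}] \in \DD^+$, so $b := \Qp{\dot{e} > \0}_\BB > \0_\BB$. The hypothesis yields $\Qp{\exists \alpha \in \check\lambda\ \dot{e} \wedge \dot{c}_\alpha > \0}_\BB \geq b$, and using $\Qp{\exists \alpha \in \check\lambda\ \phi(\check\alpha)}_\BB = \bigvee_{\alpha \in \lambda} \Qp{\phi(\check\alpha)}_\BB$ we conclude that at least one $\Qp{\dot{e} \wedge \dot{c}_\alpha > \0}_\BB > \0_\BB$. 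But this precisely says $\pi_{\BB*\dot{\QQ}}([\dot{e}] \wedge [\dot{c}_\alpha]) > \0_\BB$, hence $[\dot{e}]$ is compatible with $[\dot{c}_\alpha]$ in $\DD$, establishing maximality.

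The main obstacle is the fullness and mixing step in the forward direction, where one must glue together a counterexample name inside $\dot{\QQ}$ on a piece $b$ with the trivial name $\dot{0}$ on $\neg b$ to obtain a single equivalence class in $\DD$ that is simultaneously positive and incompatible with every $[\dot{c}_\alpha]$; once this construction is carried out, all remaining verifications are direct translations through Lemma~\ref{lem:twostepit}.
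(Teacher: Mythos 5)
Your proof is correct, but it takes a noticeably different route from the paper's. You decompose ``maximal antichain'' as ``pairwise incompatible, and every nonzero element is compatible with some member,'' which forces you in the forward direction to construct a putative counterexample via the fullness lemma and then glue it with $\dot{0}$ via mixing to obtain a legitimate element of $\DD$, and in the backward direction to unwind an existential over $\check\lambda$. The paper instead decomposes ``maximal antichain'' as ``pairwise incompatible, and $\bigvee A = \1$,'' and then simply observes that both clauses commute with the map $\dot{c}\mapsto[\dot{c}]_\approx$ by what was already established in Lemma~\ref{lem:twostepit}: $\Qp{\dot{c}_\alpha\wedge\dot{c}_\beta=\dot{0}}=\1 \iff [\dot{c}_\alpha]\wedge[\dot{c}_\beta]=[\dot{0}]$ and $\Qp{\bigvee\dot{c}_\alpha=\dot{1}}=\1 \iff \bigvee[\dot{c}_\alpha]=[\dot{1}]$, the latter using the completeness computation $\bigvee[\dot{c}_\alpha]=[\bigvee\dot{c}_\alpha]$ from that lemma. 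That supremum-based reformulation avoids fullness and mixing entirely and makes the fact a two-line corollary of the earlier work, whereas your argument re-derives that structure from scratch. One small imprecision worth noting in your write-up: fullness alone gives $\Qp{\dot{e}\in\dot{\QQ}}\geq b$, not $=\1$, so $\dot{e}$ is not yet an element of $\DD$; your subsequent mixing with $\dot{0}$ on $\neg b$ does repair this (producing $\dot{e}'$ with $\Qp{\dot{e}'\in\dot{\QQ}}=\1$), but the phrase ``fix a name $\dot{e}$ for an element of $\dot{\QQ}$'' overstates what fullness delivers at that stage.
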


\begin{proof}
	It is sufficient to observe the following:
	\[
 		 \Qp{ \dot{c}_\alpha \wedge \dot{c}_\beta = \dot{0} } = \1 \iff \qp{\dot{c}_\alpha}_\approx \wedge \qp{\dot{c}_\beta}_\approx = \qp{\dot{0}}_\approx;
	\]
	\[
 		 \Qp{ \bigvee \dot{c}_{\alpha} = \dot{1} } = \1 \iff \bigvee\qp{\dot{c}_\alpha}_\approx= \qp{\bigvee \dot{c}_\alpha}_\approx=\qp{\dot{1}}_\approx.
	\]
\end{proof}

We want also to address briefly how to handle the case of three steps iteration in our framework.
\begin{fact}
Assume $\BB\in V$ is a complete boolean algebra, $\dot{\QQ}\in V^{\BB}$ is a
$\BB$-name for a complete boolean algebra and $\dot{\RR}\in V^{\BB\ast\dot{\QQ}}$ is 
a $\BB*\dot{\QQ}$-name for a complete boolean algebra.

Let $G$ be any ultrafilter on $\BB$ and $K$ be an ultrafilter on $\BB/G$.
Set
\[
H=\{c:[c]_G\in K\}
\]
Then 
\[
K=\{[c]_G: c\in H\}
\]
and
$((\BB\ast\dot{\QQ})*\dot{\RR}/_G)/_K$ is isomorphic to 
$(\BB\ast\dot{\QQ})*\dot{\RR}/_H$ via the map $[[c]_G]_K\mapsto [c]_H$.
\end{fact}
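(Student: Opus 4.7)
The plan is to read $K$ as an ultrafilter on the intermediate quotient $(\BB\ast\dot{\QQ})/G$, which is the only reading that makes the conclusion non-trivial (literally $K$ on $\BB/G$ is vacuous, as $\BB/G$ is the two-element algebra when $G$ is an ultrafilter on $\BB$). Let $i:\BB\to\BB\ast\dot{\QQ}$ and $j:\BB\ast\dot{\QQ}\to(\BB\ast\dot{\QQ})\ast\dot{\RR}$ denote the canonical regular embeddings from Lemma~\ref{lem:twostepit}, so that $H=\{c\in\BB\ast\dot{\QQ}:[c]_G\in K\}$ is the pullback of $K$ along the quotient surjection $\BB\ast\dot{\QQ}\twoheadrightarrow(\BB\ast\dot{\QQ})/G$. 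First I would verify that $H$ is an ultrafilter on $\BB\ast\dot{\QQ}$ extending $i[G]$: upward closure, closure under meets and the ultrafilter dichotomy transfer from $K$ through the surjection (a boolean homomorphism), while $i(b)\in H$ for $b\in G$ holds because $\neg i(b)$ lies in the ideal on $\BB\ast\dot{\QQ}$ dual to $G$, so $[i(b)]_G=\1\in K$. The identity $K=\{[c]_G:c\in H\}$ is then immediate from the surjectivity of the quotient map together with the definition of $H$.

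Next I would define $\phi:((\BB\ast\dot{\QQ})\ast\dot{\RR}/_G)/_K\to(\BB\ast\dot{\QQ})\ast\dot{\RR}/_H$ by $\phi([[c]_G]_K)=[c]_H$. For well-definedness, suppose $[[c]_G]_K=[[c']_G]_K$; then $[c\triangle c']_G$ lies in the ideal on $(\BB\ast\dot{\QQ})\ast\dot{\RR}/_G$ dual to $K$, which, viewed through the inclusion $(\BB\ast\dot{\QQ})/G\hookrightarrow(\BB\ast\dot{\QQ})\ast\dot{\RR}/_G$ induced by $j$, is generated by $\{[\neg j(d)]_G:d\in H\}$. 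So there exist $d\in H$ and $b\in G$ with $(c\triangle c')\wedge j(d)\leq\neg(j\circ i)(b)$. Setting $d'=d\wedge i(b)$, one has $d'\in H$ (both $d$ and $i(b)$ lie in $H$, and $H$ is meet-closed) and $c\triangle c'\leq\neg j(d')$, so $c\triangle c'$ is in the ideal on $(\BB\ast\dot{\QQ})\ast\dot{\RR}$ dual to $H$, whence $[c]_H=[c']_H$.

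Injectivity runs the same chain in reverse: if $c\triangle c'\leq\neg j(d')$ with $d'\in H$, then $[c\triangle c']_G\leq\neg[j(d')]_G$ and $[j(d')]_G=[d']_G\in K$, so $[c\triangle c']_G$ sits in the ideal dual to $K$, whence $[[c]_G]_K=[[c']_G]_K$. Surjectivity is immediate since every element of $(\BB\ast\dot{\QQ})\ast\dot{\RR}/_H$ has the form $[c]_H$, and preservation of the boolean operations is automatic because each of the two successive quotient maps is a boolean homomorphism, so $\phi$ is just the factorization of their composite through $/_H$. The only nontrivial bookkeeping in the whole argument is tracking which ideal lives on which algebra in the chain $(\BB\ast\dot{\QQ})\ast\dot{\RR}\to(\BB\ast\dot{\QQ})\ast\dot{\RR}/_G\to((\BB\ast\dot{\QQ})\ast\dot{\RR}/_G)/_K$, and that is precisely what the displayed inequality above accomplishes; I do not expect any deeper obstacle.
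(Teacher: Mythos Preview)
Your proof is correct. The paper states this result as a \emph{Fact} without any proof, so there is nothing to compare against; you have supplied the argument the paper omits. Your identification of the typo (that $K$ must live on $(\BB\ast\dot{\QQ})/_G$, not on the trivial algebra $\BB/_G$) is right and necessary, and your treatment is the standard third-isomorphism-theorem computation: the two successive quotients $(\BB\ast\dot{\QQ})\ast\dot{\RR}\to(\BB\ast\dot{\QQ})\ast\dot{\RR}/_G\to((\BB\ast\dot{\QQ})\ast\dot{\RR}/_G)/_K$ compose to the single quotient by the ideal dual to $j[H]$, and the key inequality $(c\triangle c')\wedge j(d)\leq\neg(j\circ i)(b)\Rightarrow c\triangle c'\leq\neg j(d\wedge i(b))$ is exactly the step that absorbs the $G$-ideal into the $H$-ideal. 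No gaps.
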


\subsection{Generic quotients}

We now outline the definition and properties of generic quotients.

\begin{proposition}\label{qQuotientCba}
	Let $i: \BB \to \QQ$ be a regular embedding of complete boolean algebras and $G$ be a $V$-generic filter for $\BB$. Then $\QQ /_G$, defined with abuse of notation as the quotient of $\QQ$ with the filter generated by $i[G]$, is a boolean algebra in $V[G]$.
\end{proposition}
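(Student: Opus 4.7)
The plan is to reduce the claim to the general fact recalled earlier in the notation subsection, namely that for any ideal $I \subseteq \QQ$, the quotient $\QQ/I$ is a boolean algebra. All one needs is to verify that the ideal in question is an honest proper ideal and that everything used in its construction is available in $V[G]$.

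First I would unwind the notation: by the convention that $\QQ/G$ stands for $\QQ/({\uparrow}\, i[G])^{*}$, it suffices to consider the filter $F = {\uparrow}\, i[G] = \{q \in \QQ : \exists b \in G\ q \geq i(b)\}$ computed inside $V[G]$, and its dual ideal $I = F^{*}$. I would check that $F$ is a filter in $\QQ$ by arguing that $i[G]$ is already directed downwards: if $b_{1},b_{2} \in G$ then $b_{1}\wedge b_{2}\in G$ and $i(b_{1})\wedge i(b_{2}) = i(b_{1}\wedge b_{2}) \in i[G]$, since $i$ preserves finite meets. Upward closure is automatic.

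Next I would verify properness, which is the only substantive point: if $\0_{\QQ}\in F$ then $\0_{\QQ}\geq i(b)$ for some $b\in G$, i.e.\ $i(b) = \0_{\QQ}$, whence $b = \0_{\BB}$ by injectivity of $i$ (as $i$ is a regular embedding). But $\0_{\BB}\notin G$ because $G$ is a (proper) filter, a contradiction. Hence $F$ is a proper filter of $\QQ$ and consequently $I = F^{*}$ is a proper ideal of $\QQ$; both $F$ and $I$ plainly belong to $V[G]$ since $i,\QQ\in V$ and $G\in V[G]$.

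With $I$ in hand as a proper ideal of the boolean algebra $\QQ$ (as seen in $V[G]$), I invoke the fact recalled in the notation subsection: for any ideal $I\subseteq\QQ$, the quotient $\QQ/I$, defined via $p\approx q\iff p\triangle q\in I$ with boolean operations induced pointwise, is a boolean algebra. Applying this inside $V[G]$ yields that $\QQ/_{G}$ is a boolean algebra in $V[G]$, completing the proof. The only potential pitfall is the properness of $I$, which I expect to be the one genuine step; completeness of $\QQ/_{G}$ is \emph{not} asserted here and will be addressed separately in the subsequent development.
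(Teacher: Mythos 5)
Your proof is correct and takes essentially the same approach as the paper's, which simply observes that $\QQ$ remains a boolean algebra in $V[G]$ and that $i[G]$ generates a (proper) filter there, and then quotients; you merely spell out the directedness and properness checks that the paper leaves implicit.
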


\begin{proof}
	We have that
	\[
		V[G] \models \QQ \mbox{ is a boolean algebra and } i[G] \mbox{ generates a filter on } \QQ.
	\]
	Thus $\QQ /_G$ is a boolean algebra in $V[G]$ such that
	\begin{itemize}
	\item $\qp{a}=\qp{b}$ if and only if $a \triangle b \in {i[G]}^*$;
	\item $\qp{a} \vee \qp{b} = \qp{a \vee b}$;
	\item $\neg \qp{a} = \qp{\neg a}$;
	\end{itemize}
	where ${i[G]}^*$ is the dual ideal of the filter ${i[G]}$.
\end{proof}

\begin{lemma} \label{qElmRepr}
	Let $i: \BB \to \QQ$ be a regular embedding, $\dot{G}$ be the canonical name for a generic filter for $\BB$ and $\dot{d}$ be a $\BB$-name for an element of $\QQ/_{\dot{G}}$. Then there exists a unique $c \in \QQ$ such that $\Qp{\dot{d} = \qp{c}_{i[\dot{G}]} } = \1_\BB$.
\end{lemma}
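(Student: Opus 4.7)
The plan is to establish existence by a mixing--style construction and uniqueness by the fact that the associated retraction $\pi=\pi_i$ sends $\QQ^+$ into $\BB^+$ (Proposition~\ref{eRetrProp}). For each $c\in\QQ$ set $b_c=\Qp{\dot d=\qp{\check c}_{i[\dot G]}}_\BB$.

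First I would show that $\bigvee_{c\in\QQ} b_c=\1_\BB$: otherwise a $V$--generic filter $G$ disjoint from every $b_c$ would yield, by the forcing theorem, an element $\dot d_G\in \QQ/i[G]$ different from $\qp{c}_{i[G]}$ for every $c\in\QQ$, contradicting the fact that every element of $\QQ/i[G]$ is of this form by construction. From this, using Zorn's Lemma, I would extract a maximal antichain $A\subseteq\BB^+$ together with a choice function $a\mapsto c_a$ in $\QQ$ with $a\leq b_{c_a}$, and set
\[
c=\bigvee_{a\in A}\bigl(i(a)\wedge c_a\bigr)\in\QQ.
\]
Using Proposition~\ref{eRetrProp}.\ref{eRPHomo} and the incompatibility of the elements of $A$, one computes $c\wedge i(a)=c_a\wedge i(a)$ for every $a\in A$, whence $(c\triangle c_a)\wedge i(a)=\0_\QQ$ and therefore $a\wedge\pi(c\triangle c_a)=\0_\BB$.

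The main step is to translate equality in the quotient into an identity involving $\pi$: for any $c,c'\in\QQ$,
\[
\Qp{\qp{\check c}_{i[\dot G]}=\qp{\check{c'}}_{i[\dot G]}}_\BB=\neg\pi(c\triangle c').
\]
This follows from the observation that, for a $V$--generic $G$, the dual ideal of the filter generated by $i[G]$ consists exactly of those $q\in\QQ$ with $\pi(q)\notin G$, using that $G$ is an ultrafilter and that $i[G]$ and $\pi_i^{-1}[G]$ generate the same filter in $\QQ$ by Proposition~\ref{eRetrProp}. Granted this identity, the previous paragraph gives $a\leq \neg\pi(c\triangle c_a)=\Qp{\qp{\check c}_{i[\dot G]}=\qp{\check{c_a}}_{i[\dot G]}}_\BB$, and combining with $a\leq b_{c_a}$ one obtains $a\leq \Qp{\dot d=\qp{\check c}_{i[\dot G]}}_\BB$. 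Maximality of $A$ promotes this to $\Qp{\dot d=\qp{\check c}_{i[\dot G]}}_\BB=\1_\BB$, giving existence.

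For uniqueness, if $c,c'\in\QQ$ both satisfy the conclusion, then by the displayed identity $\pi(c\triangle c')=\0_\BB$, and since $\pi$ sends $\QQ^+$ into $\BB^+$ by Proposition~\ref{eRetrProp}.\ref{eRPComp}, we conclude $c\triangle c'=\0_\QQ$, i.e.\ $c=c'$. The main obstacle is the translation identity $\Qp{\qp{\check c}_{i[\dot G]}=\qp{\check{c'}}_{i[\dot G]}}_\BB=\neg\pi(c\triangle c')$; once it is in hand, both halves of the lemma reduce to the boolean identities already established for $\pi_i$.
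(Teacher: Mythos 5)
Your proposal is correct and follows the same route as the paper: mix the candidates $c_a$ over a maximal antichain $A$ deciding $\dot d$, set $c=\bigvee_{a\in A}\bigl(i(a)\wedge c_a\bigr)$, exploit the identity $\Qp{\qp{\check c}_{i[\dot G]}=\qp{\check{c'}}_{i[\dot G]}}_\BB=\neg\pi_i(c\triangle c')$ to promote to $\1_\BB$, and deduce uniqueness from the fact that $\pi_i$ sends $\QQ^+$ into $\BB^+$. The paper simply records the same identity by writing the dual ideal name explicitly as $\dot I=\bp{\ap{q,\neg\pi_i(q)}:q\in\QQ}$, where you instead derive the characterization of the dual ideal from forcing considerations.

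One small caveat: the parenthetical remark that ``$i[G]$ and $\pi_i^{-1}[G]$ generate the same filter in $\QQ$'' is false in general. If $q\in\QQ$ has $\pi_i(q)=\pi_i(\neg q)=\1_\BB$ (which happens whenever $i$ is not surjective), then $q\in\pi_i^{-1}[G]$ for every $G$, yet $q$ lies in the filter generated by $i[G]$ only if $q=\1_\QQ$; in fact $\pi_i^{-1}[G]$ need not even be downward directed. Fortunately you don't actually need that claim: the correct characterization of the dual ideal follows directly, since $q$ lies in the dual ideal of the filter generated by $i[G]$ iff $q\leq i(\neg g)$ for some $g\in G$ iff $\pi_i(q)\leq\neg g$ for some $g\in G$ iff $\pi_i(q)\notin G$ (using that $G$ is an ultrafilter). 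So the identity you rely on is true and the rest of your argument goes through unchanged.
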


\begin{proof}
	First, notice that the $\BB$-name for the dual of the filter generated by $i[\dot{G}]$ is 
	$\dot{I} = \bp{ \ap{c,\neg\pi_i(c)}: c \in \QQ }$.
	\begin{description}
		\item[Uniqueness.] Suppose that $c_0, c_1$ are such that $\Qp{\dot{d} = \qp{c_k}_{\dot{I}} } = \1_\BB$ for $k < 2$. Then $\Qp{\qp{c_0}_{\dot{I}} = \qp{c_1}_{\dot{I}} } = \1_\BB$ hence $\Qp{ c_0 \triangle c_1 \in \dot{I} } = \neg \pi_i(c_0 \triangle c_1) = \1_\BB$. This implies that $\pi_i(c_0 \triangle c_1) = \0_\BB \Rightarrow c_0 \triangle c_1 = \0_\BB \Rightarrow c_0 = c_1$.
		\item[Existence.] Let $A \subset \BB$ be a maximal antichain deciding the value of $\dot{d}$, and for every $a \in A$ let $c_a$ be such that $a \Vdash \dot{d} = \qp{c_a}_{\dot{I}}$. Let $c \in \QQ$ be such that $c = \bigvee \bp{ i(a) \wedge c_a : ~ a \in A}$, so that
		\[
		\Qp{\qp{c}_{\dot{I}} = \qp{c_a}_{\dot{I}}} = \Qp{c \triangle c_a \in \dot{I}} = \neg \pi_i(c \triangle c_a) \geq \neg \pi_i(i(\neg a)) = a
		\]
		since $c \triangle c_a \leq \neg i(a) = i(\neg(a))$. Thus,
		\[
		\begin{array}{lll}
			\Qp{\dot{d} = \qp{c}_{\dot{I}} } &\geq& \Qp{\dot{d} = \qp{c_a}_{\dot{I}}} \wedge \Qp{\qp{c}_{\dot{I}} = \qp{c_a}_{\dot{I}}} \geq a \wedge a = a
		\end{array}
		\]
		The above inequality holds for any $a \in A$, so $\Qp{\dot{d} = \qp{c}_{\dot{I}} } \geq \bigvee A = \1_\BB$ concluding the proof. \qedhere
	\end{description}
\end{proof}

\begin{proposition}\label{qQuotientC}
	Let $i: \BB \to \QQ$ be a regular embedding of complete boolean algebras and $G$ be a $V$-generic filter for $\BB$. Then $\QQ /_G$ is a complete boolean algebra in $V[G]$.
\end{proposition}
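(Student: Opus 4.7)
The plan is to show that for any $\BB$-name $\dot{X}$ for a subset of $\QQ/i[\dot{G}]$, one can construct a single element $s \in \QQ$ in $V$ such that $\Vdash_\BB [\check{s}]_{i[\dot{G}]} = \sup \dot{X}$. Once this is done, completeness in $V[G]$ follows immediately: any $X \in V[G]$ with $X \subseteq \QQ/_G$ has a name, hence a supremum.

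The first ingredient is Lemma~\ref{qElmRepr}, which provides, for every $\BB$-name for an element of $\QQ/i[\dot{G}]$, a unique canonical representative in $\QQ$. Consequently, a name $\dot{X}$ for a subset is entirely determined (up to forced equality) by the ``characteristic function'' $b: \QQ \to \BB$ defined by $b(c) = \Qp{[\check{c}]_{i[\dot{G}]} \in \dot{X}}_\BB$. I would then define
\[
s := \bigvee \bp{ c \wedge i(b(c)) : c \in \QQ } \in \QQ,
\]
using that $\QQ$ is complete in $V$.

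To verify that $[\check s]_{i[\dot G]}$ is forced to be the supremum of $\dot{X}$, I would work with boolean values. For the upper bound, $c \wedge i(b(c)) \leq s$ gives $\Qp{[\check{c}] \in \dot{X} \to [\check{c}] \leq [\check{s}]}_\BB = \1_\BB$ for every $c$. For the least upper bound, one must check that for every canonical representative $t \in \QQ$ of a potential upper bound, $\Qp{[\check t] \text{ is u.b. of } \dot X}_\BB \leq \Qp{[\check s] \leq [\check t]}_\BB$. Unfolding the definitions (using that $\Qp{[\check c] \leq [\check t]}_\BB = \neg\pi_i(c \wedge \neg t)$ since $\langle i[\dot G]\rangle^*$ consists of those $x$ with $\pi_i(x) \notin \dot G$), these two boolean values equal respectively
\[
\neg \bigvee_{c \in \QQ} \bigl( b(c) \wedge \pi_i(c \wedge \neg t) \bigr) \quad \text{and} \quad \neg \pi_i(s \wedge \neg t).
\]
The cornerstone of the whole argument is then the computation
\[
\pi_i(s \wedge \neg t) = \pi_i\Bigl(\bigvee_{c} (c \wedge \neg t) \wedge i(b(c))\Bigr) = \bigvee_{c} b(c) \wedge \pi_i(c \wedge \neg t),
\]
using that $\pi_i$ preserves arbitrary joins (Proposition~\ref{eRetrProp}.\ref{eRPJoins}) and the identity $\pi_i(x \wedge i(b)) = b \wedge \pi_i(x)$ (Proposition~\ref{eRetrProp}.\ref{eRPHomo}). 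This shows the two boolean values are equal, and thus the implication is forced with value $\1_\BB$.

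The main obstacle, in my view, is conceptual rather than technical: recognizing that Lemma~\ref{qElmRepr} allows one to replace the seemingly unruly class of $\BB$-names for subsets of $\QQ/i[\dot G]$ by the $V$-definable indexing $c \mapsto b(c)$ over $\QQ$, thereby reducing the problem to a single supremum inside the complete boolean algebra $\QQ$. After this reduction the verification is pure algebra built on the retraction identity advertised earlier as the algebraic heart of this approach to iterated forcing.
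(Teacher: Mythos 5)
Your proof is correct and rests on the same two pillars as the paper's: Lemma~\ref{qElmRepr} (canonical $\QQ$-representatives of names for quotient elements) and the retraction identities from Proposition~\ref{eRetrProp} (preservation of joins and $\pi_i(c\wedge i(b))=\pi_i(c)\wedge b$). The route is genuinely different in packaging, though. The paper starts from a family $\{\dot c_\alpha:\alpha<\delta\}\in V$ of names for elements of $\QQ/_{\dot G}$, extracts canonical representatives $d_\alpha$, proposes $\bigvee_\alpha d_\alpha$ as the supremum, and then verifies this \emph{inside} $V[G]$ by a case split on whether $\pi(c)\in G$ for a candidate upper bound $c$ (using the fact that generic ultrafilters are closed under ground-model meets). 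You instead fix an arbitrary name $\dot X$ for a subset, encode it as a single $V$-definable function $b:\QQ\to\BB$ via $b(c)=\Qp{[\check c]\in\dot X}$, set $s=\bigvee_{c}(c\wedge i(b(c)))$, and then prove the forced identity $\Qp{[\check t]\text{ is u.b.\ of }\dot X}=\Qp{[\check s]\leq[\check t]}$ by a purely algebraic computation of boolean values in $V$ -- no passage to $V[G]$ and no genericity is used. Your version buys a cleaner treatment of the reduction to the ground model: the paper's proof implicitly assumes one can pass from an arbitrary $X\in V[G]$, $X\subseteq\QQ/_G$, to a ground-model-indexed family of names (this is doable by mixing, but is glossed over), whereas your characteristic-function reformulation handles an arbitrary name $\dot X$ in one stroke. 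The paper's version, in exchange, produces the supremum in the perhaps more recognizable form $\bigvee_\alpha d_\alpha$. Both are correct; yours is slightly more self-contained and stays closer to the ``algebraic'' philosophy the paper advertises.
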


\begin{proof}
	By Proposition \ref{qQuotientCba}, we need only to prove that $\QQ/_G$ is complete. Let $\{\dot{c}_\alpha : \alpha < \delta\}\in V$ be a set of $\BB$ names for elements of $\QQ/_{\dot{G}}$. Then, by 
	Lemma \ref{qElmRepr}, for each $\alpha < \delta$ there exists $d_{\alpha} \in \QQ$ such that 
	\[
		\Qp{\dot{c}_\alpha = \qp{d_\alpha}_{i[\dot{G}]} } = \1_{\BB}.
	\]
	We have that $\bigvee d_{\alpha} \in \QQ$, since $\QQ$ is complete. Let $c \in \QQ$ be such that  $V[G]\models \forall \alpha < \delta \qp{c} \geq \qp{d_\alpha}$, then
	\[
		\neg \pi (d_\alpha \wedge \neg c)
 = \Qp{ d_\alpha \wedge \neg c \in {i[\dot{G}]}^*} = \Qp{\qp{c} \geq \qp{d_\alpha} }\in G	
 \]
	So $\pi(d_\alpha \wedge \neg c) \not\in G $ for all $\alpha<\delta$.
	In particular since $\{\pi(d_\alpha \wedge \neg c):\alpha<\delta\}\in V$ is disjoint from $G$, we also have that 
	\[
	d=\bigvee\{\pi(d_\alpha \wedge \neg c):\alpha<\delta\}=
	\pi(\neg c\wedge\bigvee\{d_\alpha:\alpha<\delta\})\not\in G.
	\]
	
	This gives that if $\pi(c)\in G$ then 
	\[
		V[G] \models \qp{c} \geq \qp{\bigvee{d_\alpha}}, 
	\]
	while if $\pi(c)\not\in G$, then $\pi(\neg c)\in G$ and thus
	\[
		\begin{split}
		\bigvee\{\pi(d_\alpha):\alpha<\delta\}&=
		\bigvee\{\pi(d_\alpha \wedge \neg c):\alpha<\delta\}\vee
		\bigvee\{\pi(d_\alpha \wedge c):\alpha<\delta\}\\
		&\leq d\vee \pi(c)\not\in G,
		\end{split}
	\]
	in which case $\qp{d_\alpha}$ and $\qp{\bigvee\bp{{d_\alpha}:{\alpha<\delta}}}$ are all
	equal to $\0_{\QQ/G}$.
	In either cases $ \qp{\bigvee\{d_\alpha:\alpha<\delta\}}$ is the least upper bound of the family
	$\{\qp{d_\alpha}:\alpha<\delta\}$ in $V[G]$.
	This shows that $V[G] \models \QQ/_G$ is complete for all $V$-generic filters $G$.
\end{proof}

The construction of generic quotients can be defined also for regular embeddings:

\begin{proposition}\label{lem:firstfctlem}
	Let $\BB$, $\QQ_0$, $\QQ_1$ be complete boolean algebras, and let $G$ be a $V$-generic filter for $\BB$. Let $i_0$, $i_1$, $j$ form a commutative diagram of regular embeddings as in the following picture:
	\[
	\begin{tikzpicture}[xscale=1.5,yscale=-1.2]
		\node (B) at (0, 0) {$\BB$};
		\node (Q0) at (1, 0) {$\QQ_0$};
		\node (Q1) at (1, 1) {$\QQ_1$};
		\path (B) edge [->]node [auto] {$\scriptstyle{i_0}$} (Q0);
		\path (B) edge [->]node [auto,swap] {$\scriptstyle{i_1}$} (Q1);
		\path (Q0) edge [->]node [auto] {$\scriptstyle{j}$} (Q1);
	\end{tikzpicture}
	\]
	Then $j/_G: \QQ_0/_G \to \QQ_1/_G$ defined by $j/_G( \qp{c}_{i_0[G]} ) = \qp{ j(c) }_{i_1[G]}$ is a well-defined regular embedding of complete boolean algebras in $V[G]$ with associated retraction $\pi$ such that $\pi( \qp{c}_{i_1[G]} ) = \qp{ \pi_j(c) }_{i_0[G]}$.
\end{proposition}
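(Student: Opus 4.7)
The plan is to reduce everything to computations in $V$ by using Lemma \ref{qElmRepr}, which tells us that every element of $\QQ_0/_G$ (resp.\ $\QQ_1/_G$) in $V[G]$ is represented by a unique element of $\QQ_0$ (resp.\ $\QQ_1$) in $V$. The verifications then hinge on a single algebraic identity linking the three retractions.

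First I would prove the auxiliary identity $\pi_{i_1} = \pi_{i_0} \circ \pi_j$. Since $i_1 = j \circ i_0$, this follows from the adjointness characterization of retraction: $g(f(a)) \geq q \iff f(a) \geq \pi_g(q) \iff a \geq \pi_f(\pi_g(q))$. With this in hand, well-definedness of $j/_G$ becomes immediate: since $j$ is a homomorphism $j(c) \triangle j(c') = j(c \triangle c')$, and applying $\pi_{i_1} = \pi_{i_0} \circ \pi_j$ together with $\pi_j \circ j = \id$ (Proposition \ref{eRetrProp}) gives $\pi_{i_1}(j(c) \triangle j(c')) = \pi_{i_0}(c \triangle c')$, so the membership in $G$ is decided by the class of $c \triangle c'$ modulo $i_0[G]^*$. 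That $j/_G$ preserves the boolean operations is then automatic from $j$ being a boolean homomorphism, and injectivity follows from the same identity: $[j(c)]_{i_1[G]} = \0$ iff $\pi_{i_1}(j(c)) = \pi_{i_0}(c) \notin G$ iff $[c]_{i_0[G]} = \0$.

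For completeness I would note that, in $V[G]$, any family in $\QQ_0/_G$ is of the form $\{[c_\alpha]_{i_0[G]} : \alpha < \delta\}$ with each $c_\alpha \in \QQ_0$ lying in $V$ by Lemma \ref{qElmRepr}; Proposition \ref{qQuotientC} identifies its supremum as $[\bigvee_\alpha c_\alpha]_{i_0[G]}$. Since $j$ preserves arbitrary joins, one concludes $j/_G\bigl(\bigvee_\alpha [c_\alpha]\bigr) = [\bigvee_\alpha j(c_\alpha)] = \bigvee_\alpha j/_G([c_\alpha])$, confirming that $j/_G$ is a complete homomorphism.

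Finally, I would verify the retraction formula. The inequality $j/_G([\pi_j(c)]_{i_0[G]}) \geq [c]_{i_1[G]}$ is immediate from $j(\pi_j(c)) \geq c$ (Proposition \ref{eRetrProp}.\ref{eRPComp}). For minimality, assume $[d] \in \QQ_0/_G$ satisfies $j/_G([d]) \geq [c]_{i_1[G]}$, i.e., $\pi_{i_1}(c \wedge \neg j(d)) \notin G$. Here enters the cornerstone identity $\pi_j(c \wedge j(b)) = \pi_j(c) \wedge b$ (Proposition \ref{eRetrProp}.\ref{eRPHomo}) applied with $b = \neg d$: it rewrites $\pi_j(c \wedge \neg j(d)) = \pi_j(c) \wedge \neg d$, and then applying $\pi_{i_0}$ to both sides and using $\pi_{i_1} = \pi_{i_0} \circ \pi_j$ shows $\pi_{i_0}(\pi_j(c) \wedge \neg d) \notin G$, i.e., $[\pi_j(c)]_{i_0[G]} \leq [d]$. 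The main obstacle is purely bookkeeping: there is no conceptual difficulty once the two algebraic identities ($\pi_{i_1} = \pi_{i_0} \circ \pi_j$ and $\pi_j(c \wedge j(b)) = \pi_j(c) \wedge b$) are isolated and deployed systematically.
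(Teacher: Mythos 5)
Your proof is correct, and while the overall strategy (reduce to $V$-computations via Lemma~\ref{qElmRepr} and exploit Proposition~\ref{eRetrProp}.\ref{eRPHomo}) matches the paper's, you streamline it by isolating $\pi_{i_1} = \pi_{i_0}\circ\pi_j$ as a standalone lemma proved by the adjointness characterization $g(a)\geq q \iff a\geq\pi_g(q)$. The paper proves well-definedness and injectivity by chasing elements of the form $i_k(g)$ (finding $g\notin G$ with $c\triangle d\leq i_0(\neg g)$ and pushing through $j$), whereas you handle both uniformly by computing $\pi_{i_1}(j(\cdot))=\pi_{i_0}(\cdot)$. The clearest payoff is in the retraction formula: the paper establishes minimality by an auxiliary condition $b' = b\vee(\pi_j(c)\wedge\neg b)$ and checks $[b]_{i_0[G]}=[b']_{i_0[G]}$ and $j(b')\geq c$, while you argue directly that $[\pi_j(c)]_{i_0[G]}$ is the minimum of $\{[d]:j/_G([d])\geq[c]\}$, since $\pi_{i_0}(\pi_j(c)\wedge\neg d)=\pi_{i_1}(c\wedge\neg j(d))\notin G$ gives $[\pi_j(c)]\leq[d]$ outright. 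Both are valid; yours avoids the $b'$ device and makes the dependency on the two algebraic identities explicit, which is arguably more in the spirit of the paper's own program.
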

\begin{proof}
	By Proposition \ref{qQuotientC}, $j/_G$ is a map between complete boolean algebras.
	\begin{itemize}
	\item $j/_G$ is well defined: If $\qp{c}_{i_0[G]}=\qp{d}_{i_0[G]}$, then $c \triangle d \in {i_0[G]}^*$. Hence $j(c) \triangle j(d) = j(c \triangle d) \in {i_1[G]}^*$. So $\qp{j(c)}_{i_1[G]} = \qp{j(d)}_{i_1[G]}$.
	\item $j/_G$ is a complete homomorphism of boolean algebras:
	\[
		\begin{split}
			j/_G(\neg\qp{c_\alpha}_{i_0[G]})&=j/_G(\qp{\neg c_\alpha}_{i_0[G]})= \qp{j(\neg c_\alpha)}_{i_1[G]}\\
			&= \qp{\neg j( c_\alpha)}_{i_1[G]}=\neg \qp{j( c_\alpha)}_{i_1[G]}.
		\end{split}
	\] 
	Moreover, by Proposition \ref{qQuotientC},
	\[
		\begin{split}
			j/_G\cp{\bigvee\qp{c_\alpha}_{i_0[G]}}&=j/_G\cp{\qp{\bigvee c_\alpha}_{i_0[G]}}= \qp{j\cp{\bigvee c_\alpha}}_{i_1[G]}\\
			&= \qp{\bigvee j( c_\alpha)}_{i_1[G]}=\bigvee \qp{j( c_\alpha)}_{i_1[G]}.
		\end{split}
	\] 
	\item $j/_G$ is injective: Let $c,d \in \QQ_0$ be such that $ j/_G(\qp{c}_{i_0[G]})=j/_G(\qp{d}_{i_0[G]})$, then $j(c \triangle d) \in {i_1[G]}^*$. So there exists $g \not\in G$ such that $j(c \triangle d) \leq i_1(g) = j(i_0(g))$; since $j$ is injective, then $c \triangle d \in i_0[G]^*$. 
	
	\item $\pi( \qp{c}_{i_1[G]} ) = \qp{ \pi_j(c) }_{i_0[G]}$: 
	\[
		V[G]\models \pi(\qp{c}_{i_1[G]})= \bigwedge \{\qp{b}_{i_0[G]} \in \QQ_0/_G : j/_G([b]_{i_0[G]}) \geq \qp{c}_{i_1[G]}\}.
	\]
	
	  
	Now observe that for any $b\in \QQ_0$:
	\[
		 \Qp{ j/_{\dot{G}}(\qp{b}_{i_0[\dot{G}]}) \geq \qp{c}_{i_1[\dot{G}]}}= \Qp{ c \wedge \neg j(b) \in {i_1[\dot{G}]}^* }= \neg \pi_{i_1}(c \wedge \neg j(b)).
	\]
	
	Thus $j(b)\geq c$ iff $c\wedge \neg j(b)=\0_{\QQ_1}$ iff $\pi_{i_1}(c \wedge \neg j(b))=\0_{\BB}$
	iff 
	\[ 	
	\Qp{ j/_{\dot{G}}(\qp{b}_{i_0[\dot{G}]}) \geq \qp{c}_{i_1[\dot{G}]}}=\1_{\BB}.
	\]
	
	Now $ \pi_j(c \wedge \neg j(b))=\pi_j(c)\wedge\neg b$ and 
	$\pi_{i_0}(\pi_j(c)\wedge\neg b)=\pi_{i_1}(c \wedge \neg j(b))$.
	
	Thus 
	\[
		V[G]\models  j/_G([b]_{i_0[G]}) \geq \qp{c}_{i_1[G]}
	\]
	iff $\pi_j(c)\wedge\neg b\in i_0[G]^*$. 
	
	Given such a $b$, let $b'=b\vee(\pi_j(c)\wedge\neg b)$.
	Then 
	\[
	\neg\pi_{i_0} (\pi_j(c)\wedge\neg b)\leq \Qp{ [b]_{i_0[G]}=[b']_{i_0[G]}}
	\]  
	and thus
	$\Qp{ [b]_{i_0[G]}=[b']_{i_0[G]}}\in G$ and:
	\[
	\begin{split}
	\Qp{ j/_{\dot{G}}([b']_{i_0[G]})\geq [c]_{i_1[G]}}&=\neg \pi_{i_1}(c \wedge \neg j(b')) \\
	&=\neg \pi_{i_0}(\pi_j(c) \wedge \neg b').
	\end{split}
	\]

	Now observe that
	
	\begin{align*}
	\pi_j(c) \wedge \neg b'&= \pi_j(c) \wedge \neg (b\vee(\pi_j(c)\wedge\neg b)) \\
	&=\pi_j(c) \wedge \neg b\wedge \neg(\pi_j(c)\wedge\neg b)\\
	&= \pi_j(c) \wedge \neg b\wedge (\neg\pi_j(c)\vee b) \\
	&=(\pi_j(c) \wedge \neg b\wedge \neg\pi_j(c))\vee (\pi_j(c) \wedge \neg b\wedge  b)=\0_{\QQ_0}.
	\end{align*}

Thus 
\[
\Qp{j/_{\dot{G}}([b']_{i_0[G]}) \geq [c]_{i_1[G]}}=\neg \pi_{i_0}(\pi_j(c) \wedge \neg b')=\neg\pi_{i_0}(\0_{\QQ_0})=\1_{\BB},
\]
and $[b]_{i_0[G]^*}=[b']_{i_0[G]^*}$.

This gives that
	\[
		\begin{split}
		V[G]\models \pi(\qp{c}_{i_1[G]})&= \bigwedge \{\qp{b}_{i_0[G]} \in \QQ_0/_G : j(b) \geq c\}\\
		&=\qp{\bigwedge \{b \in \QQ_0 : j(b) \geq c\}}_{i_0[G]}=[\pi_j(c)]_{i_0[G]}
		\end{split}
	\]
as was to be shown.
%
	\end{itemize}
\end{proof}

\subsection{Equivalence of two-step iterations and regular embeddings}
We are now ready to prove that two-step iteration and regular embedding capture the same concept.
\begin{theorem}\label{qIso}
	If $i: \BB \rightarrow \QQ$ is a regular embedding of complete boolean algebra, then 
	$\BB * \QQ/_{i[\dot{G}_{\BB}]} \cong \QQ$.
\end{theorem}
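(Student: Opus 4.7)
The plan is to construct the natural map $\Phi: \QQ \to \BB * \QQ/_{i[\dot{G}_\BB]}$ given by $c \mapsto [[c]_{i[\dot{G}_\BB]}]_\approx$ and verify it is a bijective complete homomorphism of boolean algebras. First I would check that $\Phi$ is well defined: for each $c \in \QQ$, the symbol $[c]_{i[\dot{G}_\BB]}$ is a $\BB$-name for an element of $\QQ/_{i[\dot{G}_\BB]}$, so its $\approx$-class lives in $\BB * \QQ/_{i[\dot{G}_\BB]}$.

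Second, I would verify that $\Phi$ preserves the boolean operations. For $c, d \in \QQ$, in any generic extension $V[G]$ we have $[c]_{i[G]} \vee [d]_{i[G]} = [c \vee d]_{i[G]}$ and $\neg [c]_{i[G]} = [\neg c]_{i[G]}$ by the definition of the boolean operations on $\QQ/_G$; hence the corresponding boolean values are $\1_\BB$ and the identities transfer to $\BB * \QQ/_{i[\dot{G}_\BB]}$ by the definition of its operations. The same argument applied to arbitrary suprema, together with the completeness of $\QQ/_G$ in $V[G]$ established in Proposition \ref{qQuotientC}, shows that $\Phi$ preserves arbitrary joins.

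Third, I would prove injectivity. If $\Phi(c) = \Phi(d)$ then $\Qp{[c]_{i[\dot{G}_\BB]} = [d]_{i[\dot{G}_\BB]}}_\BB = \1_\BB$, which unfolds to $\Qp{c \triangle d \in i[\dot{G}_\BB]^*}_\BB = \neg \pi_i(c \triangle d) = \1_\BB$, so $\pi_i(c \triangle d) = \0_\BB$. By Proposition \ref{eRetrProp}.\ref{eRPComp}, $\pi_i$ maps $\QQ^+$ into $\BB^+$, so $c \triangle d = \0_\QQ$, i.e.\ $c = d$.

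Finally, surjectivity is the one step that is not purely formal, and it is exactly what Lemma \ref{qElmRepr} was set up to deliver: given any $[\dot{d}]_\approx \in \BB * \QQ/_{i[\dot{G}_\BB]}$, that lemma furnishes a unique $c \in \QQ$ with $\Qp{\dot{d} = [c]_{i[\dot{G}_\BB]}}_\BB = \1_\BB$, and then $[\dot{d}]_\approx = \Phi(c)$. Thus the only substantive piece of the proof is already in hand; the rest is bookkeeping that translates the pointwise-in-$V[G]$ description of $\QQ/_G$ into the names-based description of $\BB * \dot{\QQ}$.
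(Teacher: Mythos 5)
Your proposal is correct and follows essentially the same route as the paper: the same map $c \mapsto [[c]_{i[\dot{G}_\BB]}]_\approx$, preservation of $\neg$ and $\bigvee$, and surjectivity from Lemma~\ref{qElmRepr}. The only small variation is that you establish injectivity directly via the retraction identity $\Qp{c \triangle d \in i[\dot{G}_\BB]^*} = \neg\pi_i(c\triangle d)$ together with Proposition~\ref{eRetrProp}.\ref{eRPComp}, whereas the paper simply cites the uniqueness half of Lemma~\ref{qElmRepr}; your inline argument is in effect a re-derivation of that uniqueness.
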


\begin{proof}
	Let 
		\[
			\begin{split}
			i^* : \QQ &\rightarrow \BB* \QQ/_{\dot{G}} \\
			c &\mapsto \qp{[c]_{i[\dot{G}]}}_\approx.
			\end{split}
		\]
	$i^*$ is a regular embedding by Proposition~\ref{qQuotientC} $\QQ/_{\dot{G}}$ and by definition of two-step iteration; in fact:
		\[
			i^*(\neg c)= \qp{[\neg c]_{i[\dot{G}]}}_\approx = \qp{\neg [c]_{i[\dot{G}]}}_\approx= \neg \qp{ [c]_{i[\dot{G}]}}_\approx = \neg i^*(c);
		\]
	and
		\[
			i^*(\bigvee c_\alpha)= \qp{[\bigvee c_\alpha]_{i[\dot{G}]}}_\approx = \qp{\bigvee[c_\alpha]_{i[\dot{G}]}}_\approx= \bigvee \qp{ [c_\alpha]_{i[\dot{G}]}}_\approx = \bigvee i^*(c_\alpha).
		\]

	Moreover it is a bijection since, by Lemma \ref{qElmRepr}, for all $\dot{d}$ $\BB$-name for an element in $\QQ/_{\dot{G}}$, there exists a unique $c \in \QQ$ such that $\Qp{[c]_{i[\dot{G}]}= \dot{d} }=\1$, and since, by  definition of two-step iteration  $[\dot{d}_1]_\approx = [\dot{d}_2]_\approx$ iff  $\Qp{\dot{d}_1=\dot{d}_2}=\1$.
\end{proof}


\begin{proposition}\label{prop:embfromembnames}
	Let $\dot{\QQ}_0$, $\dot{\QQ}_1$ be $\BB$-names for complete boolean algebras, and let $\dot{k}$ be a $\BB$ name for a regular embedding from $\dot{\QQ}_0$ to $\dot{\QQ}_1$. Then 
	there is a regular embedding $i : ~ \BB \ast \dot{\QQ}_0 \to \BB \ast \dot{\QQ}_1$ such that
	\[
	\Qp{\dot{k}=i/\dot{G}_{\BB}}=\1_{\BB}.
	\]
\end{proposition}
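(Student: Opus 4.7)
The plan is to define $i$ by applying $\dot{k}$ to representatives: for each $[\dot{a}]_\approx\in\BB\ast\dot{\QQ}_0$, use the fullness lemma applied to the formula ``$y=\dot{k}(\dot{a})$'' to pick a $\BB$-name $\dot{b}$ with $\Qp{\dot{b}=\dot{k}(\dot{a})}_{\BB}=\1_{\BB}$, and set $i([\dot{a}]_\approx)=[\dot{b}]_\approx$. Any two such choices of $\dot{b}$ are $\approx$-equivalent, so the value $[\dot{b}]_\approx$ is independent of the choice; similarly, since $\dot{k}$ is forced to be a function, $[\dot{a}]_\approx=[\dot{a}']_\approx$ implies $\Qp{\dot{k}(\dot{a})=\dot{k}(\dot{a}')}_{\BB}=\1_{\BB}$, so $i$ is well-defined on equivalence classes.

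The verifications that $i$ is a complete homomorphism and injective are routine and reduce to the fact that each of the statements ``$\dot{k}$ preserves arbitrary suprema'', ``$\dot{k}$ preserves complements'', and ``$\dot{k}$ is injective'' has boolean value $\1_{\BB}$. For instance, given $\{[\dot{a}_\alpha]_\approx\}_{\alpha<\delta}$ with associated names $\dot{b}_\alpha$, one checks that the representative $\dot{b}$ produced for $i(\bigvee_\alpha [\dot{a}_\alpha]_\approx)$ satisfies $\Qp{\dot{b}=\bigvee_\alpha \dot{b}_\alpha}_{\BB}=\1_{\BB}$, giving $i(\bigvee_\alpha[\dot{a}_\alpha]_\approx)=\bigvee_\alpha i([\dot{a}_\alpha]_\approx)$ in $\BB\ast\dot{\QQ}_1$; injectivity and preservation of complements are analogous.

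To conclude $\Qp{\dot{k}=i/\dot{G}_{\BB}}_{\BB}=\1_{\BB}$, I would first verify that $i\circ i_{\BB\ast\dot{\QQ}_0}=i_{\BB\ast\dot{\QQ}_1}$, equivalently $[\dot{k}(\dot{d}_b)]_\approx=[\dot{d}_b]_\approx$ for every $b\in\BB$: this follows from the fact that $\dot{k}$ is forced to preserve $\0$ and $\1$, so that $\Qp{\dot{k}(\dot{d}_b)=\1_{\dot{\QQ}_1}}_{\BB}=b$ and $\Qp{\dot{k}(\dot{d}_b)=\0_{\dot{\QQ}_1}}_{\BB}=\neg b$. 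Proposition~\ref{lem:firstfctlem} then yields a $\BB$-name $i/\dot{G}_{\BB}$ for a regular embedding from $(\BB\ast\dot{\QQ}_0)/\dot{G}_{\BB}$ to $(\BB\ast\dot{\QQ}_1)/\dot{G}_{\BB}$; composing on both sides with the canonical isomorphisms of Theorem~\ref{qIso} that identify $(\BB\ast\dot{\QQ}_j)/\dot{G}_{\BB}$ with $\dot{\QQ}_j$, one reads off that this quotient sends the element corresponding to $\dot{a}\in\dot{\QQ}_0$ to the element corresponding to $\dot{b}=\dot{k}(\dot{a})\in\dot{\QQ}_1$, which is exactly the action of $\dot{k}$. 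The main potential obstacle is bookkeeping how these canonical isomorphisms intertwine $i/\dot{G}_{\BB}$ with $\dot{k}$, which relies on the uniqueness of the representative guaranteed by Lemma~\ref{qElmRepr}.
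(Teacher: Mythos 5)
Your proof is correct and takes essentially the same approach as the paper: $i$ is defined by $i([\dot{a}]_\approx)=[\dot{k}(\dot{a})]_\approx$, the forced properties of $\dot{k}$ transfer to give well-definedness, injectivity, and completeness, and the identity $\Qp{\dot{k}=i/\dot{G}_{\BB}}=\1_{\BB}$ is read off from the explicit formula for the quotient map in Proposition~\ref{lem:firstfctlem}. The only differences are cosmetic: you are more explicit about invoking fullness to produce a representing name, checking commutativity of the triangle $i\circ i_{\BB\ast\dot{\QQ}_0}=i_{\BB\ast\dot{\QQ}_1}$, and the role of Theorem~\ref{qIso} and Lemma~\ref{qElmRepr} in identifying $(\BB\ast\dot{\QQ}_j)/\dot{G}_{\BB}$ with $\dot{\QQ}_j$, all of which the paper leaves implicit.
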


\begin{proof}
	Let 
	\[
		\begin{split}
		i: \BB \ast \dot{\QQ}_0 &\to \BB \ast \dot{\QQ}_1\\
		[\dot{d}]_\approx &\mapsto [\dot{k} (\dot{d})]_\approx.
		\end{split}
	\]
	
	Since $\dot{k}$  is a $\BB$-name
	for a regular embedding with boolean value $\1_{\BB}$, we have that 
	\[  \begin{split}
			\qp{\dot{d}}_\approx = \qp{\dot{e}}_\approx \iff \Qp{\dot{d}= \dot{e}}=\1 \iff\\
	   		\Qp{\dot{k} (\dot{d})= \dot{k} (\dot{e})} =\1 \iff  
			\qp{\dot{k} (\dot{d})}_\approx = \qp{\dot{k} (\dot{e})}_\approx 
		\end{split}	
	\]
	This shows that $i$ is well defined and injective. We have that
	$i$ is a complete homomorphism, since
	\[ 
		\begin{split}
		&i\cp{\neg \qp{\dot{c}}_\approx} =  i\cp{ \qp{\neg \dot{c}}_\approx}=\qp{\dot{k}\cp{\neg \dot{c}}}_\approx \\
		&=\qp{\neg \dot{k}\cp{ \dot{c}}}_\approx =\neg\qp{ \dot{k}\cp{ \dot{c}}}_\approx =\neg i\cp{\qp{\dot{c}}_\approx};
		\end{split}
	\]
	and
	\[
		\begin{split}
		&i\cp{\bigvee \qp{\dot{c}_\alpha}_\approx} =  i\cp{ \qp{\bigvee \dot{c}_\alpha}_\approx}=\qp{\dot{k}\cp{\bigvee \dot{c}_\alpha}}_\approx \\
		&=\qp{\bigvee \dot{k}\cp{ \dot{c}_\alpha}}_\approx =\bigvee\qp{ \dot{k}\cp{ \dot{c}_\alpha}}_\approx =\bigvee i\cp{ \qp{\dot{c}_\alpha}_\approx}. 
		\end{split}
	\]
	
	Moreover if $G$ is $V$-generic for $\BB$, $\dot{k}_G=i/_G$. As a matter of fact, thanks to the diagram
	
	\[
		\begin{tikzpicture}[xscale=1.5,yscale=-1.2]
			\node (B) at (0, 0) {$\BB$};
			\node (Q0) at (1.25, 0) {$\BB\ast\dot{\QQ}_0$};
			\node (Q1) at (1.25, 1.25) {$\BB\ast\dot{\QQ}_1$};
			\path (B) edge [->]node [auto] {$\scriptstyle{i_{\BB\ast \dot{\QQ}_0}}$} (Q0);
			\path (B) edge [->]node [auto,swap] {$\scriptstyle{i_{\BB\ast \dot{\QQ}_1}}$} (Q1);
			\path (Q0) edge [->]node [auto] {$\scriptstyle{i}$} (Q1);
		\end{tikzpicture}
	\]
	
	\[
	i/_{G} (\qp{\qp{\dot{c}}_\approx}_{i_{\BB\ast\dot{\QQ}_0}[G]})= \qp{i\cp{\qp{\dot{c}}_\approx}}_{i_{\BB\ast\dot{\QQ}_1}[G]}
	= \qp{\qp{\dot{k}\cp{\dot{c}}}_\approx}_{i_{\BB\ast\dot{\QQ}_1}[G]}.
	\]
	\end{proof}

\subsection{Generic quotients of iteration systems}
The results on generic quotients of the previous sections generalize without much effort to
iteration systems. In the following we outline how this occurs.

\begin{lemma}
	Let $\FFF=\{i_{\alpha \beta}:\BB_\alpha \to \BB_\beta:\alpha \leq \beta < \lambda\}$ be a complete iteration system of complete boolean algebras, $G_\gamma$ be a $V$-generic filter for $\BB_\gamma$.
	Then $\FFF/_{G_\gamma}=\{i_{\alpha \beta}/_{G_\gamma}:\gamma < \alpha \leq \beta < \lambda\}$ is a complete iteration system in $V[G_\gamma]$.
\end{lemma}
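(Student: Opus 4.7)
The plan is to reduce everything to the two earlier results already established, namely Proposition \ref{qQuotientC} (generic quotients of complete boolean algebras are complete boolean algebras) and Proposition \ref{lem:firstfctlem} (given a commutative triangle of regular embeddings from a common base, the induced quotient map is a regular embedding). The system $\FFF$ gives us exactly the commutative triangles needed: for every $\gamma < \alpha \leq \beta < \lambda$ the identity $i_{\alpha\beta}\circ i_{\gamma\alpha}=i_{\gamma\beta}$ furnishes the diagram with apex $\BB_\gamma$ and sides $i_{\gamma\alpha}:\BB_\gamma\to\BB_\alpha$, $i_{\gamma\beta}:\BB_\gamma\to\BB_\beta$, $i_{\alpha\beta}:\BB_\alpha\to\BB_\beta$.

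Working in $V[G_\gamma]$, I would first invoke Proposition \ref{qQuotientC} applied to each regular embedding $i_{\gamma\alpha}:\BB_\gamma\to\BB_\alpha$ for $\gamma<\alpha<\lambda$ to conclude that each quotient $\BB_\alpha/_{G_\gamma}$ is a complete boolean algebra in $V[G_\gamma]$. Then, applying Proposition \ref{lem:firstfctlem} to each triangle above, the map
\[
i_{\alpha\beta}/_{G_\gamma}:\BB_\alpha/_{G_\gamma}\to\BB_\beta/_{G_\gamma},\qquad [c]_{i_{\gamma\alpha}[G_\gamma]}\mapsto [i_{\alpha\beta}(c)]_{i_{\gamma\beta}[G_\gamma]},
\]
is a well-defined regular embedding of complete boolean algebras in $V[G_\gamma]$, with associated retraction given by $[d]_{i_{\gamma\beta}[G_\gamma]}\mapsto [\pi_{\alpha\beta}(d)]_{i_{\gamma\alpha}[G_\gamma]}$.

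It then remains to check the two purely algebraic axioms of a complete iteration system. For reflexivity, $i_{\alpha\alpha}/_{G_\gamma}([c])=[i_{\alpha\alpha}(c)]=[c]$, so $i_{\alpha\alpha}/_{G_\gamma}$ is the identity on $\BB_\alpha/_{G_\gamma}$. For the commutativity condition, pick $\gamma<\alpha\leq\beta\leq\delta<\lambda$ and $c\in\BB_\alpha$; then
\[
(i_{\beta\delta}/_{G_\gamma})\circ(i_{\alpha\beta}/_{G_\gamma})([c])=[i_{\beta\delta}(i_{\alpha\beta}(c))]=[i_{\alpha\delta}(c)]=(i_{\alpha\delta}/_{G_\gamma})([c]),
\]
using the commutativity of $\FFF$ in $V$. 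This gives all axioms for a complete iteration system in $V[G_\gamma]$.

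The only place where one has to be mildly careful is the verification that the maps $i_{\alpha\beta}/_{G_\gamma}$ are genuinely well defined on representatives and that the domain really is $\BB_\alpha/_{G_\gamma}$ (i.e.\ the quotient by the filter generated by $i_{\gamma\alpha}[G_\gamma]$, not by $i_{\gamma\beta}[G_\gamma]$ read back through $i_{\alpha\beta}$); but this is precisely the content of Proposition \ref{lem:firstfctlem} and requires no additional argument beyond invoking it triangle by triangle. Thus I do not expect any real obstacle: the proof is a routine bookkeeping reduction to the two quoted propositions.
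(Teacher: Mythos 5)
Your proposal is correct and matches the intended argument: the paper states this lemma without proof, treating it as the routine bookkeeping you describe. The reduction to Proposition~\ref{qQuotientC} (to get that each $\BB_\alpha/_{G_\gamma}$ is a complete boolean algebra in $V[G_\gamma]$) and Proposition~\ref{lem:firstfctlem} (applied to each commutative triangle with apex $\BB_\gamma$, to get that $i_{\alpha\beta}/_{G_\gamma}$ is a regular embedding with the expected retraction) takes care of the two substantive requirements, and the remaining identities $i_{\alpha\alpha}/_{G_\gamma}=\id$ and $(i_{\beta\delta}/_{G_\gamma})\circ(i_{\alpha\beta}/_{G_\gamma})=i_{\alpha\delta}/_{G_\gamma}$ follow directly from the corresponding identities in $V$ exactly as you write. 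The only remark worth making is entirely cosmetic: the quotient system is indexed over $(\gamma,\lambda)$ rather than over an initial segment of the ordinals, so to match the paper's Definition of a complete iteration system one silently reindexes, but this is immaterial and the paper itself glosses over it.
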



\begin{lemma} \label{qThreadRepr}
	Let $\FFF=\{i_{\alpha \beta}:\BB_\alpha \to \BB_\beta:\alpha \leq \beta < \lambda\}$ be a complete iteration system of complete boolean algebras, $\dot{G}_\alpha$ be the canonical name for a generic filter for $\BB_\alpha$ and $\dot{f}$ be a $\BB_\alpha$-name for an element of $T(\FFF/_{\dot{G}_\alpha})$. Then there exists a unique $g \in T(\FFF)$ such that $\Qp{\dot{f} = \qp{g}_{\dot{G}_\alpha} } = \1_{\BB_\alpha}$.
\end{lemma}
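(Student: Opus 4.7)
The plan is to reduce to Lemma~\ref{qElmRepr} coordinate-by-coordinate and then use the identification of the quotient retraction given by Proposition~\ref{lem:firstfctlem} to verify the thread condition in $V$.

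First, notice that $\dot{f}$ is a $\BB_\alpha$-name forced to be a function whose $\beta$-th value (for $\alpha < \beta < \lambda$) is an element of $\BB_\beta/\dot{G}_\alpha$. For each such $\beta$, apply Lemma~\ref{qElmRepr} to the regular embedding $i_{\alpha\beta}\colon \BB_\alpha\to\BB_\beta$ and the $\BB_\alpha$-name $\dot{f}(\beta)$ to obtain a unique $g(\beta)\in\BB_\beta$ with $\Qp{\dot{f}(\beta)=[g(\beta)]_{i_{\alpha\beta}[\dot{G}_\alpha]}}_{\BB_\alpha}=\1_{\BB_\alpha}$. This defines $g$ on the tail $(\alpha,\lambda)$.

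Next I verify the thread equation $\pi_{\gamma\beta}(g(\beta))=g(\gamma)$ for $\alpha<\gamma\leq\beta<\lambda$. Since $\dot{f}$ is forced to be a thread in $\FFF/\dot{G}_\alpha$, we have $\Qp{\pi_{i_{\gamma\beta}/\dot{G}_\alpha}(\dot{f}(\beta))=\dot{f}(\gamma)}_{\BB_\alpha}=\1$. By Proposition~\ref{lem:firstfctlem} the quotient retraction satisfies
\[
\pi_{i_{\gamma\beta}/\dot{G}_\alpha}\bigl([g(\beta)]_{i_{\alpha\beta}[\dot{G}_\alpha]}\bigr)=[\pi_{\gamma\beta}(g(\beta))]_{i_{\alpha\gamma}[\dot{G}_\alpha]},
\]
so $\Qp{[\pi_{\gamma\beta}(g(\beta))]_{i_{\alpha\gamma}[\dot{G}_\alpha]}=\dot{f}(\gamma)}_{\BB_\alpha}=\1_{\BB_\alpha}$. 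By the uniqueness clause of Lemma~\ref{qElmRepr} (applied to $i_{\alpha\gamma}$) this forces $\pi_{\gamma\beta}(g(\beta))=g(\gamma)$, as required.

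Now extend $g$ to indices $\beta\leq\alpha$ by setting $g(\beta)=\pi_{\beta\gamma}(g(\gamma))$ for any (equivalently, all) $\gamma>\alpha$; the choice is immaterial because the commuting square $\pi_{\beta\gamma_1}=\pi_{\beta\gamma_2}\circ\pi_{\gamma_2\gamma_1}$ together with the identities already verified on the tail make the value independent of $\gamma$. The resulting $g\in T(\FFF)$ satisfies $\Qp{\dot{f}=[g]_{\dot{G}_\alpha}}_{\BB_\alpha}=\1_{\BB_\alpha}$ by construction. Uniqueness of $g$ in $T(\FFF)$ is immediate: on the tail it follows from the uniqueness in Lemma~\ref{qElmRepr} coordinatewise, and on indices $\beta\leq\alpha$ from the thread condition of any candidate.

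The only delicate point is the second step, namely that the thread equations in the quotient translate back to thread equations in $V$; this is essentially forced by Proposition~\ref{lem:firstfctlem} combined with the uniqueness assertion of Lemma~\ref{qElmRepr}, so no genuine obstacle arises.
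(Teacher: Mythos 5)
Your proof is correct and follows the same coordinate-wise strategy as the paper: apply Lemma~\ref{qElmRepr} to each $\dot f(\beta)$ with $\beta>\alpha$, then use uniqueness to get uniqueness of $g$. The paper's own proof stops there, silently leaving to the reader the check that the resulting $g$ actually lies in $T(\FFF)$; you fill that gap explicitly, and correctly, by invoking the retraction formula $\pi_{i_{\gamma\beta}/\dot G_\alpha}([c]_{i_{\alpha\beta}[\dot G_\alpha]})=[\pi_{\gamma\beta}(c)]_{i_{\alpha\gamma}[\dot G_\alpha]}$ from Proposition~\ref{lem:firstfctlem} together with the uniqueness clause of Lemma~\ref{qElmRepr}, and you also fill in the definition of $g$ on coordinates $\beta\leq\alpha$. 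So this is not a different route but a more complete rendering of the intended one; the extra step you supply is genuinely needed for $g\in T(\FFF)$, and your argument for it is the right one.
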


\begin{proof}
	We proceed applying Lemma \ref{qElmRepr} at every stage $\beta > \alpha$.
	\begin{description}
		\item[Existence.] For every $\beta > \alpha$, by hypothesis $\dot{f}(\beta)$ is a name for an element of the quotient $\BB_\beta/_{i_{\alpha\beta}[\dot{G}_\alpha]}$. Let $g(\beta)$ be the unique element of $\BB_\beta$ such that $\Qp{\dot{f}(\beta) = \qp{g(\beta)}_{i_{\alpha\beta}[\dot{G}_\alpha]} } = \1_{\BB_\alpha}$. Then,
		\[
		\begin{split}
			\Qp{\dot{f} = \qp{g}_{\dot{G}_\alpha}} &= \Qp{\forall \beta \in \lambda ~ \dot{f}(\beta) = \qp{g(\beta)}_{\dot{G}_\alpha}} \\
			&= \bigwedge\bp{ \Qp{\dot{f}(\beta) = \qp{g(\beta)}_{i_{\alpha\beta}[\dot{G}_\alpha]} } :{\beta \in \lambda}}= \bigwedge \1_{\BB_\alpha} = \1_{\BB_\alpha}
		\end{split}
		\]
		\item[Uniqueness.] If $g'$ is such that $\Qp{\dot{f} = \qp{g'}_{\dot{G}_\alpha} } = \1_{\BB_\alpha}$ then for every $\beta > \alpha$, $\Qp{\dot{f}(\beta) = \qp{g'(\beta)}_{i_{\alpha\beta}[\dot{G}_\alpha]} } = \1_{\BB_\alpha}$. Such an element is unique by Lemma \ref{qElmRepr}, hence $g'(\beta) = g(\beta)$ defined above, completing the proof. \qedhere
	\end{description}
\end{proof}

\begin{remark}
	All the results in this section can be generalized to complete homomorphisms $i$, by considering $i \res \coker(i)$ that is a regular embedding as already noted in Definition \ref{eCoker}.
\end{remark}

	\section{Examples and counterexamples}\label{sec:examples}

In this section we shall examine some aspects of iterated systems by means of examples. In the first one we will see that $T(\FFF)$ may not be a complete boolean algebra, and that $C(\FFF)$ and $T(\FFF)$ as forcing notions share little in common.
In the second one we show that the pointwise meet of two threads may not even be a thread.
In the third one, we will justify the introduction of
$\RCS$-limits showing that in many cases $C(\FFF)$ collapses $\omega_1$ even if all factors of the iteration are preserving $\omega_1$. This shows that in order to produce a limit of an iteration system that preserves $\omega_1$ one needs to devise subtler notions of limits than 
full and direct limits.
This motivates the results of sections~\ref{sec:semiproper} and~\ref{sec:semiproperiter} where it is shown that $\RCS$-limits are a nice notion of limit, since
$\RCS$-iterations of semiproper posets are semiproper and preserve $\omega_1$.
The last iteration system shall provide also an example of iteration in which the direct limit is taken stationarily often but $T(\FFF) \neq \RO(C(\FFF))$.

\subsection{Distinction between direct limits and full limits}

\begin{example} \label{xSups} \label{xSubstruct}
Let $\FFF_0= \{i_{n,m}: \BB_n \rightarrow \BB_m : n < m < \omega\}$ be an iteration system such that for all $n \in \omega$ $1_{\BB_n} \Vdash \BB_{n+1}/_{\dot{G}} \neq 2$, and $\vp{\BB_0}$ is atomless and infinite. 
\end{example}

\begin{lemma} \label{xLemmaEx1}
	There exists $t_m \in C(\FFF_0)$ for each $m \in \omega$ such that the followings hold:
	\begin{enumerate}
	\item $\{t_{n+1} : n \in \omega\}$ is an antichain;
	\item $\tilde{\bigvee}\bp{t_{n+1}: n \in \omega}=1$;
	\item there exists $t \in T(\FFF_0)$ such that for all $n\in \omega$, $t \perp t_{n+1}$. 
	\end{enumerate}  	
\end{lemma}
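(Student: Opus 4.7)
The plan is to build an explicit splitting structure exploiting the nontriviality of each quotient. First I would use the hypothesis $\1_{\BB_n}\Vdash\BB_{n+1}/\dot G\neq 2$ to pick, for every $n$, an element $a_{n+1}\in\BB_{n+1}$ such that $\pi_{n,n+1}(a_{n+1})=\pi_{n,n+1}(\neg a_{n+1})=\1_{\BB_n}$: such an $a_{n+1}$ exists because the quotient in any generic extension has an element strictly between $\0$ and $\1$, and this fact has boolean value $\1_{\BB_n}$; translating ``$[a_{n+1}]_{\dot G}\neq\0$'' into $\pi_{n,n+1}(a_{n+1})=\1$ (as in the proof of Lemma~\ref{qElmRepr}) gives the desired element.

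Next I would form the descending sequence $d_n\in\BB_n$ defined by $d_0=\1_{\BB_0}$ and $d_{n+1}=i_{n,n+1}(d_n)\wedge a_{n+1}$. By Proposition~\ref{eRetrProp}.\ref{eRPHomo} one gets $\pi_{n,n+1}(d_{n+1})=d_n\wedge\pi_{n,n+1}(a_{n+1})=d_n$, so $\langle d_n:n<\omega\rangle$ is already a thread, and the same identity plus $\pi_{n,n+1}(a_{n+1})=\1$ shows inductively that $d_n>\0$. I would then define $t_{n+1}\in C(\FFF_0)$ as the constant thread with support $n+1$ and value
\[
e_{n+1}:=i_{n,n+1}(d_n)\wedge\neg a_{n+1}
\]
at coordinate $n+1$.

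For the antichain condition, observe that $d_{n+1}\leq a_{n+1}$ so by a straightforward induction $d_n\leq i_{k+1,n}(a_{k+1})$ whenever $k<n$; hence at coordinate $n+1$,
\[
i_{k+1,n+1}(e_{k+1})\wedge e_{n+1}\leq i_{k+1,n+1}(\neg a_{k+1})\wedge i_{k+1,n+1}(a_{k+1})=\0,
\]
so $t_{k+1}\wedge t_{n+1}=\0$ in $C(\FFF_0)$. For the pointwise supremum I would show by induction on $n$ the key identity $\bigvee_{1\leq j\leq n}i_{j,n}(e_j)\vee d_n=\1_{\BB_n}$, using at the inductive step the evident decomposition $e_{n+1}\vee d_{n+1}=i_{n,n+1}(d_n)$. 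Combined with the fact that for $k\geq n$, $t_{k+1}(n)=\pi_{n,k+1}(e_{k+1})=\pi_{n,k}(d_k)$ and the $d_k$'s descend, so $\bigvee_{k\geq n}\pi_{n,k}(d_k)=d_n$, this yields $(\psup\{t_{n+1}:n\in\omega\})(n)=\1_{\BB_n}$ at every coordinate, i.e.\ $\psup\{t_{n+1}:n\in\omega\}=\1$.

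Finally, for the third assertion I would take $t:=\langle d_n:n<\omega\rangle\in T(\FFF_0)$, which is a nonzero thread by the remarks above. Since $d_{n+1}\leq a_{n+1}$, we get $t(n+1)\wedge t_{n+1}(n+1)=d_{n+1}\wedge\neg a_{n+1}=\0$; hence any common refinement $s\leq t,t_{n+1}$ in $T(\FFF_0)$ satisfies $s(n+1)=\0$, and since retractions and embeddings send nonzero to nonzero (Proposition~\ref{eRetrProp}), this forces $s\equiv\0$, so $t\perp t_{n+1}$. The main subtlety will be the inductive computation of the pointwise supremum, since one must carefully track how the ``residual'' parts $\neg a_{k+1}$ at each coordinate combine through the retractions; the identity $e_{n+1}\vee d_{n+1}=i_{n,n+1}(d_n)$ is the engine that makes the induction go through.
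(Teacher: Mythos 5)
Your proposal is correct and takes essentially the same approach as the paper: your $d_n=\bigwedge_{1\le l\le n}i_{l,n}(a_l)$ and $e_{n+1}$ are, in the paper's notation, $t_{n+1}(n)$ and $t_{n+1}(n+1)$ respectively, and your engine identity $e_{n+1}\vee d_{n+1}=i_{n,n+1}(d_n)$ is exactly the paper's observation $t_{n+1}(n+1)\vee t_{n+2}(n+1)=i_{n,n+1}(t_{n+1}(n))$ that drives the induction on the pointwise supremum.
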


\begin{proof}

	Since $\1_{\BB_n} \Vdash \BB_{n+1}/_{\dot{G}} \neq 2$, there exists $\dot{a}_{n+1} \in V^{\BB_n}$ such that $\1_{\BB_n} \Vdash 0 < \dot{a}_{n+1} < 1$. Then let $a_{n+1} \in \BB_{n+1}$ be such that $\Qp{\dot{a}_{n+1}=[a_{n+1}]_{\dot{G}}}=\1_{\BB_n}$, which exists by Lemma \ref{qElmRepr}. Then $\pi_{n,n+1}(a_{n+1})=\1$ and $\pi_{n,n+1}(\neg a_{n+1})=\1$. Let $a_0 = \1$.

	For all $n >0$, let
	\[
		t_n = \ap{i_{n,m} (\neg a_n) \wedge \bigwedge\bp{ i_{l,m}(a_l): {l<n}}: m \in \omega, m > n}.
	\]
    First of all we have
    		\[ 
    	     	\begin{split}
                 &\pi_{n,n+1}(\bigwedge\bp{i_{l,n+1}(a_l):{l \leq n+1}}) \\ &=\pi_{n,n+1}(i_{n,n+1}(\bigwedge\bp{i_{l,n}(a_l): {l < n+1}}) \wedge a_{n+1}) \\
                 &= \bigwedge\bp{ i_{l,n}(a_l):{l \leq n}} \wedge \pi_{n,n+1}(a_{n+1})= \bigwedge\bp{ i_{l,n}(a_l):{l \leq n}}.
    	    	\end{split}
    		\]
    This implies also that  
    	    \[
    	       	    \begin{split}
    	       	    &t_{n+1}(n)= \pi_{n,n+1}( \neg a_{n+1} \wedge \bigwedge\bp{ i_{l,n+1}(a_l):{l < n+1}})=\\
    	       	     &= \pi_{n,n+1}(\neg a_{n+1}) \wedge \pi_{n,n+1}(i_{n,n+1} \bigwedge\bp{ i_{l,n}(a_l):{l \leq n}} )= \bigwedge\bp{ i_{l,n}(a_l):{l \leq n}}.
    	        	\end{split}
    		\]
	\begin{enumerate} 
		\item Observe that for all $0< m < n \in \omega$ $t_n \perp t_m$. As a matter of fact
			 \[
				\begin{split}
					t_m(n)&= i_{m,n} (\neg a_m) \wedge \bigwedge\bp{ i_{l,n}(a_l):{l<m}} < \neg i_{m,n}(a_m),\\
					t_n(n)&= \neg a_n \wedge \bigwedge\bp{ i_{l,n}(a_l):{l<n}} < i_{m,n}(a_m).
				\end{split}
			 \]
		\item In order to prove $\tilde{\bigvee}\bp{t_m: 0<m\in \omega} = 1$, we prove by induction on $n$ that 
			\[
				\bigvee\bp {t_m(n): {0 < m \leq n+1}} = \1.
			\] 
	    	If $n=0$ then $t_1(0)=\pi_{0,1} (a_1 \wedge i_{0,1}(a_0))=\pi_{0,1}(a_1)=\1$. Now assume that it holds for $n$.
			Observe that
				\[ 	
					\begin{split}
					&t_{n+1}(n+1) \vee t_{n+2}(n+1)= (\neg a_{n+1} \wedge \bigwedge\bp{ i_{m,n+1}(a_m):{m < n+1}}) \\
					&\vee ( a_{n+1} \wedge \bigwedge\bp{i_{m,n+1}(a_m):{m < n+1}}= \bigwedge\bp{ i_{m,n+1}(a_m):{m < n+1}}\\
					&= i_{n,n+1}(\bigwedge\bp{ i_{m,n} (a_m):{m \leq n}} )= i_{n,n+1}(t_{n+1}(n))\\
					\end{split}
				\]
			Then  
				\[	
					\begin{split}
					&i_{n,n+1} (\bigvee\bp{ t_m(n): {0< m \leq n}}) \vee t_{n+1}(n+1) \vee t_{n+2}(n+1)= \\
					&= i_{n,n+1} (\bigvee\bp{ t_m(n):{0<m \leq n}}) \vee i_{n,n+1}(t_{n+1}(n))= \\
					&= i_{n,n+1}(\bigvee \bp{t_m(n): {0< m \leq n+1}}))=\1.
					\end{split}
				\]
		
		\item Let $t= \ap{ \bigwedge\bp{ i_{m,n}(a_m):{m\leq n}} : n \in \omega}$. It is a thread since, thanks to the first point, for all $l < n$: 
				\[
					\pi_{l,n} (\bigwedge\bp{ i_{m,n}(a_m):{m \leq n}}) = \bigwedge\bp{ i_{m,l}(a_m):{m \leq l }}.
				\]
		      Moreover we have that $t \perp t_{n}$ for all $n\in \omega\setminus\{0\}$, since $t_n(n) < \neg a_n$ and $t(n+1) < i_{n,n} (a_n)= a_n$.	
		\end{enumerate} 
\end{proof}

\begin{proposition}
$\RO(C(\FFF_0))$ is not a complete subalgebra of $T(\FFF_0)$. Moreover $T(\FFF_0)$ is not closed under suprema.
\end{proposition}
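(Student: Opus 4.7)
The plan is to exploit the three properties collected in Lemma~\ref{xLemmaEx1} for the antichain $A = \{t_{n+1} : n \in \omega\} \subseteq C(\FFF_0)$ together with the witnessing thread $t \in T(\FFF_0)$. I will compute the supremum of $A$ in $\RO(C(\FFF_0))$ on the one hand and in $\RO(T(\FFF_0))$ on the other, and show the two disagree.

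First I would identify $\RO(C(\FFF_0))$ with the family $D \subseteq T(\FFF_0)$ provided by Proposition~\ref{iCFinTF}. Under this identification each $t_n$ lies in $D$ (being already in $C(\FFF_0)$) and $\1 = \psup A$ is the top element of $D$, so the supremum of $A$ taken in $\RO(C(\FFF_0))$ equals $\1$. Next I would compute the supremum of $A$ inside $\RO(T(\FFF_0))$: property (3) gives a thread $t > \0$ with $t \wedge t_{n+1} = \0$ in $T(\FFF_0)$ for every $n$, hence $t \leq \neg\bigvee_{\RO(T(\FFF_0))} A$ and therefore $\bigvee_{\RO(T(\FFF_0))} A \leq \neg t < \1$.

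Comparing these two values yields both halves of the proposition at once. Since the natural inclusion $\RO(C(\FFF_0)) \hookrightarrow \RO(T(\FFF_0))$ sends $\bigvee_{\RO(C(\FFF_0))} A = \1$ to an element strictly above $\bigvee_{\RO(T(\FFF_0))} A$, the inclusion fails to preserve arbitrary joins, so $\RO(C(\FFF_0))$ is not a complete subalgebra of $\RO(T(\FFF_0))$. For the second assertion, the pointwise supremum $\psup A = \1 \in T(\FFF_0)$ is strictly larger than the true boolean supremum $\bigvee_{\RO(T(\FFF_0))} A \leq \neg t$, which witnesses that the join operation on $T(\FFF_0)$ given by $\psup$ does not compute suprema in the completion, i.e.\ $T(\FFF_0)$ is not closed under suprema in the boolean sense.

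The only delicate point is the first computation: one must be sure that the supremum of $A$ in $\RO(C(\FFF_0))$ really is $\1$ and not something smaller coming from the regular-open structure. This is handled by Proposition~\ref{iCFinTF}, which says that the isomorphic image of a supremum in $\RO(C(\FFF_0))$ is precisely the pointwise supremum $\psup$ of the corresponding set of constant threads; together with property (2) of Lemma~\ref{xLemmaEx1} this pins down the value $\1$ and makes the contrast with $\neg t < \1$ decisive.
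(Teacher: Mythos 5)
Your argument is correct and follows essentially the same route as the paper: use Lemma~\ref{xLemmaEx1} to get $\psup A = \1$ while $\bigvee_{\RO(T(\FFF_0))} A \leq \neg t < \1$, and conclude both halves from this discrepancy. The one step the paper makes explicit that you leave implicit at the very end is that any thread upper-bounding $A$ must upper-bound $\psup A$ (immediate from the pointwise order on $T(\FFF_0)$); this is what rules out $\bigvee_{\RO(T(\FFF_0))} A$ being some thread below $\1$ and thereby shows the boolean supremum genuinely escapes $T(\FFF_0)$, rather than merely differing from $\psup A$.
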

\begin{proof}
    We have that for each $m \in \omega$ there exists $t_m \in C(\FFF_0)$ as in Lemma \ref{xLemmaEx1}.
	Hence $\bigvee \bp{t_{n+1}:n \in \omega} \neq \tilde{\bigvee}\bp{t_{n+1}:n \in \omega}=1$. Since $t_{n+1} \in C(\FFF_0)$ for all $n \in \omega$, this implies also that $\RO(C(\FFF_0))$ is not a complete subalgebra of $T(\FFF_0)$.
	Moreover since it is easy to check that a thread $t\in T(\FFF_0)$ is a majorant of a family $A$ of threads
	in $T(\FFF_0)$ iff $t\geq \tilde{\bigvee}A$, we also get that $T(\FFF_0)$ is not closed under suprema of its subfamilies and thus cannot be a complete boolean algebra.
\end{proof}

\subsection{The pointwise meet of threads may not be a thread}

Let $\FFF_0$ be the iteration system defined in example~\ref{xSups}.

\begin{proposition} \label{xWedge}
	There exist $f, g \in T(\FFF_0)$ such that $f \perp g$ in $T(\FFF)$ but $f(n) \wedge g(n) > \0$ for all $n < \omega$.
\end{proposition}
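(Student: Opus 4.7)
The plan is to take $f = t$, the non-constant thread from Example~\ref{xSups}, and to construct a second thread $g \in T(\FFF_0)$ that is pointwise compatible with $t$ at every stage but admits no nonzero common lower bound with $t$ in $T(\FFF_0)$.

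Using the atomlessness of $\BB_0$, I would first fix a strictly decreasing sequence $\ap{b_m : m \geq 1}$ of nonzero elements of $\BB_0$ with $\bigwedge_{m \geq 1} b_m = \0_{\BB_0}$ (specializing $\BB_0$ if necessary to a rich atomless complete algebra such as a Cohen or measure algebra, where such sequences are immediate). For each $m \geq 1$ I set
\[
c_m = \bigl(i_{0, m}(b_m) \wedge a_m\bigr) \vee \bigl(i_{0, m}(\neg b_m) \wedge \neg a_m\bigr) \in \BB_m,
\]
a deliberate ``correlation'' of the halves of $\BB_m$ determined by $a_m$ with those of $i_{0, m}[\BB_0]$ determined by $b_m$. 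The identities $\pi_{m-1,m}(a_m) = \pi_{m-1,m}(\neg a_m) = \1$ together with Property~\ref{eRetrProp}.\ref{eRPHomo} yield $\pi_{m-1, m}(c_m) = \1$, so the prescription $g(0) = \1_{\BB_0}$ and $g(\alpha) = \bigwedge_{m=1}^{\alpha} i_{m, \alpha}(c_m)$ for $\alpha \geq 1$ defines a bona fide thread in $T(\FFF_0)$.

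The key retraction calculation is then
\[
t(\alpha) \wedge g(\alpha) = t(\alpha) \wedge i_{0, \alpha}(b_\alpha),
\]
obtained by noting that $t(\alpha) \leq i_{m, \alpha}(a_m)$ annihilates the $\neg a_m$-disjunct in $i_{m,\alpha}(c_m)$, so that $t(\alpha) \wedge i_{m,\alpha}(c_m) = t(\alpha) \wedge i_{0,\alpha}(b_m)$; intersecting over $1 \leq m \leq \alpha$ and using the monotonicity of $\ap{b_m}$ collapses this meet to $t(\alpha) \wedge i_{0,\alpha}(b_\alpha)$. Since $\pi_{0,\alpha}$ sends this element to $b_\alpha > \0$, we obtain $f(\alpha) \wedge g(\alpha) > \0$ for every $\alpha$.

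Finally, to see $f \perp g$, I take any thread $h \in T(\FFF_0)$ with $h \leq f, g$ and, for each $\alpha \leq \beta$, combine the thread property with Property~\ref{eRetrProp}.\ref{eRPHomo}:
\[
h(\alpha) = \pi_{\alpha, \beta}(h(\beta)) \leq \pi_{\alpha, \beta}\bigl(t(\beta) \wedge i_{0, \beta}(b_\beta)\bigr) = t(\alpha) \wedge i_{0, \alpha}(b_\beta).
\]
Taking the meet over $\beta \geq \alpha$ and using $\bigwedge_\beta b_\beta = \0$ forces $h(\alpha) = \0$ for each $\alpha$, whence $h = \0$. The main obstacle is securing the decreasing sequence $\ap{b_m}$ with vanishing infimum in $\BB_0$, which is not automatic from atomlessness alone (it can fail for $\sigma$-distributive $\BB_0$); once this sequence is in hand, everything else reduces to routine applications of the retraction identity.
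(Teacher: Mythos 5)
Your construction is correct and, at bottom, exploits the same mechanism as the paper's: anchor the pointwise meets $f(n)\wedge g(n)$ to a strictly descending sequence in $\BB_0$ with $\0$ infimum, so that for any purported common refinement $h$ the values $\pi_{\alpha,\beta}(h(\beta))$ are squeezed below an $i_{0,\alpha}$-image of arbitrarily small ground terms, forcing $h(\alpha)=\0$. The packaging is genuinely different though: the paper builds $f$ and $g$ symmetrically, taking $b_n=d_n\vee i_{0n}(a_n)$ and $c_n=\neg d_n\vee i_{0n}(a_n)$ so that $f(n)\wedge g(n)=i_{0n}(a_n)$ falls out by a one-line calculation, whereas you freeze $f=t$ from Lemma~\ref{xLemmaEx1} and then engineer $g$ via the ``correlated'' condition $c_m=(i_{0,m}(b_m)\wedge a_m)\vee(i_{0,m}(\neg b_m)\wedge\neg a_m)$, using $t(\alpha)\le i_{m,\alpha}(a_m)$ to kill the $\neg a_m$-disjunct. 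Your asymmetric version works, and reusing $t$ links the two propositions of the section nicely; the paper's symmetric version is shorter and avoids the extra retraction bookkeeping needed to establish $t(\alpha)\wedge g(\alpha)=t(\alpha)\wedge i_{0,\alpha}(b_\alpha)$.

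Your closing caveat, however, is mistaken: a strictly decreasing sequence with $\0$ infimum is automatic in any atomless complete boolean algebra. Atomlessness gives a countably infinite antichain $\bp{c_n:n<\omega}$; setting $a_n=\neg\bigvee_{m<n}c_m$ gives a decreasing sequence, and if $\bigwedge_n a_n=e>\0$ one replaces $a_n$ by $a_n\wedge\neg e$ (still $\ge c_n>\0$, now with infimum $\0$). This has nothing to do with $\sigma$-distributivity, which concerns commuting countable meets with countable joins, not the existence of infinite antichains; indeed Suslin algebras are $\sigma$-distributive and have plenty of such sequences. The paper's proof states this explicitly (``it can be defined from a maximal antichain of $\BB_0$ of countable size''), so no extra hypothesis on $\BB_0$ is needed and no specialization to a Cohen or measure algebra is necessary.
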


\begin{proof}
	Let $\ap{a_n: n < \omega}$ be a descending sequence in $\BB_0$ such that $\bigwedge \ap{a_n: n< \omega} = \0$ (it exists since it can be defined from a maximal antichain of $\BB_0$ of countable size). Let $d_n \in \BB_n$ be such that $\pi(d_n) = \pi(\neg d_n) = \1$ as in the previous subsection. Let $b_n = d_n \vee i_{0n}(a_n)$, $c_n = \neg d_n \vee i_{0n}(a_n)$, so that $b_n \wedge c_n = i_{0n}(a_n)$ and $\pi_{n-1,n}(b_n) = \pi_{n-1,n}(c_n) = \1$.
	
	As in the previous subsection $f = \bigwedge\bp{ i_n(b_n):n\in \omega}$, $g = \bigwedge\bp{ i_n(c_n):n \in \omega}$ are threads in $T(\FFF)$, such that $f(n) \wedge g(n) = \bigwedge\bp{ i_{0n}(a_m):{m \leq n}} = i_{0n}(a_n) > \0$ since $\ap{a_n: n<\omega}$ is a descending sequence.
	
	Furthermore, suppose by contradiction that there exist a non-zero thread $h \leq f,g$. Then for all $n < \omega$, $h(n) \leq f(n) \wedge g(n) = i_{0n}(a_n)$ and $h(0) = \pi_{0n}(h(n)) \leq \pi_{0n}\circ i_{0n}(a_n) = a_n$ for all $n$. Thus, $h(0) \leq \bigwedge\bp{ a_n: n \in \omega} = \0$, a contradiction.
\end{proof}

\subsection{Direct limits may not preserve $\omega_1$}

To develop an example of a case where the direct limit of an iteration system of length bigger than $\omega_1$ does not preserve $\omega_1$ we shall use the following forcing notion.

\begin{definition}
Let $\lambda$ be a regular cardinal. 
	\emph{Namba forcing} $\Nm(\lambda)$ is the poset of all perfect trees $T \subseteq \lambda^{<\omega}$ (i.e. everbranching and such that for every $t \in T$, the set $\{\alpha < \lambda: ~ t^\smallfrown \alpha \in T \}$ has cardinality either $1$ or $\lambda$), ordered by reverse inclusion.
\end{definition}

In the example below we shall use only the following well-known properties of Namba forcing:

\begin{fact}
	$\Nm(\lambda)$ is stationary set preserving (and thus preserves $\omega_1$)
	 and forces the cofinality of $\lambda$ to become $\omega$ and its size to become $\omega_1$.
\end{fact}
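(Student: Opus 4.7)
The plan is to verify the three assertions of the fact separately, in increasing order of difficulty.

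\textbf{Cofinality of $\lambda$ becomes $\omega$.} Given a $V$-generic filter $G \subseteq \Nm(\lambda)$, first I would observe that $\bigcap G$ singles out a unique branch $b \in \lambda^\omega$: for each $n$, the set of $T$ whose stem has length at least $n$ is dense (given $T$, any extension of the stem is again a perfect tree below $T$), and any two conditions in $G$ must agree on all such initial segments by compatibility. Then, for each fixed $\alpha < \lambda$, the set $D_\alpha=\{T\in\Nm(\lambda):\text{every branch of }T\text{ eventually exceeds }\alpha\}$ is dense: given $T$, take the first $\lambda$-splitting node $t$ above the stem (such nodes exist above every node by the perfection requirement, since otherwise $T$ would reduce to a single branch and fail to be perfect) and replace $T$ by the subtree consisting of those extensions of $t$ that pass through a successor $t^\smallfrown\xi$ with $\xi>\alpha$. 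Genericity then yields $b(n)>\alpha$ for some $n$, so $b$ is cofinal in $\lambda$ and hence $\cf(\lambda)^{V[G]}=\omega$.

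\textbf{Stationary set preservation.} This is the main obstacle, and I would follow Shelah's fusion argument. Fix $T\in\Nm(\lambda)$, a $\Nm(\lambda)$-name $\dot{C}$ for a club subset of $\omega_1$, and a stationary $S\in V$ with $S\subseteq\omega_1$; the goal is to produce $T^*\leq T$ and $\alpha\in S$ forcing $\check\alpha\in\dot{C}$. The construction builds, by recursion on $k<\omega$, a decreasing sequence $T=T_0\geq T_1\geq\cdots$ of conditions together with levels $n_0<n_1<\cdots$ and ordinals $\alpha_0<\alpha_1<\cdots$ in $\omega_1$ such that: (i) $T_{k+1}$ agrees with $T_k$ up to level $n_k$; (ii) every $\lambda$-splitting node of $T_k$ below level $n_k$ remains a $\lambda$-splitting node of $T_{k+1}$; and (iii) for each node $t\in T_{k+1}$ at level $n_{k+1}$ the subcondition $T_{k+1}$ below $t$ forces some element of $\dot{C}\cap[\alpha_k,\alpha_{k+1})$ into $\dot{C}$. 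Condition (iii) is arranged by inspecting, for each such $t$, a single name for $\min(\dot C\setminus\check\alpha_k)$ below $T_k\res t$, bounding it in $\omega_1$ using countably many conditions (one per node at level $n_{k+1}$), and setting $\alpha_{k+1}$ above the sup of these bounds. The stationarity of $S$ lets me simultaneously arrange $\alpha:=\sup_k\alpha_k\in S$, since the $\alpha_k$ trace out a club-type sequence in $\omega_1$. The fusion $T^*=\bigcap_k T_k$ is still a condition in $\Nm(\lambda)$ because $\lambda$-splitting is preserved at all levels $n_k$, and by closure of $\dot C$ we have $T^*\Vdash\check\alpha\in\dot C$. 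The delicate point, and the reason this is the heart of the argument, is ensuring that $T^*$ is perfect in the defined sense: the countably many restrictions imposed at each level remove at most countably many successors at any $\lambda$-splitting node, leaving $\lambda$ many behind, which is what makes the fusion close up.

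\textbf{Size of $\lambda$ becomes $\omega_1$.} This now follows from the previous two items. In $V[G]$ we have $\lambda=\sup_{n<\omega}b(n)$ with $b(n)<\lambda$. In the application relevant to these notes one takes $\lambda=\omega_2$; then $|b(n)|^V\leq\omega_1$ for each $n$, and since $\omega_1$ is preserved we still have $|b(n)|^{V[G]}\leq\omega_1$, giving $|\lambda|^{V[G]}\leq\omega\cdot\omega_1=\omega_1$; preservation of $\omega_1$ forces equality. For general regular $\lambda>\omega_1$ the same argument succeeds provided every cardinal of $V$ strictly between $\omega_1$ and $\lambda$ is collapsed to $\omega_1$, which can be arranged by iterating the fusion argument at each intermediate level or by restricting the statement to $\lambda=\omega_2$, the case used in the consistency proof of $\MM$.
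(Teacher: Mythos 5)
The paper gives no proof of this statement: it is listed among the ``well-known properties of Namba forcing'' and is followed directly by the example that uses it, so I assess your sketch on its own terms. The cofinality part is handled correctly, and your observation on the third part is a good one --- for a general regular $\lambda$ the collapse of $\vp{\lambda}$ to $\omega_1$ does not follow from the rest of the argument, since the generic branch $b$ takes values cofinally in $\lambda$ and $\vp{b(n)}^{V[G]}$ can exceed $\omega_1$ unless the cardinals of $V$ strictly between $\omega_1$ and $\lambda$ are also collapsed; restricting to $\lambda=\omega_2$ (where $\vp{b(n)}^V\leq\omega_1$ automatically) is the reading the notes actually rely on, and indeed only the cofinality and stationary-preservation clauses are used in Example~\ref{xCollapse}.

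The genuine gap is in the fusion argument. You propose to ``bound [the name] in $\omega_1$ using countably many conditions (one per node at level $n_{k+1}$)'' and assert that ``the countably many restrictions imposed at each level remove at most countably many successors at any $\lambda$-splitting node, leaving $\lambda$ many behind.'' Both steps fail in Namba forcing. Once a $\lambda$-splitting node sits below level $n_{k+1}$, there are $\lambda$ many nodes at that level, not countably many, so your bookkeeping does not produce countably many decisions; and the pruning at a splitting node is \emph{not} a matter of discarding a small set of successors. For each successor $t^\smallfrown\xi$ one decides a countable ordinal $\beta_\xi$ by a subcondition, and keeping the fusion alive requires retaining a $\lambda$-\emph{sized} family of successors on which the $\beta_\xi$ admit a single small bound. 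This needs the regularity of $\lambda>\omega_1$ (pigeonhole gives a fiber of $\xi\mapsto\beta_\xi:\lambda\to\omega_1$ of size $\lambda$), and, to force that bound below the target ordinal, the entire construction must be run inside a fixed countable $M\prec H_\theta$ with $T,\dot{C},S\in M$ and $\delta=M\cap\omega_1\in S$, so that by elementarity the common bound lies in $M$ and hence below $\delta$. Your remark that ``the $\alpha_k$ trace out a club-type sequence'' is gesturing at this, but without fixing $M$ up front there is no reason $\sup_k\alpha_k$ should land in $S$. The bookkeeping also ought to proceed node-by-node over a prescribed enumeration of the $\lambda$-splitting nodes to be preserved in the fusion, handling finitely many new ones at each stage, rather than level-by-level; otherwise the uncountable width of the tree past the first splitting node defeats the recursion.
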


%

\begin{example} \label{xCollapse} \label{xStatDirLim}
	Let $\FFF_1 = \{i_{\alpha,\beta}: \BB_\alpha \rightarrow \BB_\beta : \alpha \leq \beta < \lambda\}$ be an iteration system such that $S = \bp{\alpha < \lambda: ~ \BB_\alpha = C(\FFF_1 \res \alpha)}$ is stationary, and suppose that $\BB_0$ is the boolean completion of the Namba forcing $\Nm(\lambda)$ and $\BB_{\alpha+1} /_{\dot{G}_\alpha}$ is forced to have antichains of uncountable size.
\end{example}

\begin{proposition}
	$\RO(C(\FFF_1))$ is a proper subset of $T(\FFF_1)$ and collapses $\omega_1$.
\end{proposition}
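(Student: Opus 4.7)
The plan is to drive both claims through the Namba base factor $\BB_0=\RO(\Nm(\lambda))$, which embeds regularly into $\RO(C(\FFF_1))$ via $i_{0\lambda}$. For any $V$-generic filter $G$ for $\RO(C(\FFF_1))$, $G_0=\pi_{0\lambda}[G]$ is $V$-generic for $\BB_0$, so in $V[G_0]\subseteq V[G]$ we have a cofinal $\omega$-sequence in $\lambda$. Since $S$ is stationary, hence unbounded in the regular cardinal $\lambda$, a perfect-tree refinement of Namba conditions lets us fix a $\BB_0$-name $\langle\dot\alpha_n:n<\omega\rangle$ for such a cofinal sequence lying entirely in $S$; so $\BB_{\alpha_n}=\RO(C(\FFF_1\res\alpha_n))$ for every $n$. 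Assuming without loss of generality that every $\BB_\beta$ with $\beta<\lambda$ preserves $\omega_1^V$ (else $\RO(C(\FFF_1))$ collapses $\omega_1^V$ trivially via $i_{\beta\lambda}$), we use the hypothesis and Lemma~\ref{qElmRepr} to promote the uncountable antichain in each quotient $\BB_{\alpha_n+1}/G_{\alpha_n}$ to a family $\{a^n_\xi:\xi<\omega_1^V\}\subseteq\BB_{\alpha_n+1}$.

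For the collapsing, I would introduce the $\RO(C(\FFF_1))$-name $\dot h$ for a function $h:\omega\to\omega_1^V$ defined by $\dot h(n)=\xi\iff a^n_\xi\in G_{\alpha_n+1}$, and show $\dot h$ is forced to be cofinal in $\omega_1^V$. Given $\eta<\omega_1^V$ and a condition $p\in C(\FFF_1)^+$ with $\supp(p)=\beta<\lambda$, cofinality of $\dot{\vec\alpha}$ and density let me extend $p$ to a $p'$ deciding $\dot\alpha_n=\alpha>\beta$ for some specific $n$; since $\{[a^n_\xi]_{G_\alpha}:\xi<\omega_1\}$ is a maximal antichain in the quotient, I can further extend $p'$ below $i_{\alpha+1,\lambda}(a^n_\xi)$ for some $\xi>\eta$, yielding a condition forcing $\dot h(n)>\eta$. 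Hence $V[G]\models\cf(\omega_1^V)=\omega$, so $\omega_1^V$ is collapsed.

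For the proper inclusion, Proposition~\ref{iCFinTF} reduces the task to exhibiting a thread $f\in T(\FFF_1)$ with $f\neq\psup\{g\in C(\FFF_1):g\leq f\}$. The candidate $f$ encodes the uniform choice ``$a^n_0$ at every stage $\alpha_n+1$'': in $V[G_0]$, I form the thread in $T(\FFF_1/G_0)$ whose $\beta$-th coordinate is the pointwise meet $\bigwedge\{[a^n_0]_{G_{\alpha_n}}:\alpha_n+1\leq\beta\}$ (taken in the quotient iteration), and then pull it back via Lemma~\ref{qThreadRepr} to a $\BB_0$-name for a thread, which after mixing along a maximal antichain of $\BB_0$-conditions deciding $\dot{\vec\alpha}$ produces a genuine $f\in T(\FFF_1)$. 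Any constant $g\in C(\FFF_1)$ with $g\leq f$ has bounded support $\gamma<\lambda$, and the constraints $i_{\gamma,\alpha_n+1}(g(\gamma))=g(\alpha_n+1)\leq f(\alpha_n+1)$ for the infinitely many $\alpha_n>\gamma$ force $g(\gamma)$ to lie strictly below $f(\gamma)$ in a way that cannot be recovered by pointwise joins; hence $\psup\{g\in C(\FFF_1):g\leq f\}$ falls strictly short of $f$ at some coordinate and $f\notin\RO(C(\FFF_1))$.

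The main obstacle is the rigorous construction of $f$ as a bona fide element of $T(\FFF_1)$: since the $\alpha_n$'s and the representatives $a^n_0$ all depend on the Namba generic, $f$ must be assembled from $\BB_0$-names via the translation in Lemma~\ref{qThreadRepr} between $\BB_0$-names for threads in $\FFF_1/G_0$ and threads in $\FFF_1$, and the retraction identities $\pi_{\gamma\delta}(f(\delta))=f(\gamma)$ must be verified coordinate by coordinate using Property~\ref{eRetrProp}(\ref{eRPHomo}). Once that translation is in place, both parts of the proposition fall out by routine density arguments.
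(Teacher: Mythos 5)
Your collapsing argument follows the paper's route in essence: use the Namba generic $\dot f$ to climb cofinally through $\lambda$ in $\omega$ steps, use the uncountable antichains at successor stages to hit every $\beta<\omega_1$, and observe via $C(\FFF_1)\cong\BB_0*C(\FFF_1/\dot G_0)$ that the resulting name is a surjection onto $\omega_1$. The detour through $S$ (``cofinal sequence lying entirely in $S$'') and the WLOG assumption that each $\BB_\beta$ preserves $\omega_1^V$ are both unnecessary: the paper's argument never uses stationarity of $S$ in this proposition, and the antichains $A_\alpha$ are available at every stage $\alpha+1$. Also, once you have a $\BB_0$-name for a thread in $T(\FFF_1/\dot G_0)$, Lemma~\ref{qThreadRepr} already hands you a canonical representative in $T(\FFF_1)$; no additional ``mixing along a maximal antichain of $\BB_0$-conditions'' is needed.

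The genuine gap is in the proper-inclusion part. You reduce via Proposition~\ref{iCFinTF} to showing $f\neq\psup\{g\in C(\FFF_1):g\leq f\}$, and then assert that for each constant $g\leq f$ the constraints at stages $\alpha_n+1$ ``force $g(\gamma)$ to lie strictly below $f(\gamma)$ in a way that cannot be recovered by pointwise joins.'' This is not an argument: a pointwise supremum of a family of elements each strictly below $f(\gamma)$ can perfectly well equal $f(\gamma)$, so ``strictly below for each $g$'' does not give you ``$\psup$ falls strictly short.'' What the paper actually proves is much stronger and cleaner: there is \emph{no} nonzero constant thread $r$ below $t$ at all (so that the pointwise sup of the threads below $t$ is $\0\neq t$). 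The mechanism is the one you almost have in hand: if $r\in C(\FFF_1)^+$ with $r\leq t$ and $\supp(r)=\gamma$, choose $b\leq r(0)$ in $\BB_0$ deciding $\dot f(n)=\gamma'$ for some $n$ with $\gamma'>\gamma$; then $r\wedge i_0(b)$ still has support $\leq\gamma<\gamma'$, hence is compatible with $i_{\gamma'+1}(\neg a^{\gamma'}_0)$ (which projects to $\1_{\BB_{\gamma'}}$), while $t\wedge i_0(b)\leq i_{\gamma'+1}(a^{\gamma'}_0)$. This contradicts $r\leq t$. You should replace the vague ``cannot be recovered by pointwise joins'' with this explicit incompatibility argument; without it the proper-inclusion half of the proposition is unproven.
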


\begin{proof}
	Let $\dot{f} \in V^{\BB_0}$ be the canonical name for a cofinal function from $\omega$ to $\lambda$, and let $\dot{A}_\alpha$ be a name for an antichain of size $\omega_1$ in $\BB_{\alpha+1}/_{\dot{G}_\alpha}$, with $A_\alpha = \{a^\alpha_\beta : ~ \beta < \omega_1\}$ the corresponding antichain of size $\omega_1$ in $\BB_{\alpha+1}$ obtained by repeated application of Lemma \ref{qElmRepr}. Then for all $\alpha$, $\beta$ we have that $\Qp{\0 < \qp{a^\alpha_\beta}_{\dot{G}_\alpha} < \1} = \1_{\BB_\alpha}$ hence $\pi_{\alpha,\alpha+1}(a^\alpha_\beta) = \pi_{\alpha,\alpha+1}(\neg a^\alpha_\beta) = \{\1_{\BB_\alpha}\}$.

	Let $\dot{t} \in V^{\BB_0}$ be a name for the thread in $T(\FFF_1/_{\dot{G}_0})$ defined by
	the requirement that for all $n<\omega$
	\[
	\Qp{\dot{t}(\dot{f}(n)) = [a^{\dot{f}(n)}_0]_{\dot{G}_0}}_{\BB_0}=\1_{\BB_0}.
	\]
	Let $t \in T(\FFF_1)$ be the canonical representative for $\dot{t}$ obtained from Lemma \ref{qThreadRepr}. Suppose by contradiction that  $t \in \RO(C(\FFF_1))^+$, so that there exists an $r \leq t$ in $C(\FFF_1)^+$. Since $\dot{f}$ is a $\BB_0$-name for a cofinal increasing function from 
	$\omega$ to $\lambda$, $r$ cannot decide in $C(\FFF_1)$ a bound for the value of $\dot{f}(n)$ for cofinally many $n$, else $\lambda$ would have countable cofinality in $V$. Let $b \in \BB_0$, $b \leq r(0)$ be such that 
	\[
	b \Vdash_{\BB_0} \dot{f}(n) = \gamma
	\] 
	with $\gamma > \supp(r)$ and $n$ large enough so that $r$ cannot bound the value of $\dot{f}(n)$.
	 Let
	$i_\alpha:\BB_\alpha\to C(\FFF_1)$ be the canonical embedding of $\BB_\alpha$ into $C(\FFF_1)$.
	Then $t \wedge i_0(b) \leq i_\gamma(a^\gamma_0)$ but $r \wedge i_0(b)$ cannot be below $i_\gamma(a^\gamma_0)$ since it has support smaller than $\gamma$ (and so is compatible con $\neg a^\gamma_0$, that is an element that projects to $\1_{\BB_\gamma}$), a contradiction which shows that
	$t\not\in \RO(C(\FFF_1))$.

	For the second part of the thesis, let $G_0$ be $V$-generic for $\BB_0$, $f=\dot{f}_{G_0}$.
	Let $i_\alpha/_{G_0}$ denote the canonical embedding of $\BB_\alpha/_{G_0}$ into
	$C(\FFF_1/_{G_0})$.
	
	Define $\dot{g}$ to be a $C(\FFF_1/_{G_0})$-name in $V[G_0]$ for a function from $\omega$ to $\omega_1$ as follows:
	\[
	\Qp{\dot{g}(n) = \check{\beta}} = i_{f(n)}/_{G_0}([a^{f(n)}_\beta]_{G_0}).
	\]
	Then $\dot{g}$ is forced to be a $C(\FFF_1/_{G_0})$-name for a surjective
	map from $\omega$ to $\omega_1$, since for every $t \in C(\FFF_1/_{G_0})$ and $\beta \in \omega_1$ we can find an $n$ such that $f(n) > \supp(t)$ so that 
	\[
	[t]_{G_0} \wedge i_{f(n)}/_{G_0}([a^{f(n)}_\beta]_{G_0})
	\] 
	is positive and forces $\beta$ to be in the range of $\dot{g}$. Thus, $C(\FFF_1/_{G_0})$ collapses $\omega_1$ to $\omega$ for every $G_0$ $V$-generic for $\BB_0$. Since $C(\FFF_1) = \BB_0 \ast C(\FFF_1/_{\dot{G}_0})$ the same holds for $C(\FFF_1)$, as witnessed by the following 
	$C(\FFF_1)$-name $\dot{h}$ for a function
	\[
	\Qp{\dot{h}(\check{n}) = \check{\beta}}_{\RO(C(\FFF_1))} = \bigvee\bp{ i_0\cp{\Qp{\dot{f}(\check{n}) = \check{\alpha}}_{\BB_0}}  \wedge a^\alpha_\beta :{\alpha \in \lambda}},
	\]
	completing the proof.
\end{proof}
	\section{Semiproperness} \label{sec:semiproper}

In this section we shall introduce the definition of semiproperness and some 
equivalent formulations of it.  
Our final aim is to show that $\RCS$-limits of semiproper posets 
yield a semiproper poset (this will be achieved in the next section).
It is rather straightforward to check that semiproper forcings preserve $\omega_1$ as 
well as the stationarity of ground model subsets of $\omega_1$, thus $\RCS$-limits are particularly 
appealing in order to prove consistency results over $H_{\omega_2}$ and are actually 
the tool to obtain the 
consistency of
strong forcing axioms. This consistency result (i.e. the proof of the consistency of 
Martin's maximum relative to a supercompact cardinal) will be the content of 
the last section of these notes.
Most of all our considerations about semiproperness transfer without much effort to 
properness with the obvious changes in the definitions. 
However since we decided to focus our analysis on semiproper posets we shall leave to 
the interested reader to transfer our result to the case of proper forcings.

\subsection{Algebraic definition of properness and semiproperness}

To state an algebraic formulation of semiproperness, we first need the following definition.

\begin{definition}\label{def:semipropcba}
	Let $\BB$ be a complete boolean algebra, $M \prec H_\theta$ for some $\theta \gg \vp{\BB}$, $\PD(\BB)$ be the collection of predense subsets of $\BB$ of size at most $\omega_1$. 
	The boolean value
	\[
	\sg(\BB,M) = \bigwedge\bp{  \bigvee (D \cap M):{D \in \PD(\BB) \cap M} }
	\]
	is the \emph{degree of semigenericity} of $M$ with respect to $\BB$. 
\end{definition}


The next results show that the degree of semigenericity can be also calculated from maximal antichains, and behaves well with respect to the restriction operation.

\begin{proposition}\label{PredenseAntichain}
	Let $\BB$, $M$, $\PD(\BB)$ be as in the previous definition, and let $\A(\BB)$ be the collection of maximal antichains of $\BB$ of size at most ${\omega_1}$. Then 
	\[
	\sg(\BB,M) = \bigwedge\bp{  \bigvee (A \cap M) :{A \in \A(\BB) \cap M}}
	\]
\end{proposition}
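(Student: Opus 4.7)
The plan is to prove the two inequalities separately. One direction is essentially immediate; the other requires refining each predense set to a maximal antichain in a way that is definable inside $M$ and keeps the supremum restricted to $M$ under control.

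For the easy direction, every maximal antichain is predense (by the fact quoted in Section~\ref{sec:background}), and the cardinality constraint is the same on both sides, so $\A(\BB) \cap M \subseteq \PD(\BB) \cap M$. Meeting over a subfamily yields a larger element, hence
\[
\sg(\BB,M) \;\leq\; \bigwedge\bigl\{\,\bigvee(A \cap M):A\in\A(\BB)\cap M\,\bigr\}.
\]

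For the reverse inequality it suffices to show: for every $D\in\PD(\BB)\cap M$ there is some $A\in\A(\BB)\cap M$ with $\bigvee(A\cap M)\leq\bigvee(D\cap M)$. Fix such $D$. Since $|D|\leq\omega_1$ and $D\in M$, elementarity gives a surjection $e:\omega_1\to D$ with $e\in M$. Working inside $M$, define
\[
a_\alpha \;=\; e(\alpha)\wedge\neg\bigvee_{\beta<\alpha}e(\beta),\qquad A=\{a_\alpha:\alpha<\omega_1,\ a_\alpha\neq\0\}.
\]
A routine argument (for any $b\in\BB^+$, use predensity of $D$ to choose the least $\alpha$ with $b\wedge e(\alpha)>\0$ and observe that $b\wedge e(\alpha)\leq a_\alpha$) shows $A$ is a maximal antichain. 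Since $A$ is defined from $e\in M$ and has size at most $\omega_1$, we have $A\in\A(\BB)\cap M$.

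The key step is then to relate $A\cap M$ to $D\cap M$. Because the map $\alpha\mapsto a_\alpha$ lies in $M$, elementarity gives that each $a\in A\cap M$ equals $a_\alpha$ for some $\alpha\in M\cap\omega_1$; the same reasoning applied to $e$ shows $e(\alpha)\in D\cap M$. Since $a_\alpha\leq e(\alpha)$, summing over $\alpha\in M\cap\omega_1$ yields
\[
\bigvee(A\cap M)\;\leq\;\bigvee_{\alpha\in M\cap\omega_1}e(\alpha)\;=\;\bigvee(D\cap M),
\]
which finishes the argument. The only non-cosmetic point is verifying that the disjointification $A$ remains of size $\leq\omega_1$ and maximal — both follow directly from $|D|\leq\omega_1$ and the predensity of $D$ — and that $A\cap M$ is correctly described via the enumeration in $M$, which is where elementarity is used.
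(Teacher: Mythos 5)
Your proof is correct and follows essentially the same route as the paper's: disjointify a given predense $D\in M$ into an antichain $A_D$ via $a_\alpha = b_\alpha\wedge\neg\bigvee_{\beta<\alpha}b_\beta$, observe $A_D\in\A(\BB)\cap M$ by elementarity, and use $a_\alpha\leq b_\alpha$ to conclude $\bigvee(A_D\cap M)\leq\bigvee(D\cap M)$. You merely spell out more explicitly the elementarity step identifying $A\cap M$ and $D\cap M$ with the $\alpha\in M\cap\omega_1$ slices of the enumerations, which the paper leaves implicit.
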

\begin{proof}
	Since $\A(\BB) \subseteq \PD(\BB)$, the inequality
	\[
	\sg(\BB,M) \leq \bigwedge\bp{ { \bigvee (A \cap M) }:{A \in \A(\BB) \cap M}}
	\]
	is trivial.
	Conversely, if $D=\bp{b_\alpha : \alpha <{\omega_1}} \in \PD(\BB)\cap M$, define
	\[
	A_D = \bp{a_\alpha = b_\alpha \wedge \neg \bigvee\bp{ b_\beta:{\beta < \alpha}} : \alpha < {\omega_1}}
	\]
	By elementarity, since $D \in M$ also $A_D$ is in $M$. It is straightforward to verify that $A_D$ is an antichain, and since $\bigvee A_D = \bigvee D = \1$ it is also maximal. Moreover, since $a_\alpha \leq b_\alpha$ we have that $\bigvee A_D \cap M \leq \bigvee D \cap M$. Thus, for any $D \in \PD(\BB)\cap M$, we have that $\bigwedge\bp { \bigvee (A \cap M) :{A \in \A(\BB) \cap M}} \leq \bigvee D \cap M$ hence
	\[
	\bigwedge\bp { \bigvee (A \cap M):{A \in \A(\BB) \cap M} } \leq \sg(\BB,M)
	\]
	The thesis follows.
\end{proof}

\begin{proposition}\label{4ristretto}
  Let $\BB$ be a complete boolean algebra and $M \prec H_\theta$ for some $\theta \gg \vp{\BB}$. Then for all $b \in M \cap \BB$
  \[
    \sg(\BB \res b, M )= \sg(\BB, M) \wedge b.
  \]
\end{proposition}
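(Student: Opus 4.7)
My plan is to prove the two inequalities separately by converting between predense subsets of $\BB$ and of $\BB\res b$ via the correspondence $D\mapsto D\res b := \{d\wedge b : d\in D\}$ in one direction and $E\mapsto E\cup\{\neg b\}$ in the other. Both operations preserve the size bound $\aleph_1$, and because $b\in M$ they also preserve membership in $M$ by elementarity. This is the only nontrivial ingredient; everything else is a routine distributivity calculation.

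For $\sg(\BB\res b, M)\leq \sg(\BB, M)\wedge b$, I first note that $\sg(\BB\res b, M)$ is a meet of elements of $\BB\res b$, hence is already $\leq b$. Given any $D\in\PD(\BB)\cap M$, the set $D\res b$ belongs to $\PD(\BB\res b)\cap M$, and by elementarity
\[
(D\res b)\cap M = \{d\wedge b : d\in D\cap M\},
\]
whose join equals $b\wedge\bigvee(D\cap M)$. Hence
\[
\sg(\BB\res b, M)\leq \bigvee\bigl((D\res b)\cap M\bigr) = b\wedge\bigvee(D\cap M),
\]
and meeting over $D\in\PD(\BB)\cap M$ yields $\sg(\BB\res b, M)\leq b\wedge\sg(\BB, M)$.

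For the reverse inequality $\sg(\BB, M)\wedge b \leq \sg(\BB\res b, M)$, I take any $E\in\PD(\BB\res b)\cap M$ and form $E' = E\cup\{\neg b\}$, which sits in $\PD(\BB)\cap M$ with size at most $\aleph_1$; predensity of $E'$ in $\BB$ follows from the dichotomy $c\leq\neg b$ or $c\wedge b>0$ applied to any positive $c\in\BB$. Since $\neg b\in M$,
\[
\bigvee(E'\cap M) = \bigvee(E\cap M)\,\vee\,\neg b,
\]
so from $\sg(\BB,M)\leq \bigvee(E'\cap M)$, after meeting with $b$ and using that every element of $E$ lies below $b$, we obtain $\sg(\BB,M)\wedge b\leq \bigvee(E\cap M)$. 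Taking the meet over all such $E$ concludes.

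As indicated, no real obstacle arises: the whole proof is bookkeeping around the two conversion maps between $\PD(\BB)\cap M$ and $\PD(\BB\res b)\cap M$, together with the elementarity consequence $d,b\in M \Rightarrow d\wedge b\in M$ and its counterpart for $\neg b$.
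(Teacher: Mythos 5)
Your argument is correct and is essentially the paper's proof: both hinge on the same two conversion maps, $D\mapsto\{d\wedge b:d\in D\}$ in one direction and $E\mapsto E\cup\{\neg b\}$ in the other, with elementarity (since $b\in M$) ensuring the conversions respect membership in $M$. The only cosmetic difference is that you work with predense sets while the paper first passes to maximal antichains via Proposition~\ref{PredenseAntichain}; your write-up is slightly more explicit about the two inequalities, but the underlying idea is identical.
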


\begin{proof}
  Observe that if $A$ is a maximal antichain  in $\BB$, then $A \wedge b = \{a \wedge b : a \in A\}$ is a maximal antichain in $\BB \res b$. Moreover for each maximal antichain $A_b$ in $\BB\res b \cap M$, $A= A_b \cup \{\neg b\}$ is a maximal antichain in $\BB  \cap M$. Therefore
  \[
   \sg(\BB, M) \wedge b = \bigwedge \bigvee (A \cap M) \wedge b = \bigwedge \bigvee ((A \wedge b) \cap M) = \sg(\BB \res b, M).
  \]
\end{proof}

We are now ready to introduce the definition of semiproperness and properness
for complete boolean algebras and regular embeddings.

\begin{definition}\label{defSPboole}
	Let $\BB$ be a complete boolean algebra, $S$ be a stationary set on 
	$H_\theta$ with $\theta \gg \vp{\BB}$. $\BB$ is $S$-$\SP$ iff for club many 
	$M \in S$ whenever $b$ is in $\BB\cap M$, we have that $\sg(\BB,M) \wedge b > \0_\BB$.

		Similarly, $i:\BB \rightarrow \QQ$ is $S$-$\SP$ iff $\BB$ is $S$-$\SP$ and for 
		club many $M \in S$, whenever $c$ is in $\QQ \cap M$ we have that
	\[
	\pi(c \wedge \sg(\QQ,M))= \pi(c) \wedge \sg(\BB,M).
	\]
\end{definition}

The previous definitions can be reformulated with a well-known trick in the following form.

\begin{proposition} \label{4SPequivC}
	$\BB$ is $S$-$\SP$ iff for every $\nu \gg \theta$ regular, $M \prec H_{\nu}$ with $\BB, S \in M$ and $M \cap H_{\theta} \in S$ then $\forall b \in \BB \cap M$, $\sg(\BB,M) \wedge b > \0$.

	Similarly, $i:\BB \rightarrow \QQ$ is $S$-$\SP$ iff $\BB$ is $S$-$\SP$ and for every $\nu \gg \theta$ regular, $M \prec H_{\nu}$ with $i, S\in M$ and $M \cap H_{\theta} \in S$ then $\forall c \in \QQ \cap M$
	\[
	\pi(c \wedge \sg(\QQ,M))= \pi(c) \wedge \sg(\BB,M).
	\]
\end{proposition}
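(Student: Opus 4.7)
My plan is to reduce both directions to two standard facts: (i) the semigenericity calculation $\sg(\BB,M)$ depends only on $M\cap H_\theta$, and (ii) the reflection principle that if $C\in M\prec H_\nu$ is a club on $[H_\theta]^{\aleph_0}$, then $M\cap H_\theta\in C$. Fact (i) holds because every $b\in\BB$ and every predense $D\subseteq\BB$ of size at most $\aleph_1$ lies in $H_\theta$, so $\BB\cap M=\BB\cap M\cap H_\theta$ and $\PD(\BB)\cap M=\PD(\BB)\cap M\cap H_\theta$. The analogous identities hold for $\QQ$ and for the retraction $\pi_i$ in the second part of the proposition.

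For the forward direction, suppose $\BB$ is $S$-$\SP$, so that a club $C\subseteq[H_\theta]^{\aleph_0}$ witnesses $\sg(\BB,N)\wedge b>\0_\BB$ for every $N\in C\cap S$ and every $b\in\BB\cap N$. Given any countable $M\prec H_\nu$ with $\BB,S\in M$ and $M\cap H_\theta\in S$, elementarity of $M$ in $H_\nu$ yields some club $C'\in M$ witnessing $S$-semiproperness of $\BB$. Fact (ii) — which follows by producing inside $M$ a function $f:H_\theta^{<\omega}\to H_\theta$ whose closure set $\{N:f[N^{<\omega}]\subseteq N\}$ is contained in $C'$, and noting that $M\cap H_\theta$ is automatically closed under any such $f\in M$ — gives $M\cap H_\theta\in C'$. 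Combined with $M\cap H_\theta\in S$ and fact (i), this produces $\sg(\BB,M)\wedge b>\0_\BB$ for every $b\in\BB\cap M$.

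For the backward direction, I invoke the projection lemma from the appendix: the set $C^*=\{M\cap H_\theta:M\prec H_\nu\text{ countable},\ \BB,S\in M\}$ contains a club on $[H_\theta]^{\aleph_0}$. Under the hypothesis of the right-hand side, every $N\in C^*\cap S$ is of the form $M\cap H_\theta$ for some $M$ satisfying the antecedent, and hence by fact (i) itself satisfies the required sg-property on $\BB\cap N$. This witnesses that $\BB$ is $S$-$\SP$. The argument for the regular embedding case is essentially the same, with $\QQ$ in place of $\BB$ and with the retraction identity $\pi_i(c\wedge\sg(\QQ,M))=\pi_i(c)\wedge\sg(\BB,M)$ replacing the sg-inequality throughout; fact (i) applies to both $\sg(\BB,\cdot)$ and $\sg(\QQ,\cdot)$ simultaneously, and the needed $C'\in M$ now witnesses the joint condition involving $i$.

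The only delicate point is fact (ii): one must pick $f\in M$ carefully so that $M\cap H_\theta$ is forced to lie in the basic club $C_f$. This is a standard consequence of the Kueker-style characterization of clubs on $[H_\theta]^{\aleph_0}$ via closure under finitary functions, developed in the appendix on generalized stationarity, so no extra work is needed here beyond quoting it correctly.
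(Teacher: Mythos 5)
Your proof is correct and takes essentially the same route as the paper: one direction via the projection-of-a-club lemma, the other via elementarity of $M\prec H_\nu$ to extract a witnessing club $C'\in M$ and then observing $M\cap H_\theta\in C'$. The two facts you isolate (that $\sg(\BB,M)$ depends only on $M\cap H_\theta$, and the Kueker-style reflection of clubs into $M\cap H_\theta$) are exactly the implicit ingredients in the paper's argument, just stated more explicitly.
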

\begin{proof}
	First, suppose that $\BB$, $i:\BB \rightarrow \QQ$ satisfy the above conditions. Then $C=\{M \cap H_{\theta} : M \prec H_{\nu}, ~\BB,S\in M\}$ is a club (since it is the projection of a club), and witnesses that $\BB$, $i:\BB \rightarrow \QQ$ are $S$-$\SP$.

	Conversely, suppose that $\BB$, $i:\BB \rightarrow \QQ$ are $S$-$\SP$ and fix $\nu \gg \theta$ regular and $M \prec H_{\nu}$ with $\BB,S \in M$, $M \cap H_\theta \in S$. Since the sentence that $\BB$, $i:\BB \rightarrow \QQ$ are $S$-$\SP$ is entirely computable in $H_{\nu}$ and $M \prec H_{\nu}$, there exists a club $C \in M$ witnessing that $\BB$, $i:\BB \rightarrow \QQ$ are $S$-$\SP$.  Furthermore, $M$ models that $C$ is a club hence $M \cap H_\theta \in C$ and $\sg(\BB,M) \wedge b > \0$, $\pi(c \wedge \sg(\QQ,M))= \pi(c) \wedge \sg(\BB,M)$ hold for any $b \in \BB \cap M$, $c \in \QQ \cap M$ since $C$ witnesses that $\BB$, $i:\BB \rightarrow \QQ$ are $S$-$\SP$ and $M \cap H_\theta \in S \cap C$.
\end{proof}

We may observe that if $i: \BB \to \QQ$ is $S$-$\SP$, then $\QQ$ is $S$-$\SP$. 
As a matter of fact $c \in \QQ \cap M$ is such that $\sg(\QQ,M) \wedge c = \0$ iff
\[
	\0 =\pi(c \wedge \sg(\QQ,M))= \pi(c) \wedge \sg(\BB,M),
\]
this contradicts the assumption that $\BB$ is $S$-$\SP$.

\subsection{Shelah's semiproperness}

Definition~\ref{def:semipropcba} of semiproperness is equivalent to the usual Shelah's notion of semiproperness. In this section we spell out the details of this fact.

\begin{definition}(Shelah)
	Let $P$ be a partial order, and fix $M \prec H_\theta$. Then $q$ is a \emph{$M$-semigeneric} condition for $P$ iff for every $\dot{\alpha} \in V^{P}\cap M$ such that $\1_P \Vdash \dot{\alpha} < \check{\omega}_1$,
	\[
		q \Vdash \dot{\alpha} < M \cap \omega_1.
	\]

	$P$ is \emph{$S$-$\SP$ in the sense of Shelah} if there exists a club $C$ of elementary substructures of $H_\theta$ such that for every $M \in S \cap C$ there exists a $M$-semigeneric condition below every element of $P \cap M$.
\end{definition}

\begin{proposition} \label{4supSG}
	Let $\BB$ be a complete boolean algebra, and fix $M \prec H_\theta$. Then
	\[
	\sg(\BB,M) = \bigvee \bp{q \in \BB : ~ q \text{ is a }M\text{-semigeneric condition} }
	\]
\end{proposition}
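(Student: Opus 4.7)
The plan is to establish the two inequalities separately. First I would show that $\sg(\BB,M)$ itself is an $M$-semigeneric condition, yielding $\sg(\BB,M)\leq \bigvee\bp{q : q \text{ is } M\text{-semigeneric}}$. Then I would show that every $M$-semigeneric condition $q$ lies below $\sg(\BB,M)$. Both directions rest on the same correspondence, exchanging a $\BB$-name $\dot\alpha\in M$ for a countable ordinal with a maximal antichain $A=\bp{a_\beta:\beta<\omega_1}$ in $M$ via $a_\beta=\Qp{\dot\alpha=\check\beta}_\BB$: one direction reads the antichain off a given name, the other builds the name from a given antichain using the mixing lemma.

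For the first inequality, I would fix $\dot\alpha\in V^\BB\cap M$ with $\1\Vdash\dot\alpha<\check\omega_1$ and observe that $A=\bp{\Qp{\dot\alpha=\check\beta}_\BB:\beta<\omega_1}$ is a maximal antichain belonging to $M$ by elementarity. Proposition~\ref{PredenseAntichain} then gives $\sg(\BB,M)\leq\bigvee(A\cap M)$. The key elementarity observation is that for $a_\beta\ne\0$, one has $a_\beta\in M$ if and only if $\beta\in M$, since $\beta$ is uniquely characterised in $M$ by $a_\beta\Vdash\dot\alpha=\check\beta$. Hence
\[
\sg(\BB,M)\leq \bigvee\bp{a_\beta:\beta\in M\cap\omega_1}=\Qp{\dot\alpha<\check{M\cap\omega_1}}_\BB,
\]
which is exactly the statement that $\sg(\BB,M)\Vdash\dot\alpha<M\cap\omega_1$, so $\sg(\BB,M)$ is $M$-semigeneric.

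For the reverse inequality, I would fix an $M$-semigeneric $q$ and use Proposition~\ref{PredenseAntichain} to reduce $q\leq\sg(\BB,M)$ to showing $q\leq\bigvee(A\cap M)$ for every $A\in\A(\BB)\cap M$. Given such $A$ enumerated inside $M$ as $\bp{a_\beta:\beta<\omega_1}$, I would apply the mixing lemma in $M$ to the family of names $\bp{\check\beta:\beta<\omega_1}$ along $A$ to produce $\dot\alpha\in V^\BB\cap M$ satisfying $\Qp{\dot\alpha=\check\beta}_\BB=a_\beta$ and $\1\Vdash\dot\alpha<\check\omega_1$. Semigenericity of $q$ then gives $q\Vdash\dot\alpha<M\cap\omega_1$, i.e.\
\[
q\leq\bigvee\bp{a_\beta:\beta\in M\cap\omega_1}\leq\bigvee(A\cap M),
\]
as required. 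The only non-routine point, rather than a genuine obstacle, is the elementarity bookkeeping linking $a_\beta\in M$ with $\beta\in M$ for nonzero $a_\beta$, together with the tacit assumption that $M$ is countable so that $M\cap\omega_1$ is an ordinal and Shelah's formulation $q\Vdash\dot\alpha<M\cap\omega_1$ is well-formed.
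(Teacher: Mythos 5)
Your proof is correct and follows essentially the same route as the paper: both directions exploit the bijective correspondence between $\BB$-names in $M$ for countable ordinals and maximal antichains in $\A(\BB)\cap M$, reduce each direction to the antichain characterization of $\sg(\BB,M)$ from Proposition~\ref{PredenseAntichain}, and use the identity $\bigvee(A\cap M)=\bigvee\{a_\beta:\beta\in M\cap\omega_1\}=\Qp{\dot\alpha<\check{M\cap\omega_1}}$. The only cosmetic differences are that you invoke the mixing lemma to build $\dot\alpha$ from an antichain $A\in M$, where the paper instead writes down the explicit name $\dot\alpha_A=\bp{\ap{\check\gamma,a_\beta}:\gamma<\beta<\omega_1}$, and that you spell out (correctly) the elementarity bookkeeping that the paper leaves tacit.
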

\begin{proof}
	Given $A = \bp{a_\beta : ~ \beta < {\omega_1} } \in \A(\BB)$, define $\dot{\alpha}_A = \bp{ \ap{\check{\gamma},a_\beta} : ~ \gamma < \beta < {\omega_1} }$. It is straightforward to check that $\Qp{\dot{\alpha}_A < \check{\omega}_1} = \bigvee\bp{ \Qp{\dot{\alpha}_A = \check{\beta}}:{\beta < {\omega_1}}} = \bigvee\bp{ a_\beta:{\beta < {\omega_1}}} = \1$.
	Conversely, given $\dot{\alpha} \in V^\BB \cap M$ such that $\Qp{\dot{\alpha} < \check{\omega}_1} = \1$, define $A_{\dot{\alpha}} = \bp{a_\beta = \Qp{\dot{\alpha} = \check{\beta}} : ~ \beta < {\omega_1}}$. It is straightforward to check that $A_{\dot{\alpha}} \in \A(\BB)$.

	Suppose now that $q$ is a $M$-semigeneric condition, and fix an arbitrary $A \in \A(\BB) \cap M$. Then $\dot{\alpha}_A \in M$ and $\Qp{\dot{\alpha}_A < \check{\omega}_1} = \1$, hence
	\[\begin{split}
	&q \leq \Qp{\dot{\alpha}_A < \check{\cp{M \cap {\omega_1}}}} = \bigvee\bp{ \Qp{\dot{\alpha}_A = \check{\beta}}:{\beta \in M \cap {\omega_1}}} \\
	&= \bigvee\bp{a_\beta:{\beta \in M \cap {\omega_1}} } = \bigvee A \cap M
	\end{split}
	\]
	It follows that $q \leq \bigwedge\bp{ \bigvee \cp{A \cap M}:{A \in \A(\BB) \cap M}} = \sg(\BB,M)$, hence
	\[
	\sg(\BB,M) \geq \bigvee \bp{q \in \BB : ~ q \text{ is a }M\text{-semigeneric condition} }
	\]

	Finally, we show that $\sg(\BB,M)$ is a $M$-semigeneric condition itself. Fix an arbitrary $\dot{\alpha} \in V^\BB \cap M$ such that $\1_\BB \Vdash \dot{\alpha} < \check{\omega}_1$, and let $A_{\dot{\alpha}} \in A_{\omega_1}(\BB)$ be as above. Since $\dot{\alpha} \in M$, also $A_{\dot{\alpha}} \in M$. Moreover,
	\[\begin{split}
	&\Qp{\dot{\alpha} < \check{\cp{M \cap {\omega_1}}}} = \bigvee\bp{ \Qp{\dot{\alpha} = \check{\beta}}:{\beta \in M \cap {\omega_1}}} \\
	&= \bigvee\bp{ a_\beta:{\beta \in M \cap {\omega_1}}} = \bigvee A_{\dot{\alpha}} \cap M \geq \sg(\BB,M)
	\end{split}
	\]
	concluding the proof.
\end{proof}

\begin{corollary}\label{equivSP}
	Let $P$ be a partial order, then $P$ is $S$-$\SP$ in the sense of Shelah if and only if $\RO(P)$ is $S$-$\SP$.
\end{corollary}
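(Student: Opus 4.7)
The plan is to use Proposition~\ref{4supSG} as the bridge between Shelah's combinatorial notion of an $M$-semigeneric condition and the algebraic quantity $\sg(\BB,M)$, together with the density of $i_P[P]$ in $\RO(P)^+$. The first observation I would record is that being an $M$-semigeneric condition is absolute between $P$ and $\RO(P)$: since every $P$-name $\dot\alpha$ can be canonically identified with an $\RO(P)$-name (and vice versa) preserving boolean values of formulas, the statement $q \Vdash_P \dot\alpha < M \cap \omega_1$ is equivalent to $i_P(q) \leq \Qp{\dot\alpha < \check{M \cap \omega_1}}_{\RO(P)}$. So $q \in P$ is $M$-semigeneric in Shelah's sense iff $i_P(q) \in \RO(P)$ is $M$-semigeneric in the sense of Proposition~\ref{4supSG}. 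I would also recall that, being a join of $M$-semigeneric conditions, $\sg(\RO(P),M)$ is itself $M$-semigeneric, as shown in the last paragraph of the proof of Proposition~\ref{4supSG}, and that semigenericity is inherited by strengthening conditions.

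For the forward direction, assume that $P$ is $S$-$\SP$ in Shelah's sense, witnessed by a club $C \subseteq [H_\theta]^{\aleph_0}$. I would show that $C$ (intersected with the club of $M \prec H_\theta$ containing $P$ and $i_P$) also witnesses the algebraic semiproperness of $\RO(P)$. Fix $M \in S \cap C$ and $b \in \RO(P) \cap M$ with $b > \0$. Since $i_P[P]$ is dense in $\RO(P)^+$, by elementarity there exists $p \in P \cap M$ with $i_P(p) \leq b$. By Shelah's hypothesis applied inside $M$, there is $q \leq_P p$ that is $M$-semigeneric. Then $i_P(q) \leq i_P(p) \leq b$ and, by Proposition~\ref{4supSG}, $i_P(q) \leq \sg(\RO(P),M)$. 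Hence $\0 < i_P(q) \leq b \wedge \sg(\RO(P),M)$, as required.

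For the converse, assume $\RO(P)$ is $S$-$\SP$, witnessed by a club $C$. Fix $M \in S \cap C$ (also containing $P$, $i_P$) and $p \in P \cap M$. Then $i_P(p) \in \RO(P) \cap M$ and, by hypothesis, $\sg(\RO(P),M) \wedge i_P(p) > \0$. Since $\sg(\RO(P),M)$ is itself $M$-semigeneric in $\RO(P)$, the element $b = \sg(\RO(P),M) \wedge i_P(p)$ is $M$-semigeneric in $\RO(P)$ and nonzero. Using density once more, choose $q \in P$ with $i_P(q) \leq b$; separativity of $P$ then yields $q \leq_P p$. By the first observation, $q$ is $M$-semigeneric for $P$ in the sense of Shelah, and it lies below an arbitrary $p \in P \cap M$, proving that $P$ is $S$-$\SP$ in Shelah's sense.

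The only genuinely delicate point I anticipate is the translation between $V^P$-names and $V^{\RO(P)}$-names in the verification that the two notions of $M$-semigenericity coincide; everything else is routine once Proposition~\ref{4supSG} and the density of $i_P$ are in place. This translation, however, is standard and follows from the fact that $\Qp{\cdot}_{\RO(P)}$ extends the forcing relation $\Vdash_P$ via $p \Vdash_P \varphi \iff i_P(p) \leq \Qp{\varphi}_{\RO(P)}$, which was already recorded in the background section.
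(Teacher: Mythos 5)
Your proof is correct and follows essentially the same route as the paper's: density of $i_P[P]$ in $\RO(P)^+$ plus Proposition~\ref{4supSG} drive both directions, and your preliminary observation that $M$-semigenericity transfers between $P$ and $\RO(P)$ along $i_P$ is exactly the right bridge, though the paper leaves it implicit. One small caveat: in the converse direction you invoke separativity of $P$ to deduce $q \leq_P p$ from $i_P(q) \leq i_P(p)$, but $P$ is not assumed separative in the statement; this is harmless to repair --- since $i_P$ preserves incompatibility, $i_P(q) \leq i_P(p) > \0$ forces $q \parallel p$ in $P$, and any common refinement $q' \leq_P q,p$ still satisfies $i_P(q') \leq \sg(\RO(P),M)$ and hence is $M$-semigeneric and below $p$ --- and the paper's own proof glosses over the same point.
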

\begin{proof}
	First, suppose that $P$ is $S$-$\SP$ in the sense of Shelah as witnessed by $C$, and fix $M \in S \cap C$, $b \in \RO(P) \cap M$. Since $P$ is dense in $\RO(P)$, there exists a $p \in P \cap M$, $p \leq b$, and by semiproperness there exists a $q \in P$, $q \leq p \leq b$ that is $M$-semigeneric. Then $q > \0$ and by Proposition \ref{4supSG}, $q \leq \sg(\RO(P),M)$. Hence $\sg(\RO(P),M) \wedge b \geq q > \0$.

	Finally, suppose that $\RO(P)$ is $S$-$\SP$ as witnessed by $C$, and fix $M \in S \cap C$, $p \in P \cap M$. Since $P$ is dense in $\RO(P)$, there exists a $q \in P$, $q \leq \sg(\RO(P),M) \wedge p$ that is a $M$-semigeneric condition since $q \leq \sg(\RO(P),M)$ and the set of semigeneric conditions is open.
\end{proof}


\subsection{Topological characterization of semiproperness} \label{rap2}

An equivalent definition of semiproperness and properness can be stated also in the topological context introduced in the previous sections, as a Baire Category property. 
Let $\BB$ be a complete boolean algebra and $X_{\BB}$ be the space of its ultrafilters defined 
in~\ref{def:ROP}.
The Baire Category Theorem states that given any family of maximal antichains $\{A_n: n \in \omega \}$
of $\BB$, then $\bigcap \bigcup\{N_a : a \in A_n\}$ is comeager in $X_{\BB}$, so 
\[
 \mathring{\overline{\bigcap_{n \in \omega} \bigcup\{N_a : a \in A_n\}}}=X_{\mathbb{B}}.
\]

Now, let $M \prec H_{\theta}$, $\mathbb{B}\in M$, then if $\{A_n: n \in \omega\}$ is a subset of the set of the maximal antichains of $\mathbb{B} \in M$, the classical construction of an $M$-generic filter shows that
\[
 \bigcap_{n \in \omega} \cp{\bigcup \{N_a : a \in A_n\cap M\} } \neq \emptyset.
\]
However this does not guarantee that
\[
 \bigcap_{n \in \omega} \cp{\bigcup \{N_a : a \in A_n\cap M\}} \mbox{ is comeager on some } N_b \mbox{ in } V.
\]
This latter requirement is exactly the request that $\BB$ is proper:
\begin{definition}\label{defSPboole}
	Let $\BB$ be a complete boolean algebra, $M\prec H_\theta$ with $\theta \gg \vp{\BB}$.
	\[
	\gen(\BB,M) = \bigwedge\bp{\bigvee (A \cap M):A\in M\text{ is a maximal antichain of }\BB }
	\]
	is the \emph{degree of genericity} of $M$ with respect to $\BB$. 
	
	$\BB$ is \emph{proper} iff for club many countable models $M\prec H_\theta$, 
	whenever $b$ is in $\BB\cap M$, we have that $\gen(\BB,M) \wedge b > \0_\BB$.
\end{definition}
We leave to the reader to check (along the same lines of what has been done for semiproperness) that this algebraic definition of properness is equivalent to the usual one by Shelah.

\begin{proposition}
$\mathbb{B}$ is proper if and only if $\forall M \prec H_{\theta}$ with $\mathbb{B} \in M$, $M$ countable
\[X_M= \bigcap \bp{\bigcup \{N_a : a \in A \cap M\}: A \in M \mbox{ maximal antichain of } \mathbb{B} }\]
is such that $\forall c \in M \cap \mathbb{B} \exists b \in \mathbb{B}$ such that $X_M$ is comeager set on $N_b \cap N_c$.
\end{proposition}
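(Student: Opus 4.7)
The plan is to reduce the proposition to a pointwise equivalence between the boolean condition $\gen(\BB,M)\wedge c>\0$ and the topological comeagerness condition on $N_c$. The key technical fact is the following lemma: for any countable $M\prec H_\theta$ with $\BB\in M$ and any $b\in\BB$,
\[
b\leq \gen(\BB,M) \quad\Longleftrightarrow\quad X_M\cap N_b \text{ is comeager in }N_b.
\]

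To prove the lemma, first observe that for any maximal antichain $A\in M$ of $\BB$, the open set $U_A:=\bigcup\{N_a:a\in A\cap M\}$ satisfies $N_{\bigvee(A\cap M)}=\mathring{\overline{U_A}}$ by the Stone-duality computation of joins in $\RO(X_\BB)$. Since $N_b$ is regular open, the condition $b\leq\bigvee(A\cap M)$ is equivalent to $N_b\subseteq\overline{U_A}$, which in turn is equivalent to $U_A$ being dense in $N_b$. For the forward direction of the lemma, assume $b\leq\gen(\BB,M)$: then $U_A\cap N_b$ is open dense in $N_b$ for every maximal antichain $A\in M$, and since $M$ is countable there are only countably many such $A$. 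As $N_b$ is clopen in the compact Hausdorff space $X_\BB$, it is itself a Baire space, so $X_M\cap N_b=\bigcap_A(U_A\cap N_b)$ is comeager in $N_b$. Conversely, if $X_M\cap N_b$ is comeager in $N_b$ it is dense there, hence so is every superset $U_A$, giving $b\leq\bigvee(A\cap M)$ for all such $A$ and thus $b\leq\gen(\BB,M)$.

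Granted the lemma, the proposition is immediate. By definition $\BB$ is proper iff for all relevant countable $M$ and every $c\in\BB\cap M$ we have $\gen(\BB,M)\wedge c>\0$; taking $b:=\gen(\BB,M)\wedge c$, the lemma gives that this amounts to the existence of some $b\in\BB$ with $N_b\cap N_c=N_{b\wedge c}\neq\emptyset$ such that $X_M$ is comeager on $N_b\cap N_c$. Conversely, given such a $b$, applying the lemma to $b\wedge c$ yields $b\wedge c\leq\gen(\BB,M)$ and $b\wedge c>\0$, whence $\gen(\BB,M)\wedge c>\0$. The main subtlety to watch is that density of $U_A$ must be taken relative to the subspace $N_b$ rather than to $X_\BB$, so the Baire category argument must be performed inside the clopen (and hence compact Hausdorff) subspace $N_b$; this is precisely why the regular open algebra is the natural framework for the equivalence.
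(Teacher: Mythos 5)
Your proof is correct and takes essentially the same approach as the paper's (very terse) one: both reduce the topological condition to $\gen(\BB,M)\wedge c>\0$ via the identification $N_{\gen(\BB,M)}=\mathring{\overline{X_M}}$, with your lemma and Baire-category argument merely spelling out the equivalence the paper asserts in one line. Your explicit remark that the witnessing $b$ must satisfy $N_b\cap N_c\neq\emptyset$ (otherwise the condition is vacuous) is a useful clarification of the statement that the paper leaves implicit.
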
 
\begin{proof}
As a matter of fact
\[ \forall c \in M \cap \mathbb{B} \ \exists b (N_b \subseteq \mathring{\overline{ X_M}} \cap N_c)\]
\[\iff \forall c \in M \cap \mathbb{B} \exists b \leq \bigwedge \bp{\bigvee (A \cap M) : A \in M \mbox{ maximal antichain }} \wedge c. \]
\end{proof}

\begin{proposition}
$\mathbb{B}$ is semiproper if and only if $\forall M \prec H_{\theta}$ with $\mathbb{B} \in M$, $M$ countable
\[X_M= \bigcap \bp{\bigcup \{N_a : a \in A \cap M\}: A \in M \mbox{ maximal antichain of } \mathbb{B}, |A|=\omega_1}\]
is such that $\forall c \in M \cap \mathbb{B} \exists b \in \mathbb{B}$ such that $X_M$ is comeager set on $N_b \wedge N_c$.
\end{proposition}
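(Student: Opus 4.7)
The plan is to mirror the argument just given for properness, with the crucial difference that the intersection defining $X_M$ now ranges only over antichains of size at most $\omega_1$, so that Proposition~\ref{PredenseAntichain} identifies the resulting boolean invariant as $\sg(\BB,M)$ in place of $\gen(\BB,M)$.

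First I would observe that $X_M$ is a $G_\delta$ subset of the Stone space $X_\BB$: since $M$ is countable, the family of $A\in M$ that are maximal antichains of $\BB$ of size at most $\omega_1$ is countable, and each piece $U_A:=\bigcup\{N_a:a\in A\cap M\}$ is open. Since $X_\BB$ is compact Hausdorff, hence Baire, $X_M$ is comeager on an open set $V\subseteq X_\BB$ if and only if each $U_A$ is dense in $V$.

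Next I would compute the closure of $U_A$ and show $\overline{U_A}=N_{\bigvee(A\cap M)}$. The inclusion $\subseteq$ is immediate because $N_{\bigvee(A\cap M)}$ is clopen and contains every $N_a$ with $a\in A\cap M$. For the reverse inclusion, given an ultrafilter $G\in X_\BB$ with $\bigvee(A\cap M)\in G$ and an arbitrary $d\in G$, we have $d\wedge\bigvee(A\cap M)>\0_\BB$, hence $d\wedge a>\0_\BB$ for some $a\in A\cap M$, which witnesses that the basic neighborhood $N_d$ of $G$ meets $U_A$. This calculation, the only nontrivial step and the one that essentially uses the completeness of $\BB$, yields that $U_A$ is dense in $N_b\cap N_c=N_{b\wedge c}$ iff $b\wedge c\leq\bigvee(A\cap M)$.

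Combining the two observations with Proposition~\ref{PredenseAntichain}, I obtain that $X_M$ is comeager on $N_b\cap N_c$ iff $b\wedge c\leq\bigvee(A\cap M)$ for every maximal antichain $A\in M$ of size at most $\omega_1$, iff $b\wedge c\leq\sg(\BB,M)$. Hence, interpreting ``$X_M$ comeager on $N_b\cap N_c$'' as requiring in addition $N_b\cap N_c$ to be nonempty, the topological condition
\[
\forall c\in M\cap\BB\;\exists b\in\BB\;\bigl(X_M\text{ comeager on }N_b\cap N_c\bigr)
\]
is equivalent to $c\wedge\sg(\BB,M)>\0_\BB$ for every $c\in M\cap\BB^+$, the witness being $b=\sg(\BB,M)$ itself. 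This is exactly the pointwise semiproperness clause for $M$ in the sense of Definition~\ref{defSPboole}. The bridge between this ``for all countable $M$'' formulation and the ``club many $M$'' formulation used to define semiproperness is the standard reflection argument already carried out in Proposition~\ref{4SPequivC}: any $M\prec H_\theta$ containing $\BB$ must, by elementarity applied to a witnessing club $C$, itself lie in $C$. I expect this reflection bookkeeping to be routine, so the main technical point of the proof is really the Stone-space closure identity $\overline{U_A}=N_{\bigvee(A\cap M)}$.
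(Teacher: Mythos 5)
Your computational core is correct, and it mirrors (and spells out) the one-line proof the paper gives for the properness analogue: you reduce ``$X_M$ comeager on $N_b\cap N_c$'' via Baire category to density of each $U_A$ in $N_{b\wedge c}$, compute $\overline{U_A}=N_{\bigvee(A\cap M)}$ using completeness of $\BB$ (to distribute $d\wedge\bigvee(A\cap M)$ over the join), translate density into $b\wedge c\leq\bigvee(A\cap M)$, and invoke Proposition~\ref{PredenseAntichain} to turn the intersection over antichains into $\sg(\BB,M)$; the witness $b=\sg(\BB,M)$ then yields exactly $c\wedge\sg(\BB,M)>\0$. Your remark that one must read ``comeager on $N_b\cap N_c$'' as implicitly requiring $N_b\cap N_c\neq\emptyset$ (else $b=\0$ trivializes everything, for $c=\0$ included) is a genuine point the statement elides, and the observation that countable $A\in M$ contribute trivially (since then $A\subseteq M$ and $\bigvee(A\cap M)=\1$) reconciles the ``$|A|=\omega_1$'' in the statement with the ``$\leq\omega_1$'' in the definition of $\sg$.

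The one slip is in your closing sentence justifying the passage from ``club many $M$'' to ``all countable $M\prec H_\theta$.'' A club $C$ on $[H_\theta]^\omega$ is a subset of $\pp(H_\theta)$, hence not an element of $H_\theta$; a countable $M\prec H_\theta$ therefore cannot have $C\in M$, and ``elementarity applied to a witnessing club $C$'' is not available at the level of $H_\theta$. Proposition~\ref{4SPequivC} works around this precisely by passing to $M\prec H_\nu$ with $\nu\gg\theta$, so that $C\in H_\nu$ can be reflected into $M$ and one concludes $M\cap H_\theta\in C$. To obtain the ``all $M\prec H_\theta$'' form literally, one should either read $\theta$ in the proposition as already a sufficiently large cardinal above the one used in the semiproperness definition, or note that $\sg(\BB,M)$ depends only on $M\cap H_\eta$ for $\eta$ slightly above $\vp{\BB}$ and then argue via the projection lemma for clubs. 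The paper leaves this same point implicit in the properness proposition, so it is not a gap peculiar to your write-up, but the specific mechanism you invoke is not the one that works.
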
 

	\section{Semiproper iterations}\label{sec:semiproperiter}

In this section we will prove that (granting some natural assumptions) 
the iteration of semiproper boolean algebras is semiproper.
First we shall examine the case of two-step iterations, then we will focus on the limit case.

\subsection{Two-step iterations}

The notion of being $S$-$\SP$ can change when we move to a generic extension: for example, $S$ can be no longer stationary. In order to recover the ``stationarity'' in 
$V[G]$ of an $S$ which is stationary in $V$,
we are led to the following definition:

\begin{definition}
	Let $S$ be a subset of $\pp(H_\theta)$, $\BB\in H_{\theta}$ be a complete boolean algebra, and $G$ a $V$-generic filter for $\BB$. We define
	\[
 		S(G)= \{M[G] : \BB \in M \in S\}.
	\]
\end{definition}

\begin{fact}
Let $S$ be a stationary set on $H_\theta$, $\BB\in H_{\theta}$ be a complete boolean algebra, and $G$ be a $V$-generic filter for $\BB$. Then $S(G)$ is stationary in $V[G]$.
\end{fact}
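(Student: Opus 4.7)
The plan is to argue in $V[G]$ that every club on $\pp(H_\theta^{V[G]})$ meets $S(G)$. Every such club contains a subclub of the form $C_F = \{X : X \text{ is closed under } F\}$ for some $F \colon [H_\theta^{V[G]}]^{<\omega} \to H_\theta^{V[G]}$ in $V[G]$, so it suffices to meet a fixed $C_F$. I fix a name $\dot F \in V^\BB$ for $F$ and a regular $\theta' \gg \theta$ such that $\BB, \dot F, S, H_\theta^V$ all lie in $H_{\theta'}^V$.

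Working in $V$, I would consider the lifted set
\[
T = \{M' \prec H_{\theta'}^V : \BB, \dot F, S, H_\theta^V \in M'\ \text{and}\ M' \cap H_\theta^V \in S\},
\]
which is stationary in $\pp(H_{\theta'}^V)$: the trace map $M' \mapsto M' \cap H_\theta^V$ is continuous, so the preimage of $S$ under this map is stationary on $\pp(H_{\theta'}^V)$, and intersecting with the club of $M'$ containing the fixed parameters keeps it stationary. Pick any $M' \in T$ and set $M := M' \cap H_\theta^V \in S$. Since $\BB \in M$, by definition $M[G] \in S(G)$.

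It then remains to verify $M[G] \in C_F$, i.e.\ that $M[G]$ is closed under $F$. For $x_1, \ldots, x_n \in M[G]$, pick $\dot\tau_i \in M$ with $(\dot\tau_i)_G = x_i$. Since $F$ takes values in $H_\theta^{V[G]}$ and $\BB \in H_\theta^V$, the standard small-name principle supplies some $\dot\sigma \in H_\theta^V \cap V^\BB$ with $\Qp{\dot\sigma = \dot F(\dot\tau_1,\ldots,\dot\tau_n)}_\BB = \1_\BB$; the existence of such a $\dot\sigma$ is expressible inside $H_{\theta'}^V$ with parameters $\dot F, \dot\tau_1,\ldots,\dot\tau_n, H_\theta^V$, all of which lie in $M'$, so by elementarity of $M' \prec H_{\theta'}^V$ I may take $\dot\sigma \in M' \cap H_\theta^V = M$. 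Then $F(x_1,\ldots,x_n) = \dot\sigma_G \in M[G]$, which gives the required closure.

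The main obstacle is the technical one of guaranteeing that the witnessing name for $\dot F$ applied to $\dot\tau_1, \ldots, \dot\tau_n$ can actually be chosen inside $H_\theta^V$, and hence inside $M$: the name produced by a naive application of fullness inside $V$ could have rank well above $\theta$, and this is precisely what forces the argument to be set up at the lifted level $H_{\theta'}^V$ with $H_\theta^V$ kept among the parameters of $M'$.
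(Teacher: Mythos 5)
Your proof is correct and structurally parallel to the paper's. Both arguments lift to an $M'$ elementary in some larger $H_{\theta'}^V$ (the paper uses $H_{\theta^+}^V$) with the relevant parameters inside and $M' \cap H_\theta^V \in S$, and then check that $(M' \cap H_\theta^V)[G]$ lands in the given club. The difference lies in how that final membership is verified. The paper invokes two facts about the extension wholesale --- that $M'[G] \prec H_{\theta^+}^{V[G]}$ and that $M'[G] \cap H_\theta^{V[G]} = (M' \cap H_\theta^V)[G]$ --- and concludes immediately. You instead unwind this into a direct closure verification at the level of names: for each tuple from $M[G]$ you apply the small-name principle, reflected into $M'$ by ground-model elementarity, to produce a witnessing name in $M = M' \cap H_\theta^V$. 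Your route dispenses with quoting the elementarity of $M'[G]$ in $V[G]$, but the small-name step you substitute for it hides the same background assumption, namely that $\BB$-names of hereditary size below $\theta$ suffice to represent all elements of $H_\theta^{V[G]}$; this requires $\theta$ to be sufficiently large relative to $\vp{\BB}$, in particular that $\theta$ remain a cardinal in $V[G]$. Both proofs leave this standard hypothesis implicit. Granting it, yours is a valid and slightly more self-contained rendering of essentially the same argument.
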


\begin{proof}
	Let $\dot{C} \in V^\BB$ be a name for a club on $P(H_\theta)$, and let $M \prec H_{\theta^+}$ be such that $M \cap H_\theta \in S$, $\BB,\dot{C} \in M$. Then $C \in M[G]$ hence $M[G] \cap H_\theta \in C$, and $M[G] \cap H_\theta = (M \cap H_\theta)[G]$ thus $M[G] \cap H_\theta \in S(G) \cap C$.
\end{proof}

\begin{proposition}\label{6iSP}
	Let $\BB$ be a $S$-$\SP$ complete boolean algebra, and let $\dot{\QQ}$ be such that
	\[ 
  		\Qp{ \dot{\QQ} \mbox{ is } S(\dot{G})\mbox{-}\SP } =\1,
	\]
	then $\DD = \BB \ast \dot{\QQ}$ and $i_{\BB \ast \dot{\QQ}}$ are $S$-$\SP$.
\end{proposition}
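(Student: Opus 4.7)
The plan is to establish the key identity $\pi(d \wedge \sg(\DD, M)) = \pi(d) \wedge \sg(\BB, M)$ for $d \in \DD \cap M$, from which both the $S$-semiproperness of $i$ and of $\DD$ follow. The strategy will be to first exhibit
\[
\sg(\DD, M) = i(\sg(\BB, M)) \wedge [\dot u]_\approx
\]
for a $\BB$-name $\dot u$ with $\Qp{\dot u = \sg(\dot{\QQ}, \check M[\dot{G}_{\BB}])}_{\BB} = \1_{\BB}$, and then apply the identity $\pi(q \wedge i(b)) = \pi(q) \wedge b$ (Property~\ref{eRetrProp}.\ref{eRPHomo}) together with the hypothesised semiproperness of $\dot{\QQ}$ inside each generic extension. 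By Proposition~\ref{4SPequivC} it will suffice to fix any $M \prec H_\nu$ (with $\nu \gg \theta$) containing $\BB, \dot{\QQ}, S, i$ and with $M \cap H_\theta \in S$. Once the identity is in hand, the $S$-semiproperness of $\DD$ is immediate: for positive $d \in \DD \cap M$ one has $\pi(d) \in \BB \cap M$ positive, so $S$-$\SP$ of $\BB$ gives $\pi(d) \wedge \sg(\BB, M) > \0_{\BB}$, whence $d \wedge \sg(\DD, M) > \0_{\DD}$.

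To identify $\sg(\DD, M)$ with the claimed meet, I will use Fact~\ref{6antichainDQ}: maximal antichains $A$ of $\DD$ of size $\leq \omega_1$ correspond to $\BB$-names $\dot B$ forced to be maximal antichains of $\dot{\QQ}$ of size $\leq \omega_1$, via $A = \{[\dot B(\alpha)]_\approx : \alpha < \omega_1\}$. Those lying in $M$ split into two flavors: the image antichains $i[A_0]$ for $A_0 \in \A(\BB) \cap M$ of size $\leq \omega_1$, for which $\bigvee(i[A_0] \cap M) = i(\bigvee(A_0 \cap M))$ and hence the infimum over $A_0$ produces the factor $i(\sg(\BB, M))$; and the \emph{genuine} antichains coming from names $\dot B \in M$, for which $\bigvee(A \cap M) = [\dot e^{\dot B}]_\approx$ with $\dot e^{\dot B}$ evaluating in $V[G]$ to $\bigvee(\dot B_G \cap M[G])$. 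The last equality uses that $S$-$\SP$ of $\BB$ forces $\BB$ to preserve $\omega_1$, so $M \cap \omega_1 = M[G] \cap \omega_1^{V[G]}$. A mixing and name-surjectivity argument, namely that every $B \in \A(\dot{\QQ}_G) \cap M[G]$ of size $\leq \omega_1$ arises as $\dot B_G$ for some such $\dot B \in M$, then shows that the infimum of the genuine contributions equals $[\dot u]_\approx$ with $\dot u$ naming $\sg(\dot{\QQ}, \check M[\dot{G}_{\BB}])$. The opposite inequality needed to derive $\sg(\DD, M) = i(\sg(\BB, M)) \wedge [\dot u]_\approx$ is routine, using the monotonicity of $i$ and the trivial bound $\bigvee(\dot B_G \cap M[G]) \geq \sg(\dot{\QQ}_G, M[G])$ in every $V[G]$.

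Writing now $d = [\dot c]_\approx$ with $\dot c \in M$ and applying Property~\ref{eRetrProp}.\ref{eRPHomo},
\[
\pi(d \wedge \sg(\DD, M)) = \pi(d \wedge [\dot u]_\approx) \wedge \sg(\BB, M) = \Qp{\dot c \wedge \sg(\dot{\QQ}, \check M[\dot{G}_{\BB}]) > \0_{\dot{\QQ}}}_{\BB} \wedge \sg(\BB, M).
\]
To evaluate the remaining boolean value I step inside $V[G]$: the elementary substructure $M[G] \prec H_\nu^{V[G]}$ contains $\dot{\QQ}_G, S(G), \BB$ and satisfies $M[G] \cap H_\theta^{V[G]} \in S(G)$ directly from the definition of $S(G)$, so Proposition~\ref{4SPequivC} applied inside $V[G]$ to the hypothesis $\Qp{\dot{\QQ} \text{ is } S(\dot{G})\text{-}\SP}_{\BB} = \1_{\BB}$ yields $\sg(\dot{\QQ}_G, M[G]) \wedge \dot c_G > \0$ whenever $\dot c_G > \0$. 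Hence the boolean value reduces to $\Qp{\dot c > \0_{\dot{\QQ}}}_{\BB} = \pi(d)$, and the identity is proved. The principal obstacle will be the name-surjectivity step in the second paragraph, ensuring that the ``genuine'' contributions suffice to compute the full $\sg(\dot{\QQ}, \check M[\dot{G}_{\BB}])$ rather than merely a lower bound, together with the careful bookkeeping required to pass cleanly between antichains of $\DD$ in $V$ and antichains of $\dot{\QQ}_G$ in $V[G]$.
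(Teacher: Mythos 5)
Your overall route is close to the paper's: both proofs hinge on comparing $\sg(\DD,M)$ and the $\BB$-name $\dot u$ for $\sg(\dot\QQ,\check M[\dot G])$ via the correspondence of Fact~\ref{6antichainDQ}, then applying the retraction identity of Property~\ref{eRetrProp}.\ref{eRPHomo} and the semiproperness of $\dot\QQ$ in the extension. The paper does the computation only for $d=\1$ and then deduces the general case by restricting to $\BB\res\pi(d)$ and $\DD\res d$ using Lemmas~\ref{4ristretto} and~\ref{6StepRestr}; you instead make the decomposition of $\sg(\DD,M)$ explicit and compute $\pi(d\wedge\sg(\DD,M))$ directly. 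Either organization can work, but yours forces you to justify the decomposition identity in full, and this is where the gap appears.

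The concrete gap: you justify
\[
\dot e^{\dot B}_G \;=\; \bigvee\bigl(\dot B_G\cap M[G]\bigr)
\]
by the claim that ``$S$-$\SP$ of $\BB$ forces $\BB$ to preserve $\omega_1$, so $M\cap\omega_1 = M[G]\cap\omega_1^{V[G]}$.'' This inference is false: preservation of $\omega_1$ says $\omega_1^V=\omega_1^{V[G]}$, which in no way implies that $M[G]$ gains no new countable ordinals over $M$. What one actually needs is the \emph{semigenericity} of the condition, i.e.\ $\sg(\BB,M)\in G$, which by Proposition~\ref{4supSG} is exactly what guarantees $M\cap\omega_1 = M[G]\cap\omega_1$. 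Consequently, $\dot e^{\dot B}_G = \bigvee\{\dot B(\alpha)_G:\alpha\in M\cap\omega_1\}$ coincides with $\bigvee(\dot B_G\cap M[G])$ only for $G\ni\sg(\BB,M)$, and your ``opposite inequality'' $\sg(\DD,M)\geq i(\sg(\BB,M))\wedge[\dot u]$ is not a routine bound: its proof must pass through the agreement of $\omega_1$'s below $\sg(\BB,M)$, which is precisely the pivot of the paper's argument. (Once this is fixed, the identity $\sg(\DD,M)=i(\sg(\BB,M))\wedge[\dot u]$ does hold, and your final retraction computation is correct.)

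Two smaller points. The ``two flavors'' of antichains is not a dichotomy: the image antichains $i[A_0]$ are themselves special cases of the ``genuine'' names (the corresponding $\dot B$ is forced to name the trivial antichain $\{\1_{\dot\QQ}\}$), so they provide the upper bound $\sg(\DD,M)\leq i(\sg(\BB,M))$ but do not partition the computation. And the name-surjectivity step you flag (that every $B\in\A(\dot\QQ_G)\cap M[G]$ of size $\leq\omega_1$ is $\dot B_G$ for some suitable $\dot B\in M$) does need to be spelled out: it follows from elementarity of $M$ plus a mixing argument carried out inside $M$ to replace an arbitrary $\dot b\in M$ with $\dot b_G=B$ by a name forced by $\1_\BB$ to be a maximal antichain and agreeing with $\dot b$ on a condition of $G$. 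The paper effectively absorbs this into the invocation of Fact~\ref{6antichainDQ}.
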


\begin{proof}
	First, we verify that $i = i_{\BB \ast \dot{\QQ}}$ is $S$-$\SP$. Let $\dot{C}_1$ be the club that witnesses $\Qp{\dot{\QQ} \mbox{ is } S(\dot{G})\mbox{-}\SP}=\1$, and let $M$ be such that $\dot{C}_1 \in M$: this guarantees that $V[G] \vDash M[{G}] \cap H_{\theta}^{V[{G}]} \in {C}_{1_{G}}$.

	 We shall first prove that $\pi(\sg(\DD,M))=\sg(\BB, M)$.
	Thanks to Lemma \ref{eRetrDense} we obtain $\sg(\DD,M) \leq i(\sg(\BB, M))$, hence
	\[
	 	\pi(\sg(\DD, M)) \leq (\sg(\BB, M)).
	\]

	Now we have to prove $\pi(\sg(\DD, M)) \geq (\sg(\BB, M))$. Let $\sg(\BB, M) \in G$, then, thanks to the semiproperness of $\BB$, $V[G] \vDash M \cap \omega_1 = M[{G}] \cap \omega_1.$ Therefore, thanks to Lemma \ref{6antichainDQ}, $V[G] \vDash [\sg(\DD, M)]_{i[{G}]} = \sg({\QQ}, M[{G}])$, hence
	\[
		 \Qp{ [\sg(\DD, M)]_{i[\dot{G}]} = \sg(\dot{\QQ}, M[\dot{G}]) } \geq \sg(\BB, M).
	\]
	This implies that if $\sg(\BB, M) \in G$,
	\[
		 \sg(\BB,M) \wedge \Qp{ [sg(\DD, M)]_{i[\dot{G}]} > \dot{0} } = \sg(\BB,M) \wedge \Qp{ \sg(\dot{\QQ}, M[\dot{G}]) > \dot{0} } = \sg(\BB,M),
	\]
	using the semiproperness of $\dot{\QQ}$ in $V[G]$. Thus,
	\[
 		\pi(\sg(\DD, M)) = \Qp{[\sg(\DD, M)]_{i[\dot{G}]} > \dot{0} } \geq \sg(\BB, M).
	\]
	Finally, by Lemma \ref{4ristretto} and \ref{6StepRestr}, repeating the proof for $\BB\res \pi([\dot{c}])$ and $\DD \res [\dot{c}]$ (that are a two-step iteration of $S$-$\SP$ boolean algebras) we obtain that
	\[
		\pi(\sg(\DD,M) \wedge [\dot{c}])= \sg(\BB, M) \wedge \pi([\dot{c}])
	\]
	hence $i$ is $S$-$\SP$. Moreover, for any $[\dot{c}] \in \DD \cap M$ incompatible with $\sg(\DD,M)$,
	\[
		\pi(\0) = \pi(\sg(\DD,M )\wedge [\dot{c}]) = \sg(\BB, M)\wedge \pi([\dot{c}]) = \0
	\]
	that implies $\pi([\dot{c}]) = \0$ and $[\dot{c}] = \0$ since $\BB$ is $S$-$\SP$, completing the proof that $\DD$ is $S$-$\SP$.
\end{proof}

%

\begin{lemma}\label{EmbTwoStepSP}
	Let $\BB$, $\QQ_0$, $\QQ_1$ be $S$-$\SP$ complete boolean algebras, and let $G$ be any $V$-generic filter for $\BB$. Let $i_0$, $i_1$, $j$ form a commutative diagram of regular embeddings as in the following picture:
	\[
	\begin{tikzpicture}[xscale=1.5,yscale=-1.2]
		\node (B) at (0, 0) {$\BB$};
		\node (Q0) at (1, 0) {$\QQ_0$};
		\node (Q1) at (1, 1) {$\QQ_1$};
		\path (B) edge [->]node [auto] {$\scriptstyle{i_0}$} (Q0);
		\path (B) edge [->]node [auto,swap] {$\scriptstyle{i_1}$} (Q1);
		\path (Q0) edge [->]node [auto] {$\scriptstyle{j}$} (Q1);
	\end{tikzpicture}
	\]
	Moreover assume that $\QQ_0/i_0[G]$ is $S(G)$-$\SP$ and
	\[
	\Qp{\QQ_1/j[\dot{G}_{\QQ_0}]\mbox{ is }S(\dot{G}_{\QQ_0})\mbox{-}\SP}_{\QQ_0}=\1_{\QQ_0}.
	\]
	Then in $V[G]$, $j/_G: \QQ_0/_G \to \QQ_1/_G$ is an $S(G)$-$\SP$ embedding.
\end{lemma}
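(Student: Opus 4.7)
The plan is to reduce Lemma~\ref{EmbTwoStepSP} to Proposition~\ref{6iSP} applied inside $V[G]$, by representing $j/G$ as the canonical embedding of a two-step iteration computed in $V[G]$.

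First, by Proposition~\ref{lem:firstfctlem}, $j/G \colon \QQ_0/G \to \QQ_1/G$ is already known to be a regular embedding of complete boolean algebras in $V[G]$. Applying Theorem~\ref{qIso} inside $V[G]$ to $j/G$ yields an isomorphism
\[
\QQ_1/G \;\cong\; (\QQ_0/G) \ast \dot{\RR},
\]
where $\dot{\RR}$ is the $(\QQ_0/G)$-name $(\QQ_1/G)/(j/G)[\dot{H}]$ and $\dot{H}$ denotes the canonical $(\QQ_0/G)$-name for a generic filter; under this identification $j/G$ corresponds to the canonical embedding $i_{(\QQ_0/G) \ast \dot{\RR}}$ of Lemma~\ref{lem:twostepit}.

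Next I would invoke the three-step iteration principle (the \textbf{Fact} following Lemma~\ref{lem:twostepit}). Given $H$ that is $V[G]$-generic for $\QQ_0/G$, the set $K = \{c \in \QQ_0 : [c]_G \in H\}$ is $V$-generic for $\QQ_0$, with $V[K] = V[G][H]$, and the three-step iteration fact identifies $(\QQ_1/G)/(j/G)[H]$ in $V[G][H]$ with $\QQ_1/K$ in $V[K]$. The hypothesis $\Qp{\QQ_1/j[\dot{G}_{\QQ_0}] \text{ is } S(\dot{G}_{\QQ_0})\text{-}\SP}_{\QQ_0} = \1_{\QQ_0}$ then tells us that $\QQ_1/K$ is $S(K)$-$\SP$ in $V[K]$.

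I then need to match the stationary sets: $S(K) = S(G)(H)$. Any $N \in S(G)(H)$ has the form $(M[G])[H]$ for some $M \in S$ with $\BB, \QQ_0 \in M$, and $(M[G])[H] = M[K]$ shows $S(G)(H) \subseteq S(K)$; the reverse inclusion is analogous since any $M \in S$ containing $\QQ_0$ also contains $\BB$. Hence $\dot{\RR}$ is forced over $\QQ_0/G$ in $V[G]$ to be $S(G)(\dot{H})$-$\SP$. Combined with the assumption that $\QQ_0/G$ itself is $S(G)$-$\SP$ in $V[G]$, Proposition~\ref{6iSP} applied within $V[G]$ shows that $(\QQ_0/G) \ast \dot{\RR}$ together with its canonical embedding $i_{(\QQ_0/G) \ast \dot{\RR}}$ is $S(G)$-$\SP$. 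Translating back through the isomorphism from the first paragraph, we conclude that $j/G$ is $S(G)$-$\SP$ in $V[G]$.

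The main technical obstacle is not any individual computation but the careful bookkeeping needed to identify $\dot{\RR}$ with the interpretation of the $\QQ_0$-name $\QQ_1/j[\dot{G}_{\QQ_0}]$ via the three-step iteration correspondence, and to check that the stationary set $S(K) \subseteq V[K]$ coincides with the iterated version $S(G)(H)$ computed in two stages; both steps are routine once the right names are chosen, but they are where a careless argument would miss a subtle set-theoretic mismatch.
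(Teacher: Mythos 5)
Your proposal takes essentially the same route as the paper's own proof: both reduce to Proposition~\ref{6iSP} applied in $V[G]$, both use the correspondence $H \leftrightarrow K$ between $V[G]$-generics for $\QQ_0/G$ and $V$-generics for $\QQ_0$ together with the observation that $S$-evaluated-at-the-two-step-generic agrees with $S$ evaluated at once (the paper writes this as $S(H)=S(G)(H/G)$, you write $S(K)=S(G)(H)$ with the letters swapped). The only difference is one of explicitness: you cite Theorem~\ref{qIso} and the three-step iteration Fact by name to make the identification of $j/G$ with the canonical embedding $i_{(\QQ_0/G)*\dot\RR}$ precise, whereas the paper leaves that identification implicit when it concludes ``by applying Proposition~\ref{6iSP}.''
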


\begin{proof}
	Let $G$ be $V$-generic for $\BB$. Pick $K$ $V[G]$-generic for $\QQ_0/_G$. Then we can let 
	\[
		H=\{c \in \QQ_0 : [c]_G \in K\},
	\]
	and we get that 
	\[
		K=H/_G= \{[c]_G: c\in H\}.
	\]
	Moreover $H$ is $V$-generic for $\QQ_0$, 
	$V[H]=V[G][H/_G]$ and in $V[H]$ we have that $S(H)=S(G)(H/_G)$. 
	Since this latter equality holds for whichever choice of $K$ we make, 
	this gives that in $V[G]$ it holds that $j/_G: \QQ_0/_G \to \QQ_1/_G$ is a map such that
	\[
		\Qp{(\QQ_1/_G)/_{j/_G[\dot{G}_{\QQ_0/_G}]} \mbox{ is } S(G)(\dot{G}_{\QQ_0/_G})\mbox{-}\SP}_{\QQ_0/_G}=\1_{\QQ_0/_G}.
	\]
	So, by applying Proposition \ref{6iSP}, $j/G$ is $S(G)$-$\SP$ in $V[G]$.
\end{proof}

\subsection{Semiproper iteration systems}

The limit case needs a slightly different approach depending on the length of the iteration. We shall start with some general lemmas, then we will proceed to examine the different cases one by one.

\begin{definition}
	An iteration system $\FFF = \{i_{\alpha\beta} : \alpha \leq \beta < \lambda\}$ is $S$-$\SP$ iff $i_{\alpha\beta}$ is $S$-$\SP$ for all $\alpha \leq \beta < \lambda$.

	An iteration system $\FFF = \{i_{\alpha\beta} : \alpha \leq \beta < \lambda\}$ is $\RCS$ iff for all $\alpha < \lambda$ limit ordinal we have $\BB_\alpha = \RO(\RCS(\FFF \res \alpha))$.
\end{definition}

\begin{fact}\label{6IterRestr}
	Let $\FFF = \{i_{\alpha\beta} : \alpha \leq \beta < \lambda\}$ be an $S$-$\SP$ iteration system, $f$ be in $T(\FFF)$. Then 
	\[
		\FFF \res f = \{(i_{\alpha\beta})_{f(\beta)}:\BB_{\alpha}\res f(\alpha)\to \BB_\beta\res f(\beta) : \alpha \leq \beta < \lambda\}
	\]
	 is an $S$-$\SP$ iteration system and its associated retractions are the restriction of the original retractions.
\end{fact}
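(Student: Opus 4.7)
The plan is to verify the three assertions in turn, leveraging the Restriction Lemma~\ref{6StepRestr} and the restriction formula $\sg(\BB\res b,M)=\sg(\BB,M)\wedge b$ of Proposition~\ref{4ristretto}.

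First I would check that $\FFF\res f$ is a complete iteration system. Since $f\in T(\FFF)$ we have $\pi_{\alpha\beta}(f(\beta))=f(\alpha)$, so Lemma~\ref{6StepRestr} guarantees that each $(i_{\alpha\beta})_{f(\beta)}:\BB_\alpha\res f(\alpha)\to\BB_\beta\res f(\beta)$ is a regular embedding with associated retraction $\pi_{\alpha\beta}\res(\BB_\beta\res f(\beta))$, which is the second assertion. The only nontrivial axiom is the commutativity $(i_{\beta\gamma})_{f(\gamma)}\circ (i_{\alpha\beta})_{f(\beta)}=(i_{\alpha\gamma})_{f(\gamma)}$. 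For $b\leq f(\alpha)$, expanding the left side gives $i_{\alpha\gamma}(b)\wedge i_{\beta\gamma}(f(\beta))\wedge f(\gamma)$, and from $\pi_{\beta\gamma}(f(\gamma))=f(\beta)$ together with Proposition~\ref{eRetrProp}.\ref{eRPComp} we obtain $i_{\beta\gamma}(f(\beta))\geq f(\gamma)$, so the expression collapses to $i_{\alpha\gamma}(b)\wedge f(\gamma)=(i_{\alpha\gamma})_{f(\gamma)}(b)$.

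Next I would turn to $S$-$\SP$. Fix a club $C_{\alpha\beta}$ witnessing $S$-$\SP$ of $i_{\alpha\beta}$ and intersect it with the club $\{M:f\in M\}$; call the result $C'_{\alpha\beta}$, and let $M\in S\cap C'_{\alpha\beta}$. By Proposition~\ref{4ristretto}, $\sg(\BB_\beta\res f(\beta),M)=\sg(\BB_\beta,M)\wedge f(\beta)$ and analogously for $\alpha$. For any $c\in(\BB_\beta\res f(\beta))\cap M$ we have $c\leq f(\beta)$, hence $\pi_{\alpha\beta}(c)\leq\pi_{\alpha\beta}(f(\beta))=f(\alpha)$. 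Applying the $S$-$\SP$ identity for $i_{\alpha\beta}$,
\[
\pi_{\alpha\beta}(c\wedge\sg(\BB_\beta\res f(\beta),M))=\pi_{\alpha\beta}(c\wedge\sg(\BB_\beta,M))=\pi_{\alpha\beta}(c)\wedge\sg(\BB_\alpha,M),
\]
which equals $\pi_{\alpha\beta}(c)\wedge\sg(\BB_\alpha,M)\wedge f(\alpha)=\pi_{\alpha\beta}(c)\wedge\sg(\BB_\alpha\res f(\alpha),M)$ since $\pi_{\alpha\beta}(c)\leq f(\alpha)$.

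Finally, for $b\in(\BB_\alpha\res f(\alpha))\cap M$ we have $\sg(\BB_\alpha\res f(\alpha),M)\wedge b=\sg(\BB_\alpha,M)\wedge f(\alpha)\wedge b=\sg(\BB_\alpha,M)\wedge b>\0$ by the $S$-$\SP$-ness of $\BB_\alpha$ (witnessed on $C'_{\alpha\alpha}$), which gives that $\BB_\alpha\res f(\alpha)$ is $S$-$\SP$. The argument is essentially a bookkeeping exercise and I anticipate no real obstacle beyond ensuring $f$ lies in all the relevant models, which is handled by intersecting with the club $\{M:f\in M\}$.
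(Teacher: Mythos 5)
The paper states this result as a \emph{Fact} and gives no proof, so there is no paper argument to compare against; your verification is correct in substance and uses exactly the right ingredients (Lemma~\ref{6StepRestr} for the iteration-system structure and the retraction identification, Proposition~\ref{4ristretto} for transferring semigenericity across the restriction, and the $S$-$\SP$ identity for $i_{\alpha\beta}$). The commutativity check via $i_{\beta\gamma}(f(\beta))\geq f(\gamma)$ and the reduction of the $\sg$-identity using $c\leq f(\beta)$ and $\pi_{\alpha\beta}(c)\leq f(\alpha)$ are both right.

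One small bookkeeping point should be tightened: to invoke Proposition~\ref{4ristretto} for $\BB_\beta\res f(\beta)$ and $\BB_\alpha\res f(\alpha)$ you need $f(\beta)\in M\cap\BB_\beta$ and $f(\alpha)\in M\cap\BB_\alpha$, and $f\in M$ alone does not guarantee $f(\alpha),f(\beta)\in M$ unless $\alpha,\beta\in M$ as well. So, for each fixed pair $\alpha\leq\beta$, the club you intersect with should be $\{M: f,\alpha,\beta\in M\}$ (equivalently $\{M: f(\alpha),f(\beta)\in M\}$) rather than merely $\{M: f\in M\}$. This is the only adjustment needed and does not affect the structure of the argument.
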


\begin{lemma}\label{LemmaGenerale}
	Let $\FFF=\bp{i_{\alpha\beta}: \BB_\alpha \to \BB_\beta: \alpha \leq \beta < \lambda}$ be an $\RCS$ and $S$-$\SP$ iteration system with $S$ stationary on $[H_\theta]^\omega$. Let $M$ be in $S$, $g \in M$ be any condition in $\RCS(\FFF)$, $\dot{\alpha} \in M$ be a name for a countable ordinal, $\delta \in M$ be an ordinal smaller than $\lambda$.

	Then there exists a condition $g' \in \RCS(\FFF) \cap M$ below $g$ with $g'(\delta) = g(\delta)$ and $g' \wedge i_\delta(\sg(\BB_\delta,M))$ forces that $\dot{\alpha} < M \cap \omega_1$.
	If $\lambda = \omega_1$,  then the support of $g' \wedge i_\delta(\sg(\BB_\delta,M))$ is contained in $M \cap \omega_1$.
\end{lemma}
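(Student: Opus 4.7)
The strategy is to exploit the two-step decomposition $\RO(\RCS(\FFF)) \cong \BB_\delta \ast \dot{\QQ}$ of Theorem~\ref{qIso}, where $\dot{\QQ}$ is a $\BB_\delta$-name for $\RCS(\FFF)/\dot{G}_\delta$, and reduce the problem to the semigenericity of $\BB_\delta$ with respect to $M$. Under this isomorphism, $g$ corresponds to a $\BB_\delta$-name $\dot{g}^*$ for a condition in $\dot{\QQ}$ with $\Qp{\dot{g}^* > \0}_{\BB_\delta} = g(\delta)$, and $\dot{\alpha}$ corresponds to a $\BB_\delta$-name for a $\dot{\QQ}$-name for a countable ordinal. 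Since $g, \dot{\alpha}, \delta \in M$, these associated objects can be taken in $M$ by elementarity.

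Next, apply the Fullness Lemma in $\BB_\delta$ to obtain $\BB_\delta$-names $\dot{h}, \dot{\gamma} \in M^{\BB_\delta}$ witnessing that, on the boolean value $g(\delta)$, there exist a condition $h \leq \dot{g}^*$ in $\RCS(\FFF)/\dot{G}_\delta$ and a countable ordinal $\gamma$ with $h \Vdash_{\dot{\QQ}} \dot{\alpha} = \check{\gamma}$; this is possible because the set of conditions below $\dot{g}^*$ deciding $\dot{\alpha}$ is dense. The density of $\RCS(\FFF)$ in $\RO(\RCS(\FFF))$ is used to ensure that $\dot{h}$ can be taken as a name for a condition in the partial order $\RCS(\FFF)/\dot{G}_\delta$ rather than merely in its completion. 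Setting $g' \in \RCS(\FFF) \cap M$ to be the condition corresponding to $\dot{h}$, we obtain $g' \leq g$ and $g'(\delta) = \Qp{\dot{h} > \0}_{\BB_\delta} = g(\delta)$.

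Finally, since (by Proposition~\ref{4supSG}) $\sg(\BB_\delta, M)$ is itself an $M$-semigeneric condition for $\BB_\delta$ and $\dot{\gamma} \in M$ is a $\BB_\delta$-name for a countable ordinal, we have $\sg(\BB_\delta, M) \Vdash_{\BB_\delta} \dot{\gamma} < M \cap \omega_1$. Combining with the previous step, $g(\delta) \wedge \sg(\BB_\delta, M)$ forces that $\dot{h} \Vdash_{\dot{\QQ}} \dot{\alpha} = \check{\dot{\gamma}} < M \cap \omega_1$, which translates via the isomorphism to $g' \wedge i_\delta(\sg(\BB_\delta, M)) \Vdash \dot{\alpha} < M \cap \omega_1$, as required. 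For the case $\lambda = \omega_1$, the $S$-$\SP$ property of each $\BB_\alpha$ implies preservation of $\omega_1$, so no positive condition can force $\cf(\check{\omega}_1) = \check{\omega}$; hence $\RCS(\FFF)^+ = C(\FFF)^+$. Thus $g' \in C(\FFF) \cap M$ has a support $\sigma \in M \cap \omega_1$, and since $\delta \in M \cap \omega_1$ as well, the meet $g' \wedge i_\delta(\sg(\BB_\delta, M)) \in C(\FFF)$ has support at most $\max(\sigma, \delta) \in M \cap \omega_1$.

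The main technical obstacle is the Fullness argument combined with the requirement that $\dot{h}$ represent an element of the partial order $\RCS(\FFF)/\dot{G}_\delta$, so that the resulting $g'$ lies in $\RCS(\FFF)$ rather than only in $\RO(\RCS(\FFF))$. This is addressed by incorporating the density of $\RCS(\FFF)$ in its completion directly into the formula to which the Fullness Lemma is applied; the elementarity of $M$ then supplies witnesses inside $M$ without any further work.
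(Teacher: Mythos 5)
Your proposal takes a genuinely different route from the paper. The paper does not pass through the two-step factorization $\RO(\RCS(\FFF)) \cong \BB_\delta \ast \dot{\QQ}$ and the Fullness lemma; instead, it fixes $D \in M$ dense deciding $\dot{\alpha}$, picks a maximal antichain $A \subset \pi_\delta[D\res g]$ in $M$ with $\bigvee A = g(\delta)$, a selector $\phi : A \to D\res g$ in $M$ with $\pi_\delta(\phi(a)) = a$, and sets $g' = \tilde{\bigvee}\phi[A]$. Lemma~\ref{iTFSup} guarantees that this pointwise supremum is the genuine supremum in $\RO(T(\FFF))$, hence $g' \leq g$ and $g'(\delta) = g(\delta)$; a name $\dot{\beta}$ coding the $A$-decided value of $\dot{\alpha}$ is then bounded by $\sg(\BB_\delta,M)$. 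Conceptually the two constructions are dual — the paper's $A$, $\phi$, and $\dot{\beta}$ are exactly the antichain, the mixed thread, and the name one would extract by unwinding your Fullness witness $\dot{h}$ — but the paper stays on the ``antichain side'' and so never has to ask whether a name-side object has a thread representative with the right structural properties.

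That question is precisely where your proposal has a genuine gap. You assert that, because $\dot{h}$ is a $\BB_\delta$-name for a condition in the partial order $\RCS(\FFF)/\dot{G}_\delta$ rather than its completion, the representative thread $g'$ (from Lemma~\ref{qThreadRepr}) lies in $\RCS(\FFF) \cap M$, and that ``incorporating the density of $\RCS(\FFF)$ in its completion into the formula'' together with elementarity ``supplies witnesses inside $M$ without any further work.'' This does not follow. Being a name for an element of $\RCS(\FFF)/\dot{G}_\delta$ constrains only the generic values $\dot{h}_{G_\delta}$; the thread $g' \in T(\FFF)$ obtained by mixing over a (possibly uncountable) maximal antichain of $\BB_\delta$ need not be eventually constant in $V$, nor need any coordinate $g'(\alpha)$ force $\cf(\lambda) = \omega$, even though each generic specialization is of one of these two types in $V[G_\delta]$. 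Density of $\RCS(\FFF)$ in $\RO(\RCS(\FFF))$ gives you a condition \emph{below} a given regular-open element, but it says nothing about the membership of a canonical representative in $\RCS(\FFF)$. The paper sidesteps this entirely: each $\phi(a)$ is a member of $\RCS(\FFF)$ outright, and $g' = \tilde{\bigvee}\phi[A]$ is built from them in $V$. (You also gloss over the mixing needed to arrange $\Qp{\dot{h} > \0_{\dot{\QQ}}}_{\BB_\delta} = g(\delta)$ exactly — Fullness gives a witnessing name on the boolean value of the existential, and below the complement of $g(\delta)$ you must separately force $\dot{h} = \0$ — but that is a routine fix.)

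This gap propagates into your treatment of the $\lambda = \omega_1$ case. Your argument is: $\RCS(\FFF)^+ = C(\FFF)^+$, so $g' \in C(\FFF) \cap M$ has a support $\sigma \in M$, hence $\sigma < M \cap \omega_1$, hence $\supp(g' \wedge i_\delta(\sg(\BB_\delta,M))) \leq \max(\sigma,\delta) < M \cap \omega_1$. This is a clean deduction \emph{if} $g' \in C(\FFF)\cap M$ has already been secured — but that is exactly the unresolved point. The paper does not rely on it. It introduces a second $\BB_\delta$-name $\dot{\gamma} \in M$ for the quantity $\supp(\phi(a))$, uses $\sg(\BB_\delta,M) \leq \Qp{\dot{\gamma} < M\cap\omega_1}$, and computes
\[
g' \wedge i_\delta(\sg(\BB_\delta,M)) = \tilde{\bigvee}\bp{\phi(a) : a \in A,\ \supp(\phi(a)) < M\cap\omega_1} \wedge i_\delta(\sg(\BB_\delta,M)),
\]
which visibly has support $\leq M\cap\omega_1$ because the surviving $\phi(a)$'s all do. This gives the support bound on the meet directly, without asserting anything about the support of $g'$ alone. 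If you want to keep your route, you would need to mirror this: after obtaining $\dot{h}$, pass to an explicit antichain of $\BB_\delta$ deciding $\dot{h}$ and work with the pointwise supremum of the decided values (which is then exactly the paper's $g'$), at which point you have reproduced the paper's argument and gained little from the detour through the two-step decomposition.
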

\begin{proof}
Let $D \in M$ be the set of conditions in $\RCS(\FFF)$ deciding the value of $\dot{\alpha}$ ($D$ is open dense by the forcing theorem):
	\[
	D = \{f \in \RCS(\FFF) : \exists \beta < \omega_1 \ f \Vdash \dot{\alpha}=\check{\beta}\}.
	\]

	Consider the set $\pi_\delta[D \res g]$ (which is open dense below $g(\delta)$ by Lemma \ref{eRetrDense}) and fix $A$ a maximal antichain in $M$ contained in it, so that $\bigvee A = g(\delta)$. Let $\phi: A \to D \res g$ be a map in $M$ such that $\pi_\delta(\phi(a))=a$ for every $a\in A$, and define $g' \in \RCS(\FFF) \cap M$ by $g' =  \tilde{\bigvee} \phi[A]$. Observe that $g'(\delta) = g(\delta)$ by definition of pointwise supremum and $g' \leq g$ since $\tilde{\bigvee} \phi[A]$ is really the supremum of $\phi[A]$ in $\RO(T(\FFF))$ by Lemma \ref{iTFSup} (thus it is the supremum in $\RO(\RCS(\FFF))$ as well).

	Then we can define a name\footnote{Literally speaking this is not a $\BB_\delta$-name according to our definition. See the footnote below~\ref{def:forcingnames} to resolve this ambiguity.} $\dot{\beta} \in V^{\BB_\delta} \cap M$ as:
	\[
	\dot{\beta} = \bp{ \ap{\check{\gamma}, a}: ~ a \in A, ~ \phi(a) \Vdash_{\RCS(\FFF)} \dot{\alpha} > \check{\gamma} }
	\]
	so that for any $a \in A$, $a \Vdash \dot{\beta} = \check{\xi}$ iff $\phi(a) \Vdash \dot{\alpha} = \check{\xi}$. It follows that $\Qp{\hat{\imath}_\delta(\dot{\beta}) = \dot{\alpha}} \geq \bigvee \phi[A] = g'$. Moreover, $\sg(\BB_\delta,M) \leq \Qp{\dot{\beta} < \check{M \cap \omega_1}}$ and is compatible with $g'(\delta) \in M$ (since $\BB_\delta$ is $S$-$\SP$), so that
	\[
	\Qp{\dot{\alpha} < \check{M \cap \omega_1} } \geq g' \wedge i_{\delta}(\sg(\BB_\delta,M)).
	\]

	If $\lambda = \omega_1$, $\RCS(\FFF) = C(\FFF)$ and we can define a name $\dot{\gamma} \in V^{\BB_\delta} \cap M$ for a countable ordinal setting:
	\[
	\dot{\gamma} = \bp{ \ap{\check{\eta}, a}: ~ a \in A, ~ \eta < \supp(\phi(a)) }.
	\]
	Notice that 
	$\dot{\gamma}$ is defined in such a way that for all $\beta<\omega_1$ 
	\[
	\Qp{\dot{\gamma}=\beta}=\bigvee\{a\in A:\supp(\phi(a))=\beta\}.
	\]
	 In particular this gives that:
	 \begin{align*}
	 i_\delta(\Qp{\dot{\gamma}<\beta})\wedge g'=\\
	 = i_\delta(\bigvee\{a\in A:\supp(\phi(a))<\beta\})\wedge\bigvee\{\phi(a):a\in A\}=\\
	 =\tilde{\bigvee}\{\phi(a):a\in A,\,\supp(\phi(a))<\beta\}.
	 \end{align*}
	 
	 Now observe that 
	 \[
	 g'\wedge \sg(\BB_{\delta},M)=\tilde{\bigvee}\{\phi(a)\wedge i_\delta(\sg(\BB_{\delta},M)):a\in A\}.
	 \]
	 Since $\sg(\BB_{\delta},M) \leq \Qp{\dot{\gamma} < \check{M \cap \omega_1} }$, we get that:
	 \begin{align*}
	 g'\wedge i_\delta(\sg(\BB_{\delta},M))=\\
	 =g'\wedge i_\delta(\sg(\BB_{\delta},M))\wedge i_\delta(\Qp{\dot{\gamma}<M\cap\omega_1})=\\
	 =\tilde{\bigvee}\{\phi(a):a\in A,\,\supp(\phi(a))<M\cap\omega_1\}\wedge i_\delta(\sg(\BB_{\delta},M)).
	 \end{align*}
	 It is now immediate to check that this latter element of $C(\FFF)$ has support contained in
	 $M\cap\omega_1$ as required.
\end{proof}

\begin{lemma}\label{LemmaMinOmega1}
	Let $\FFF = \{i_{nm} : n \leq m < \omega\}$ be an $S$-$\SP$ iteration system with $S$ stationary on $[H_\theta]^\omega$. Then $T(\FFF)$ and the corresponding $i_{n\omega}$ are $S$-$\SP$.
\end{lemma}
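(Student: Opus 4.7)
The plan is to establish both assertions at once by producing, for each countable $M\prec H_\theta$ with $\FFF\in M$ and $M\cap H_\theta\in S$, each $f\in T(\FFF)\cap M$ with $f>\0$, and each fixed $n<\omega$, a thread $q\in T(\FFF)$ with $q\leq f$ which is $M$-semigeneric and satisfies $q(n)=f(n)\wedge\sg(\BB_n,M)$. By Proposition~\ref{4supSG} such a $q$ lies below $\sg(T(\FFF),M)$, so $f\wedge\sg(T(\FFF),M)\geq q>\0$ gives that $T(\FFF)$ is $S$-$\SP$; and since for every $M$-semigeneric $q'\in T(\FFF)$ the coordinate $q'(n)\in\BB_n$ is $M$-semigeneric (a $\BB_n$-name for a countable ordinal in $M$ becomes a $T(\FFF)$-name in $M$ via $\hat{\imath}_n$), one has $\pi_n(\sg(T(\FFF),M))\leq\sg(\BB_n,M)$; combined with $\pi_n(q)=q(n)=f(n)\wedge\sg(\BB_n,M)$ this yields the $\pi_n$-identity required for $i_{n\omega}$ to be $S$-$\SP$.

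To construct $q$, enumerate the countably many names $\{\dot\alpha_k:k<\omega\}\subseteq V^{T(\FFF)}\cap M$ for countable ordinals, set $\delta_k=n+k$, and recursively apply Lemma~\ref{LemmaGenerale} starting from $f^{(0)}=f$: at stage $k$ one obtains $f^{(k+1)}\in T(\FFF)\cap M$ with $f^{(k+1)}\leq f^{(k)}$, $f^{(k+1)}(\delta_k)=f^{(k)}(\delta_k)$, and
\[
f^{(k+1)}\wedge i_{\delta_k}\!\bigl(\sg(\BB_{\delta_k},M)\bigr)\ \Vdash\ \dot\alpha_k<M\cap\omega_1.
\]
Since each $f^{(j)}$ is a thread and $\delta_k\geq n$, one has $f^{(k+1)}(m)=\pi_{m\delta_k}(f^{(k+1)}(\delta_k))=\pi_{m\delta_k}(f^{(k)}(\delta_k))=f^{(k)}(m)$ for every $m\leq\delta_k$; iterating, the sequence $(f^{(j)}(m))_{j<\omega}$ stabilizes once $j\geq k_m:=\max(0,m-n)$. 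I then define the candidate thread by
\[
q(m)=f^{(k_m)}(m)\wedge\sg(\BB_m,M).
\]

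The technical heart of the argument is checking that $q$ is a thread. For $m_1\leq m_2$, the hypothesis that $i_{m_1m_2}$ is $S$-$\SP$ and the fact that $f^{(k_{m_2})}(m_2)\in\BB_{m_2}\cap M$ give, via Definition~\ref{defSPboole},
\[
\pi_{m_1m_2}\!\bigl(f^{(k_{m_2})}(m_2)\wedge\sg(\BB_{m_2},M)\bigr)=\pi_{m_1m_2}\!\bigl(f^{(k_{m_2})}(m_2)\bigr)\wedge\sg(\BB_{m_1},M);
\]
the first factor equals $f^{(k_{m_2})}(m_1)=f^{(k_{m_1})}(m_1)$ by threadedness and stabilization, so $\pi_{m_1m_2}(q(m_2))=q(m_1)$. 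Positivity $q>\0$ follows from the $S$-$\SP$ness of $\BB_m$ applied to $f^{(k_m)}(m)\in\BB_m^+\cap M$. Finally, $q$ is $M$-semigeneric: $q\leq f^{(k+1)}$ by monotonicity and stabilization, while $q\leq i_{\delta_k}(\sg(\BB_{\delta_k},M))$ is checked coordinatewise using $\pi_{m\delta_k}(\sg(\BB_{\delta_k},M))=\sg(\BB_m,M)$ for $m<\delta_k$ (again the $S$-$\SP$ness of $i_{m\delta_k}$ applied with $c=\1$) and $i_{\delta_k m}(\sg(\BB_{\delta_k},M))\geq\sg(\BB_m,M)$ for $m\geq\delta_k$ (which follows by mapping maximal antichains of $\BB_{\delta_k}$ in $M$ into $\BB_m$ via $i_{\delta_k m}$). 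Hence $q\leq f^{(k+1)}\wedge i_{\delta_k}(\sg(\BB_{\delta_k},M))$ forces $\dot\alpha_k<M\cap\omega_1$ for every $k$. The principal obstacle is precisely this thread verification: it is the $S$-$\SP$ness hypothesis on every embedding $i_{m_1m_2}$ that permits $\pi_{m_1m_2}$ to commute past the meet against $\sg(\BB_{m_2},M)$, and without it the coordinatewise definition of $q$ would fail to patch together into a thread.
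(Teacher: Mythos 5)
Your proof is correct and follows essentially the same path as the paper: repeated application of Lemma~\ref{LemmaGenerale} to a countable enumeration of the $T(\FFF)$-names in $M$ for countable ordinals, followed by the construction of a ``diagonal'' thread whose $m$-th coordinate is the meet of the stabilized $m$-th coordinate of the approximating sequence with $\sg(\BB_m,M)$, using the $S$-$\SP$ness of the embeddings $i_{m_1 m_2}$ to verify threadedness via the identity $\pi_{m_1 m_2}(c\wedge\sg(\BB_{m_2},M))=\pi_{m_1 m_2}(c)\wedge\sg(\BB_{m_1},M)$. The only differences from the paper are stylistic: the paper first invokes Lemma~\ref{4ristretto} and Fact~\ref{6IterRestr} to reduce to $n=0$ and $f=\1$ before running the diagonal argument, whereas you carry $n$ and $f$ through explicitly with the index shift $\delta_k=n+k$; and the paper dismisses the inequality $\pi_n(\sg(\RO(T(\FFF)),M))\leq\sg(\BB_n,M)$ as trivial while you spell out a justification via pushing an $M$-semigeneric condition through $\pi_n$ (both are fine; the paper's intended route is presumably the predense-image argument as in Proposition~\ref{6iSP}).
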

\begin{proof}
	By Proposition \ref{4SPequivC}, any countable $M \prec H_\nu$ with $\nu > \theta$, $\FFF,S \in M$, $M \cap H_\theta \in S$, witnesses the semiproperness of every $i_{nm}$.

	We need to show that for every $f \in T(\FFF) \cap M$, $n < \omega$,
	\[
	\pi_{n\omega}(\sg(\RO(T(\FFF)),M) \wedge f) = \sg(\BB_n,M) \wedge f(n)
	\]
	this would also imply that $\RO(T(\FFF))$is $S$-$\SP$ by the same reasoning of the proof of Lemma \ref{6iSP}. Without loss of generality, we can assume that $n=0$ and by Lemma \ref{4ristretto} and \ref{6IterRestr} we can also assume that $f = \1$. Thus is sufficient to prove that
	\[
	\pi_{0\omega}(\sg(\RO(T(\FFF)),M)) = \sg(\BB_0,M)
	\]

	Let $\{\dot{\alpha}_n : n \in \omega\}$ be an enumeration of the $T(\FFF)$-names in $M$ for countable ordinals. Let $g_0 = \1_{T(\FFF)}$, $g_{n+1}$ be obtained from $g_n$, $\dot{\alpha}_n$, $n$ as in Lemma \ref{LemmaGenerale}, so that
	\[
	\Qp{\dot{\alpha}_n < \check{M \cap \omega_1} } \geq g_{n+1} \wedge i_n(\sg(\BB_n,M))
	\]

	Consider now the sequence $\bar{g}(n) = g_n(n) \wedge \sg(\BB_n,M)$. This sequence is a thread since $i_{n,n+1}$ is $S$-$\SP$ and $g_n(n) \in M$ for every $n$, hence
	\[
	\pi_{n,n+1}( \sg(\BB_{n+1},M) \wedge g_{n+1}(n+1) ) = \sg(\BB_n,M) \wedge \pi_{n,n+1}(g_{n+1}(n+1))
	\]
	and $\pi_{n,n+1}(g_{n+1}(n+1)) = g_{n+1}(n) = g_n(n) $ by Lemma \ref{LemmaGenerale}. Furthermore, for every $n\in \omega$, $\bar{g} \leq g_n$ since the sequence $g_n$ is decreasing, and $\bar{g} \leq i_{n}(\sg(\BB_n,M))$ since $\bar{g}(n) \leq \sg(\BB_n,M)$. It follows that $\bar{g}$ forces that $\Qp{\dot{\alpha}_n < \check{M \cap \omega_1}}$ for every $n$, thus $\bar{g} \leq \sg(\RO(T(\FFF)),M)$ by Lemma \ref{4supSG}. Then,
	\[
	\pi_{0}(\sg(\RO(T(\FFF)),M)) \geq \bar{g}(0) = g_0(0) \wedge \sg(\BB_0,M) = \sg(\BB_0,M)
	\]
	and the opposite inequality is trivial, completing the proof.
\end{proof}

\begin{lemma}\label{LemmaOmega1}
	Let $\FFF=\bp{i_{\alpha\beta}: \BB_\alpha \to \BB_\beta: \alpha \leq \beta < \omega_1}$ be an $\RCS$ and $S$-$\SP$ iteration system with $S$ stationary on $[H_\theta]^\omega$. Then $C(\FFF)$ and the corresponding $i_{\alpha\omega_1}$ are $S$-$\SP$.
\end{lemma}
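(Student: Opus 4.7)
The plan is to follow the template of Lemma~\ref{LemmaMinOmega1}: using restriction arguments as in the proof of Lemma~\ref{6iSP} (via Lemma~\ref{4ristretto} and Fact~\ref{6IterRestr}), I reduce to showing that for a generic countable $M \in S$ with $\FFF \in M$ we have
\[
\pi_{0}(\sg(\RO(C(\FFF)),M)) \geq \sg(\BB_0,M),
\]
the reverse inequality being immediate from Lemma~\ref{eRetrDense} and the fact that $i_0$ is a complete homomorphism. Set $\delta = M \cap \omega_1$, which is a countable limit ordinal so $\cf(\delta) = \omega$ in $V$. Enumerate the $C(\FFF)$-names in $M$ for countable ordinals as $\{\dot{\alpha}_n : n < \omega\}$, and fix an increasing sequence $\{\beta_n : n < \omega\} \subseteq M$ cofinal in $\delta$ with $\beta_0 = 0$.

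Starting from $g_0 = \1$, I inductively apply Lemma~\ref{LemmaGenerale} with the parameters $g_n, \dot{\alpha}_n, \beta_n$ to produce $g_{n+1} \in C(\FFF) \cap M$ with $g_{n+1}(\beta_n) = g_n(\beta_n)$, and set $h_n = g_{n+1} \wedge i_{\beta_n}(\sg(\BB_{\beta_n},M))$; by that lemma $h_n$ has support strictly below $\delta$ and forces $\dot{\alpha}_n < M \cap \omega_1$. The crucial coherence property is that, since $g_{n+1}(\beta_n) = g_n(\beta_n)$ and both are threads, applying retractions gives $g_{n+1}(\gamma) = g_n(\gamma)$ for every $\gamma \leq \beta_n$; iterating, $g_m(\beta_n) = g_n(\beta_n)$ whenever $m \geq n$.

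Next I build the limit condition. Define $\bar{g}(\beta_n) = g_n(\beta_n) \wedge \sg(\BB_{\beta_n},M)$ and extend to $\alpha < \delta$ via $\bar{g}(\alpha) = \pi_{\alpha,\beta_n}(\bar{g}(\beta_n))$ for any $\beta_n \geq \alpha$. The main technical step is checking that $\bar{g}$ is a thread on $\{\beta_n\}$: for $n \leq m$, using $S$-$\SP$ of $i_{\beta_n,\beta_m}$ (since $g_m(\beta_m) \in M$) and the coherence above,
\[
\pi_{\beta_n,\beta_m}(\bar{g}(\beta_m)) = \pi_{\beta_n,\beta_m}(g_m(\beta_m)) \wedge \sg(\BB_{\beta_n},M) = g_m(\beta_n) \wedge \sg(\BB_{\beta_n},M) = \bar{g}(\beta_n).
\]
Since $\bar{g}(0) = \sg(\BB_0,M) > 0$ and $\cf(\delta) = \omega$, the thread $\bar{g}$ is in $\RCS(\FFF \res \delta)^+$; the RCS hypothesis $\BB_\delta = \RO(\RCS(\FFF \res \delta))$ then lets me interpret $[\bar{g}]$ as a positive element of $\BB_\delta$, and I lift it to the constant thread $\bar{g}^{*} = i_\delta([\bar{g}]) \in C(\FFF)$ of support $\delta$.

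Finally I verify $\bar{g}^{*} \leq h_n$ for every $n$. At level $\beta_m$ with $m > n$,
\[
\bar{g}^{*}(\beta_m) = g_m(\beta_m) \wedge \sg(\BB_{\beta_m},M) \leq g_{n+1}(\beta_m) \wedge i_{\beta_n,\beta_m}(\sg(\BB_{\beta_n},M)) = h_n(\beta_m),
\]
using the monotonicity of the $g_n$'s and the inequality $\sg(\BB_{\beta_m},M) \leq i_{\beta_n,\beta_m}(\sg(\BB_{\beta_n},M))$ (immediate by pushing predense subsets of $\BB_{\beta_n}$ in $M$ forward along $i_{\beta_n,\beta_m}$ via Lemma~\ref{eRetrDense}); the levels $\alpha \geq \delta$ reduce to $[\bar{g}] \leq h_n(\delta)$ in $\BB_\delta$, which follows from $\bar{g} \leq h_n$ in $\RCS(\FFF \res \delta)$. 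Hence $\bar{g}^{*}$ forces each $\dot{\alpha}_n$ below $\delta$, so $\bar{g}^{*} \leq \sg(\RO(C(\FFF)),M)$ by Proposition~\ref{4supSG}, and $\pi_{0}(\sg(\RO(C(\FFF)),M)) \geq \pi_{0}(\bar{g}^{*}) = \bar{g}^{*}(0) = \sg(\BB_0,M)$ as required. The main obstacle is the thread identity for $\bar{g}$: it simultaneously exploits $S$-$\SP$ of every intermediate embedding $i_{\beta_n,\beta_m}$, the coherence $g_m(\beta_n) = g_n(\beta_n)$ built into the construction, and the RCS assumption that makes $\BB_\delta$ genuinely host the limit $[\bar{g}]$.
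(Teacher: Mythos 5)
Your proof is correct and follows essentially the same approach as the paper: the same reduction to $\pi_0(\sg(\RO(C(\FFF)),M)) \geq \sg(\BB_0,M)$, the same inductive construction via Lemma~\ref{LemmaGenerale} along a cofinal $\omega$-sequence in $\delta = M\cap\omega_1$, and the same limit condition $i_\delta(\bar g)$ using the RCS hypothesis at $\delta$. You merely make explicit two facts the paper leaves implicit — the coherence $g_m(\beta_n)=g_n(\beta_n)$ for $m\geq n$ and the inequality $\sg(\BB_{\beta_m},M)\leq i_{\beta_n\beta_m}(\sg(\BB_{\beta_n},M))$ — which is a reasonable clarification rather than a different argument.
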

\begin{proof}
	The proof follows the same pattern of the previous Lemma \ref{LemmaMinOmega1}. By Proposition \ref{4SPequivC}, any countable $M \prec H_\nu$ with $\nu > \theta$, $\FFF,S \in M$, $M \cap H_\theta \in S$, witnesses the semiproperness of every $i_{\alpha\beta}$ with $\alpha,\beta \in M \cap \omega_1$.

	As before, by Lemma \ref{4ristretto} and \ref{6IterRestr} we only need to show that
	\[
	\pi_0(\sg(\RO(C(\FFF)),M)) \geq \sg(\BB_0,M),
	\]
	the other inequality being trivial.
	Let $\ap{\delta_n: n \in \omega}$ be an increasing sequence of ordinals such that $\delta_0 = 0$ and $\sup_n \delta_n = \delta = M \cap \omega_1$, and $\{\dot{\alpha}_n : n \in \omega\}$ be an enumeration of the $C(\FFF)$-names in $M$ for countable ordinals. Let $g_0 = \1_{T(\FFF)}$, $g_{n+1}$ be obtained from $g_n$, $\dot{\alpha}_n$, $\delta_n$ as in Lemma \ref{LemmaGenerale}, so that
	\[
	\Qp{\dot{\alpha}_n < \check{M \cap \omega_1} } \geq g_{n+1} \wedge i_{\delta_n}(\sg(\BB_{\delta_n},M)).
	\]
	Consider now the sequence $\bar{g}(\delta_n) = g_n(\delta_n) \wedge \sg(\BB_{\delta_n},M)$. As before, this sequence induces a thread on $\FFF \res \delta$, so that $\bar{g} \in \BB_\delta$ since $\FFF$ is an $\RCS$-iteration, $\delta$ has countable cofinality and thus we can naturally identify 
	$T(\FFF\res\delta)$ as a dense subset of $\BB_\delta$.
	Moreover we can also check that $i_\delta(\bar{g})$ is a thread in $C(\FFF)$ with support $\delta$
	such that $i_\delta(\bar{g})(\alpha)=\bar{g}(\alpha)$
	for all $\alpha<\delta$.
	
	Since by Lemma \ref{LemmaGenerale}
	\[
	\supp(g_{n+1} \wedge i_{\delta_n}(\sg(\BB_{\delta_n},M))) \leq \delta,
	\]
	the relation $i_\delta(\bar{g}) \leq g_{n+1} \wedge i_{\delta_n}(\sg(\BB_{\delta_n},M))$ holds pointwise hence $i_\delta(\bar{g})$ forces that $\Qp{\dot{\alpha}_n < \check{M \cap \omega_1}}$ for every $n$. Thus, $i_\delta(\bar{g}) \leq \sg(\RO(C(\FFF)),M)$ by Lemma \ref{4supSG} and $\pi_{0}(\sg(\RO(T(\FFF)),M)) \geq \bar{g}(0) = g_0(0) \wedge \sg(\BB_0,M) = \sg(\BB_0,M)$ as required.
\end{proof}

\begin{lemma} \label{LemmaLCC}
	Let $\FFF=\bp{i_{\alpha\beta}: \BB_\alpha \to \BB_\beta: \alpha \leq \beta < \lambda}$ be an $\RCS$ and $S$-$\SP$ iteration system with $S$ stationary on $[H_\theta]^\omega$ such that $C(\FFF)$ is ${<}\lambda$-cc. Then $C(\FFF)$ and the corresponding $i_{\alpha\lambda}$ are $S$-$\SP$.
\end{lemma}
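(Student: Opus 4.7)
\textbf{Proof proposal for Lemma \ref{LemmaLCC}.}
The plan is to transpose the argument of Lemma \ref{LemmaOmega1} to this setting, using ${<}\lambda$-cc to localize names for countable ordinals. By Lemma \ref{iBaumCorollary}, under ${<}\lambda$-cc we have $T(\FFF) = C(\FFF) = \RCS(\FFF)$ and this is already a complete boolean algebra. Using Proposition \ref{4SPequivC}, I fix a countable $M \prec H_\nu$ (for $\nu \gg \theta$) with $\FFF, S \in M$ and $M \cap H_\theta \in S$; then by Lemmas \ref{4ristretto} and \ref{6IterRestr}, after restricting below an arbitrary $f \in C(\FFF) \cap M$, it suffices to prove
\[
\pi_0(\sg(C(\FFF), M)) \geq \sg(\BB_0, M),
\]
the reverse inequality being immediate from Lemma \ref{eRetrDense}. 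The key localization step is that, by ${<}\lambda$-cc and regularity of $\lambda$, every maximal antichain of $C(\FFF)$ of size ${<}\lambda$ consists of constant threads whose supports are bounded by some $\alpha < \lambda$; hence every $C(\FFF)$-name in $M$ for a countable ordinal is, by elementarity, equivalent to a $\BB_\delta$-name for some $\delta \in M$.

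Set $\delta^* = \sup(M \cap \lambda)$; since $M$ is countable and $\lambda$ regular uncountable, $\delta^* < \lambda$ and $\cf(\delta^*) = \omega$. Enumerate the $C(\FFF)$-names for countable ordinals in $M$ as $\{\dot{\alpha}_n : n < \omega\}$, and choose an increasing sequence $\delta_n \in M$ cofinal in $M \cap \lambda$ such that $\dot{\alpha}_n$ is (equivalent to) a $\BB_{\delta_n}$-name. Set $g_0 = \1$ and iteratively apply Lemma \ref{LemmaGenerale} to produce $g_{n+1} \in C(\FFF) \cap M$ below $g_n$ with $g_{n+1}(\delta_n) = g_n(\delta_n)$ and
\[
g_{n+1} \wedge i_{\delta_n}(\sg(\BB_{\delta_n}, M)) \Vdash \dot{\alpha}_n < \check{M \cap \omega_1}.
\]
Define $\bar{g}$ on $\{\delta_n : n < \omega\}$ by $\bar{g}(\delta_n) = g_n(\delta_n) \wedge \sg(\BB_{\delta_n}, M)$. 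Semiproperness of each $i_{\delta_n, \delta_{n+1}}$ together with $g_n(\delta_n) \in M$ gives, as in Lemma \ref{LemmaOmega1}, that $\bar{g}$ extends by the retractions to a thread on $\FFF \res \delta^*$. Since $\cf(\delta^*) = \omega$, every thread of $T(\FFF \res \delta^*)$ is in $\RCS(\FFF \res \delta^*)$, so by the RCS hypothesis $\bar{g}$ is naturally identified with an element of $\BB_{\delta^*}$.

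Finally, I push $\bar{g}$ up to $C(\FFF)$ as $i_{\delta^*,\lambda}(\bar{g})$ and verify that this is below every $g_{n+1} \wedge i_{\delta_n}(\sg(\BB_{\delta_n}, M))$. Indeed, each $g_{n+1}$ has support $\sigma_{n+1} \in M \cap \lambda < \delta^*$, and $\bar{g}(\sigma_{n+1}) \leq g_m(\sigma_{n+1}) \leq g_{n+1}(\sigma_{n+1})$ for any $m > n$ with $\delta_m > \sigma_{n+1}$ (using the thread property and monotonicity of $\langle g_n \rangle$); hence $\bar{g} \leq i_{\sigma_{n+1}, \delta^*}(g_{n+1}(\sigma_{n+1}))$ in $\BB_{\delta^*}$, which translates to $i_{\delta^*,\lambda}(\bar{g}) \leq g_{n+1}$ pointwise, and similarly $i_{\delta^*,\lambda}(\bar{g}) \leq i_{\delta_n}(\sg(\BB_{\delta_n}, M))$. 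Thus $i_{\delta^*,\lambda}(\bar{g})$ forces $\dot{\alpha}_n < M \cap \omega_1$ for every $n$, so by Proposition \ref{4supSG} it lies below $\sg(C(\FFF), M)$, yielding
\[
\pi_0(\sg(C(\FFF), M)) \geq \pi_0(i_{\delta^*,\lambda}(\bar{g})) = \bar{g}(0) = \sg(\BB_0, M).
\]
The part I expect to be most delicate is the identification of the partial data $\bar{g}$ on $\{\delta_n\}$ with a genuine element of $\BB_{\delta^*}$: this requires the countable cofinality of $\delta^*$ together with the RCS hypothesis to ensure $T(\FFF \res \delta^*) = \RCS(\FFF \res \delta^*)$, and hence the dense embedding of the thread into $\BB_{\delta^*}$, after which the semiproperness of $i_{\delta^*, \lambda}$ (coming from $S$-semiproperness of the system) carries the argument over to $C(\FFF)$.
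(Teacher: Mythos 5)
Your proof is correct and follows essentially the same route as the paper's: apply Lemma \ref{iBaumCorollary} to get $T(\FFF)=C(\FFF)=\RCS(\FFF)$, reduce via Lemmas \ref{4ristretto}/\ref{6IterRestr} to the single retraction inequality, run the fusion argument from Lemma \ref{LemmaGenerale} along a cofinal sequence $\langle\delta_n\rangle$ in $M\cap\lambda$, form $\bar g$ at stage $\delta^*=\sup(M\cap\lambda)$, identify it as an element of $\BB_{\delta^*}$ via the RCS hypothesis and $\cf(\delta^*)=\omega$, push up, and invoke Lemma \ref{4supSG}. Two small remarks: (i) your "key localization step" (every $C(\FFF)$-name in $M$ for a countable ordinal is equivalent to a $\BB_\delta$-name for some $\delta\in M$) is correct but not needed — Lemma \ref{LemmaGenerale} works for an arbitrary name, and you never use the constraint you impose on the $\delta_n$ in the subsequent argument; the paper simply chooses $\delta_n\in M$ cofinal in $M\cap\lambda$ with $\delta_0=0$. (ii) You should either take $\delta_0=0$ (as the paper does) so that $\bar g(0)=g_0(0)\wedge\sg(\BB_0,M)=\sg(\BB_0,M)$ is immediate, or else explicitly note that $\bar g(0)=\pi_{0\delta_0}(\sg(\BB_{\delta_0},M))=\sg(\BB_0,M)$ by $S$-semiproperness of $i_{0\delta_0}$ applied to $\1\in M$; as written the equality $\bar g(0)=\sg(\BB_0,M)$ is asserted without justification. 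The place where your proof supplies genuinely more detail than the paper is the final pointwise verification that $i_{\delta^*,\lambda}(\bar g)\le g_{n+1}\wedge i_{\delta_n}(\sg(\BB_{\delta_n},M))$, which the paper states without proof; your chain through $\bar g(\sigma_{n+1})\le g_{n+1}(\sigma_{n+1})$ and the thread inequality $\bar g\le i_{\sigma_{n+1},\delta^*}(\bar g(\sigma_{n+1}))$ is exactly what is needed, and correctly uses that $g_{n+1}$ is a constant thread with support $\sigma_{n+1}\in M\cap\lambda$ because $T(\FFF)=C(\FFF)$ and $g_{n+1}\in M$.
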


\begin{proof}
	The proof follows the same pattern of the previous Lemmas \ref{LemmaMinOmega1} and \ref{LemmaOmega1}. By Proposition \ref{4SPequivC}, any countable $M \prec H_\nu$ with $\nu > \theta$, $\FFF,S \in M$, $M \cap H_\theta \in S$, witnesses the semiproperness of every $i_{\alpha\beta}$ with $\alpha,\beta \in M \cap \lambda$.

	As before, by Lemma \ref{4ristretto} and \ref{6IterRestr} we only need to show that
	\[
	\pi_0(\sg(\RO(C(\FFF)),M)) \geq \sg(\BB_0,M).
	\]
	Let $\ap{\delta_n: n \in \omega}$ be an increasing sequence of ordinals such that $\delta_0 = 0$ and $\sup_n \delta_n = \delta = \sup(M \cap \lambda)$, and $\{\dot{\alpha}_n : n \in \omega\}$ be an enumeration of the $C(\FFF)$-names in $M$ for countable ordinals. Let $g_0 = \1_{T(\FFF)}$, $g_{n+1}$ be obtained from $g_n$, $\dot{\alpha}_n$, $\delta_n$ as in Lemma \ref{LemmaGenerale}, so that
	\[
	\Qp{\dot{\alpha}_n < \check{M \cap \omega_1} } \geq g_{n+1} \wedge i_{\delta_n}(\sg(\BB_{\delta_n},M)).
	\]
	Since $C(\FFF)$ is ${<}\lambda$-cc by Theorem \ref{iBaumCorollary} we have that $T(\FFF) = \RO(C(\FFF)) = C(\FFF)$, so every $g_n$ is in $C(\FFF) \cap M$ hence $M$ has to model $g_n$ to be eventually constant, thus $\supp(g_n) < \delta$. Then the sequence $\bar{g}(\delta_n) = g_n(\delta_n) \wedge \sg(\BB_{\delta_n},M)$ induces a thread on $\FFF \res \delta$ 
	(hence $\bar{g} \in \BB_\delta=\RO(T(\FFF\res\delta))$ by the countable cofinality of $\delta$) 
	and $i_\delta(\bar{g}) \leq g_{n+1} \wedge i_{\delta_n}(\sg(\BB_{\delta_n},M))$ for every $n$, so that $i_\delta(\bar{g}) \leq \sg(\RO(C(\FFF)),M)$ by Lemma \ref{4supSG} and $\pi_{0}(\sg(\RO(T(\FFF)),M)) \geq \bar{g}(0) = g_0(0) \wedge \sg(\BB_0,M) = \sg(\BB_0,M)$ as required.
\end{proof}

\begin{theorem} \label{ThmIteration}
	Let $\FFF=\bp{i_{\alpha\beta}: \BB_\alpha \to \BB_\beta: \alpha \leq \beta < \lambda}$ be an $\RCS$ and $S$-$\SP$ iteration system with $S$ stationary on $[H_\theta]^\omega$, such that for all $\alpha < \beta < \lambda$,
	\[
	\Qp{ \BB_\beta/i_{\alpha\beta}[\dot{G}_\alpha] \text{ is } S(\dot{G}_\alpha)\text{-}\SP } = \1_{\BB_\alpha}
	\]
	and for all $\alpha$ there is a $\beta > \alpha$ such that $\BB_\beta \Vdash \vp{\BB_\alpha} \leq \omega_1$. Then $\RCS(\FFF)$ and the corresponding $i_{\alpha\lambda}$ are $S$-$\SP$.
\end{theorem}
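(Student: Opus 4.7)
The proof is by induction on $\lambda$, splitting into three cases according to $\cf(\lambda)\in\{\omega,\omega_1,{>}\omega_1\}$. The reindexing isomorphisms of Section~\ref{sec:iterations} preserve $\RCS$-limits and the embeddings $i_{\alpha\lambda}$ up to isomorphism, so one may assume $\lambda$ is itself a regular cardinal.

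If $\lambda=\omega$, then $\1$ forces $\cf(\check\omega)=\omega$ trivially, so $\RCS(\FFF)=T(\FFF)$ and Lemma~\ref{LemmaMinOmega1} yields the conclusion. If $\lambda=\omega_1$, the semiproperness of each $\BB_\alpha$ preserves $\omega_1$, so no condition in any $\BB_\alpha$ can force $\cf(\check\omega_1)=\omega$; hence $\RCS(\FFF)=C(\FFF)$ and Lemma~\ref{LemmaOmega1} applies.

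The substantial case is $\lambda\geq\omega_2$, where I aim to reduce to Lemma~\ref{LemmaLCC} by establishing (a)~$\RCS(\FFF)=C(\FFF)$ and (b)~$C(\FFF)$ is $<\lambda$-cc. For (b) I invoke Baumgartner's theorem (Theorem~\ref{iBaumgartner}). First, the size hypothesis forces each $\BB_\alpha$ to be $<\lambda$-cc: given an antichain $A\subseteq\BB_\alpha$ in $V$ and $\beta>\alpha$ with $\BB_\beta\Vdash|\BB_\alpha|\leq\omega_1$, a name-counting argument applied to a forced surjection $\omega_1\to A$ in $V[G_\beta]$ (for each $\gamma<\omega_1$ the family $\{\Qp{\dot f(\gamma)=a}_{\BB_\beta}:a\in A\}\setminus\{\0\}$ is an antichain of $\BB_\beta$) yields $|A|^V\leq\omega_1\cdot|\BB_\beta|^V$, and a bootstrap induction on $\alpha$ gives $|\BB_\alpha|^V<\lambda$ for every $\alpha<\lambda$. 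Second, for the stationarity hypothesis of Baumgartner, I take the club of $\alpha<\lambda$ with $\cf(\alpha)>\omega_1$ which are closure points of the collapse map $\alpha\mapsto$ the least $\beta$ witnessing the size hypothesis; at such $\alpha$ the inductive hypothesis applied to $\FFF\res\alpha$ gives $\RCS(\FFF\res\alpha)=C(\FFF\res\alpha)$, whence $\BB_\alpha=\RO(\RCS(\FFF\res\alpha))=\RO(C(\FFF\res\alpha))$ by the $\RCS$-nature of $\FFF$. Statement (a) then follows from (b): each $\BB_\alpha$ is $<\lambda$-cc and so preserves $\cf(\check\lambda)>\omega_1$, precluding any condition from forcing $\cf(\check\lambda)=\omega$.

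The main obstacle lies in Case~3, where statements (a) and (b) are intertwined with the induction. One must strengthen the statement being proved so that the inductive hypothesis yields ``$\RCS(\FFF)=C(\FFF)$ whenever $\cf(\lambda)>\omega_1$'' in addition to $S$-$\SP$-ness, and then carefully orchestrate the size bookkeeping so that the $<\lambda$-cc bound on each factor $\BB_\alpha$ propagates through successor and limit stages in tandem with Baumgartner's stationary set requirement.
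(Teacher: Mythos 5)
There is a genuine gap in your Case~3. The hypothesis of the theorem does \emph{not} imply that $\vp{\BB_\alpha}<\lambda$ for all $\alpha$, so the reduction to Baumgartner's theorem and Lemma~\ref{LemmaLCC} cannot go through in general. Your name-counting argument bounds an antichain $A\subseteq\BB_\alpha$ by $\omega_1\cdot\vp{\BB_\beta}^V$, but this is only useful if one already knows $\vp{\BB_\beta}^V<\lambda$; since $\beta>\alpha$, a ``bootstrap induction on $\alpha$'' runs in the wrong direction and never gets started. Indeed, nothing forbids an iteration in which $\vp{\BB_0}\geq\lambda$ already (for instance $\BB_0$ a semiproper collapse of a cardinal $\geq\lambda$ to $\omega_1$), and then each later collapse stage can push the sizes up again before they are tamed. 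In that situation $\BB_0$ is \emph{not} $<\lambda$-cc, $\BB_\beta$ genuinely forces $\cf(\check\lambda)\leq\omega_1$, and $\RCS(\FFF)\neq C(\FFF)$; your statements (a) and (b) are simply false, so the inductive strengthening you sketch cannot repair the argument.

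The paper instead splits on whether all $\BB_\alpha$ have size $<\lambda$. When they do, one proceeds exactly as you propose via Theorem~\ref{iBaumgartner} and Lemma~\ref{LemmaLCC}. When some $\vp{\BB_\alpha}\geq\lambda$, the size hypothesis yields a $\beta>\alpha$ with $\BB_\beta\Vdash\cf(\check\lambda)\leq\omega_1$; one then factors $\RCS(\FFF)$ as the two-step iteration of $\BB_\beta$ with $\RCS(\FFF/_{\dot{G}_\beta})$, argues via Lemma~\ref{EmbTwoStepSP} that the tail $\FFF/_{\dot{G}_\beta}$ is an $S(\dot{G}_\beta)$-$\SP$ iteration system of effective length $\omega$ or $\omega_1$ in $V^{\BB_\beta}$, invokes Lemma~\ref{LemmaMinOmega1} or~\ref{LemmaOmega1} \emph{inside} $V^{\BB_\beta}$, and finally applies Proposition~\ref{6iSP} to the two-step decomposition. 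Your Cases~1 and~2 are fine and essentially coincide with the paper's tools, but this ``push the problem into the generic extension and shrink the cofinality'' move is the missing idea for the high-cofinality case, and it is precisely what makes the collapse hypothesis in the theorem statement do real work rather than serve as a size bound.
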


\begin{proof}
	First, suppose that for all $\alpha$ we have that $\vp{\BB_\alpha} < \lambda$. Then, by Theorem \ref{iBaumgartner}, $C(\FFF)$ is ${<}\lambda$-cc and $\RCS(\FFF) = C(\FFF)$ hence by Lemma \ref{LemmaLCC} we have the thesis.

	Now suppose that there is an $\alpha$ such that $\vp{\BB_\alpha} \geq \lambda$. Then by hypothesis there is a $\beta > \alpha$ such that $\BB_\beta \Vdash \vp{\BB_\alpha} \leq \omega_1$, thus $\BB_\beta \Vdash \cf{\lambda} \leq \omega_1$. So by Lemma \ref{EmbTwoStepSP} $\FFF /_{\dot{G}_\beta}$  is a $\BB_{\beta}$-name for an $S(\dot{G}_\beta)$-$\SP$ iteration system that is equivalent to a system of length $\omega$ or $\omega_1$ hence its limit is $S(\dot{G}_\beta)$-$\SP$ by Lemma \ref{LemmaMinOmega1} or Lemma \ref{LemmaOmega1} applied in $V^{\BB_\beta}$. 
	Finally, $\RCS(\FFF)$ can always be factored as a two-step iteration of $\BB_\beta$ and $\RCS(\FFF /_{\dot{G}_\beta})$, hence by Proposition \ref{6iSP} we have the thesis.
\end{proof}

	\section{Consistency of $\MM$}\label{sec:consMM}

In this section we will see one of the main applications of the general results about semiproperness and iterations, namely that assuming a supercompact cardinal it is possible to force the forcing axiom
$\MM$ (Martin's maximum).

\begin{definition}
	A cardinal $\delta$ is \emph{supercompact} and $f: ~ \delta \rightarrow V_\delta$ is its \emph{Laver function} iff for every set $X$ there exists an elementary embedding $j: ~ V_\alpha \rightarrow V_\lambda$ such that $j(f(\crit(j))) = X$, $j(crit(j)) = \delta$.
\end{definition}

\begin{definition}
	$\FA_\kappa(\PP)$ holds if for every $\mathcal D \subset \pp(\PP)$ family of open dense sets of $\PP$ with $\vp{\mathcal D} \leq \kappa$, there exists a filter $G\subset \PP$ such that $G \cap D \neq \emptyset$ for all $D \in \mathcal{D}$.
\end{definition}

\begin{definition}
	$\SPFA$ (semiproper forcing axiom) states that $\FA_{\omega_1}(\PP)$ holds for every 
	semiproper $\PP$.
\end{definition}

\begin{remark}
	It is worth noting that $\SPFA$ is in fact equivalent to $\MM$ (i.e. the sentence ``$\FA_{\omega_1}(\PP)$ hold for every $\PP$ stationary set preserving'').
\end{remark}

\begin{theorem}[Magidor, Foreman, Shelah]
	If $\delta$ is supercompact then there exists an $\RCS$ iteration $\FFF=\{i_{\alpha, \beta}: \BB_\alpha \rightarrow \BB_\beta: \alpha \leq \beta < \delta \}$ such that $\RCS(\FFF) \Vdash \SPFA$, collapses $\delta$ to $\omega_2$ and is ${<}\delta$-cc.
\end{theorem}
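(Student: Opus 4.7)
The plan is to perform an $\RCS$ iteration of length $\delta$ guided by the Laver function $f\colon \delta \to V_\delta$. Recursively define $\FFF = \{i_{\alpha\beta}: \BB_\alpha \to \BB_\beta : \alpha \leq \beta < \delta\}$ as follows. At a successor stage $\alpha$, if $f(\alpha)$ codes a $\BB_\alpha$-name $\dot{\QQ}$ with $\Vdash_{\BB_\alpha} \dot{\QQ}$ a semiproper complete boolean algebra of size less than $\check{\delta}$, set $\BB_{\alpha+1} = \BB_\alpha \ast \dot{\QQ}$; otherwise, set $\BB_{\alpha+1} = \BB_\alpha \ast \dot{\Coll}(\check{\omega}_1, \check{\kappa}_\alpha)$, where a bookkeeping device ensures that every cardinal $\kappa < \delta$ is collapsed at cofinally many stages. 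At a limit $\alpha$, set $\BB_\alpha = \RO(\RCS(\FFF \res \alpha))$, so that the construction is an $\RCS$ iteration by fiat.

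Next, I would verify by induction on $\alpha \leq \delta$ that $|\BB_\alpha| < \delta$, that $\BB_\alpha$ is $S$-$\SP$ for $S$ the stationary set of countable elementary substructures of some $H_\theta$ with $\theta \gg \delta$, and that for each $\alpha$ there exists $\beta > \alpha$ with $\Vdash_{\BB_\beta} |\check{\BB}_\alpha| \leq \check{\omega}_1$. The size bound uses the inaccessibility of $\delta$ together with the size restriction on the iterands; the third condition is arranged by the collapse bookkeeping. Semiproperness at successor stages is Proposition~\ref{6iSP} (noting that collapses are semiproper), and at limit stages it follows from Theorem~\ref{ThmIteration}, whose hypotheses are precisely the three inductive points (with the quotient semiproperness supplied by Lemma~\ref{EmbTwoStepSP}). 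The $<\delta$-chain condition of $\RCS(\FFF)$ is then obtained from Theorem~\ref{iBaumgartner}: each $\BB_\alpha$ is $<\delta$-cc because it has size less than $\delta$, and the set of inaccessible $\alpha < \delta$ at which $\BB_\alpha \cong \RO(C(\FFF \res \alpha))$ is stationary by the Mahloness of $\delta$. The chain condition and the cofinally many collapses together yield $\delta = \omega_2$ in the extension.

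The main obstacle is verifying $\SPFA$. Given in $V^{\RCS(\FFF)}$ a semiproper complete boolean algebra $\PP$ and predense sets $\langle D_\xi : \xi < \omega_1 \rangle$, fix $\RCS(\FFF)$-names $\dot{\PP}$ and $\langle \dot{D}_\xi : \xi < \omega_1 \rangle$. Invoking the defining property of the Laver function, choose an elementary embedding $j\colon V \to M$ with $\crit(j) = \delta$, $M$ closed under $\delta$-sequences, and $j(f)(\delta) = \dot{\PP}$ up to the coding convention. By elementarity, $j(\FFF)$ is an $\RCS$ and $S$-$\SP$ iteration in $M$ of length $j(\delta)$ whose initial segment of length $\delta$ coincides with $\FFF$ and whose $\delta$-th iterand is $\dot{\PP}$; using the closure of $M$ together with the $<\delta$-cc of $\RCS(\FFF)$, one identifies $\RCS(\FFF) \ast \dot{\PP}$ as a complete subalgebra of $\RCS(j(\FFF))$ via the two-step iteration analysis of Section~\ref{sec:twostepitquot}. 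Theorem~\ref{ThmIteration} applied inside $M$ shows that the quotient $\RCS(j(\FFF))/(\RCS(\FFF)\ast\dot{\PP})$ is semiproper, so the generic filter for $\RCS(j(\FFF))$ projects in $V^{\RCS(\FFF)}$ to a filter on $\PP$ meeting all the $D_\xi$, yielding $\FA_{\omega_1}(\PP)$. The delicate point is arranging the coding so that the Laver function reliably anticipates arbitrary $\RCS(\FFF)$-names for semiproper posets, and extracting the desired filter from the lifted generic; this is where the preservation theorem and the quotient machinery from Sections~\ref{sec:twostepitquot} and~\ref{sec:semiproperiter} are used in their full strength.
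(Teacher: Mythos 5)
Your sketch of the iteration's construction and of the verification that $\RCS(\FFF)$ is semiproper, $<\delta$-cc, and collapses $\delta$ to $\omega_2$ is essentially what the paper does (modulo using a separate bookkeeping device for collapses rather than interleaving a collapse at every other stage). The genuine divergence is in the $\SPFA$ verification, and the divergence traces back to the form of the Laver function. The paper defines supercompactness via elementary embeddings $j\colon V_\alpha \to V_\lambda$ between rank initial segments with $\crit(j)=\gamma<\delta$, $j(\gamma)=\delta$, and $j(f(\gamma))=\dot{Q}$: a \emph{downward reflection} form. You invoke the more familiar \emph{upward extension} form $j\colon V\to M$ with $\crit(j)=\delta$, $M$ closed under $\delta$-sequences, and $j(f)(\delta)=\dot{\PP}$. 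The two are equivalent characterizations, but the arguments they enable are structurally different, and the paper's is the lighter one.

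With the reflection form, the paper needs no forcing beyond $\BB_\delta$: it observes $j(\BB_\gamma)=\BB_\delta$ and $\BB_{\gamma+1}=\BB_\gamma\ast f(\gamma)$, so that given a $V$-generic $G$ for $\BB_\delta$, the filters $G_\gamma=\pi_\gamma[G]$ and $H=\{\val_{G_\gamma}(h(\gamma+1)):h\in G\}$ are already encoded in $G$. Since $j[G_\gamma]\subset G$, $j$ lifts to $\bar{\jmath}\colon V_\alpha[G_\gamma]\to V_\lambda[G]$, and $\bar{\jmath}[H]$ \emph{is already} a filter on $Q=\val_G(\dot{Q})$ lying in $V_\lambda[G]$ and meeting every $D_\beta\in M=\bar{\jmath}[V_\alpha[G_\gamma]]$; one then closes the argument by elementarity of $M\prec V_\lambda[G]$ and absoluteness of $\FA_{\omega_1}(Q)$ between $V_\lambda[G]$ and $V[G]$.

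Your version, by contrast, requires forcing further with (a quotient of) $\RCS(j(\FFF))$ over $V[G]$ to obtain $H\supset j[G]$, then lifting $j$ to $\bar{\jmath}\colon V[G]\to M[H]$ in $V[H]$, and finally pulling the conclusion back to $V[G]$ by elementarity of $\bar{\jmath}$. This is the FMS-style argument and it does work, but the phrase ``the generic filter for $\RCS(j(\FFF))$ projects in $V^{\RCS(\FFF)}$ to a filter on $\PP$'' is not correct as stated: there is no generic for $\RCS(j(\FFF))$ in $V[G]$, and the desired filter is not obtained by projection but by applying $\bar{\jmath}$-elementarity to the statement ``there is a filter on $j(\PP)$ meeting all $j(D_\xi)$'' in $M[H]$. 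For that you must check that $\bar{\jmath}[K]$, or at least an $\omega_1$-sized directed subset of it selecting a point of each $K\cap D_\xi$, is an element of $M[H]$ — a closure computation your outline leaves implicit and that the paper's reflection-style argument sidesteps entirely. You should either switch to the paper's $j\colon V_\alpha\to V_\lambda$ formulation (which the paper's definition of supercompactness hands you directly), or spell out the further forcing, the master-condition observation $j[G]\subset H$, the closure of $M[H]$, and the elementarity pullback.
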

\begin{proof}
	Let $f: ~ \delta \rightarrow V_\delta$ be a Laver function for $\delta$. Define $\FFF=\{i_{\alpha, \beta}: \BB_\alpha \rightarrow \BB_\beta: \alpha \leq \beta < \delta\}$ such that $\BB_0 = \2$, $\BB_{2\cdot\alpha+1} = \BB_{2\cdot\alpha} \ast f(\alpha)$ if $f(\alpha)$ is a $\BB_{2\cdot\alpha}$-name for a semiproper poset and $\BB_{2\cdot\alpha}$ otherwise, $\BB_{2\cdot\alpha+2} = \BB_{2\cdot\alpha +1} \ast \Coll(\vp{\BB_\alpha},\omega_1)$ (as calculated in $\BB_{2\cdot\alpha +1}$).

	Then $\BB_\delta = \RCS(\FFF)$ is semiproper by Lemma \ref{ThmIteration}, and $\RCS(\FFF) = C(\FFF)$ is ${<}\delta$-cc by Theorem \ref{iBaumgartner} since $f$ has values in $V_\delta$ hence all $\BB_\alpha$ are in $V_\delta$. Furthermore,  $\BB_\delta$ preserves the regularity of $\delta$ since it is 
	${<}\delta$-cc and collapses $\alpha < \delta$ to $\omega_1$ (as guaranteed at steps $\BB_{2\cdot\alpha+2}$). Thus, we only need to prove that $\BB_\delta$ forces $\FA_{\omega_1} (\dot{Q})$ for any $\dot{Q}$ name for a semiproper poset.

	Fix a name $\dot{Q}$ for a semiproper poset, and find $j: V_\alpha \rightarrow V_\lambda$ such that $\crit(j) = \gamma$, $j(\gamma) = \delta$, $j(f(\gamma)) = \dot{Q}$. Notice that 
	$j(\BB_\gamma) = \BB_\delta$ (since $\BB_\gamma$ is defined by $f\restriction\gamma$ as $\BB_\delta$ is defined by $f$), $j( f(\gamma) ) = \dot{Q}$ is a $\BB_\delta$-name for a semiproper poset, hence by elementarity $\BB_\gamma=C(\FFF\restriction\gamma)$ is ${<}\gamma$-cc and
	 $f(\gamma)$ is a $\BB_\gamma$-name for a semiproper poset so $\BB_{\gamma+1} = \BB_\gamma \ast f(\gamma)$ (since $\gamma$ is limit hence $2\cdot\gamma = \gamma$).

	Let $G$ be $V$-generic for $\BB_\delta$, so that 
	$G_\gamma = \pi_\gamma[G] = \bp{ h(\gamma): ~ h \in G }$ is $V$-generic for $\BB_\gamma$ and $H = \bp{\val_{G_\gamma}(h(\gamma+1)): ~ h \in G }$ is $V[G_\gamma]$-generic for $\val_{G_\gamma}(f(\gamma))$. 
	Now observe that $G_\gamma=\{h\restriction\gamma:h\in G\}$ since 
	$\BB_\gamma=C(\FFF\restriction\gamma)$.
	
	In particular 
	\[
	j[G_\gamma]=j[\{h\restriction\gamma:h\in G\}]=\{h\in G:\supp(h)< \gamma\}\subset G.
	\]
	Thus $j$ extends to an elementary map $\overline{j}$:
	\[
	\begin{array}{llll}
		\overline{j}: & V_\alpha[G_\gamma] &\rightarrow& V_\lambda[G] \\
		&\val_{G_\gamma}(\tau) &\mapsto& \val_G(j(\tau))
	\end{array}
	\]
	Let $M = \overline{j}[V_\alpha[G_\gamma]] \prec V_\lambda[G]$. Since $\overline{j}[H]$ is $M$-generic for $\val_{G}(j(f(\gamma))) = Q$, for any $\mathcal D \in M$, $\mathcal D = \bp{ D_\beta: ~ \beta < \omega_1}$ dense subsets of $Q$, it holds that
	\[
	\begin{array}{lll}
		V_\lambda[G] &\vDash& \forall \alpha < \omega_1  ~ j[H] \cap D_\alpha \neq \emptyset \Rightarrow \\
		V_\lambda[G] &\vDash& \exists K \subset Q \text{ filter } \forall \alpha < \omega_1  ~ K \cap D_\alpha \neq \emptyset \Rightarrow \\
		M &\vDash& \exists K \subset Q \text{ filter } \forall \alpha < \omega_1  ~ K \cap D_\alpha \neq \emptyset\\
	\end{array}
	\]
	Since the latter holds for any $\mathcal D \in M$, it follows that $M \vDash \FA_{\omega_1}(Q)$ and again by elementarity $V_\lambda[G] \vDash \FA_{\omega_1}(Q) \Rightarrow V[G]  \vDash \FA_{\omega_1}(Q)$, concluding the proof.
\end{proof}

	\appendix	
	
	\section{Generalized stationary sets}\label{sec:statsets}

In this appendix we recall the properties of generalized stationarity that were 
used throughout these notes in the definition and the analysis of semiproperness. 
Reference texts for this section are \cite{JECH}, \cite[Chapter 2]{LARSON}.

		\begin{definition}
			Let $X$ be an uncountable set. A set $C$ is a \emph{club} on $\pp(X)$ iff there is a function $f_C: ~ X^{<\omega} \rightarrow X$ such that $C$ is the set of elements of $\pp(X)$ closed under $f_C$, i.e.
			\[
			C = \bp{ Y \in \pp(X): ~ f_C[Y]^{<\omega} \subseteq Y }
			\]
			A set $S$ is \emph{stationary} on $\pp(X)$ iff it intersects every club on $\pp(X)$.
		\end{definition}

		\begin{example}
			The set $\bp{X}$ is always stationary since every club contains $X$. Also $\pp(X) \setminus \bp{X}$ and $\qp{X}^\kappa$ are stationary for any $\kappa \leq \vp{X}$ (following the proof of the well-known downwards L\"owhenheim-Skolem Theorem). Notice that every element of a club $C$ must contain $f_C(\emptyset)$, a fixed element of $X$.
		\end{example}

		\begin{remark}
			The reference to the support set $X$ for clubs or stationary sets may be omitted, since every set $S$ can be club or stationary only on $\bigcup S$.
		\end{remark}

		There is one more property of stationary sets that is worth to mention. Given any first-order structure $M$, from the set $M$ we can define a Skolem function $f_M: M^{<\omega} \rightarrow M$ (i.e., a function coding solutions for all existential first-order formulas over $M$). Then the set $C$ of all elementary submodels of $M$ contains a club (the one corresponding to $f_M$). Henceforth, every set $S$ stationary on $X$ must contain an elementary submodel of any first-order structure on $X$.

		\begin{definition}
			A set $S$ is \emph{subset modulo club} of $T$, in symbols $S \subseteq^* T$, iff $\bigcup S = \bigcup T = X$ and there is a club $C$ on $X$ such that $S \cap C \subseteq T \cap C$. Similarly, a set $S$ is \emph{equivalent modulo club} to $T$, in symbols $S =^* T$, iff $S \subseteq^* T \wedge T \subseteq^* S$.
		\end{definition}

		\begin{definition}
			The \emph{club filter} on $X$ is 
			\[
				\CF_X = \bp{C \subset \pp(X): ~ C \text{ contains a club} }.
			\]
			Similarly, the \emph{non-stationary ideal} on $X$ is 
			\[
				\NS_X = \bp{A \subset \pp(X): ~ A \text{ not stationary}}.
			\]
		\end{definition}

		\begin{remark}
			If $\vp{X} = \vp{Y}$, then $\pp(X)$ and $\pp(Y)$ are isomorphic and so are $\CF_X$ and $\CF_Y$ (or $\NS_X$ and $\NS_Y$): then we can suppose $X \in \ON$ or $X \supseteq \omega_1$ if needed.
		\end{remark}

		\begin{lemma}
			$\CF_X$ is a $\sigma$-complete filter on $\pp(X)$, and the stationary sets are exactly the $\CF_X$-positive sets.
		\end{lemma}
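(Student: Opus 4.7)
The plan is to separately verify that $\CF_X$ is a filter, that it is $\sigma$-complete, and that the stationary sets coincide with the $\CF_X$-positive sets, the last of these being essentially a reformulation of the definitions.

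First I would observe that $X$ itself lies in every club (being vacuously closed under any $f_C:X^{<\omega}\to X$), so every club is non-empty and hence $\emptyset\notin\CF_X$. Closure of $\CF_X$ under supersets is built into the definition, since $\CF_X$ is by construction the upward closure in $\pp(\pp(X))$ of the collection of clubs.

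The heart of the argument is $\sigma$-completeness. Given clubs $(C_n)_{n<\omega}$ with associated Skolem functions $f_n:X^{<\omega}\to X$, I would produce a single function $f:X^{<\omega}\to X$ whose closure points are contained in $\bigcap_n C_n$. Fixing a bijective pairing $\pi:\omega\times\omega\to\omega$ with $\pi(n,\ell)\geq\ell$ and $\pi(0,0)=0$ (for example Cantor's pairing), I would set $f(s)=f_n(s\res\ell)$ whenever $|s|=\pi(n,\ell)$. Any $M\in\pp(X)$ closed under $f$ is then closed under each $f_n$: $M$ is non-empty because $f(\emptyset)=f_0(\emptyset)\in M$, and given $t\in M^{\ell}$ one can pad $t$ with any chosen element of $M$ to a sequence $s\in M^{\pi(n,\ell)}$ with $s\res\ell=t$, whence $f_n(t)=f(s)\in M$. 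The club of closure points of $f$ therefore witnesses $\bigcap_n C_n\in\CF_X$.

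Finally, stationarity of $S$ says $S\cap C\neq\emptyset$ for every club $C$, while $\CF_X$-positivity says $S\cap C'\neq\emptyset$ for every $C'\in\CF_X$; since every $C'\in\CF_X$ contains a club and every club lies in $\CF_X$, the two conditions coincide. The main technical hurdle is the encoding step for $\sigma$-completeness: the pairing must be chosen so that the padding operation is always available, which is precisely what $\pi(n,\ell)\geq\ell$ guarantees.
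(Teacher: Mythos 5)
Your proposal is correct and follows essentially the same route as the paper: both encode the countably many Skolem functions $f_n$ into a single function via a pairing of $\omega$ with $\omega^2$ satisfying the coordinate bound that makes the restriction $s\res\ell$ well-defined, and then observe that the closure points of the combined function lie in every $C_n$. The paper uses a surjection $\pi:\omega\to\omega^2$ with $\pi_2(n)\le n$ where you use a bijection $\omega\times\omega\to\omega$ with $\pi(n,\ell)\ge\ell$, which is the same device written in the opposite direction; the paper leaves the verification $C_g=\bigcap_n C_n$ to the reader and omits the routine filter axioms and the stationary/positive equivalence, which you spell out.
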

		\begin{proof}
			$\CF_X$ is closed under supersets by definition. Given a family of clubs $C_i$, $i < \omega$, let $f_i$ be the function corresponding to the club $C_i$. Let $\pi: \omega \rightarrow \omega^2$ be a surjection, with components $\pi_1$ and $\pi_2$, such that $\pi_2(n) \leq n$. Define $g: X^{<\omega} \rightarrow X$ to be $g(s) = f_{\pi_1(\vp{s})}(s \res \pi_2(\vp{s}))$. It is easy to verify that $C_g = \bigcap_{i < \omega} C_i$.
		\end{proof}

		\begin{definition}
			Given a family $\bp{S_a \subseteq \pp(X): ~ a \in X}$, the \emph{diagonal union} of the family is $\nabla_{a \in X} S_a = \bp{z \in \pp(X): ~ \exists a \in z ~ z \in S_a}$, and the \emph{diagonal intersection} of the family is $\Delta_{a \in X}  S_a = \bp{z \in \pp(X): \forall a \in z ~ z \in S_a}$.
		\end{definition}

		\begin{lemma}[Fodor] \label{sFodor}
			$\CF_X$ is normal, i.e. is closed under diagonal intersection. Equivalently, every function $f: ~ \pp(X) \rightarrow X$ that is regressive on a $\CF_X$-positive set is constant on a $\CF_X$-positive set.
		\end{lemma}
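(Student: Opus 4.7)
The plan is to prove normality of $\CF_X$ directly from the function-generated presentation of clubs given in the definition, and then derive the equivalence with the regressive-function formulation by the standard translation.

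For closure under diagonal intersection, fix a family $\bp{S_a : a \in X}$ with each $S_a \in \CF_X$, and choose for each $a \in X$ a generating function $f_a : X^{<\omega} \to X$ with $C_{f_a} \subseteq S_a$. I would combine them into a single function $g : X^{<\omega} \to X$ by
\[
g(\ap{a, s_1, \dots, s_n}) = f_a(\ap{s_1, \dots, s_n})
\]
on non-empty sequences, with an arbitrary value on $\emptyset$. Then $C_g \subseteq \Delta_{a \in X} S_a$: whenever $z$ is closed under $g$ and $a \in z$, any $\ap{s_1, \dots, s_n} \in z^{<\omega}$ satisfies $f_a(\ap{s_1, \dots, s_n}) = g(\ap{a, s_1, \dots, s_n}) \in z$, so $z \in C_{f_a} \subseteq S_a$. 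Hence $\Delta_{a \in X} S_a \in \CF_X$.

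For the equivalence with the regressive formulation, one direction uses normality: suppose $f : \pp(X) \to X$ is regressive on a stationary $S$ and every fibre $T_a = \bp{z \in S : f(z) = a}$ is non-stationary, witnessed by a club $C_a$ disjoint from $T_a$ (setting $C_a = \pp(X)$ for $a$ outside the range of $f$ if needed). Then any $z$ in the stationary set $S \cap \Delta_{a \in X} C_a$ would satisfy both $z \in C_{f(z)}$, because $f(z) \in z$, and $z \in T_{f(z)}$, contradicting $C_{f(z)} \cap T_{f(z)} = \emptyset$. Conversely, if $\Delta_{a \in X} C_a \notin \CF_X$ for some family of clubs $C_a$, its complement $\nabla_{a \in X}(\pp(X) \setminus C_a)$ is stationary; using $\AC$ one defines a regressive $f$ on this set by picking for each $z$ some $a \in z$ with $z \notin C_a$, and the regressive-fibre hypothesis yields a stationary set on which $f \equiv a$ that is therefore disjoint from the club $C_a$, a contradiction.

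The main combinatorial point, which distinguishes this argument from the classical Fodor lemma on $\omega_1$, is the encoding of an $X$-indexed family of clubs into one generating function: reserving the first coordinate of $g$ for the index $a$ converts the universal quantifier ``$\forall a \in z$'' appearing in the definition of $\Delta_{a \in X} S_a$ into the universal closure condition defining $C_g$, without appealing to any well-ordering of $X$. Once this encoding is in place, the normality-versus-regressivity equivalence transposes routinely from the ordinal setting to the $\pp(X)$ setting.
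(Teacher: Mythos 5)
Your proof is correct, and the key combinatorial step coincides with the paper's: both establish normality by fusing the family of generating functions into a single $g$ with $g(a^\smallfrown s)=f_a(s)$, so that closure of $z$ under $g$ forces closure under $f_a$ for every $a\in z$. The difference lies in how the regressive-function form is obtained. You derive it abstractly from normality (together with the easy converse, which is not actually needed to prove the lemma once normality is in hand, only to justify the word ``equivalently''). The paper, by contrast, explicitly declines this route (``\emph{Even though the second part of our thesis is provably equivalent to the first one for any filter $\mathcal F$, we shall opt here for a direct proof}'') and instead applies the same prefix-encoding directly to clubs $C_{g_a}\subseteq\{Y: a\in Y\}$ disjoint from the fibres $f^{-1}[\{a\}]$, arriving at a club $C_g$ on which $f$ cannot be regressive. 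The combinatorial content is therefore identical; your version is marginally more modular while the paper's is self-contained and avoids invoking the general filter-theoretic equivalence. Both are fine.

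One very small point: when you intersect $S$ with $\Delta_{a\in X}C_a$, it is worth saying explicitly (as you implicitly do) that this intersection is nonempty because $\Delta_{a\in X}C_a\in\CF_X$ by the first part of the lemma, so it meets the stationary set $S$.
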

		\begin{proof}
			Given a family $C_a$, $a \in X$ of clubs, with corresponding functions $f_a$, let $g(a^\smallfrown s) = f_a(s)$. It is easy to verify that $C_g = \Delta_{a \in X} C_a$.

			Even though the second part of our thesis is provably equivalent to the first one for any filter $\mathcal F$, we shall opt here for a direct proof. Assume by contradiction that $f: ~ \pp(X) \rightarrow X$ is regressive (i.e., $f(Y) \in Y$) in a $\CF_X$-positive (i.e., stationary) set, and $f^{-1}\qp{a}$ is non-stationary for every $a \in X$. Then, for every $a \in X$ there is a function $g_a: ~ \qp{X}^{<\omega} \rightarrow X$ such that the club $C_{g_a}$ is disjoint from $f^{-1}\qp{a}$. Without loss of generality, suppose that $C_{g_a} \subseteq C_a = \bp{Y \subseteq X: ~ a \in Y}$. As in the first part of the lemma, define $g(a^\smallfrown s) = g_a(s)$. Then for every $Z \in C_g$ and every $a \in Z$, $Z$ is in $C_{g_a}$ hence is not in $f^{-1}\qp{a}$ (i.e., $f(Z) \neq a$). So $f(Z) \notin Z$ for any $Z \in C_g$, hence $C_g$ is a club disjoint with the stationary set in which $f$ is regressive, a contradiction.
		\end{proof}

		\begin{remark}
			The club filter is never $\omega_2$-complete, unlike its well-known counterpart on cardinals. Let $Y \subseteq X$ be such that $\vp{Y} = \omega_1$, and $C_a$ be the club corresponding to $f_a: \qp{X}^{<\omega} \rightarrow \bp{a}$; then $C = \bigcap_{a \in Y} C_a = \bp{Z \subseteq X: ~ Y \subseteq Z}$ is disjoint from the stationary set $\qp{X}^\omega$, hence is not a club.
		\end{remark}

		This generalized notion of club and stationary set is closely related to the well-known one defined for subsets of cardinals.

		\begin{lemma} \label{sClassicalOmg1}
			$C \subseteq \omega_1$ is a club in the classical sense if and only if $C \cup \bp{\omega_1}$ is a club in the generalized sense. $S \subseteq \omega_1$ is stationary in the classical sense if and only if it is stationary in the generalized sense.
		\end{lemma}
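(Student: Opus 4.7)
The plan rests on the canonical identification of each ordinal $\alpha\leq\omega_1$ with its underlying set, viewed as an element of $\pp(\omega_1)$. First I would establish the auxiliary fact that for every $f\colon \omega_1^{<\omega}\to\omega_1$, the set $E_f=\{\alpha<\omega_1 : f[\alpha^{<\omega}]\subseteq\alpha\}$ is a classical club in $\omega_1$: closure is immediate since any finite tuple from $\sup_n\alpha_n$ sits inside some $\alpha_n\in E_f$; unboundedness follows by iterating $f$-closure for $\omega$ steps starting from an arbitrary $\beta<\omega_1$. Under the ordinal-as-subset identification, $E_f$ then embeds into the generalized club $D_f=\{Y\subseteq\omega_1 : f[Y^{<\omega}]\subseteq Y\}$.

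For the stationary biconditional, the $(\Rightarrow)$ direction is immediate: if $S\subseteq\omega_1$ is classically stationary and $D_f$ is any generalized club, then $S\cap E_f\neq\emptyset$ forces $S\cap D_f\neq\emptyset$. For $(\Leftarrow)$, given a classical club $C$, I would set $f_C(\emptyset)=\min C$ and $f_C(s)=\min(C\setminus(\max(s)+1))$ for nonempty $s$, and show that any ordinal $\alpha$ lying in the resulting generalized club $D_{f_C}$ has $C\cap\alpha$ cofinal in $\alpha$, forcing $\alpha$ to be a limit and hence to be in $C$ by closure of $C$. An intersection $Y\in S\cap D_{f_C}$ produced by generalized stationarity must lie in $\omega_1$ since $S\subseteq\omega_1$, and is therefore such an $\alpha\in C$.

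For the club biconditional, $(\Leftarrow)$ is handled by the auxiliary fact: if $C\cup\{\omega_1\}$ is in the generalized club filter via some $D_f$, then $E_f\subseteq D_f\cap\omega_1\subseteq C$ exhibits a classical club inside $C$. For $(\Rightarrow)$, the set $D_{f_C}$ from above witnesses that $C\cup\{\omega_1\}$ lies in the generalized club filter: every ordinal in $D_{f_C}$ lies in $C$, and $\omega_1$ itself is automatically in $D_{f_C}$.

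The main obstacle I anticipate is a reading subtlety: a generalized club $D_{f_C}$ will always contain many non-ordinal subsets of $\omega_1$ (a single Skolem function cannot enforce downward closure of an arbitrary subset), so literal set equality $D_{f_C}=C\cup\{\omega_1\}$ fails. The lemma should therefore be read as asserting that the classical and generalized club filters on $\omega_1$ agree under the ordinal-as-subset identification, and the trace arguments above establish exactly this.
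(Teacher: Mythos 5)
The heart of the gap is your claim in the last paragraph that ``a single Skolem function cannot enforce downward closure of an arbitrary subset,'' which leads you to abandon the literal reading of the lemma in favour of an equivalence of filters. This claim is false, and the paper's proof turns exactly on disproving it. Fix, for each $\alpha < \omega_1$, an enumeration $\{e^\alpha_n : n < \omega\}$ of $\alpha$, and include among the clauses defining the Skolem function $f_C(\emptyset) = 0$ and $f_C(0^\smallfrown \alpha^n) = e^\alpha_n$. If $Y \subseteq \omega_1$ is closed under such an $f_C$, then $0 \in Y$, and for every $\alpha \in Y$ and every $n$ the tuple $0^\smallfrown \alpha^n$ lies in $Y^{<\omega}$, so $e^\alpha_n \in Y$; hence $\alpha \subseteq Y$, and $Y$ is transitive, i.e.\ an ordinal. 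So a single Skolem function does enforce that every closure point is an ordinal, and your ``reading subtlety'' paragraph can be dropped: the lemma is literally true.

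There is a second, smaller, gap in your choice $f_C(s) = \min(C \setminus (\max(s)+1))$: its ordinal closure points are precisely the limit points of $C$. Indeed an isolated point $c\in C$ fails to be closed, since for the $C$-predecessor $c'$ of $c$ one has $f_C(\langle c'\rangle)=c\notin c$. You would therefore only exhibit a generalized club strictly contained in $C\cup\{\omega_1\}$, not equal to it. The paper instead codes the successor step indirectly via cofinal $\omega$-sequences $\{d^\alpha_n : n<\omega\}$ in $c_{\alpha+1}$ and sets $f_C((c_\alpha)^n)=d^\alpha_n$, which makes every $c_\gamma > 0$ a closure point while still confining the ordinal closure points to $C$. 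Combining the two devices yields the exact equality $C_{f_C}=(C\setminus\{0\})\cup\{\omega_1\}$, which is what the lemma asserts (the exclusion of $0=\emptyset$ is forced, since every generalized club must contain $f_C(\emptyset)$, and is immaterial). Your auxiliary fact about $E_f$, your $(\Leftarrow)$ directions, and your stationary argument---where you correctly rescue the non-ordinal closure points by intersecting with $S\subseteq\omega_1$---are all sound and essentially match the paper.
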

		\begin{proof}
			Let $C \subseteq \omega_1 + 1$ be a club in the generalized sense. Then $C$ is closed: given any $\alpha = \sup{\alpha_i}$ with $f[\alpha_i]^{<\omega} \subseteq \alpha_i$, $f[\alpha]^{<\omega} = \bigcup_i f[\alpha_i]^{<\omega} \subseteq \bigcup_i \alpha_i = \alpha$. Furthermore, $C$ is unbounded: given any $\beta_0 < \omega_1$, define a sequence $\beta_i$ by taking $\beta_{i+1} = \sup{f[\beta_i]^{<\omega}}$. Then $\beta_\omega = \sup{\beta_i} \in C$.

			Let now $C \subseteq \omega_1$ be a club in the classical sense. Let $C = \bp{c_\alpha: ~ \alpha < \omega_1}$ be an enumeration of the club. For every $\alpha < \omega_1$, let $\bp{d^\alpha_i: ~ i < \omega} \subseteq c_{\alpha+1}$ be a cofinal sequence in $c_{\alpha+1}$ (eventually constant), and let $\bp{e^\alpha_i: ~ i < \omega} \subseteq \alpha$ be an enumeration of $\alpha$. Define $f_C$ to be $f_C((c_\alpha)^n) = d^\alpha_n$, $f_C(0^\smallfrown \alpha^n) = e^\alpha_n$, and $f_C(s) = 0$ otherwise. The sequence $e^\alpha_i$ forces all closure points of $f_C$ to be ordinals, while the sequence $d^\alpha_i$ forces the ordinal closure points of $f_C$ being in $C$.
		\end{proof}

		\begin{lemma} \label{sClassicalK}
			If $\kappa$ is a cardinal with cofinality at least $\omega_1$, $C \subseteq \kappa$ contains a club in the classical sense if and only if $C \cup \bp{\kappa}$ contains the ordinals of a club in the generalized sense. $S \subseteq \kappa$ is stationary in the classical sense if and only if it is stationary in the generalized sense.
		\end{lemma}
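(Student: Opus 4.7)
The plan is to mirror the proof of Lemma~\ref{sClassicalOmg1}, with countable enumerations and suprema replaced by ones bounded below $\cf(\kappa)$, exploiting $\cf(\kappa)\geq\omega_1$ to keep $\omega$-length iterations inside $\kappa$.

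For the forward direction, given a classical club $C_0\subseteq C$, I would define $f_C:\kappa^{<\omega}\to\kappa$ by $f_C(\emptyset)=\min C_0$, $f_C(\beta)=\min(C_0\setminus(\beta+1))$ for $\beta<\kappa$ (well-defined by classical unboundedness), and $f_C(s)=0$ on tuples of length at least two. An ordinal $\alpha\leq\kappa$ closed under $f_C$ would then contain $\min C_0$ and, for each $\beta<\alpha$, the immediate $C_0$-successor of $\beta$; hence $C_0\cap\alpha$ is cofinal in $\alpha$, forcing $\alpha\in C_0\subseteq C$ when $\alpha<\kappa$ by classical closedness and leaving $\alpha=\kappa$ as the only other possibility. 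Thus the ordinals of the generalized club $C_{f_C}$ all lie in $C\cup\{\kappa\}$.

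For the backward direction, given $f:\kappa^{<\omega}\to\kappa$ whose generalized club $C_f$ has ordinal elements contained in $C\cup\{\kappa\}$, I put $C_0=C_f\cap\kappa\subseteq C$ and check that $C_0$ is a classical club in $\kappa$. Classical closedness is immediate: every finite tuple in a supremum $\alpha<\kappa$ of closure points is bounded by one of them, so $f$-images stay below $\alpha$. For classical unboundedness, given $\beta<\kappa$, I would set $\beta_0=\beta$ and $\beta_{n+1}=\sup f[(\beta_n+1)^{<\omega}]+1$, and take $\alpha=\sup_n\beta_n$; when $\kappa$ is regular each step satisfies $|\beta_n^{<\omega}|<\kappa=\cf(\kappa)$ so $\beta_{n+1}<\kappa$, while $\alpha<\kappa$ by $\cf(\kappa)>\omega$, placing $\alpha$ in $C_0$ above $\beta$. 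For singular $\kappa$, I would instead build an elementary chain $M_n\prec H_\theta$ with $\{f,\beta\}\subseteq M_0$, closing each $M_{n+1}$ under all ordinals below $\sup(M_n\cap\kappa)$, so that $M=\bigcup_n M_n$ has $M\cap\kappa$ an ordinal in $(\beta,\kappa)$; elementarity then gives $M\cap\kappa\in C_f$.

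The stationarity biconditional follows formally: a classically stationary $S\subseteq\kappa$ meets any generalized club $D$ via the classical club $D\cap\kappa$ supplied by the backward direction, while a generalized stationary $S$ meets any classical club $C$ through the generalized club $C_{f_C}$ produced by the forward direction (the intersection point being an ordinal $<\kappa$, hence in $C$). The main technical obstacle is the backward-direction unboundedness in the singular case: the naive sup-iteration can jump to $\kappa$ once intermediate cardinalities reach or exceed $\cf(\kappa)$, so the Skolem-hull construction must be carried out with care, controlling the cardinality of the elementary chain along a cofinal sequence of regular cardinals below $\kappa$ so that $M\cap\kappa$ remains an ordinal strictly below $\kappa$ while still lying above $\beta$.
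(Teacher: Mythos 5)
Your forward direction and your regular-$\kappa$ backward direction are correct and essentially the paper's argument (the paper uses the slightly more uniform $f(s) = \min\{c \in C : \sup s < c\}$ rather than a map on singletons, but to the same effect). The obstacle you flag in the singular case, however, is not a technical one that a more careful Skolem-hull construction could overcome: for singular $\kappa$ with $\cf(\kappa) \geq \omega_1$ the backward direction of the lemma is simply false. Take $\kappa = \aleph_{\omega_1}$ and $f : \kappa^{<\omega} \to \kappa$ with $f(\langle\alpha\rangle) = \aleph_\alpha$ for $\alpha < \omega_1$ and $f(s) = 0$ on all other tuples; an ordinal $1 \leq \alpha \leq \kappa$ is closed under $f$ only if $\aleph_\beta < \alpha$ for every $\beta < \min(\alpha,\omega_1)$, which forces $\alpha = \kappa$, so $C_f \cap \Ord = \{\kappa\}$. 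Thus $C_f$ is a generalized club whose ordinals lie in $C \cup \{\kappa\}$ for \emph{every} $C \subseteq \kappa$, including $C = \emptyset$, which of course contains no classical club; by the same token no $S \subseteq \kappa$ is stationary on $\pp(\kappa)$ in the generalized sense for such $\kappa$, so the stationarity biconditional fails as well.

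Your elementary-chain patch cannot work for exactly the same underlying reason: if $M \prec H_\theta$ with $f \in M$, then $\kappa, \cf(\kappa) \in M$; if moreover $M \cap \kappa$ is an ordinal, then $\cf(\kappa) \in M \cap \kappa$, hence $\cf(\kappa) \subseteq M$, and then an increasing cofinal map $\cf(\kappa) \to \kappa$ lying in $M$ witnesses $\sup(M \cap \kappa) = \kappa$, so $M \cap \kappa = \kappa$. The paper's own backward argument (``by the same reasoning of Lemma~\ref{sClassicalOmg1}'') elides this point: the naive $\omega$-length iteration needs $|\beta_n^{<\omega}| < \cf(\kappa)$ at every step, which requires $\kappa$ regular. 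The lemma as stated is correct only when $\kappa$ is a regular uncountable cardinal.
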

		\begin{proof}
			If $C$ is a club in the generalized sense, then $C \cap \kappa$ is closed and unbounded by the same reasoning of Lemma \ref{sClassicalOmg1}. Let now $C$ be a club in the classical sense, and define $f: ~ \kappa^{< \omega} \rightarrow \kappa$ to be $f(s) = \min \bp{c \in C: \sup{s} < c}$. Then $C_f \cap \kappa$ is exactly the set of ordinals in $C \cup \bp{\kappa}$ that are limits within $C$.
		\end{proof}

		\begin{remark}
			If $S$ is stationary in the generalized sense on $\omega_1$, then $S \cap \omega_1$ is stationary (since $\omega_1+1$ is a club by Lemma \ref{sClassicalOmg1}), while this is not true for $\kappa > \omega_1$. In this case, $\pp(\kappa) \setminus (\kappa+1)$ is a stationary set: given any function $f$, the closure under $f$ of $\bp{\omega_1}$ is countable, hence not an ordinal.
		\end{remark}

		\begin{lemma}[Lifting and Projection] \label{sLifting}
			Let $X \subseteq Y$ be uncountable sets. If $S$ is stationary on $\pp(X)$, then $S \uparrow Y = \bp{B \subseteq Y: ~ B \cap X \in S}$ is stationary. If $S$ is stationary on $\pp(Y)$, then $S \downarrow X = \bp{B \cap X: ~ B \in S}$ is stationary.
		\end{lemma}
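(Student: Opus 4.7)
The plan is to handle the two statements with a dual pair of constructions: for projection I would extend a witnessing function on $X^{<\omega}$ up to one on $Y^{<\omega}$; for lifting I would compress a witnessing function on $Y^{<\omega}$ down to one on $X^{<\omega}$ by recording all the $f$-terms whose value already lies in $X$. In both directions I will use the fact, already observed in the proof that $\CF_X$ is $\sigma$-complete, that countably many functions $X^{<\omega}\to X$ can be amalgamated into a single function whose closure points are exactly the simultaneous closure points of the family.

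Projection first, since it is the easier of the two. Given any club $D \subseteq \pp(X)$ witnessed by $g \colon X^{<\omega} \to X$ and a fixed $x_0 \in X$, I would define $f \colon Y^{<\omega} \to Y$ by $f(s) = g(s)$ when $s \in X^{<\omega}$ and $f(s) = x_0$ otherwise, producing a club $C_f$ on $\pp(Y)$. Stationarity of $S$ on $\pp(Y)$ then yields $B \in S \cap C_f$, and for any $s \in (B \cap X)^{<\omega}$ one has $f(s) = g(s) \in X$ and $f(s) \in B$, so $g(s) \in B \cap X$; hence $B \cap X \in D$, while $B \cap X \in S \downarrow X$ by definition, so $D$ meets $S \downarrow X$.

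For lifting, start from a club $C \subseteq \pp(Y)$ witnessed by $f \colon Y^{<\omega} \to Y$. I would enumerate the countable collection of $f$-terms $t_n \colon Y^{m_n} \to Y$ obtained by iterated composition of $f$ with projection maps (taking $t_0$ to be the identity), so that the $f$-closure of any $A \subseteq Y$ equals $\bp{t_n(\bar a) : n < \omega,\ \bar a \in A^{m_n}}$. Fixing $x_0 \in X$, I would set
\[
g_n(\bar x) = \begin{cases} t_n(\bar x) & \text{if } t_n(\bar x) \in X, \\ x_0 & \text{otherwise,} \end{cases}
\]
and amalgamate the family $(g_n)_{n<\omega}$ into a single $g \colon X^{<\omega} \to X$ via the pairing trick used for $\sigma$-completeness, so that a set $A \subseteq X$ is $g$-closed iff it is $g_n$-closed for every $n$; let $D$ be the resulting club on $\pp(X)$.

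By stationarity of $S$ on $\pp(X)$ I then pick $A \in S \cap D$ and let $B$ be the $f$-closure of $A$ inside $Y$, so that $B \in C$ by construction. Trivially $A \subseteq B \cap X$ since $t_0$ is the identity; conversely, every $b \in B \cap X$ has the form $t_n(\bar a)$ with $\bar a \in A^{m_n}$ and $t_n(\bar a) \in X$, so $g_n(\bar a) = t_n(\bar a) = b$ and $g_n$-closure of $A$ gives $b \in A$. Thus $B \cap X = A \in S$, which is to say $B \in S \uparrow Y \cap C$, finishing lifting. The only step I expect to require care is the amalgamation of the countably many $g_n$ into a single $g$ on $X^{<\omega}$, but this is exactly the pairing argument already used to establish $\sigma$-completeness of $\CF_X$, so it is more bookkeeping than genuine obstacle.
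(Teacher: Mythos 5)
Your proof is correct and follows essentially the same route as the paper: the projection direction extends a witnessing function $g$ on $X^{<\omega}$ to one on $Y^{<\omega}$ and restricts a closure point, while the lifting direction codes the $f$-terms into a countable family of $X$-valued functions, amalgamates them by the pairing trick, and shows that the $f$-closure in $Y$ of an amalgamated closure point $A \in S$ meets $X$ exactly in $A$. The only cosmetic difference is that the paper amalgamates the terms into a single $Y$-valued function first and then truncates to $X$, whereas you truncate each term to $X$ individually and then amalgamate; both produce the same club on $\pp(X)$.
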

		\begin{proof}
			For the first part, given any function $f: ~ \qp{X}^{<\omega} \rightarrow X$, extend it in any way to a function $g: ~ \qp{Y}^{<\omega} \rightarrow Y$. Since $S$ is stationary, there exists a $B \in S$ closed under $g$, hence $B \cap X \in S \downarrow X$ is closed under $f$.

			For the second part, fix an element $x \in X$. Given any function $f: ~ \qp{Y}^{<\omega} \rightarrow Y$, replace it with a function $g: ~ \qp{Y}^{<\omega} \rightarrow Y$ such that for any $A \subset Y$, $g\qp{[A]^{<\omega}}$ contains $A \cup \bp{x}$ and is closed under $f$. To achieve this, fix a surjection $\pi: ~ \omega \rightarrow \omega^2$ (with projections $\pi_1$ and $\pi_2$) such that $\pi_2(n) \leq n$ for all $n$, and an enumeration $\ap{t^n_i: ~ i < \omega}$ of all first-order terms with $n$ variables, function symbols $f_i$ for $i \leq n$ (that represent an $i$-ary application of $f$) and a constant $x$. The function $g$ can now be defined as $g(s) = t^{\pi_2(\vp{s})}_{\pi_1(\vp{s})}(s \res \pi_2(\vp{s}))$. Finally, let $h: ~ \qp{X}^{<\omega} \rightarrow X$ be defined by $h(s) = g(s)$ if $g(s) \in X$, and $h(s) = x$ otherwise. Since $S$ is stationary, there exists a $B \in S$ with $h\qp{[B]^{<\omega}} \subseteq B$, but $h\qp{[B]^{<\omega}} = g\qp{[B]^{<\omega}} \cap X$ (since $x$ is always in $g\qp{[B]^{<\omega}}$) and $g\qp{[B]^{<\omega}} \supset B$, so actually $h\qp{[B]^{<\omega}} = g\qp{[B]^{<\omega}} \cap X = B \in S$. Then, $g\qp{[B]^{<\omega}} \in S \uparrow Y$ and $g\qp{[B]^{<\omega}}$ is closed under $f$ (by definition of $g$).
		\end{proof}

		\begin{remark}
			Following the same proof, a similar result holds for clubs. If $C_f$ is club on $\pp(X)$, then $C_f \uparrow Y = C_g$ where $g = f ~\cup~ \mathrm{Id}_{Y \setminus X}$. If $C_f$ is club on $\pp(Y)$ such that $\bigcap C_f$ intersects $X$ in $x$, and $g, h$ are defined as in the second part of Theorem \ref{sLifting}, $C_f \downarrow X = C_h$ is club. If $\bigcap C_f$ is disjoint from $X$, $C_f \downarrow X$ is not a club, but is still true that it contains a club (namely, $\cp{C_f \cap C_{\bp{x}}} \downarrow X$ for any $x \in X$).
		\end{remark}

		\begin{theorem}[Ulam] \label{sUlam}
			Let $\kappa$ be an infinite cardinal. Then for every stationary set $S \subseteq \kappa^+$, there exists a partition of $S$ into $\kappa^+$ many disjoint stationary sets.
		\end{theorem}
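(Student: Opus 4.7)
The plan is to construct an \emph{Ulam matrix} and then harvest a partition from it.

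First, using that every ordinal $\alpha < \kappa^+$ has cardinality at most $\kappa$, I will fix for each such $\alpha$ an injection $f_\alpha : \alpha \to \kappa$. Setting
\[
A_{\xi,\beta} = \bp{\alpha < \kappa^+ : \beta < \alpha \wedge f_\alpha(\beta) = \xi}
\]
for $\xi < \kappa$ and $\beta < \kappa^+$ produces a $\kappa \times \kappa^+$ matrix with two key properties. Fixing a column $\beta$, the sets $A_{\xi,\beta}$ (as $\xi$ varies) partition the tail $(\beta,\kappa^+)$, simply because $f_\alpha(\beta)$ is defined and lies in $\kappa$ for every $\alpha > \beta$. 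Fixing a row $\xi$, the sets $A_{\xi,\beta}$ (as $\beta$ varies) are pairwise disjoint: if some $\alpha$ sat in both $A_{\xi,\beta}$ and $A_{\xi,\beta'}$ then $f_\alpha(\beta) = f_\alpha(\beta') = \xi$, forcing $\beta = \beta'$ by injectivity of $f_\alpha$.

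Next I will exploit the stationarity of $S$. By Lemma~\ref{sClassicalK} we may work with classical stationarity on $\kappa^+$, and recall that $\NS_{\kappa^+}$ is $\kappa^+$-complete (this follows as in Lemma~\ref{sFodor}, since the diagonal intersection of $\kappa$ many clubs in $\kappa^+$ is a club, noting $\cf(\kappa^+) = \kappa^+ > \kappa$). For each $\beta < \kappa^+$, the set $S \setminus (\beta+1)$ is stationary (differing from $S$ by a bounded, hence non-stationary, set) and is covered by $\bigcup_{\xi < \kappa} A_{\xi,\beta}$. By $\kappa^+$-completeness, there is some $\xi(\beta) < \kappa$ such that $S \cap A_{\xi(\beta),\beta}$ is stationary. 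Since $\xi : \kappa^+ \to \kappa$ and $\kappa^+$ is regular, an ordinary pigeonhole argument yields a single $\xi^* < \kappa$ and a set $B \subseteq \kappa^+$ with $|B| = \kappa^+$ on which $\xi$ is constantly $\xi^*$.

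Finally I will convert the family $\bp{S \cap A_{\xi^*,\beta} : \beta \in B}$ into a partition. By the row-disjointness of the Ulam matrix these $\kappa^+$ sets are already pairwise disjoint stationary subsets of $S$. Enumerating $B$ as $\bp{\beta_\gamma : \gamma < \kappa^+}$, I set $S_\gamma = S \cap A_{\xi^*,\beta_\gamma}$ for $\gamma \geq 1$ and dump the remainder into $S_0$, i.e.\ $S_0 = \cp{S \cap A_{\xi^*,\beta_0}} \cup \cp{S \setminus \bigcup_{\gamma \geq 1} S_\gamma}$. Then $S_0$ is still stationary as it contains $S \cap A_{\xi^*,\beta_0}$, the family $\bp{S_\gamma : \gamma < \kappa^+}$ is pairwise disjoint and unions to $S$, and every piece is stationary.

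There is no serious obstacle here: the construction is completely standard and the only mildly delicate point is invoking $\kappa^+$-completeness of $\NS_{\kappa^+}$ to pick $\xi(\beta)$ for each column, which is immediate from the Fodor-style argument used for Lemma~\ref{sFodor} applied at the cardinal $\kappa^+$.
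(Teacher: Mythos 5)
Your proof is correct and follows essentially the same route as the paper: build the Ulam matrix, use completeness of the nonstationary ideal to select one good row-entry per column, then pin down a single row by a pigeonhole/Fodor argument. The only superficial differences are that you index via injections $f_\alpha : \alpha \to \kappa$ where the paper uses bijections $\pi_\beta : \kappa \to \beta$ (dual presentations of the same matrix), and that you use plain pigeonhole with the regularity of $\kappa^+$ to fix the row $\xi^*$ where the paper invokes Fodor a second time to get a stationary (rather than merely size-$\kappa^+$) set of good columns — a harmless simplification since only $\kappa^+$-many columns are needed.
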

		\begin{proof}
			For every $\beta \in [\kappa,\kappa^+)$, fix a bijection $\pi_\beta: ~ \kappa \rightarrow \beta$. For $\xi < \kappa$, $\alpha < \kappa^+$, define $A^\xi_\alpha = \bp{\beta < \kappa^+: ~ \pi_\beta(\xi) = \alpha}$ (notice that $\beta > \alpha$ when $\alpha \in \ran(\pi_\beta)$). These sets can be fit in a $(\kappa \times \kappa^+)$-matrix, called \emph{Ulam Matrix}, where two sets in the same row or column are always disjoint. Moreover, every row is a partition of $\bigcup_{\alpha < \kappa^+} A^\xi_\alpha = \kappa^+$, and every column is a partition of $\bigcup_{\xi < \kappa} A^\xi_\alpha = \kappa^+ \setminus (\alpha+1)$.

			Let $S$ be a stationary subset of $\kappa^+$. For every $\alpha < \kappa^+$, define $f_\alpha: ~ S \setminus (\alpha+1) \rightarrow \kappa$ by $f_\alpha(\beta) = \xi$ if $\beta \in A^\xi_\alpha$. Since $\kappa^+ \setminus (\alpha+1)$ is a club, every $f_\alpha$ is regressive on a stationary set, then by Fodor's Lemma \ref{sFodor} there exists a $\xi_\alpha < \kappa$ such that $f^{-1}_\alpha\qp{\bp{\xi_\alpha}} = A^{\xi_\alpha}_\alpha \cap S$ is stationary. Define $g: ~ \kappa^+ \rightarrow \kappa$ by $g(\alpha) = \xi_\alpha$, $g$ is regressive on the stationary set $\kappa^+ \setminus \kappa$, again by Fodor's Lemma \ref{sFodor} let $\xi^* < \kappa$ be such that $g^{-1}\qp{\bp{\xi^*}} = T$ is stationary. Then, the row $\xi^*$ of the Ulam Matrix intersects $S$ in a stationary set for stationary many columns $T$. So $S$ can be partitioned into $S \cap A^{\xi^*}_\alpha$ for $\alpha \in T \setminus \bp{\min(T)}$, and $S \setminus \bigcup_{\alpha \in T \setminus \bp{\min(T)}} A^{\xi^*}_\alpha$.
		\end{proof}

		\begin{remark}
			In the proof of Theorem \ref{sUlam} we actually proved something more: the existence of a Ulam Matrix, i.e. a $\kappa \times \kappa^+$-matrix such that every stationary set $S \subseteq \kappa^+$ is compatible (i.e., has stationary intersection) with stationary many elements of a certain row.
		\end{remark}

	\bibliographystyle{amsplain}
	\bibliography{9_Biblio}
\end{document}